
\documentclass[a4paper,12pt]{amsart}
\usepackage{mathrsfs}
\usepackage{amsmath,amssymb,latexsym,amsfonts,amscd}

\title{Explicit birational geometry of threefolds of general type}
\author{Jungkai A. Chen and Meng Chen}
\address{\rm Taida Institute for Mathematical Sciences,
National Center for Theoretical Sciences, Taipei Office, and
Department of Mathematics, National Taiwan University, Taipei,
106, Taiwan} \email{jkchen@math.ntu.edu.tw}

\address{\rm School of Mathematical Sciences, Fudan University,
Shanghai, 200433, People's Republic of China}
\email{mchen@fudan.edu.cn}

\thanks{The first author was partially supported by TIMS, NCTS/TPE
and  National Science Council of Taiwan. The second author was
supported by both the Program for New Century Excellent Talents in
University (\#NCET-05-0358) and the National Outstanding Young
Scientist Foundation (\#10625103)}
\newcommand{\bC}{{\mathbb C}}
\newcommand{\bQ}{{\mathbb Q}}
\newcommand{\bP}{{\mathbb P}}
\newcommand{\roundup}[1]{\lceil{#1}\rceil}
\newcommand{\rounddown}[1]{\lfloor{#1}\rfloor}

\newcommand\Vol{\text{\rm Vol}}

\newcommand\OO{{\mathcal{O}}}

\newtheorem{thm}{Theorem}[section]
\newtheorem{lem}[thm]{Lemma}
\newtheorem{cor}[thm]{Corollary}
\newtheorem{prop}[thm]{Proposition}

\theoremstyle{definition}
\newtheorem{defn}[thm]{Definition}
\newtheorem{setup}[thm]{}

\newtheorem{exmp}[thm]{Example}

\newtheorem{rem}[thm]{Remark}
\theoremstyle{remark}

\newtheorem{Prbm}{\bf Problem}
\begin{document}
\begin{abstract}
Let $V$ be a complex nonsingular projective 3-fold  of general type.
We prove $P_{12}(V)>0$ and $P_{24}(V)>1$. We also prove that the
canonical volume has an universal lower bound $\Vol(V) \geq 1/2660$
and that the pluri-canonical map $\varphi_m$ is birational onto its
image for all $m\geq 77$. As an application of our method, we prove
Fletcher's conjecture on weighted hyper-surface 3-folds with
terminal quotient singularities. Another featured result is the
optimal lower bound $\Vol(V)\geq \frac{1}{420}$ among all those
3-folds $V$ with $\chi(\OO_V)\leq 1$.

\end{abstract}
\maketitle
\pagestyle{myheadings} \markboth{\hfill J. A. Chen and M. Chen
\hfill}{\hfill Explicit birational geometry of threefolds\hfill}

\tableofcontents

{\bf Notations}

\begin{tabbing}
 \= aaaaaaaaaaaaaaaaaaaa\= bbbbbbbbbbbbbbbbbbbbbbbbbbbbbbb \kill
\> $Y$    \> a nonsingular projective variety of general type\\
\> $V$  \> a nonsingular projective 3-fold of general type\\
\>$X$ \> a minimal projective 3-fold of general type\\
\> $\Vol(V)$, $K^3$ \> the canonical volume\\
\> $\varphi_m=\Phi_{|mK|}$ \> pluricanonical maps\\
\> $P_m(V)$, $P_{m}(X)$ \> plurigenus of $V$, $X$\\
\> $\pi:X'\rightarrow X$ \> nonsingular birational modification\\
\> $m_1$ \> minimal positive integer with $P_m>0$ for\\
\> \>\ \ all $m\geq m_0$\\
\> B in Sections 2, 3 \> the base of the induced fibration
from $\varphi_{m_0}$\\
\>B, in Sections 4 $\sim$ 7 \> a basket, usually a geometric
basket\\
\> $\rounddown{\cdot}$ \> round-down, taking the integral part\\
\>$\roundup{\cdot}$ \> means $-\rounddown{-\cdot}$\\
\end{tabbing}

\section{\bf Introduction}
Let $Y$ be a non-singular projective variety of dimension $n$. It
is said to be of general type if the pluricanonical map
$\varphi_m:=\Phi_{|mK_Y|}$ corresponding to the linear system
$|mK_Y|$ is birational into a projective space for $m \gg 0$. It is
thus natural and important to ask:
\begin{Prbm}
Does there exist a constant $c(n)$,  so that $\varphi_{m}$ is
birational for all $m \ge c(n)$ and for all $Y$ with dimension $n$?
\end{Prbm}

When $\dim Y=1$, it was classically known that $|mK_Y|$ gives an
embedding of $Y$ into a projective space if $m \ge 3$. When $\dim
Y=2$, Bombieri's theorem \cite{Bom} says that $|mK_Y|$ gives a
birational map onto the image for $m\geq 5$. This theorem has forever
established the canonical classification theory for nonsingular
projective surfaces of general type.

A possible approach to this problem in any dimension is to use the cohomological method via vanishing
theorems. This amounts to estimating the positivity of $K_Y$, which
is usually measured by the canonical volume
$$\Vol (Y):=\limsup_{\{m\in {\mathbb Z}^{+}\}}
\{\frac{n!}{m^n}\dim_{\bC}H^0(Y, \OO_Y(mK_Y))\}.$$ The volume is an
integer when $\dim Y \le 2$. However it's only a rational number in
higher dimensions. In fact, it is almost an equivalent question to
study the lower bound of the canonical volume.

\begin{Prbm} Does there exist a constant $c'(n)$ such that $\Vol(Y) \geq
c'(n)$ for all varieties $Y$ of general type with dimension $n$?
\end{Prbm}

Notice that a recent remarkable result of Hacon and
M$^{\text{c}}$Kernan \cite{H-M}, Takayama \cite{Ta} and Tsuji
\cite{Tsuji} implies the affirmative answer to both Problems.
However, they did not give explicit numerical bounds or the bound
could only be far from realistic.

 We would like to prove some explicit bounds for $c(3)$ and
$c'(3)$ for 3-folds $Y$
of general type in this paper.

\begin{thm}[=Theorem \ref{birat}] Let $Y$ be a nonsingular projective
3-fold of general type. Then $\varphi_{m}$ is birational for $m \ge
77$.
\end{thm}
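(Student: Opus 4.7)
I would first pass from $Y$ to a minimal model $X$ with $\bQ$-factorial terminal singularities and fix a smooth birational modification $\pi:X'\to X$; the birationality of $\varphi_m$ is the same for $Y$, $X'$ and $X$, so it suffices to prove the statement on $X'$. Using the two earlier theorems we may take $m_1=12$ and $m_0=24$, so that $|m_1 K_{X'}|$ is non-empty and $\varphi_{m_0}$ has image of dimension at least one. Let $f:X'\to B$ be the Stein factorization of $\varphi_{m_0}$; the argument then splits according to $\dim B\in\{1,2,3\}$.

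In every case the strategy is the standard two-step lifting argument. Pick a general surface $S$ in the movable part of $|m_0 K_{X'}|$, and then a general curve $C\subset S$ (or take $C$ to be a general fiber of $f$ when $\dim B=2$). Two successive applications of Kawamata--Viehweg vanishing---from $X'$ to $S$ via adjunction $K_S=(K_{X'}+S)|_S$, and from $S$ to $C$---reduce the birationality of $\varphi_m$ to separating two generic points on $C$ via the restriction of $|mK_{X'}|$. Setting $\xi:=(\pi^* K_X\cdot C)>0$ and $g:=g(C)$, the resulting numerical condition is essentially
\[
(m-1-m_0-m_1)\,\xi \;>\; 2g,
\]
together with a base-point-free side condition on $C$. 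Substituting $m_0=24$, $m_1=12$ this becomes $(m-37)\,\xi>2g$, so any uniform threshold depends on how small $\xi$ can be and how large $g$ can grow.

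The binding case, and the main obstacle, is $\dim B=2$, where $f$ is a curve fibration and $\xi$ may be very small. Writing $m_0\pi^* K_X=M_{m_0}+E_{m_0}$ with $M_{m_0}$ the movable part of $|m_0\pi^* K_X|$ on $X'$, one has $(M_{m_0}\cdot C)=0$ for a general fiber, hence $\xi=(E_{m_0}\cdot C)/m_0$; bounding $\xi$ from below therefore requires a precise analysis of $E_{m_0}$. I would use Reid's plurigenus formula applied to the basket of terminal singularities of $X$ together with the two constraints $P_{12}(X)>0$ and $P_{24}(X)\ge 2$ to produce a finite list of extremal triples $(g,\xi,B)$, each of which must be eliminated in turn. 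The worst case, $g=2$ with $\xi$ close to $1/24$, is what eventually pushes the threshold up to $77$. By comparison, $\dim B=3$ (where $\varphi_{m_0}$ is already generically finite, so only point separation on a general surface section is required) and $\dim B=1$ (where Bombieri's theorem gives $|5K_{F_0}|$ birational on the minimal model $F_0$ of a general fiber, which lifts by vanishing) yield weaker constraints, so the final bound $m\ge 77$ is dictated by the curve-fibration case.
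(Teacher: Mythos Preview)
Your proposal has a genuine gap: working with $m_0=24$ is too coarse to reach the bound $77$. The paper's technical machinery (Theorem~\ref{technical} and Propositions~\ref{III}--\ref{I3}) gives, for a given $m_0$ with $P_{m_0}\ge 2$, birationality for $m\ge 3m_0+1$ (type III), $m\ge 4m_0+2$ (type II), $m\ge 4m_0+5$ (type $I_p$), and via Theorem~\ref{5k+6} only $m\ge 5m_0+6$ in general. With $m_0=24$ the type~II and type~$I_p$ cases give $98$ and $101$, far above $77$; your assertion that $\dim B=1$ yields ``weaker constraints'' is backwards---in the paper the $\dim B=1$ case (type $I_p$) is the binding one. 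Your numerical criterion $(m-1-m_0-m_1)\xi>2g$ is also not the correct one: the paper's birationality condition is $\alpha=(m-1-\tfrac{m_0}{p}-\tfrac{1}{\beta})\xi>2$, independent of $g$, and the quantity you call $m_1$ does not enter that formula (it appears only in verifying Assumptions~\ref{assumptions}(3)--(4)). Note too that $P_{12}>0$ does not yield $m_1=12$ in the paper's sense (namely $P_m>0$ for \emph{all} $m\ge m_1$); the paper only proves $m_1\le 27$. Finally, your own numbers do not close: with $\xi\approx 1/24$ and $g=2$, your condition $(m-37)\xi>4$ requires $m>133$, not $77$.

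What is missing is the classification of baskets in Sections~5--7. That analysis---not the bare facts $P_{12}>0$ and $P_{24}\ge 2$---is what yields Theorem~\ref{18}: for every minimal $3$-fold of general type there is some $m_0\le 18$ with $P_{m_0}\ge 2$. Combined with the observation (Remark~\ref{chi(O)>1}) that type $I_n$ cannot occur when $\chi\ge 2$, the worst remaining case is type $I_p$ at $m_0=18$, giving $4\cdot 18+5=77$. For $\chi=1$ the basket classification of Section~6 shows $m_0\le 14$ except for one basket (case I-1), where instead $P_{20}\ge 3$ and Proposition~\ref{I3} with $m_0=20$ finishes the argument. Your vague plan to ``produce a finite list of extremal triples $(g,\xi,B)$'' from the constraints $P_{12}>0$, $P_{24}\ge 2$ does not substitute for this: those two inequalities alone do not force $m_0$ below $24$, and without a smaller $m_0$ the argument cannot close below roughly $100$.
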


\begin{thm}[=Theorem \ref{vol}] Let $Y$ be a nonsingular projective
3-fold of general type. Then $\Vol(Y) \geq \frac{1}{2660}$.
\end{thm}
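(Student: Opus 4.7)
The plan is to pass to a minimal model $X$ of $Y$ with only $\bQ$-factorial terminal singularities, so that $\Vol(Y) = K_X^3$, and then to exploit Reid's singular Riemann--Roch formula, which expresses each $P_m(X)$ as a linear combination of $K_X^3$, $\chi(\OO_X)$, and explicit correction terms contributed by a basket $B = \{(b_i, r_i)\}$ of virtual terminal quotient singularities. The already-established statements $P_{12}(V)>0$ and $P_{24}(V)>1$ guarantee a small integer $m_0 \le 12$ for which $\varphi_{m_0}$ defines a nontrivial rational map, providing an associated Iitaka-type fibration onto a base $B$ that is available for analysis.

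The core of the argument is a case-by-case analysis according to $d := \dim B$. When $d = 3$, the positivity of the moving part of $|m_0 K_X|$ yields a direct intersection-theoretic lower bound for $K_X^3$ in terms of $m_0$. When $d = 2$, the generic fibre is a curve of genus $\ge 2$ and one applies a slope-type inequality for the fibration together with positivity of the relevant direct image sheaves. When $d = 1$, the generic fibre $F$ is a smooth surface of general type whose invariants are constrained by Bombieri's classification, and one bounds $K_X^3$ from below via an inequality of the form $K_X^3 \ge c \cdot K_F^2$ with $c$ controlled by the base pencil. In each regime, combining the resulting inequality with Reid's formula and the positivity of small $P_m$ produces an explicit numerical lower bound for $K_X^3$, and $1/2660$ should appear as the worst such bound.

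The principal obstacle is the case where $\chi(\OO_X)$ is large, since Reid's formula couples $P_m$ to $\chi$ and to the basket in such a way that large $\chi$ can, a priori, allow $K_X^3$ to be small; indeed, under the additional hypothesis $\chi(\OO_X) \le 1$ one obtains the sharper conclusion $\Vol(Y) \ge 1/420$ stated in the abstract. The strategy is therefore first to use $P_{12}>0$ and $P_{24}>1$ (together with further positivity of small $P_m$) to bound $\chi(\OO_X)$ linearly in terms of the basket data, then to enumerate the finitely many ``dangerous'' geometric baskets compatible with $K_X^3 < 1/2660$, and finally to rule each of them out either by the fibration estimate above or by a direct combinatorial computation on the basket. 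The bound $1/2660$ should emerge from this enumeration as the minimum over all surviving candidate baskets, with the delicate step being that a single unruled basket would already destroy the estimate, so the combinatorics must be checked exhaustively.
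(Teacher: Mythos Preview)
Your overall architecture—a fibration estimate when some small plurigenus is at least $2$, and a finite basket enumeration otherwise—matches the paper's. But there is a genuine logical gap in how you set up the dichotomy.

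You assert that $P_{12}>0$ and $P_{24}>1$ ``guarantee a small integer $m_0 \le 12$ for which $\varphi_{m_0}$ defines a nontrivial rational map.'' This is false: $P_{12} \ge 1$ only says $|12K_X|$ is nonempty, not that $P_{12} \ge 2$; and $P_{24} \ge 2$ only gives $m_0 \le 24$, from which the fibration bound of Theorem~\ref{volume}(iv) yields roughly $K^3 \ge 11/(12\cdot 24 \cdot 25^2) \approx 1/16000$, far too weak. Moreover, the statements $P_{12}>0$ and $P_{24}>1$ are not inputs available for the volume proof; in the paper they are proved \emph{afterwards} (Section~8), using the very basket classification that drives the volume bound.

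The correct split is: either $P_{m_0} \ge 2$ for some $m_0 \le 12$, in which case Theorem~\ref{volume}(iv) already gives $K^3 \ge 11/(12\cdot 12\cdot 13^2) > 1/2213$; or $P_m \le 1$ for all $m \le 12$. In the second branch it is precisely the \emph{smallness} of the $P_m$, not their positivity, that bounds $\chi$: the key inequality $(5.3)$, obtained from non-negativity of the packing invariants $\epsilon_{10}+\epsilon_{12}$, reads
\[
2P_5+3P_6+P_8+P_{10}+P_{12} \;\ge\; \chi + 10P_2 + 4P_3 + P_7 + P_{11} + P_{13} + R,
\]
which under $P_m\le 1$ forces $\chi \le 8$ (and in fact $\le 6$ after a short argument). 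This is the step you have backwards. With $\chi$ bounded one then classifies all admissible baskets $B^{(12)}$ exhaustively (Table~B), computes $K^3$ for each minimal positive basket dominated by them, eliminates those that fail geometric constraints by feeding their computed $m_0$ back into the fibration bounds, and reads off the minimum $1/2660$ (attained in cases 11a and 59a). That enumeration, and the packing/unpacking formalism of Sections~4--5 that makes it finite and tractable, is the substantive content; your sketch treats it as an afterthought, but it is where the number $1/2660$ actually comes from.
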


Yet another approach for 3-folds is to study $\varphi_{m_0}$ for
some positive integer $m_0$ with $\varphi_{m_0}$ non-constant. This
program was first proposed by  Koll\'ar \cite{Kol}, and then
improved by the second author.

\begin{thm}\label{56}\cite[Theorem 0.1]{JPAA} Let $Y$ be a nonsingular projective
3-fold of general type. If $P_{m_0} \ge 2$, then $\varphi_{m}$ is
birational for all $m \ge 5m_0+6$.
\end{thm}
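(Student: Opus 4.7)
The approach is the fibration method. Take a smooth birational modification $\pi \colon X' \to Y$ on which the moving part $M_{m_0}$ of $|m_0 K_{X'}|$ is base-point-free. Since $P_{m_0}(Y) \ge 2$, $|M_{m_0}|$ has positive-dimensional image and, after Stein factorization, gives a fibration $f \colon X' \to W$ onto a normal projective variety with $d := \dim W \in \{1, 2, 3\}$. Let $F$ denote a general fiber. The argument then splits by $d$.

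When $d = 3$, $M_{m_0}$ is big and nef, and Kawamata--Viehweg vanishing quickly yields birationality of $\varphi_m$ for $m$ well below $5m_0 + 6$. When $d = 2$, a general fiber is a smooth curve $C$ of genus at least $2$; $|M_{m_0}|$ separates distinct fibers, and a Riemann--Roch argument on $C$ embeds $C$ once $m$ exceeds a bound on the order of $2g(C) + 1$, again far below $5 m_0 + 6$. The real work lies in $d = 1$: then $W = B$ is a smooth curve, $F$ is a smooth surface of general type, and $M_{m_0} \sim aF$ with $a \ge 1$, so the difference $m_0 K_{X'} - a F$ is effective and $K_{X'} \ge_{\bQ} \frac{1}{m_0} F$ numerically.

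In this main case, separation of distinct general fibers is immediate once $m \ge m_0$. The delicate step is to prove that $|m K_{X'}|$ restricts to a birational sub-system on a general fiber $F$. Since $F|_F = 0$, adjunction gives $K_{X'}|_F = K_F$, and Bombieri's theorem says $|5 K_F|$ is birational on $F$, so the plan is to lift a birational sub-system of $|5 K_F|$ to $X'$ via Kawamata--Viehweg vanishing. The subtlety is that $K_{X'}$ itself is big but not nef; to construct a nef-and-big $\bQ$-divisor $N$ on $X'$ with $N - F$ still nef and big and with $\lceil N \rceil|_F$ dominating a birational sub-system of $|5 K_F|$, one combines multiples of $K_{X'}$ (for bigness) with copies of $\frac{1}{m_0} M_{m_0}$ (for nefness). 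Once such an $N$ is found, the vanishing $H^1(X', K_{X'} + \lceil N \rceil - F) = 0$ yields the surjection $H^0(X', mK_{X'}) \twoheadrightarrow H^0(F, K_F + \lceil N \rceil|_F)$ needed to produce the lifted birational sub-system.

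The main obstacle is the quantitative bookkeeping, and it is exactly this that forces the bound $m \ge 5m_0 + 6$. Each copy of $F$ peeled off via $K_{X'} \ge_{\bQ} \frac{1}{m_0} F$ costs $m_0$ units of $K_{X'}$; Bombieri's theorem contributes the factor $5$; one extra $K_{X'}$ is needed for the outer $K_{X'}$-summand inside Kawamata--Viehweg; round-up losses absorb into a small additive constant; and an additional peel is required so that the lifted sub-system still separates two general points on $F$ rather than merely being nonempty. Arranging these terms to match while preserving nefness, bigness, and the simple-normal-crossing condition on the fractional part is the heart of the proof.
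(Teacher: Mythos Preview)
The paper does not itself prove this theorem; it is quoted from \cite{JPAA} and used as a black box (restated later as Theorem~\ref{5k+6}). However, Section~2 and Propositions~\ref{III}--\ref{I3} develop and apply the same machinery, so your sketch can be measured against that.

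Your architecture---resolve $|m_0K|$, Stein-factorize, split on $\dim B$, identify $\dim B=1$ as the bottleneck, and lift Bombieri's $|5K_F|$ via Kawamata--Viehweg---is the right shape, but two genuine ingredients are missing.

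First, you never reduce to a minimal model. On your $X'\to Y$ with $Y$ merely smooth, neither $K_{X'}$ nor any positive combination $aK_{X'}+bM_{m_0}$ with $a>0$ need be nef, so your proposed vanishing step has no input. The actual move (see~\ref{reduc},~\ref{setup}) is to pick a minimal model $X$ first and work with the nef $\bQ$-divisor $\pi^*(K_X)$ on a resolution $\pi\colon X'\to X$; all the relevant $\bQ$-divisors are then built from $\pi^*(K_X)$, not from $K_{X'}$.

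Second, and this is the real gap: even granting nefness, you cannot pass from ``$|5K_{F_0}|$ is birational on the minimal model $F_0$ of $F$'' to anything about $|mK_{X'}|\,|_F$ without comparing $\pi^*(K_X)|_F$ to $\sigma^*(K_{F_0})$. There is no such comparison for free. The heart of the argument in \cite{JPAA}, reproduced here as Lemma~\ref{beta}, is that $\pi^*(K_X)|_F - \beta_n\,\sigma^*(K_{F_0})$ is $\bQ$-linearly equivalent to an effective divisor for a sequence $\beta_n\to\tfrac{p}{m_0+p}\ge\tfrac{1}{m_0+1}$; its proof requires Viehweg's semipositivity of $f_*\omega_{X'/B}^{\otimes t}$ together with an inductive bootstrap through repeated applications of vanishing. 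Your phrase ``combines multiples of $K_{X'}$ (for bigness) with copies of $\tfrac{1}{m_0}M_{m_0}$ (for nefness)'' is exactly where this lemma belongs, and without it the bookkeeping that is supposed to produce $5m_0+6$ has nothing to count: the constants $5$ and $6$ emerge only once one feeds $\beta\approx\tfrac{1}{m_0+1}$ into the numerical criterion of Theorem~\ref{technical} (with $G$ a suitable multiple of $\sigma^*(K_{F_0})$), not from a naive ``cost $m_0$ per copy of $F$'' accounting.
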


Therefore, it is of fundamental importance to know the non-vanishing
of plurigenera. In fact,  Koll\'ar and  Mori proposed some related
problems (see, for example, the last question of 7.74 in \cite{K-M})
including the following one:
\begin{Prbm}
Does there exist a constant $c''(n)$ such that $P_{m} \ge 2 $ (or
$\geq 1$) for some $m \le c''(n)$ for all nonsingular projective
varieties $Y$ of general type with dimension $n$?
\end{Prbm}

We are able to answer these questions for 3-folds.

\begin{thm}[=Theorems \ref{p12}, \ref{p24}] Let $Y$ be a nonsingular projective
3-fold of general type. Then $ P_{12} \ge 1$ and $P_{24} \ge 2$.
\end{thm}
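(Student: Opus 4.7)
The plan is to reduce to a minimal model and apply Reid's Riemann--Roch. By running the MMP on $V$, we obtain a birational minimal model $X$ with $\bQ$-factorial terminal singularities, and $P_m(V)=P_m(X)$ for every $m\ge 1$. For $m\ge 2$, Kawamata--Viehweg vanishing gives $H^i(X,\OO_X(mK_X))=0$ for $i>0$, so $P_m(X)=\chi(\OO_X(mK_X))$. Reid's formula then expresses this as
$$P_m(X)=\tfrac{1}{12}m(m-1)(2m-1)K_X^3+(1-2m)\chi(\OO_X)+\sum_{Q\in B}l(Q,m),$$
where $B$ is the basket of virtual cyclic quotient singularities of type $\tfrac{1}{r}(1,-1,b)$ attached to $X$ and $l(Q,m)\ge 0$ is the standard combinatorial contribution. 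The task is thus to show that the right-hand side is at least $1$ at $m=12$ and at least $2$ at $m=24$, for every admissible triple $(K_X^3,\chi(\OO_X),B)$.

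Next I would collect the constraints that any such triple must satisfy. The primary ones are: $K_X^3>0$; $P_m(X)\in\ZZ_{\ge 0}$ for every $m\ge 1$, which once $m\le 11$ becomes a potent system of linear inequalities relating $K_X^3$, $\chi(\OO_X)$, and the basket data; upper bounds on $\chi(\OO_X)$ in terms of $p_g(X)$, $q(X)$ and $h^2(\OO_X)$ coming from Serre duality and standard vanishing; and Miyaoka-type inequalities bounding $K_X^3$ in terms of $\chi$ and basket contributions. Together these constrain the data $(K_X^3,\chi(\OO_X),B)$ to a tractable finite collection once $\chi(\OO_X)$ is fixed.

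The argument then splits on $\chi(\OO_X)$. When $\chi(\OO_X)\le 0$ the middle term $(1-2m)\chi(\OO_X)$ of Reid's formula is non-negative at $m=12, 24$, and together with $K_X^3>0$ and $l(Q,m)\ge 0$ this immediately gives $P_{12}\ge 1$ and $P_{24}\ge 2$ with plenty of room. When $\chi(\OO_X)\ge 1$ the middle term is strongly negative and must be compensated by the volume and basket terms. Here I would invoke the packing and geometric-basket machinery developed in the earlier sections of the paper: baskets are organized under a partial order (packing), so that it suffices to evaluate $P_{12}$ and $P_{24}$ on the finitely many minimal baskets compatible with $P_1,\dots,P_{11}\ge 0$ and the given value of $\chi(\OO_X)$. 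Verifying positivity on each such extremal basket finishes the argument.

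The principal obstacle is exactly this combinatorial enumeration. A priori baskets form an infinite set, and the packing procedure must be set up compatibly with the non-negativity constraints on $P_1,\dots,P_{11}$ and the bounds on $\chi(\OO_X)$ so that only finitely many extremal cases remain; checking that no such case yields $P_{12}=0$ or $P_{24}\in\{0,1\}$ will require careful bookkeeping, and a worst-case scenario involving small $K_X^3$ but moderately large $\chi(\OO_X)$ is the most delicate. Once that verification is complete, the two numerical conclusions $P_{12}\ge 1$ and $P_{24}\ge 2$ fall out uniformly from Reid's formula.
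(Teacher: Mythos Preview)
Your outline has the right skeleton (pass to a minimal model, Reid's formula, split on $\chi$), but there is a genuine gap in the $\chi\ge 1$ part, especially for $P_{24}\ge 2$.

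The paper does \emph{not} obtain $P_{24}\ge 2$ by evaluating Reid's formula on a finite list of extremal baskets constrained only by $P_1,\dots,P_{11}\ge 0$. The finiteness argument requires the much stronger hypothesis $P_m\le 1$ for all $m\le 12$; it is this \emph{upper} bound (together with the key inequality $(5.3)$) that forces $\chi\le 8$ and makes the classification of $\mathscr{B}^{(12)}(B)$ in Table~B possible. Under that hypothesis one checks $P_{24}(B_{\min})\ge 2$ for every minimal positive basket and concludes via Lemma~\ref{p}. But this leaves the complementary case $P_{m_0}\ge 2$ for some $m_0\le 12$, and here the paper abandons basket combinatorics entirely: it studies the rational map $\varphi_{m_0}$, its induced fibration, and applies the geometric nonvanishing result Proposition~\ref{nonvanishing} to get $P_m\ge 2$ for all $m\ge 2m_0+3$ (using also that $\chi\ge 2$ rules out type~$I_n$). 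Your proposal treats the whole problem as pure basket bookkeeping and never invokes this geometric input; without it the argument does not close.

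A secondary issue: your finiteness claim (``finitely many minimal baskets compatible with $P_1,\dots,P_{11}\ge 0$ and the given $\chi$'') is not correct as stated. Nonnegativity of low plurigenera alone does not bound the basket; the paper needs the packing hierarchy $\mathscr{B}^{(0)}\succ\cdots\succ\mathscr{B}^{(12)}\succ B$, the explicit formulas for the coefficients $n^k_{b,r}$ in terms of $\chi$ and $P_2,\dots,P_{13}$, and the assumption $P_m\le 1$ to pin things down. For $P_{12}\ge 1$ specifically, the paper avoids classification altogether and derives a contradiction from $P_{12}=0$ directly via the $\epsilon_n$ identities; this is sharper than what your outline suggests.
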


An interesting application of our method is that we are able to
prove Iano-Fletcher's conjecture (see \cite[15.1, 15.2]{C-R}) as the
following:

\begin{thm}\label{cod=1}
There are exactly 23 families of quasi-smooth weighted hyper-surface
3-folds $X$ with only terminal quotient singularities and
$\omega_X\cong \OO_X(1)$.
\end{thm}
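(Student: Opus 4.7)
Let $X = X_d \subset \bP(a_0, a_1, a_2, a_3, a_4)$ be quasi-smooth with only terminal quotient singularities and $\omega_X \cong \OO_X(1)$. The first step is to convert the hypothesis into numerical data: adjunction for weighted hypersurfaces gives $\omega_X \cong \OO_X(d - \sum_i a_i)$, so the assumption $\omega_X \cong \OO_X(1)$ forces
\[
d = 1 + \sum_{i=0}^{4} a_i.
\]
In particular $K_X = \OO_X(1)$ is ample, so $X$ is a minimal 3-fold of general type with $K_X^3 = d/\prod_i a_i$. The short exact sequence $0 \to \OO_{\bP(a)}(m-d) \to \OO_{\bP(a)}(m) \to \OO_X(m) \to 0$ yields the combinatorial formula
\[
P_m(X) = \dim_{\bC} R_m - \dim_{\bC} R_{m-d},
\]
where $R = \bC[x_0,\ldots,x_4]$ is the weighted polynomial ring, graded by $\deg x_i = a_i$.

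\textbf{Basket of singularities.} The singularities of $X$ are precisely its intersections with the coordinate strata of $\bP(a)$ along which several weights share a common factor. Quasi-smoothness together with the terminal hypothesis fixes, via Reid's classification, the local type $\tfrac{1}{r}(1,b,r-b)$ with $\gcd(r,b)=1$ at each such point, and therefore determines a (geometric) basket $B$ in the sense of the paper's Sections 4--7. I would first compile a preliminary list of admissible baskets compatible with Fletcher's quasi-smoothness criterion.

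\textbf{Matching two expressions for $P_m$.} The central idea is to play off the combinatorial formula above against Reid's Riemann--Roch,
\[
P_m(X) = \tfrac{1}{12}m(m-1)(2m-1)K_X^3 + (2m-1)\chi(\OO_X) + \sum_{Q \in B} l(Q,m),
\]
which is valid for $m \geq 2$ by Kawamata--Viehweg vanishing. Evaluating at small $m$ turns the identity into a system of Diophantine constraints relating $(a_0,\ldots,a_4;d)$ to $(K^3,\chi(\OO_X),B)$. Combining these with the non-vanishing results $P_{12} \geq 1$ and $P_{24} \geq 2$ from the paper, and with the sharp basket-theoretic bounds on $K^3$ and $\chi(\OO_X)$ developed earlier, one obtains effective upper bounds on $\sum a_i$ and on the multiplicities in $B$.

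\textbf{Finite enumeration and verification.} The final step is a bounded case analysis: for every admissible basket $B$, enumerate the candidate weight sequences $(a_0,\ldots,a_4)$ that simultaneously satisfy $d = 1 + \sum a_i$, Reid--RR compatibility, and Fletcher's criterion for quasi-smoothness of a hypersurface of degree $d$; then check each surviving candidate against the classical list of $23$ families. The main obstacle will be controlling the combinatorial explosion of this enumeration. Weaker estimates on $K^3$ or on the basket structure would leave dozens of borderline numerical candidates to exclude by ad hoc geometric arguments; the sharp bounds from the paper's Sections 4--7 are exactly what cuts the search down to a tractable finite list and allows one to conclude that no $24$-th family exists.
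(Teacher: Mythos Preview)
Your proposal takes a substantially different route from the paper, and there is a genuine gap in the mechanism you describe for bounding the weights.

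The paper's argument is far more direct. It does \emph{not} compute the basket of $X$ or match Reid's Riemann--Roch against the combinatorial plurigenus formula. Instead it uses only two ingredients: the explicit volume $K_X^3 = d/(a_0 a_1 a_2 a_3 a_4)$, and the volume lower bounds already established---chiefly $K^3 \geq \tfrac{1}{420}$ for $\chi=1$ from Theorem~\ref{volume_chi=1}, supplemented by the type-dependent bounds of Theorem~\ref{volume} whenever a small weight forces $P_{m_0} \geq 2$ for some small $m_0$. Since Fletcher had already enumerated all such hypersurfaces with $\sum a_i \leq 100$, the paper need only rule out $d \geq 101$. This is done by a short case analysis on $(a_0,a_1)$: if both are moderately large, the product $\prod a_i$ grows too fast for $K^3 \geq \tfrac{1}{420}$ to hold; if $a_0$ or $a_1$ is small, then counting monomials gives $P_{m} \geq 2$ for some small $m$, and the sharper bound from Theorem~\ref{volume} yields the contradiction. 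Fletcher's quasi-smoothness criterion enters structurally, forcing either $a_4 \mid d$ or $a_4 = a_m + 1$ for some $m$, and this organises the case split.

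The gap in your outline is that matching the combinatorial $P_m$ against Reid--RR is tautological: both formulas are valid for $X$, so they agree identically, and the ``Diophantine system'' you extract is automatically satisfied by every quasi-smooth terminal hypersurface. Likewise, $P_{12} \geq 1$ and $P_{24} \geq 2$ hold for \emph{all} minimal 3-folds of general type, so invoking them imposes no restriction on the weights. The only input from Sections 4--7 that does real work here is the numerical volume lower bound itself; the basket classification is used to \emph{prove} that bound, not re-applied to each hypersurface. You also omit the crucial reduction to $d \geq 101$ via Fletcher's existing list, which is precisely what makes the residual case analysis finite and short. (Minor aside: your Reid--RR formula has the wrong sign on the $\chi$ term; it should be $-(2m-1)\chi(\OO_X)$.)
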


We now explain the idea for the proofs. The key new ingredient is,
in some sense,  the {\it classification of Reid's baskets of
singularities}. Recall that for a 3-fold with canonical
singularities, Reid \cite{C-3f} introduced the notion of baskets of
singularities to compute the plurigenera. The upshot is that given a
minimal 3-fold $X$, Reid's ``virtue'' baskets are uniquely
determined by $X$. Thus to determine those baskets is a very
important step.

We will introduce the notion of {\it packing of baskets} on certain
given 3-fold, a  partial ordering between baskets, which allows us
to study baskets in a systematic way. In fact, there is a more
refined framework which tells that for any $m \ge 3$, the set of
baskets with given datum $(\chi,P_2,P_3,...,P_m)$ is finite.

We have discovered a key and new inequality (see inequality (5.3)):
$$2P_5+3P_6+P_8+P_{10}+P_{12} \ge \chi +10
P_2+4P_3+P_7+P_{11}+P_{13}+R,$$ with $R \geq 0$. Therefore if
$P_{m_0} \ge 2$ for some $m_0 \le 12$, then one can study
$\varphi_{m_0}$ and get effective results by Theorem \ref{56}. When
$P_{m} \leq 1$ for all $m \le 12$, the above inequality shows that
$\chi \leq 8$. So the set of baskets with these plurigenera are
finite. It is thus possible for us to classify those baskets
completely, which is basically what we did.

This article is organized as the following. In Section 2, we set up
some notations and generalities for the study of $\varphi_m$. In
Section 3, we study $\Vol$ and $P_m$ when $P_{m_0} \ge 2$ for some
$m_0>0$ using the technique developed in Section 2. The main new
ingredient starts from Section 4. We introduce the notion of packing
in Section 4. We also describe the structure between baskets by
using packings. Section 5 contains the description of baskets with
given datum $(\chi,P_2,P_3,...,P_m)$. We remark that it gives rise
to various inequalities.

Section 6 is the classification of baskets with $\chi=1$ and $P_m
\le 1$ for $m \le 6$. Together with the result in Section 3, we
prove that $\Vol \geq \frac{1}{420}$, \footnote{The authors were
informed by Lei Zhu that she obtained the same lower bound
independently.} which is sharp. Section 7 presents the list of
classification of baskets with $2 \le \chi \le 8$ and $P_m \le 1$
for $m \le 12$. Similarly, we get $\Vol \ge \frac{1}{2660}$ for
general 3-folds. With all these preparations, we prove our main
theorems in Section 8, including plurigenera and the birationality
for all 3-folds of general type. This is possible because for a
3-fold of general type, either $P_{m_0} \ge 2$ for some $m_0 \le
12$, or it's classified in Sections 6 or 7.  As a direct
application, we prove in Section 9 a conjecture of Iano-Fletcher
regarding hyper-surface 3-folds in weighted projective spaces.

There are some more applications of the techniques developed here that we will pursue in a future work.

Throughout, we work over the complex number field
$\bC$. We prefer to use $\sim$ to denote the linear equivalence
and $\equiv$ means numerical equivalence.
\bigskip

\noindent {\bf Acknowledgments.} We are indebted to Jiun-Cheng Chen,
Christopher Hacon, J\'anos Koll\'ar, Hui-Wen Lin and Chin-Lung Wang
for useful conversations and comments on this subject. We would like
to thank Pei-Yu Tsai and Hou-Yi Chen for helping us verifying datum.
After the first version was finished, we were kindly informed of Tie
Luo's paper \cite{Luo}, where there had been already considerable
calculations for a special case.

\section{\bf Technical preparations}
\begin{defn}\label{rat-map} Let $L$  be a divisor on a
nonsingular projective variety $Y$ with $n_L:=h^0(Y,\OO_Y(L))-1\geq
1$. Pick a basis $s_0,\cdots, s_{n_L}\in H^0(Y,\OO_Y(L))$. For any
point $x\in Y$, we define a rational map
$\Phi_{|L|}:Y\dashrightarrow \bP^{n_L}$ by sending $x$ to $[s_0(x),
\cdots, s_{n_L}(x)]$. $\Phi_{|L|}$ is usually said to be {\it the
rational map associated to} $|L|$.
\end{defn}

First of all we list the birationality principles which we will
frequently use in our arguments:

\begin{setup}\label{BP}{\bf Birationality principles.} Let $Y$ be a
nonsingular projective variety on which there are two divisors $D$
and $M$. Suppose $|M|$ is base point free. Take the Stein
factorization of $\Phi_{|M|}$:
$$Y\overset{f}\longrightarrow W\longrightarrow \bP^{h^0(Y,M)-1}$$
where $f$ is a fibration onto a normal variety $W$. Then the
rational map $\Phi_{|D+M|}$ is birational onto its image if one of
the following conditions is satisfied:
\begin{itemize}
\item [(i)] (\cite[Lemma 2]{T}) $\dim\Phi_{|M|}(Y)\geq 2$,
$|D|\neq \emptyset$ and $\Phi_{|D+M|}|_S$ is birational for a
general member $S$ of $|M|$.

\item [(ii)] (\cite[\S2.1]{MPCPS})
$\dim\Phi_{|M|}(Y)=1$, $\Phi_{|D+M|}$ can separate different general
fibers of $f$ and $\Phi_{|D+M|}|_F$ is birational for a general
fiber $F$ of $f$.
\end{itemize}
\end{setup}

\begin{rem}\label{separate} For the condition \ref{BP} (ii), one knows
that $\Phi_{|D+M|}$ can separate different general fibers of $f$
whenever $\dim\Phi_{|M|}(Y)=1$, $W$ is a rational curve and $D$ is
an effective divisor. In fact, since $|M|$ can separate different
fibers of $f$, so can $|D+M|$. We do not care too much about the
situation when $W$ is an irrational curve, since the results and
technique in \cite{JC-H} are sufficient for our purpose here.
\end{rem}

\begin{setup}\label{inv}{\bf Invariants of the fibration.} Let $V$
be a smooth projective 3-fold and $f:V\longrightarrow B$ a fibration
onto a nonsingular curve $B$. There is a spectral sequence,
$$E_2^{p,q}:=H^p(B,R^qf_*\omega_V)\Longrightarrow
E^n:=H^n(V,\omega_V).$$  By Serre duality and  \cite[Corollary 3.2,
Proposition 7.6]{Kol}, one has the torsion-freeness of the sheaves
$R^i f_* \omega_V$ and the following formulae:
$$h^2({\mathcal O}_V)=h^1(B,f_*\omega_V)+h^0(B,R^1f_*\omega_V),$$
$$q(V):=h^1({\mathcal O}_V)=g(B)+h^1(B,R^1f_*\omega_V).$$
\end{setup}

\begin{setup}\label{reduc}{\bf Reduction step}. Let $V$ be a
nonsingular projective 3-fold of general type. By the 3-dimensional
MMP (see for instance \cite{KMM,K-M,Reid83}), we can pick a minimal
model $X$ of $V$ and allow $X$ to have at worst $\bQ$-factorial
terminal singularities. Denote by $K_X$ a canonical divisor of $X$.
We recall the following birational invariants.
\begin{eqnarray*}
P_m(V)&:=&h^0(V,\OO_V(mK_V))=h^0(X,\OO_X(mK_X))=:P_m(X);\\
\chi(\OO_V)&=&\chi(\OO_X);\\
\Vol(V)&:=&\limsup \frac{3!}{m^3}h^0(V,mK_V)=K_X^3.
\end{eqnarray*}
Note  that the rational maps $\Phi_{|mK_V|}$ and
$\varphi_m:=\Phi_{|mK_X|}$ are birationally equivalent.
Sometimes we simply
denote by $K^3$ the canonical volume of $V$ and $X$.

{}From this point of view, it suffices to prove our main theorem
only for minimal 3-folds $X$.
\end{setup}
\medskip

($\heartsuit$) Throughout, $X$ will be an arbitrary minimal 3-fold
of general type with at worst $\bQ$-factorial terminal
singularities. The integer $m_0$ always denotes a positive (most
likely, minimal) integer with
$$P_{m_0}=P_{m_0}(X):=\dim_{\bC} H^0(X,\OO_X(m_0K_X))\geq 2$$
where $K_X$ is a canonical divisor of $X$. By minimal, we mean that
$K_X$ is nef. Define $r(X)$ to be the minimal positive integer such
that $r(X)K_X$ is Cartier. It is already known that $r(X)$ is
uniquely determined by the birational equivalence class of $X$. So
it is a birational invariant within the category of 3-folds having
at worst canonical singularities.


\begin{setup}\label{setup}{\bf Set up for $\varphi_{m_0}$.} We
study the $m_0$-canonical map of $X$:
$$\varphi_{m_0}:X\dashrightarrow \bP^{P_{m_0}-1}$$ which is
only a rational map. First of all we fix an effective Weil divisor
$K_{m_0}\sim m_0K_X$. By Hironaka's big theorem, we can take
successive blow-ups $\pi: X'\rightarrow X$ such that:
\begin{itemize}
\item [(i)] $X'$ is smooth; \item [(ii)] the movable part of
$|m_0K_{X'}|$ is base point free; \item [(iii)] the support of the
union of $\pi^*(K_{m_0})$ and the exceptional divisors is of
simple normal crossings.
\end{itemize}

Set $g_{m_0}:=\varphi_{m_0}\circ\pi$. Then $g_{m_0}$ is a morphism
by assumption. Let $X'\overset{f}\longrightarrow
B\overset{s}\longrightarrow W'$ be the Stein factorization of
$g_{m_0}$ with $W'$ the image of $X'$ through $g_{m_0}$. In
summary, we have the following commutative diagram:\medskip

\begin{picture}(50,80) \put(100,0){$X$} \put(100,60){$X'$}
\put(170,0){$W'$} \put(170,60){$B$} \put(112,65){\vector(1,0){53}}
\put(106,55){\vector(0,-1){41}} \put(175,55){\vector(0,-1){43}}
\put(114,58){\vector(1,-1){49}} \multiput(112,2.6)(5,0){11}{-}
\put(162,5){\vector(1,0){4}} \put(133,70){$f$} \put(180,30){$s$}
\put(95,30){$\pi$}
\put(130,-5){$\varphi_{m_0}$}\put(136,40){$g_{m_0}$}
\end{picture}
\bigskip

Let us recall the definition of $\pi^*(K_X)$. We can write
$r(X)K_{X'}=\pi^*(r(X)K_X)+E_{\pi}$ where $E_{\pi}$ is a sum of
exceptional divisors. We define
$$\pi^*(K_X):=K_{X'}-\frac{1}{r(X)}E_{\pi}.$$ So, whenever we take
the round-up of $m\pi^*(K_X)$, we always have
$$\roundup{m\pi^*(K_X)}\leq mK_{X'}$$ for any integer $m>0$. Denote by
$M_{m_0}$ the movable part of $|m_0K_{X'}|$. One has
$$m_0\pi^*(K_X)=M_{m_0}+E_{m_0}'$$
for an effective ${\mathbb Q}$-divisor $E_{m_0}'$ because
$$h^0(X',\rounddown{m_0\pi^*(K_X)})=h^0(X',\roundup{m_0\pi^*(K_X)})=P_{m_0}(X')=P_{m_0}(X).$$
On the other hand, one can write $m_0K_{X'}=
m_0\pi^*(K_X)+E_{m_0}$ where $E_{m_0}$ is an effective ${\mathbb
Q}$-divisor as a ${\mathbb Q}$-sum of distinct exceptional
divisors. Thus $m_0K_{X'}=M_{m_0}+Z_{m_0}$ where
$Z_{m_0}:=E_{m_0}'+E_{m_0}$ is exactly the fixed part of
$|m_0K_{X'}|$.

If $\dim(B)\geq 2$, a general member $S$ of $|M_{m_0}|$ is a
nonsingular projective surface of general type by Bertini's
theorem and by the easy addition formula for Kodaira dimension.

If $\dim(B)=1$, a general fiber $S$ of $f$ is an irreducible
smooth projective surface of general type, still by the easy
addition formula for Kodaira dimension. We may write
$$M_{m_0}=\underset{i=1}{\overset{a_{m_0}}\sum}S_i\equiv
a_{m_0}S$$ where the $S_i$ is a smooth fiber of $f$ for all $i$ and
$a_{m_0}\ge P_{m_0}(X)-1$ by considering the degree of a
non-degenerate curve in a projective space.

Define the positive integer
$$p=\begin{cases}   1 &\text{if}\ \dim (B)\geq 2\\
 a_{m_0} &\text{if}\ \dim(B)=1.
 \end{cases}$$
\begin{defn} In both cases regarding to $\dim(B)$, we call $S$ {\it a generic irreducible element
of} the linear system $|M_{m_0}|$. Denote by $\sigma:
S\longrightarrow S_0$ the blow-down onto the smooth minimal model
$S_0$.

By abuse of concepts we define {\it a generic irreducible element
of any given movable linear system} on any projective variety in a
similar way.
\end{defn}

\begin{setup}\label{type} { \bf Type of $f$}. To simplify our
statements, we say that the fibration $f$ induced from
$\varphi_{m_0}$ is of type III ( resp. II, I) if $ \dim B=3$ (resp.
$2,1$). In the case of type I, we distinguish into subcases as the
following:
$$
\begin{cases} I_q & \text{if}\  g(B)>0,\\
            I_3 & \text{if} \ P_{m_0} \ge 3,\\
            I_p& \text{if}\  g(B)=0,\ p_g(S) >0,\\
            I_n& \text{if}\  g(B)=0,\ p_g(S) =0.\\
\end{cases} $$
\end{setup}

\end{setup}

\begin{setup}\label{assumptions}{\bf Assumptions}. Keep the same
setup as in \ref{setup}. Let $m$ be a positive integer. We need some
assumptions to estimate $K^3$ and to study $\varphi_m$.
\begin{itemize}
\item [(1)] Take a generic irreducible element $S$ of $|M_{m_0}|$.
Assume
 that there is a base point free complete linear system $|G|$ on $S$.
Denote by $C$ a generic irreducible element of $|G|$. \item [(2)]
Assume there is a rational number $\beta>0$ such that
$\pi^*(K_X)|_S-\beta C$ is numerically equivalent to an effective
$\bQ$-divisor on $S$. \item [(3)] The linear system $|mK_{X'}|$
separates different generic irreducible elements of $|M_{m_0}|$
(namely, $\Phi_{|mK_{X'}|}(S')\neq \Phi_{|mK_{X'}|}(S'')$ for two
different irreducible elements $S'$, $S''$ of $|M_{m_0}|$). \item
[(4)] The linear system $|mK_{X'}|_{|S}$ on $S$ (as a sub-linear
system of $|{mK_{X'}}_{|S}|$) separates different generic
irreducible elements of $|G|$. Or sufficiently, the complete linear
system
$$|K_{S}
+\roundup{(m-1)\pi^*(K_X)-S-\frac{1}{p}E_{m_0}'}_{|S}|$$ separates
different generic irreducible elements of $|G|$.
\end{itemize}
\medskip

Set the following quantities:
\begin{eqnarray*}
&&\xi:=(\pi^*(K_X)\cdot C)_{X'};\\
&&\alpha:=(m-1-\frac{m_0}{p}-\frac{1}{\beta})\xi;\\
&&\alpha_0:=\roundup{\alpha}.
\end{eqnarray*}

Under Assumptions \ref{assumptions} (1), (2), clearly one has
$$K^3\geq \frac{p}{m_0}\pi^*(K_X)^2\cdot S\geq
\frac{p\beta}{m_0}(\pi^*(K_X)\cdot C)=\frac{p \beta}{m_0} \xi.
\eqno{(2.1)}$$

So it suffices to estimate the rational number
$\xi:=(\pi^*(K_X)\cdot C)_{X'}$ in order to obtain the lower bound
of $K^3$.
\end{setup}

\begin{setup} Suppose that $P_{m_0} \ge 2$ and Assumption \ref{assumptions}(1),
\ref{assumptions}(2) hold. Let $m$ be an integer such that $m >
1+\frac{m_0}{p}+ \frac{1}{\beta}$. Let $$\mathcal{
L}_m:=(m-1)\pi^*(K_X)-\frac{1}{p}E_{m_0}'  $$ be a $\mathbb{
Q}$-divisor on $X'$.  Clearly, we have
$$|K_{X'}+\roundup{\mathcal{L}_{m}}|\subset |mK_{X'}|.$$
Noting that $\mathcal{L}_m-S\equiv (m-1-\frac{m_0}{p})\pi^*(K_X)$
is nef and big, then the  Kawamata-Viehweg vanishing theorem
(\cite{KaV,V}) yields the surjective map
$$H^0(X', K_{X'}+\roundup{ \mathcal{L}_m}) \to H^0(S,
(K_{X'}+\roundup{ \mathcal{L}_m })_{|S}). \eqno(2.2)$$ Since $S$ is
a generic irreducible element of a free linear system, one has
$\roundup{*}|_S \geq \roundup{*_{|S} } $ for any divisor $*$. It
follows that
$$(K_{X'}+\roundup{ \mathcal{L}_m })_{|S} \geq {K_{X'}}_{|S}+\roundup{
{\mathcal{L}_m}_{|S} }=K_S+\roundup{(\mathcal{L}_m-S)_{|S}}.
\eqno(2.3)$$

By Assumption \ref{assumptions}(2), there is an effective ${\mathbb
Q}$-divisor $H$ on $S$ such that
$\frac{1}{\beta}\pi^*(K_X)|_{S}\equiv C+H$. We now consider
$$\mathcal{D}_m:= (\mathcal{L}_m -S)_{|S}-H$$ on $S$.
The divisor $\mathcal{D}_m-C\equiv (m-1-
\frac{m_0}{p}-\frac{1}{\beta})\pi^*(K_X)|_{S}$ is nef and big.
Thus the Kawamata-Viehweg vanishing theorem again gives the
following surjective map
$$H^0(S,
K_{S}+\roundup{\mathcal{D}_m}) \longrightarrow  H^0(C, K_C + D),
\eqno (2.4)$$ where $D:=\roundup{\mathcal{D}_m-C}|_C$ is a divisor
on $C$. Because $C$ is a generic irreducible element of a free
linear system, we have $D \ge \roundup{(\mathcal{D}_m-C)_{|C}}$
similarly. A simple calculation gives $$ \deg(D) \ge
(\mathcal{D}_m-C)\cdot C = (m-1-
\frac{m_0}{p}-\frac{1}{\beta})\xi= \alpha \eqno(2.5)
 $$
\end{setup}

\begin{prop} \label{easy-nv}
Keep the notation as above. Then $P_m(X) \ge 2$ for all integer $m
> 1+\frac{m_0}{p}+ \frac{1}{\beta}$.
\end{prop}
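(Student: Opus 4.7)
Since $|K_{X'}+\roundup{\mathcal{L}_m}|\subseteq|mK_{X'}|$, it suffices to prove $h^0(X',K_{X'}+\roundup{\mathcal{L}_m})\ge 2$. My plan is to combine the two Kawamata-Viehweg surjections (2.2) and (2.4) with the inclusion (2.3) to reduce the question to a nonvanishing statement on the curve $C$, while extracting a second independent section from the kernel of the restriction to $S$.

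By the short exact sequence along $S$ on $X'$, together with the vanishing behind (2.2),
\[
h^0(X',K_{X'}+\roundup{\mathcal{L}_m})=h^0(X',K_{X'}+\roundup{\mathcal{L}_m-S})+h^0(S,(K_{X'}+\roundup{\mathcal{L}_m})|_S).
\]
For the restriction summand, the inclusion (2.3), the nonnegativity of $H$, and the surjection (2.4) give
\[
h^0(S,(K_{X'}+\roundup{\mathcal{L}_m})|_S)\ge h^0(S,K_S+\roundup{\mathcal{D}_m})\ge h^0(C,K_C+D).
\]
The hypothesis $m>1+m_0/p+1/\beta$ forces $\alpha>0$, hence by (2.5) $\deg D\ge \roundup{\alpha}\ge 1$. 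Since $C$ is a generic irreducible element of the base-point-free $|G|$ on the surface of general type $S$, adjunction forces $g(C)\ge 1$ in the relevant cases, so Riemann-Roch on $C$ yields $h^0(C,K_C+D)\ge 1$.

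For the kernel summand, the $\mathbb{Q}$-divisor $\mathcal{L}_m-S\equiv (m-1-m_0/p)\pi^*(K_X)$ remains nef and big, so $K_{X'}+\roundup{\mathcal{L}_m-S}$ again fits into the Kawamata-Viehweg framework, and a parallel chain of reductions (this time landing on a divisor on $C$ of strictly larger degree $\alpha+\xi/\beta$) produces a further nonzero section. Adding the two positive contributions gives $h^0(X',K_{X'}+\roundup{\mathcal{L}_m})\ge 2$, which proves $P_m(X)\ge 2$. The main obstacle I expect is the curve-level nonvanishing $h^0(C,K_C+D)\ge 1$ when $\alpha$ is barely positive; everything else is a formal application of the vanishing theorem and the restriction long exact sequence, while this step is exactly where the genericity of $C$ in the free linear system $|G|$ and the hypothesis $m>1+m_0/p+1/\beta$ are used.
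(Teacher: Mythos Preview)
Your proof is more complicated than necessary, and the complication stems from underestimating the genus of $C$. You only claim $g(C)\ge 1$, but in fact $g(C)\ge 2$: the curve $C$ is a generic irreducible member of a base-point-free linear system on the surface $S$ of general type, so $C^2\ge 0$ and $K_S\cdot C>0$ (since $K_S$ is big and $C$ moves), hence $2g(C)-2=(K_S+C)\cdot C>0$. This is exactly the ``fact that $g(C)\ge 2$'' invoked in the paper.

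With $g(C)\ge 2$ in hand, the paper's argument is a single step on $C$: since $m>1+m_0/p+1/\beta$ gives $\alpha>0$ and hence $\deg D\ge 1$, the divisor $K_C+D$ has degree $>2g(C)-2$, so Riemann--Roch gives
\[
h^0(C,K_C+D)=\deg(K_C+D)+1-g(C)\ge g(C)-1+1\ge 2.
\]
Then (2.4), (2.3), and (2.2) push this inequality back up to $h^0(X',K_{X'}+\roundup{\mathcal{L}_m})\ge 2$, and the inclusion into $|mK_{X'}|$ finishes. No kernel term is needed.

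Your proposed kernel argument is also not solid as written. After subtracting $S$ you have $K_{X'}+\roundup{\mathcal{L}_m}-S$, but there is no second generic $S$ available to restrict to when $p=1$ (i.e.\ in types II and III), so the ``parallel chain of reductions'' does not go through uniformly. You could try to argue non-vanishing of this kernel term directly, but that is a separate (and unnecessary) problem once you use $g(C)\ge 2$.
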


\begin{proof}
This is clear from the inclusion
$|K_{X'}+\roundup{\mathcal{L}_{m}}|\subset |mK_{X'}|$, surjections
$(2.2)$ and $(2.4)$, together with $(2.3)$, $(2.5)$ and the fact
that $g(C) \ge 2$.
\end{proof}

\begin{thm}\label{technical}  Let $m>0$ be an integer satisfying
Assumptions \ref{assumptions}(1), \ref{assumptions}(2). The
inequality
$$\xi\geq \frac{\deg(K_C)+\alpha_0}{m}$$
holds if one of the following conditions is satisfied:
\begin{itemize}
\item [(i)] $\alpha>1$; \item [(ii)] $\alpha>0$ and $C$ is an even
divisor on $S$.
\end{itemize}
\medskip

Furthermore if Assumptions \ref{assumptions} (1) through (4) are
satisfied. The map $\varphi_m$ is birational onto its image when
one of the following conditions is satisfied:
\begin{itemize}
\item [(i)] $\alpha > 2$;
 \item [(ii)] $\alpha \geq 2$ and $C$ is
not a hyper-elliptic curve on $S$.
\end{itemize}
\end{thm}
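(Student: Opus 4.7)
The overall plan is to exploit the surjections $(2.2)$ and $(2.4)$ together with the inclusion $|K_{X'}+\roundup{\mathcal{L}_m}|\subset|mK_{X'}|$ in order to push the analysis down to the curve $C$, where the relevant object is the complete linear system $|K_C+D|$ of degree at least $2g(C)-2+\alpha_0$.

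For the first assertion (the lower bound on $\xi$), I would proceed as follows. Under the hypothesis $\alpha>1$ one has $\alpha_0\ge 2$, so $|K_C+D|$ is base-point-free on $C$ by Riemann--Roch. Picking a divisor $D_0\in|K_C+D|$ and lifting it successively via $(2.4)$ and $(2.2)$ produces an effective divisor $D_{X'}\in|K_{X'}+\roundup{\mathcal{L}_m}|\subset|mK_{X'}|$. Its restriction to $C$ recovers $D_0$ plus a fixed effective correction coming from $(2.3)$, from the effective term $H$ built into $\mathcal{D}_m$, and from the round-up discrepancies. Since $\pi^*(mK_X)\le mK_{X'}$ and the movable part of $|mK_{X'}|$ restricted to $C$ is controlled by $m\pi^*(K_X)\cdot C=m\xi$, comparing degrees yields $m\xi\ge \deg D_0\ge 2g(C)-2+\alpha_0$, which is precisely the desired inequality. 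The case $\alpha>0$ with $C$ an even divisor on $S$ uses a parity refinement: writing $C\sim 2C'$ allows one to replace $C$ by $\tfrac12 C$ in the Kawamata--Viehweg input, which forces the effective degree contribution to jump in steps of $2$ and thus compensates for the weaker hypothesis $\alpha>0$.

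For the second assertion (birationality of $\varphi_m$), I would invoke the birationality principles \ref{BP}. Assumption \ref{assumptions}(3) separates generic elements $S$ of $|M_{m_0}|$ and Assumption \ref{assumptions}(4) separates generic elements $C$ of $|G|$ inside $S$, so only birationality of the restriction $\varphi_m|_C$ remains. The composed surjections supply $|K_C+D|$ as a subsystem of the restricted linear system on $C$. If $\alpha>2$ then $\deg D\ge 3$, so $\deg(K_C+D-E)\ge 2g(C)-1$ for every effective divisor $E$ of degree $2$, and $|K_C+D|$ is very ample on $C$, yielding an embedding. If instead $\alpha\ge 2$ and $C$ is non-hyperelliptic, then $\deg D\ge 2$ and $h^0(D)=1$, and a standard Riemann--Roch calculation shows that $|K_C+D|$ still separates two general points, which is already enough for birationality of the induced map to its image.

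The main obstacle will be the bookkeeping in the first assertion: replacing the a priori estimate on $(K_{X'}+\roundup{\mathcal{L}_m})\cdot C$ by an inequality in the purely birational invariant $\xi=\pi^*(K_X)\cdot C$ requires identifying which fixed components of $|mK_{X'}|$ actually appear in the restriction to $C$, and carefully absorbing the round-up discrepancies. The parity gain in the even case is the other delicate ingredient, since picking up the extra unit in $\alpha_0$ relies on a half-integer manipulation that only goes through when $C$ genuinely decomposes as $2C'$ in $\mathrm{Pic}(S)$.
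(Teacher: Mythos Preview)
Your overall architecture and the birationality half match the paper's proof. For the inequality on $\xi$, the bookkeeping you worry about is handled by tracking \emph{movable parts} rather than individual lifted sections: one has $M_m \le m\pi^*(K_X)$ (same $h^0$), and the surjections $(2.2)$ and $(2.4)$ force the restricted movable parts to dominate, via Lemma~2.7 of \cite{MPCPS}, so that $m\pi^*(K_X)|_C \ge M_m|_C \ge \cdots \ge K_C+D$ once $|K_C+D|$ is shown to be base-point-free. This yields $m\xi \ge \deg(K_C)+\alpha_0$ directly; your lifting-and-restricting picture is morally the same but does not by itself control the exceptional discrepancy between $mK_{X'}\cdot C$ and $m\pi^*(K_X)\cdot C$.

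The even-divisor case is where your proposal has a genuine gap. You suggest ``replacing $C$ by $\tfrac12 C$ in the Kawamata--Viehweg input'' and a ``half-integer manipulation'', but no modification of the vanishing setup is needed, and it is unclear what such a modification would even mean (the target of $(2.4)$ is $H^0(C,\cdot)$, not $H^0(C',\cdot)$). The actual argument is a one-line parity observation: $D=\roundup{\mathcal{D}_m-C}|_C$ is the restriction to $C$ of an \emph{integral} divisor on $S$, so its degree $\roundup{\mathcal{D}_m-C}\cdot C = 2\bigl(\roundup{\mathcal{D}_m-C}\cdot C'\bigr)$ is automatically even whenever $C\sim 2C'$. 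Combined with $\deg D\ge\alpha>0$ this forces $\deg D\ge 2$, hence $|K_C+D|$ is base-point-free and the proof proceeds exactly as in case~(i). The evenness of $C$ enters only through this intersection-number parity, not through any change to the $\bQ$-divisor machinery.
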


\begin{rem} \label{weak} In particular
the inequality $\xi\geq \frac{\deg(K_C)+\alpha_0}{m}$ in Theorem
\ref{technical} implies $ \xi \ge \frac{\deg(K_C)}{1+
\frac{m_0}{p}+\frac{1}{\beta}}$ since, whenever $m$ is big enough so
that $\alpha>1$,
$$ m \xi \ge \deg(K_C)+ \alpha_0 \ge \deg(K_C)+\alpha=\deg(K_C)+
(m-1-\frac{m_0}{p}-\frac{1}{\beta}) \xi.$$
\end{rem}

\begin{proof}[{\bf Proof of Theorem \ref{technical}}]
We keep the notation as above. Denote by $|M_m|$ the movable part
of $|mK_{X'}|$ and by $|M'_m|$ the movable part of
$|K_{X'}+\roundup{\mathcal{L}_m}|$. Clearly, one has $M_m \ge M'_m$
by definition.

Let $|N_m|$ be the movable part of
$|(K_{X'}+\roundup{\mathcal{L}_m})_{|S}|$. Applying Lemma 2.7 of
\cite{MPCPS} to the surjective map (2.2), we have $$M'_m|_{S}\ge
N_m.$$

We now claim that $|K_C+D|$ is base point free. To see this, if
$\alpha>1$, then $\deg(D)\geq \alpha_0\geq 2$. Thus $|K_C+D|$ is
free.  If $C$ is an even divisor, then $\deg(D)$ is even since
$\roundup{\mathcal{D}_m-C}$ is an integral divisor on $S$. If
moreover $\alpha >0$, then $ \deg(D) \ge 2$.
 Therefore $|K_C+D|$ is base point free. Hence in both cases, the movable part of $|K_C+D|$
is itself.

Let $|N'_m|$ be the movable part of
$|K_{S}+\roundup{\mathcal{D}_m}|$.   Applying Lemma 2.7 of
\cite{MPCPS} to surjective map (2.4), we have $$ N'_m|_C \ge
K_C+D.$$ Note that  $N_m \ge N'_m$. Combining all these gives
$$ m \pi^* K_X \cdot C \ge (N'_m \cdot C)_S \ge 2g(C)-2 +
\deg(D).$$ Therefore, $$\xi\geq \frac{\deg(K_C)+\alpha_0}{m}.$$

  Next we prove the birationality. Assumption
\ref{assumptions}.(3) says that $|mK_{X'}|$ can separate different
irreducible elements of $|M_{m_0}|$. The birationality principle
\ref{BP} permits us only to verify the birationality of
$|mK_{X'}|_{|S}$ on a generic irreducible element $S$ of
$|M_{m_0}|$.

By Assumptions \ref{assumptions}.(1) and \ref{assumptions}.(3),
there is a base point free linear system $|G|$ on $S$ and the linear
system $|mK_{X'}|_{|S}$ on $S$ separates different generic
irreducible elements of $|G|$. The birationality principle \ref{BP}
reduces the problem to verify the birationality of
$(|mK_{X'}|_{|S})_{|C}$ on a generic irreducible element $C$ of
$|G|$. In fact we will prove this for a sub-linear system of
$(|mK_{X'}|_{|S})_{|C}$.


{} From the above discussion, we only need to verify that
$|K_C+D|$ gives a birational map onto the image of $C$. This is
the case whenever either $\deg(D)\geq 3$ or $C$ is
non-hyperelliptic and $\deg(D)\geq 2$. This completes the proof.
\end{proof}

The following lemma has already appeared in a couple of unpublished
preprints of the second author. In order to make this paper more
self-contained we would like to collect the proof here. In fact, the
special case that $p_g \ge 2$ has been published in \cite[Lemma
3.7]{Chen-Zhang}.

\begin{lem}\label{beta} Keep the same notation as in \ref{setup}, \ref{assumptions} and
Theorem \ref{technical}. Assume $B=\mathbb{P}^1$. Let
$f:X'\longrightarrow \mathbb{P}^1$ be an induced fibration of
$\varphi_{m_0}$. Then one can find a sequence of rational numbers
$\{\beta_n\}$ with $\underset{n\mapsto +\infty}\lim \beta_n =
\frac{p}{m_0+p}$ such that $\pi^*(K_X)|_S-\beta_n\sigma^*(K_{S_0})$
is ${\bQ}$-linearly equivalent to an effective ${\bQ}$-divisor
$H_n$.
\end{lem}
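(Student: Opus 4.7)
The plan is to build the sequence $\{\beta_n\}$ inductively, starting from an easy base case coming from the fibration structure, and boosting $\beta_n$ at each step via Kawamata--Viehweg vanishing applied to a divisor of the shape studied in Setup \ref{setup}.

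First, I would extract the key algebraic identity from $B = \mathbb{P}^1$: since any two fibers of $f$ are linearly equivalent, and $M_{m_0} = \sum_{i=1}^{p}S_i$ is a sum of smooth fibers, one has $M_{m_0} \sim p S$ as a \emph{linear} (not merely numerical) equivalence. Combined with $m_0\pi^*(K_X) = M_{m_0} + E_{m_0}'$, this upgrades to $m_0\pi^*(K_X) \sim_{\mathbb{Q}} pS + E_{m_0}'$ of $\mathbb{Q}$-divisors on $X'$. Restricting to a general fiber $S$ chosen disjoint from $S_1,\dots,S_p$, the term $pS|_S$ vanishes, so $\pi^*(K_X)|_S \sim_{\mathbb{Q}} \frac{1}{m_0}E_{m_0}'|_S$ is effective. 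This gives the trivial starting decomposition with $\beta_0 = 0$ and $H_0 = \frac{1}{m_0}E_{m_0}'|_S$.

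Next I would construct the inductive boost. Given $\pi^*(K_X)|_S \sim_{\mathbb{Q}} \beta_n \sigma^*(K_{S_0}) + H_n$ with $H_n$ effective, I would fix a large integer $k$ (so that $|k K_{S_0}|$ is basepoint-free on the minimal model, by Bombieri) and apply Kawamata--Viehweg vanishing to a $\mathbb{Q}$-divisor of the form $K_{X'}+\lceil \mathcal{L}\rceil$ with $\mathcal{L}-S \equiv (k-1-\tfrac{m_0}{p})\pi^*(K_X)$ nef and big, exactly as in the surjection (2.2) of the setup. A general section on $S$ corresponding to $\sigma^*(|kK_{S_0}|)$, plus $kE_\sigma$ along the $\sigma$-exceptional locus, then lifts to an effective divisor on $X'$ of class $\sim k K_{X'}$ (up to an exceptional correction in $\frac{1}{r(X)}E_\pi$). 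Using the relation $m_0\pi^*(K_X) \sim_{\mathbb{Q}} pS + E_{m_0}'$ to trade each $m_0$-unit of $\pi^*(K_X)$ in the lifted divisor for $p$ fibers plus an effective exceptional piece, then restricting to $S$ again (so the fibers disappear), I would obtain an effective $\mathbb{Q}$-divisor on $S$ dominating an increased multiple of $\sigma^*(K_{S_0})$, producing $\beta_{n+1} > \beta_n$ and a new effective $H_{n+1}$.

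Finally, the limit $\frac{p}{m_0+p}$ arises as the stable balance of the iteration: each lifting-and-substitution cycle converts $m_0$ units of $\pi^*(K_X)$ into $p$ units of fiber class (hence, after one more lift, into $p$ units of $\sigma^*(K_{S_0})$ content on $S$), so if one chains $k$ such cycles the update takes the form $\beta_n = \frac{kp}{km_0 + kp + O(1)} \to \frac{p}{m_0+p}$ as $k \to \infty$. The hard part will be the bookkeeping of the two exceptional corrections $E_\pi$ (from $\pi:X' \to X$) and $E_\sigma$ (from $\sigma:S \to S_0$), together with the possibly non-exceptional components of $E_{m_0}'|_S$: I will need to arrange the round-up $\lceil\mathcal{L}\rceil$ and the coefficient of $E_{m_0}'$ inside $\mathcal{L}$ so that at every iteration these defect terms get absorbed into the effective divisor $H_{n+1}$ with the correct sign, which becomes increasingly delicate as $\beta_n$ approaches the limit and the slack shrinks.
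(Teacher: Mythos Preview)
Your overall strategy---boost $\beta_n$ inductively via Kawamata--Viehweg vanishing toward the limit $\frac{p}{m_0+p}$---is the same as the paper's, but two essential ingredients are missing or mis-described.

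First, the base case. The paper does \emph{not} start from $\beta_0=0$: it invokes Viehweg's semi-positivity of $f_*\omega_{X'/B}^{t_0p}$ (hence global generation on $B=\mathbb{P}^1$) together with the inclusion $f_*\omega_{X'/B}^{t_0p}\hookrightarrow f_*\omega_{X'}^{t_0p+2t_0m_0}$ to obtain the initial inequality $(t_0p+2t_0m_0)\pi^*(K_X)|_S\geq t_0p\,\sigma^*(K_{S_0})$, i.e.\ $\beta_0=\frac{p}{p+2m_0}$. A nontrivial start is needed because the inductive step compares \emph{movable parts} and relies on $|(b_n+1)\sigma^*(K_{S_0})|$ being base-point-free (Bombieri/Reider), which fails for small $b_n$; beginning from $b_0=0$ leaves the first several steps uncontrolled.

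Second, the inductive mechanism. You cannot lift a section of $k\sigma^*(K_{S_0})$ in one shot and then ``trade'' back: the target of the surjection analogous to (2.2) is $H^0(S,K_S+\roundup{(\mathcal{L}-S)|_S})$, and there is no reason for $|k\sigma^*(K_{S_0})|$ to embed into it unless $\beta_n$ is already near $1$. The paper instead adds \emph{one fiber at a time}. From $a_n\pi^*(K_X)|_S\geq b_n\sigma^*(K_{S_0})$, vanishing gives a surjection $H^0(K_{X'}+\roundup{a_n\pi^*(K_X)}+S)\to H^0(K_S+\roundup{a_n\pi^*(K_X)}|_S)\supset H^0((b_n+1)\sigma^*(K_{S_0}))$, and Lemma~2.7 of \cite{MPCPS} (movable-part comparison) yields $M'_{a_n+1}|_S\geq(b_n+1)\sigma^*(K_{S_0})$. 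Iterating $p$ times and then absorbing $pS\leq M_{m_0}\leq m_0K_{X'}$ gives the recursion $a_{n+1}=a_n+p+m_0$, $b_{n+1}=b_n+p$. The single unit gained per step comes from $K_S\geq\sigma^*(K_{S_0})$, and working with movable parts (rather than with $\mathbb{Q}$-linear equivalence to effective divisors) is exactly what makes the exceptional bookkeeping you worry about disappear.
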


\begin{proof} We use Koll\'ar's technique in \cite{Kol}.
One has $\mathcal {O}_{B}(p)\hookrightarrow {f}_*\omega_{X'}^{m_0}$.
The inclusion relation between divisors gives the inclusion of
sheaves:
$${f}_*\omega_{X'/B}^{t_0p}\hookrightarrow
{f}_*\omega_{X'}^{t_0p+2t_0m_0}$$ for any big integer $t_0$.

For any positive integer $k$, we know in \ref{setup} that $M_k$
denotes the movable part of $|kK_{X'}|$. Note that
${f}_*\omega_{X'/B}^{t_0p}$ is generated by global sections since it
is semi-positive according to Viehweg (\cite{VV}). So any local
section of ${f}_*\omega_{X'/B}^{t_0p}$ can be extended to be a
global one. On the other hand, $|t_0p\sigma^*(K_{S_0})|$ is base
point free and is exactly the movable part of $|t_0pK_S|$ by
Bombieri \cite{Bom} or Reider \cite{Reider}. Clearly one has the
following relation:
$$(t_0p+2t_0m_0)\pi^*(K_X)|_S\geq M_{t_0p+2t_0m_0}|_S\geq t_0p\sigma^*(K_{S_0}).$$
Set $a_0:=t_0p+2t_0m_0$ and $b_0:=t_0p$. Then there is an
effective $\mathbb{Q}$-divisor $I_0'$ on $S$ such that
$$a_0\pi^*(K_X)|_S=_{\bQ} b_0\sigma^*(K_{S_0})+I_0'.$$
Thus $\pi^*(K_X)|_S =_{\bQ} \frac{b_0}{a_0}\sigma^*(K_{S_0})+I_0$
with $I_0=\frac{1}{a_0}I_0'$ still an effective $\bQ$-divisor.
\medskip

{\bf Case 1}. First we consider the case $p\ge 2$.

We use an induction on the basis of the numbers $a_0$ and $b_0$.
Suppose that we have defined $a_l$ and $b_l$ such that the
following is satisfied with $l=n$:
$$a_{n}\pi^*(K_X)|_S \geq b_{n}\sigma^*(K_{S_0}).$$
We will define $a_{l+1}$ and $b_{l+1}$ inductively such that the
above inequality is satisfied with $l = n+1$. By assumption we
know that $a_n\pi^*(K_X)$ supports on a divisor with normal
crossings. Then the Kawamata-Viehweg vanishing theorem implies the
surjective map:
$$H^0(K_{X'}+\roundup{a_n\pi^*(K_X)}+S)\longrightarrow H^0(S, K_S+
\roundup{a_n\pi^*(K_X)}|_S).$$ One sees the following relations:
\begin{eqnarray*}
|K_{X'}+\roundup{a_n\pi^*(K_X)}+S||_S&=&|K_S+\roundup{a_n\pi^*(K_X)}|_S|\\
&\supset& |K_S+b_n\sigma^*(K_{S_0})|\\
&\supset& |(b_n+1)\sigma^*(K_{S_0})|.
\end{eqnarray*}
Denote by $M_{a_n+1}'$ the movable part of $|(a_n+1)K_{X'}+S|$.
Applying Lemma 2.7 of \cite{MPCPS}, one gets $M_{a_n+1}'|_S\geq
(b_n+1)\sigma^*(K_{S_0}).$ Re-modifying our original $\pi$, if
necessary, such that $|M_{a_n+1}'|$ is base point free. In
particular, $M_{a_n+1}'$ is nef. Since $X$ is of general type
$|mK_X|$ gives a birational map whenever $m$ is big enough. Thus
we see that $M_{a_n+1}'$ is big as we fix a very big $t_0$ in
advance.

Now the Kawamata-Viehweg vanishing theorem again gives
\begin{eqnarray*}
|K_{X'}+M_{a_n+1}'+S||_S&=&|K_S+M_{a_n+1}'|_S|\\
&\supset& |K_S+(b_n+1)\sigma^*(K_{S_0})|\\
&\supset& |(b_n+2)\sigma^*(K_{S_0})|.
\end{eqnarray*}

We may repeat the above procedure. Denote by $M_{a_n+t}'$ the
movable part of $|K_{X'}+M_{a_n+t-1}'+S|$ for $t\ge 2$. For the
same reason, we may assume $|M_{a_n+t}'|$ to be base point free.
Inductively one has:
$$M_{a_n+2}'|_S\geq (b_n+2)\sigma^*(K_{S_0})$$
and in general
$$M_{a_n+t}'|_S\geq (b_n+t)\sigma^*(K_{S_0})$$

Just take $t=p$ and set $a_{n+1}:=a_n+p+m_0$ and $b_{n+1}=b_n+p$.
Noting that
$$|K_{X'}+M_{a_n+p-1}'+S|\subset |(a_n+p+m_0)K_{X'}|$$
and applying Lemma 2.7 of \cite{MPCPS} again, one has
$$a_{n+1}\pi^*(K_X)|_S\geq M_{a_n+p+m_0}|_S\geq M'_{a_n+p}|_S\geq
b_{n+1} \sigma^*(K_{S_0}).$$  Set $\beta_n := \frac{b_{n}}{a_{n}}.$
Clearly $\underset{{n\mapsto +\infty}}\lim \beta_n =
\frac{p}{m_0+p}$. We have proved the lemma when $p\geq 2$.
\medskip

{\bf Case 2}. The lemma at the case $p=1$ can be proved similarly
with a simpler induction. We omit the proof and leave it to readers
as an exercise.
\end{proof}



\section{\bf Lower bound of the volume and non-vanishing}
In this section, we are going to utilize the general method
developed in Section 2.
\begin{thm}\label{volume} Let $V$ be a nonsingular projective
3-fold of general type with $P_{m_0} \ge 2$. Then
\begin{itemize}
\item [(i)] $\Vol(V)\geq \frac{10}{m_0^2(3m_0+2)}$ for type III.

\item [(ii)] $\Vol(V)\geq \frac{4}{m_0^2(3m_0+2)}$  for type II.

\item [(iii)] $\Vol(V) \geq \frac{36}{5m_0(m_0+2)^2}$ for type
I$_3$.

\item [(iv)] $\Vol(V)\geq \frac{11}{12m_0(m_0+1)^2}$ in general.
\end{itemize}
\end{thm}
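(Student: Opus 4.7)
The plan is to prove the theorem by a case-by-case analysis according to the four possible types of the induced fibration $f:X'\to B$ from $\varphi_{m_0}$ (see \ref{type}). In each case we will choose the data $(S,C,p,\beta)$ required by Assumptions \ref{assumptions}(1)--(2), apply the $\xi$-estimate of Theorem \ref{technical} (or its weaker form in Remark \ref{weak}), and then combine it with the volume inequality
$$K^3\geq \frac{p\beta}{m_0}\xi$$
of (2.1) to read off the asserted lower bound.

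For types III and II ($\dim B\geq 2$) we have $p=1$ and a generic $S\in |M_{m_0}|$ is a smooth projective surface of general type. On $S$ the restricted free system $|M_{m_0}|_S|$ provides a base point free $|G|$ whose generic member $C$ satisfies $m_0\pi^*(K_X)|_S\geq C$, so $\beta=\tfrac{1}{m_0}$ is admissible. The difference between the two cases lies in $g(C)$: in type III, where $M_{m_0}|_S$ is birational, $C$ has larger genus than in type II, where $M_{m_0}|_S$ defines a fibration of $S$ onto a curve. Combining (2.1) with Theorem \ref{technical} for an optimally chosen integer $m$ yields the bounds (i) and (ii).

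In type I$_3$ a generic $S$ is a smooth fiber of $f$ and $p=P_{m_0}-1\geq 2$. Lemma \ref{beta} supplies a sequence of admissible parameters $\beta_n\to \tfrac{p}{m_0+p}$, with $\sigma^*(K_{S_0})$ playing the role of the auxiliary divisor; a concrete curve $C$ is then obtained from a base point free pencil on $S_0$ with controlled intersection against $\sigma^*(K_{S_0})$. Feeding this into (2.1) and Theorem \ref{technical} gives (iii) in the extremal case $p=2$. The general type I case (iv), which subsumes I$_q$, I$_p$ and I$_n$, is the worst one: here $p=1$, Lemma \ref{beta} provides $\beta_n\to \tfrac{1}{m_0+1}$, and the same computation produces the universal bound $\tfrac{11}{12m_0(m_0+1)^2}$.

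The main technical obstacle is the extraction of the sharp numerical constants $10,\,4,\,\tfrac{36}{5},\,\tfrac{11}{12}$. The naive estimate $\xi\geq \tfrac{2g(C)-2}{1+m_0/p+1/\beta}$ of Remark \ref{weak} falls short of these, so one must instead use the full Theorem \ref{technical} and pick the auxiliary integer $m$ so that the ceiling $\alpha_0=\roundup{\alpha}$ strictly exceeds $\alpha$, thereby gaining a fractional improvement in $\xi$. A second subtlety is that Lemma \ref{beta} only produces $\beta$ as a limit of rationals, so the arguments in cases (iii) and (iv) must be formulated so as to take $n\to \infty$ on the final inequality (or equivalently, to choose $n$ large enough depending on the required precision).
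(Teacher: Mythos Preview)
Your plan is essentially the paper's proof: case analysis by type, $(p,\beta)=(1,\tfrac{1}{m_0})$ with $G=S|_S$ in types III and II, and Lemma~\ref{beta} feeding into (2.1) and Theorem~\ref{technical} for type I. Two details are off and would derail the numerics if taken literally.

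First, in the type I cases the paper does \emph{not} take $C$ from a pencil on $S_0$. It sets $G:=4\sigma^*(K_{S_0})$, which is always base point free by Bombieri; a generic $C\in|G|$ then has $C^2\geq 16$ and $\deg(K_C)\geq 18$. Consequently the $\beta$ appearing in Assumption~\ref{assumptions}(2) is one quarter of what Lemma~\ref{beta} produces: $\beta\to \tfrac{p}{4(m_0+p)}$, not $\tfrac{p}{m_0+p}$. With these values, part (iii) drops out of Remark~\ref{weak} alone (no bootstrap with a clever $m$ is needed), while in (iv) the paper takes $m=6m_0+6$ so that $\alpha>3$, giving $\xi\geq \tfrac{18+4}{6(m_0+1)}=\tfrac{11}{3(m_0+1)}$ and hence the constant $\tfrac{11}{12}$.

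Second, part (iv) is stated ``in general'', so one must also check that the bounds (i)--(iii) already imply (iv) in their respective cases, and---more importantly---that type I$_q$ is covered. Lemma~\ref{beta} assumes $B=\mathbb{P}^1$, so it does not apply when $g(B)>0$; the paper handles I$_q$ separately by quoting $K^3\geq \tfrac{1}{22}$ from \cite{Jungkai-Meng}.
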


\begin{proof} Take a minimal model $X$ of $V$. We study $|mK_X|$ on $X$.
Keep the same setup as in \ref{setup}. Then $\Vol(V)=K_X^3$ as we
have known.
\medskip

{\bf Part (i).} For type III, i.e. $\dim (B)=3$, we know that $p=1$
by definition. In this case we pick $S\sim M_{m_0}$ and that $|S|$
gives a generically finite morphism. Set $G:=S|_S$. Then $|G|$ is
base point free and $\varphi_{|G|}$ gives a generically finite map.
So a generic irreducible element $C$ of $|G|$ is a smooth curve.

If $\varphi_{|G|}$ gives a birational map, then $\dim
\varphi_{|G|}(C)=1$ for a general member $C$. The Riemann-Roch and
Clifford's theorem on $C$ says $C^2=G\cdot C\geq 2$. If
$\varphi_{|G|}$ gives a generically finite map of degree $\geq 2$,
since $h^0(S,G)\geq h^0(X',S)-1\geq 3$, \cite[Lemma 2.2]{JMSJ3}
gives $C^2\geq 2h^0(S,G)-4\geq 2$. Anyway we have $C^2\geq 2$. So
$\deg(K_C)=(K_S+C)\cdot C>2C^2\geq 4$. We see $\deg(K_C)\geq 6$
because it is even.

One may take $\beta=\frac{1}{m_0}$ since $m_0\pi^*(K_X)|_S\geq C$.
Now if we take  $m \gg 0$ such that $\alpha>1$ then Theorem
\ref{technical} gives:
$$m\xi\geq \deg(K_C)+(m-1-m_0-\frac{1}{\beta})\xi.$$
This gives $\xi\geq \frac{6}{2m_0+1}$. Take $m=3m_0+2$. Then
$\alpha=(m-2m_0-1)\xi>3$. So by Theorem \ref{technical} again,
$\xi\geq \frac{10}{3m_0+2}$. It follows that $K^3\geq
\frac{10}{(3m_0+2)m_0^2}$.

\medskip

{\bf Part (ii).} If $\dim(B)=2$, we pick $S \sim M_{m_0}$ and
$|G|:=|S|_S|$ is composed with a pencil of curves.

A generic irreducible element $C$ of $|G|$ is a smooth curve of
genus $\geq 2$, so $\deg(K_C)\geq 2$. Furthermore we have
$h^0(S,G)\geq h^0(X',S)-1\geq 2$. So $G\equiv \widetilde{a} C$ for
$\widetilde{a}\geq 1$. This means $m_0\pi^*(K_X)|_S\geq
S|_S\geq_{\text{num}} C$. So we may take $\beta = \frac{1}{m_0}$.

Now take a  $m \gg 0$. Remark \ref{weak} gives $\xi\geq
\frac{2}{2m_0+1}$. Take $m=3m_0+2$. Then $\alpha>1$. One gets
$\xi\geq \frac{4}{3m_0+2}$ by Theorem \ref{technical}. So $K^3\geq
\frac{4}{(3m_0+2)m_0^2}$.

{\bf Part (iii).}
Take $S \sim M_{m_0}$ and $G:=4\sigma^*(K_{S_0})$, where $S_0$ is
the minimal model of $S$. The surface theory tells us that $|G|$ is
base point free and a generic irreducible element $C$ of $|G|$ is a
smooth curve. Because
$$\deg(K_C)=(K_S+C)\cdot C\geq (\pi^*(K_X)|_S+C)\cdot C>C^2\geq 16,$$
again we see $\deg(K_C)\geq 18$.

We know that $\pi^*(K_X)|_S-\widetilde{\beta}_n\sigma^*(K_{S_0})$ is
numerically equivalent to an effective $\bQ$-divisor for a rational
number sequence $\{\widetilde{\beta}_n\}$ with
$\widetilde{\beta}_n\mapsto \frac{p}{p+m_0}\geq \frac{2}{m_0+2}$.
Take $\beta_n:=\frac{1}{4}\widetilde{\beta}_n$. Then
$\pi^*(K_X)|_S-\beta_n C$ is numerically equivalent to an effective
$\bQ$-divisor. We can take a rational number $\beta =
\frac{1}{2(m_0+2)} -\delta$ with $0 < \delta \ll 1$.

If we take  $m \gg 0$, then $\xi\geq
\frac{18}{1+\frac{m_0}{2}+\frac{1}{\beta}}$ by Remark \ref{weak}.
So $\xi\geq\frac{36}{5(m_0+2)}$ by letting $\delta$ go to $0$. One
gets $K^3\geq \frac{36}{5m_0(m_0+2)^2} \ge
\frac{4}{(3m_0+2)m_0^2}$.


\medskip

{\bf Part (iv).} It remains to  study the case $\dim (B)=1$. When
$q>0$, one has $K^3\geq \frac{1}{22}$ by \cite{Jungkai-Meng} and one
can easily verify the inequality $K^3\geq
\frac{11}{12m_0(m_0+1)^2}$. So we may assume $q=0$. So $B$ is a
rational curve. We set $G:=4\sigma^*(K_{S_0})$.  Again we see
$\deg(K_C)\geq 18$. We know that
$\pi^*(K_X)|_S-\widetilde{\beta}_n\sigma^*(K_{S_0})$ is numerically
equivalent to an effective $\bQ$-divisor for a rational number
sequence $\{\widetilde{\beta}_n\}$ with $\widetilde{\beta}_n\mapsto
\frac{p}{p+m_0}\geq \frac{1}{m_0+1}$. Take
$\beta_n:=\frac{1}{4}\widetilde{\beta}_n$. Then
$\pi^*(K_X)|_S-\beta_n C$ is numerically equivalent to an effective
$\bQ$-divisor. We know $\beta_n\mapsto \frac{1}{4m_0+4}$ whenever
$p=1$. We thus take $\beta=\frac{1}{4m_0+4}-\delta$ for some $0 <
\delta \ll 1$.

When $m \gg 0$, Remark \ref{weak} gives $\xi\geq \frac{18}{5m_0+5}$.
Take $m=6m_0+6$. Then $\alpha >3$ and  Theorem \ref{technical} gives
$\xi\geq \frac{11}{3m_0+3}$. So $K^3\geq \frac{11}{12m_0(m_0+1)^2}$.
\end{proof}

\begin{setup}{\bf Refinement of lower bounds of $K^3$.}
Indeed, for small $m_0$, we can improve the lower bound of the
canonical volume. We study some special cases that occur in our
paper.

For example, in type III, assume $m_0=11$. Then $\xi\geq
\frac{6}{23}$ by taking $m \gg 0$. Next take $m=27$. By Theorem
\ref{technical}, we have $\xi\geq \frac{8}{27}$. So inequality
(2.1) gives $K^3\geq \frac{8}{3267}$.

Let's assume  $m_0=8$ for type II as another example. Then
$\beta\geq \frac{1}{8}$. One has already $\xi\geq \frac{2}{17}$ by
taking $m \gg 0$. Take $m=26$. Then $\alpha\geq \frac{18}{17}>1$.
We get $\xi\geq \frac{2}{13}$. Take $m=24$. Then $\alpha>1$. One
gets $\xi\geq \frac{1}{6}$. So inequality (2.1) gives $K^3\geq
\frac{1}{384}$.

A patient reader should have no difficulty to check the following
table on the lower bound of $K^3$ for small $m_0$. We tag it as:
\medskip

\centerline{\bf Table A}
\smallskip

{\tiny
\begin{tabular}{c|ccccccccccc}
 $m_0$ & 2 & 3 & 4 & 5 &6 & 7& 8 & 9 & 10 & 11 &12\\
\hline
     III &$\frac{1}{3}$ &$\frac{8}{81}$ &$\frac{1}{22}$ &$\frac{8}{325}$ & $\frac{1}{72}$ &
     $\frac{4}{441}$&
     $\frac{1}{160}$ & $\frac{4}{891}$ & $\frac{2}{625}$ & $\frac{8}{3267}$ & $\frac{1}{522}$
     \\ \hline
     II &$\frac{1}{8}$ &$\frac{2}{45}$ &$\frac{1}{52}$ &$\frac{1}{100}$ & $\frac{1}{162}$ & $\frac{4}{1029}$
     &$\frac{1}{384}$ & $\frac{2}{1053}$ & $\frac{1}{725}$ & $\frac{1}{968}$ & $\frac{1}{1224}$
     \\ \hline
     I$_3$ &$\frac{1}{8}$ &$\frac{2}{45}$ &$\frac{1}{52}$ &$\frac{1}{100}$ & $\frac{1}{162}$ & $\frac{4}{1029}$
     &$\frac{1}{384}$ & $\frac{2}{1053}$ & $\frac{1}{725}$ & $\frac{1}{968}$ & $\frac{1}{1224}$
     \\ \hline
     general &$\frac{5}{96}$ &$\frac{5}{264}$ &$\frac{1}{108}$ &$\frac{1}{192}$ & $\frac{5}{1554}$ & $\frac{5}{2408}$
     &$\frac{5}{3456}$ & $\frac{1}{954}$ & $\frac{1}{1276}$ & $\frac{5}{8448}$ & $\frac{5}{10764}$    \\
\end{tabular}
}
\end{setup}
\medskip

We now study the non-vanishing problem of plurigenera.

\begin{defn} Let $X$ be a minimal projective 3-fold of
general type. Define $m_1:=m_1(X)$ to be the smallest positive
integer such that $P_{m}(X)>0$ for all $m\geq m_1(X)$. Clearly
$m_1(X)$ is a birational invariant of $X$. One knows
$m_1(X)<+\infty$ by Matsusaka's big theorem.
\end{defn}

 In fact, by Proposition \ref{easy-nv},
one has $m_1 \le 5m_0+6$ already. We will need the following easy
lemma to get a better bound.

\begin{lem}\label{>0} Let $S$ be a nonsingular projective surface
of general type. Denote by $\sigma:S\longrightarrow S_0$ the
blow-down onto its minimal model $S_0$. Let $Q$ be a $\bQ$-divisor
on $S$. Then $h^0(S,K_S+\roundup{Q})\geq 2$ under one of the
following conditions:
\begin{itemize}
\item[(i)] $p_g(S)>0$, $Q\equiv \sigma^*(K_{S_0})+Q_1$ for some
nef and big $\bQ$-divisor $Q_1$ on $S$;

\item[(ii)] $p_g(S)=0$, $Q\equiv 2\sigma^*(K_{S_0})+Q_2$ for some
nef and big $\bQ$-divisor $Q_2$ on $S$;
\end{itemize}
\end{lem}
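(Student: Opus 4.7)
The plan is to combine Kawamata--Viehweg vanishing with Riemann--Roch on $S$ and Bombieri's classical lower bound on the plurigenera of minimal surfaces of general type.

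First I would verify that $Q$ is nef and big in both cases: $\sigma^*(K_{S_0})$ is nef and big because $S_0$ is minimal of general type and hence $K_{S_0}^2 \geq 1$, and the perturbations $Q_1, Q_2$ are nef and big by hypothesis. After an auxiliary log resolution so that the fractional part of $Q$ acquires simple normal crossings support (which does not change $h^0(K_S + \roundup{Q})$), Kawamata--Viehweg vanishing yields
\[
H^i(S, K_S + \roundup{Q}) = 0 \quad\text{for } i > 0,
\]
and hence, by Riemann--Roch on the smooth surface,
\[
h^0(S, K_S + \roundup{Q}) = \chi(\OO_S(K_S + \roundup{Q})) = \chi(\OO_S) + \frac{1}{2}\,\roundup{Q}\cdot(\roundup{Q} + K_S).
\]

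Writing $K_S = \sigma^*(K_{S_0}) + E$ with $E \geq 0$ exceptional, and $\roundup{Q} = Q + N$ with $N \geq 0$ the fractional correction, expansion in case (i) gives
\[
\roundup{Q}\cdot(\roundup{Q}+K_S) = 2K_{S_0}^2 + Q_1^2 + (\text{non-negative nef--effective cross terms}) + N(N+E),
\]
where the residual term $N(N+E)$ is controlled by exploiting the strict positivity $Q_1^2>0$ (bigness) together with the freedom to choose the log resolution so that the fractional support is transverse to the exceptional locus. The outcome is $\chi(\OO_S(K_S+\roundup{Q})) \geq \chi(\OO_S) + K_{S_0}^2 = P_2(S_0)$. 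An analogous computation in case (ii) (using $2\sigma^*(K_{S_0})$ in place of $\sigma^*(K_{S_0})$) yields $\chi \geq \chi(\OO_S) + 3K_{S_0}^2 = P_3(S_0)$.

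Finally, Bombieri's classical bound $P_2(S_0) \geq 2$ for a minimal surface of general type closes case (i). In case (ii), $p_g(S) = 0$ combined with the Miyaoka--Yau inequality $\chi(\OO_S) \geq 1$ forces $q(S) = 0$ and $\chi(\OO_S) = 1$, so $P_3(S_0) = 1 + 3K_{S_0}^2 \geq 4$. Either way $h^0(S, K_S + \roundup{Q}) \geq 2$. The main obstacle is the intersection-number bookkeeping in the second paragraph: since $K_S$ is not nef on the non-minimal $S$, the pairing $N \cdot E$ of the fractional rounding against the exceptional divisor need not be non-negative, and the argument relies on the bigness $Q_i^2 > 0$ together with a careful choice of log resolution to absorb any negative contribution.
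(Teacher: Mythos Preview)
The gap you flag yourself at the end is real, and your proposed patch does not close it. Write $\roundup{Q}=Q+N$ and $K_S=\sigma^*K_{S_0}+E$; after discarding the genuinely non-negative nef--effective cross terms, your expansion in case~(i) leaves
\[
\roundup{Q}\cdot(\roundup{Q}+K_S)\ \ge\ 2K_{S_0}^2+Q_1^2+N^2+N\cdot E,
\]
and you need $Q_1^2+N^2+N\cdot E\ge 0$ to reach $\chi\ge P_2(S_0)$. But $Q_1$ is only assumed nef and big, so $Q_1^2$ may be an arbitrarily small positive rational, while $N$ (the fractional part of the given $Q$) can be supported on $(-1)$- or $(-2)$-curves with coefficients close to~$1$, making $N^2+N\cdot E$ a fixed negative number that $Q_1^2$ cannot absorb. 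Passing to a further log resolution does not help: it introduces new exceptional curves into both $E$ and (via pullback) $N$, and typically worsens the negativity; in any case it does not compute the original $h^0(S,K_S+\roundup{Q})$. At bottom the difficulty is that neither $\roundup{Q}$ nor $K_S$ is nef on a non-minimal $S$, so there is no positivity available in a surface self-intersection.

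The paper sidesteps the self-intersection of $N$ altogether. It fixes an effective curve $R_0\in|l\sigma^*K_{S_0}|$ (taking $l=1$ when $p_g>0$, $l=2$ when $p_g=0$, so that such an $R_0$ exists), applies Kawamata--Viehweg to the nef and big class $Q-R_0\equiv Q_l$ to kill $H^1(S,K_S+\roundup{Q}-R_0)$, and restricts to the $1$-connected curve $R_0$. The resulting surjection onto $H^0(R_0,K_{R_0}+G)$ with $G=(\roundup{Q}-R_0)|_{R_0}$ reduces everything to Riemann--Roch on a curve, where only the \emph{linear} quantity $\deg G\ge (Q-R_0)\cdot R_0=Q_l\cdot R_0>0$ enters, and $p_a(R_0)\ge 2$ finishes. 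No term of the form $N^2$ ever appears.
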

\begin{proof} First of all $h^0(S, 2K_S)=h^0(S,2K_{S_0})>0$ by the
Riemann-Roch theorem on $S$, which is a surface of general type. Fix
an effective divisor $R_0\sim l\sigma^*(K_{S_0})$, where $l=1,2$ in
cases (i) and (ii) respectively. Then $R_0$ is nef and big and $R_0$
is 1-connected by \cite[Lemma 2.6]{Maga}. The Kawamata-Viehweg
vanishing theorem says $H^1(S, K_S+\roundup{Q}-R_0)=0$ which gives
the surjective map:
$$H^0(S, K_S+\roundup{Q})\longrightarrow
H^0(R_0,K_{R_0}+G_{R_0})$$ where $G_{R_0}:=(\roundup{Q}-R_0)_{|R_0}$
with $\deg(G_{R_0})\geq (Q-R_0)R_0=Q_l\cdot R_0>0$. The
1-connectedness of $R_0$ allows us to utilize the Riemann-Roch (see
Chapter II, \cite{BPV}) as in the usual way. Note that $S$ is of
general type. So $K_{S_0}^2>0$ and
$\deg(K_{R_0})=2p_a(R_0)-2=(K_S+R_0)R_0\geq 2$. By the Riemann-Roch
theorem on the 1-connected curve $R_0$, we have
$$h^0(R_0,K_{R_0}+G_{R_0})\geq \deg(K_{R_0}+G_{R_0})+1-p_a(R_0)\geq p_a(R_0)
\geq 2.$$
Hence $h^0(S,K_S+\roundup{Q})\geq 2$.
\end{proof}

\begin{prop}\label{nonvanishing} Let $X$ be a minimal projective
3-fold of general type with $P_{m_0}\geq 2$. Keep the same
notation as in \ref{setup}. Then $m_1$ has an upper bound under
each of the following situations:
\begin{itemize}
\item[(i)] $P_{m}\geq 2$ for all $m\geq 2m_0$ for type III;

 \item[(ii)] $P_{m}\geq 2$ for all $m\geq 2m_0$ for type II;

 \item[(iii)] $P_{m}\geq 2$ for all $m\geq 2m_0+3$  for type I$_{p}$;

 \item[(iv)] $P_{m}\geq 2$ for all $m\geq 3m_0+4$  for type I$_n$;

 \item[(v)]
$P_{m}\geq 2$ for all $m\geq \rounddown{\frac{3m_0}{2}}+4$  for type
I$_3$.
\end{itemize}
\end{prop}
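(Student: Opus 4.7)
My plan is to prove each bound by a uniform mechanism: reduce the existence of two sections in $|mK_{X'}|$ to the same statement on a generic irreducible element $S$ of $|M_{m_0}|$ via Kawamata--Viehweg vanishing, then apply Lemma \ref{>0} on the surface side.

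With $\mathcal{L}_m := (m-1)\pi^*(K_X) - \tfrac{1}{p}E'_{m_0}$ as in \ref{setup}, one has $|K_{X'} + \roundup{\mathcal{L}_m}| \subset |mK_{X'}|$. Whenever $\mathcal{L}_m - S \equiv (m-1-m_0/p)\pi^*(K_X)$ is nef and big, Kawamata--Viehweg yields the surjection
\[
H^0\!\bigl(X',\, K_{X'} + \roundup{\mathcal{L}_m}\bigr) \twoheadrightarrow H^0\!\bigl(S,\, K_S + \roundup{(\mathcal{L}_m - S)|_S}\bigr),
\]
so it suffices to exhibit an effective $\bQ$-decomposition $(\mathcal{L}_m - S)|_S \equiv \ell\,\sigma^*(K_{S_0}) + Q_\ell$ with $Q_\ell$ nef and big, where $\ell=1$ if $p_g(S)>0$ and $\ell=2$ if $p_g(S)=0$; Lemma \ref{>0} then delivers $h^0 \geq 2$.

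For types III and II one has $p_g(S)>0$ and the core comparison $m_0\pi^*(K_X)|_S \geq M_{m_0}|_S$. In type III, $|M_{m_0}|_S|$ is generically finite, so, using that $|4\sigma^*(K_{S_0})|$ is base point free and the usual surface-theoretic comparisons, a fixed multiple of $\pi^*(K_X)|_S$ contains $\sigma^*(K_{S_0})$ modulo effective; type II is handled in parallel via the pencil-part of $|M_{m_0}|_S|$ together with Viehweg's semi-positivity. Pushing these through, the choice $m \geq 2m_0$ produces a strictly positive nef-big remainder $Q_1$. In the type I cases, Lemma \ref{beta} supplies rational numbers $\beta_n \to p/(m_0+p)$ with $\pi^*(K_X)|_S - \beta_n \sigma^*(K_{S_0})$ $\bQ$-effective. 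For I$_p$ (where $p_g(S)>0$, $p=1$, $\beta_n \to 1/(m_0+1)$) I want $(m-1-m_0)\beta_n > 1$, which gives $m \geq 2m_0+3$. For I$_n$ (where $p_g(S)=0$), Lemma \ref{>0}(ii) demands the same coefficient exceed $2$, yielding $m \geq 3m_0+4$. For I$_3$, $P_{m_0} \geq 3$ forces $p \geq 2$ and hence $\beta_n \to 2/(m_0+2)$, sharpening the computation to $m \geq \rounddown{3m_0/2}+4$.

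The main technical obstacle is the rational bookkeeping in the type I arguments: one must work with $\beta := \beta_n - \delta$ for $\delta>0$ arbitrarily small, and track round-ups carefully to confirm that after subtracting $\ell\,\sigma^*(K_{S_0})$ the residue $Q_\ell$ remains genuinely nef and big rather than merely pseudo-effective. A smaller subtlety is verifying $p_g(S)>0$ in types III and II so that hypothesis (i) rather than the weaker (ii) of Lemma \ref{>0} can be invoked; this follows from $|M_{m_0}|_S|$ having positive-dimensional movable part together with the standard adjunction/vanishing on $X'$.
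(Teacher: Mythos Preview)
Your treatment of the three type I cases (iii), (iv), (v) is exactly the paper's argument: use Lemma \ref{beta} to write $(\mathcal{L}_m-S)|_S$ as a multiple of $\sigma^*(K_{S_0})$ plus an effective $\bQ$-divisor, then invoke Lemma \ref{>0} with threshold $1$ or $2$ according to $p_g(S)$.

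For types III and II, however, your plan has genuine gaps. First, the assertion $p_g(S)>0$ is neither proved nor, in general, true: the dichotomy on $p_g(S)$ is part of the \emph{definition} only in type I, and for $\dim B\ge 2$ nothing forces the generic member of $|M_{m_0}|$ to have positive geometric genus. Your proposed justification via ``standard adjunction/vanishing'' does not yield this. Second, even granting $p_g(S)>0$, Lemma \ref{>0}(i) requires a decomposition with \emph{nef and big} remainder $Q_1$, and there is no analogue of Lemma \ref{beta} when $\dim B\ge 2$ to compare $\pi^*(K_X)|_S$ with $\sigma^*(K_{S_0})$; your appeal to Viehweg semi-positivity is misplaced, since that input is what drives Lemma \ref{beta}, which is stated and proved only for fibrations over a curve.

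The paper avoids Lemma \ref{>0} entirely for (i) and (ii) and argues instead via the curve $C$. With $p=1$ and $G:=S|_S$ one may take $\beta=1/m_0$, so Proposition \ref{easy-nv} already gives $P_m\ge 2$ for every $m>2m_0+1$. The boundary value $m=2m_0+1$ is handled by restricting to $S$ and observing that for $L\in|M_{m_0}|_{|S}$ one has $h^0(S,K_S+L)=\chi(S,K_S+L)\ge 2$ by Riemann--Roch; and $m=2m_0$ is immediate from $P_{2m_0}\ge P_{m_0}\ge 2$. No statement about $p_g(S)$ or any comparison with $\sigma^*(K_{S_0})$ is needed.
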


\begin{proof} We keep the notation as in Section 2.

(i). For type III, one can pick $\beta=\frac{1}{m_0}$, thus by
Proposition \ref{easy-nv}, we have $P_m \geq 2$ for all $ m >
2m_0+1$. Now if $m=2m_0+1$, the surjection (2.2) and (2.3) lead us
to consider non-vanishing of $H^0(S, K_S+ \roundup{m_0
\pi^*K_X|_S})$. Let $L$ be an element in $|M_{m_0}|_{|S}$, then
clearly, $h^0(S, K_S+L)=\chi(S,K_S+L)\geq 2$ by Riemann-Roch
theorem. Hence $P_{2m_0+1}\geq 2$. Also, $P_{2m_0} \ge P_{m_0} \geq
2$. Therefore, we have $m_1 \le 2m_0$.

(ii). For type II, since we can take $\beta=\frac{1}{m_0}$, exactly
the same proof shows that $m_1 \le 2m_0$.

(iii). For type I,  Lemma \ref{beta} gives that  there is a
sequence of rational numbers $\{\beta_n\}$ with $\beta_n\mapsto
\frac{p}{m_0+p}\geq \frac{1}{m_0+1}$ such that
$$\pi^*(K_X)_{|S}-\beta_n\sigma^*(K_{S_0})\equiv H_n$$
for an effective $\bQ$-divisor $H_n$.

We consider
$$\mathcal{D}'_m:=(\mathcal{L}_m-S)_{|_S}-(m-1-\frac{m_0}{p}) H_n
\equiv (m-1-\frac{m_0}{p})\beta_n \sigma^*(K_{S_0}).$$

If $h^0(S, K_S+\roundup{\mathcal{D}'_m})\geq 2$, then so is $h^0(S,
K_S+\roundup{(\mathcal{L}_m-S)_{|_S}})$. It follows that $P_m \geq
2$ by (2.2) and the surjection (2.3).

In case I$_p$, we can pick $\beta_n = \frac{1}{m_0+1}-\delta$ for
some $0 < \delta \ll 1$. So when $m \ge 2m_0+3$,
$(m-1-\frac{m_0}{p})\beta_n
>1$. By Lemma \ref{>0}, we have $h^0(S,
K_S+\roundup{\mathcal{D}'_m})\geq 2$. Thus $m_1 \le 2m_0+3$.

In case I$_n$, similarly, we can pick $\beta_n =
\frac{1}{m_0+1}-\delta$ for some $0 < \delta \ll 1$. So when $m
\ge 3m_0+4$, $(m-1-\frac{m_0}{p})\beta_n
>2$. By Lemma \ref{>0}, we have $h^0(S,
K_S+\roundup{\mathcal{D}'_m})\geq 2$. Thus $m_1 \le 3m_0+4$.

In case I$_3$, we can pick $\beta_n = \frac{2}{m_0+2}-\delta$ for
some $0 < \delta \ll 1$. So when $m \ge
\rounddown{\frac{3m_0}{2}}+4$, we have $(m-1-\frac{m_0}{p})\beta_n
>2$ and $P_m\geq 2$. Thus $m_1 \le \rounddown{\frac{3m_0}{2}}+4$.

This completes the proof.
\end{proof}

\begin{rem}\label{chi(O)>1}
We would like to remark that the case I$_n$ implies $\chi \leq 1$.
To see this,
  we compute the invariant $\chi(\OO_X)$ under this
situation. We have an induced fibration $f:X'\longrightarrow B$
onto the smooth rational curve $B$. A general fiber $S$ of $f$ is
a nonsingular projective surface of general type with $p_g(S)=0$.
Because $\chi(\OO_S)>0$, we see $q(S)=0$. This means
$f_*\omega_{X'}=0$ and $R^1f_*\omega_{X'}=0$ since they are both
torsion free. Thus we get by \ref{inv} the following formulae:
$$h^2(\OO_X)=h^2(\OO_{X'})=h^1(f_*\omega_{X'})+h^0(R^1f_*\omega_{X'})=0;$$
$$q(X)=q(X')=g(B)+h^1(R^1f_*\omega_{X'})=0.$$
So we see $\chi(\OO_X)=1-q(X)+h^2(\OO_X)-p_g(X)\leq 1$.
\end{rem}

By a result of the first author and C. D. Hacon \cite{JC-H}, $P_m
> 0$ for all $m \ge 2$ if $q(V)>0$. Hence $m_1 \le 2$ for
irregular varieties of general type. It follows that, for a
threefold $V$ with $\chi(\mathcal{O}_V) \le 0$, one has $m_1 \le 2$.
We summarize the non-vanishing property as the following:

\begin{cor} Let $V$ be a nonsingular projective 3-fold of general type. Then
$m_1 \le 2$ if $\chi(\OO_V) \le 0$. Suppose furthermore that
$P_{m_0} \ge 2$ for some $m_0>0$, then $m_1 \le 3m_0+4$.  If
$P_{m_0}\geq 2$ and $\chi(\OO_V)>1$, then $m_1 \le 2 m_0 +3$.
\end{cor}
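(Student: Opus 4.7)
The plan is to prove the three assertions in turn, each by combining the type-by-type non-vanishing bounds from Proposition \ref{nonvanishing} with the Chen--Hacon result \cite{JC-H} that $P_m>0$ for all $m\ge 2$ whenever $q(V)>0$. No new geometric input is needed; the argument is essentially a bookkeeping one.

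For the first statement, I would split on $q(V)$. If $q(V)>0$, apply Chen--Hacon directly to conclude $m_1\le 2$. If $q(V)=0$, then from $\chi(\OO_V)=1-q(V)+h^2(\OO_V)-p_g(V)\le 0$ one gets $p_g(V)\ge 1+h^2(\OO_V)\ge 1$, so $P_1\ge 1$; taking powers of a nonzero section of $K_V$ yields $P_m\ge 1$ for every $m\ge 1$, and in particular $m_1\le 2$.

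For the second statement, assume $P_{m_0}\ge 2$ and classify the induced fibration $\varphi_{m_0}$ by the scheme in \ref{type}. Proposition \ref{nonvanishing} gives $m_1\le 2m_0$ for types III and II, $m_1\le 2m_0+3$ for type I$_p$, $m_1\le \rounddown{3m_0/2}+4$ for type I$_3$, and $m_1\le 3m_0+4$ for type I$_n$, so the uniform bound $3m_0+4$ holds in all of those cases. The remaining case to check is type I$_q$, namely $g(B)>0$. Here the spectral sequence formula $q(V)=g(B)+h^1(B,R^1f_\ast\omega_{V})$ from \ref{inv} forces $q(V)\ge g(B)>0$, and Chen--Hacon again gives $m_1\le 2\le 3m_0+4$. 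This exhausts all possibilities.

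For the third statement, Remark \ref{chi(O)>1} shows that whenever $\varphi_{m_0}$ is of type I$_n$ one has $\chi(\OO_V)\le 1$. Hence the assumption $\chi(\OO_V)>1$ rules out type I$_n$, and the only available cases are III, II, I$_p$, I$_3$, and I$_q$. In these cases the bounds recorded above yield $m_1\le 2m_0$, $2m_0+3$, $\rounddown{3m_0/2}+4$, and $2$ respectively; a short check (handling the small value $m_0=1$ for the I$_3$ bound) shows each of these is at most $2m_0+3$, giving the claimed inequality. The only place one has to be mildly careful is the comparison $\rounddown{3m_0/2}+4\le 2m_0+3$, but this is immediate once one notes it is an equality when $m_0=1$ and strict for $m_0\ge 2$. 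No step is an actual obstacle; the only thing to watch is that every type in \ref{type} (including I$_q$, which is not treated in Proposition \ref{nonvanishing}) is accounted for.
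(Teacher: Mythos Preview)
Your argument is correct and is essentially the route the paper takes: the corollary is presented there as an immediate summary of Proposition~\ref{nonvanishing}, Remark~\ref{chi(O)>1}, and the Chen--Hacon result, and you have simply written out the case analysis explicitly (including the $q=0$, $p_g>0$ observation that the paper's ``It follows that'' leaves implicit). One tiny inaccuracy: in your check of $\rounddown{3m_0/2}+4\le 2m_0+3$, equality also holds at $m_0=2$, not only at $m_0=1$; this of course does not affect the conclusion.
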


\section{\bf Baskets of singularities}

We always consider minimal projective 3-folds  of general type in
this section.

\begin{setup}{\bf Terminal quotient singularity and basket.} By {\it a 3-dimensional terminal quotient singularity $Q$ of
type $\frac{1}{r}(1,-1,b)$}, we mean a singularity which is
analytically isomorphic to the quotient of $(\bC^3, \text{O})$ by a
cyclic group action  $\varepsilon$:
$$\varepsilon(x,y,z)=(\varepsilon x,\varepsilon^{-1} y, \varepsilon^b z)$$
where $r$ is a positive integer, $\varepsilon$ is a fixed $r$-th
primitive root of $1$, the integer $b$ is coprime to $r$ and
$0<b<r$.
\end{setup}

\begin{setup}{\bf Convention.} By replacing $\varepsilon$ with
another primitive root of 1 and changing the ordering of
coordinates, we may even assume that $b \le \frac{r}{2}$.
\end{setup}
{\it A basket} $\mathscr{B}$ of singularities is a collection
(permitting weights) of terminal quotient singularities of type
$\frac{1}{r_i}(1,-1,b_i)$, $i\in I$ where $I$ is a finite index set.
{\it A single basket} means a single singularity $Q$ of type
$\frac{1}{r}(1,-1,b)$. For simplicity, we will always denote a
single basket by $(b,r)$. So we will simply write a basket as:
$$\mathscr{B}:=\{n_i\times (b_i,r_i)|i\in I,\ n_i\in {\mathbb Z}^+\}.$$

\begin{defn} When an integer $b$ is not coprime to another integer
$r$, we still call the symbol $(b,r)$ {\it a generalized single
basket} though it doesn't mean anything at this moment. {\it A
generalized basket} means a collection of single baskets and
generalized single baskets.
\end{defn}

\begin{setup}{\bf Plurigenera.}\label{pm}
Let us recall Reid's plurigenus formula (cf.  \cite{YPG}, p413) for
a minimal 3-fold $X$ of general type (with $\bQ$-factorial terminal
singularities):
\medskip

\noindent there exists a  ``virtual'' basket\footnote{Iano-Fletcher
\cite{Fletcher} has shown that Reid's virtual basket
$\mathscr{B}(X)$ is uniquely determined by $X$.} $\mathscr{B}(X)$
of terminal quotient singularities such that, for all $m>1$,
$$P_m(X)=\frac{1}{12}m(m-1)(2m-1)K_X^3-(2m-1)\chi(\mathcal
{O}_X)+l(m)  \eqno (4.1)$$ where the correction term $l(m)$ can be
computed as:
$$l(m):=\sum_{Q\in \mathscr{B}(X)}l_Q(m):=\sum_{Q\in \mathscr{B}(X)}\sum_{j=1}^{m-1}
\frac{\overline{jb_Q}(r_Q-\overline{jb_Q})}{2r_Q}$$ where the sum
$\sum_{Q}$ runs through all single baskets $Q$ of $\mathscr{B}(X)$
with type $\frac{1}{r_{Q}}(1,-1,b_{Q})$ and $\overline{jb_Q}$ means
the smallest residue of $jb_Q$ \text{mod} $r_Q$.
\medskip

We are going to analyze the above formula and Reid's virtual basket
$\mathscr{B}(X)$.
\end{setup}



\begin{setup}{\bf Invariants of baskets.} Given a generalized single basket
$(b,r)$ ($b$ not necessarily coprime to $r$) with $b\leq
\frac{r}{2}$ and a fixed integer $n>0$. Let $i:=\lfloor \frac{bn}{r}
\rfloor$. Then $\frac{i+1}{n}
> \frac{b}{r} \ge \frac{i}{n} $.  We define $$\Delta^n_{b,r}:=
ibn-\frac{(i^2+i)}{2}r.$$ One can see that $\Delta^n_{b,r}$ is a
non-negative integer. For a generalized basket $B= \{(b_i,r_i)|
i\in I\}$ and a fixed $n>0$, we define $\Delta^n(B):=
\underset{i\in I}\sum \Delta^n_{b_i,r_i}$. By definition,
$\Delta^2(B)=0$ for any basket $B$. By a direct calculation, one
gets the following relation:
$$\frac{\overline{jb_i}(r_i-\overline{jb_i})}{2r_i} - \frac{{jb_i}(r_i-{jb_i})}{2r_i} =
\Delta^j_{b_i,r_i}$$ for all $j>0$. Define $\sigma(B):=
\underset{i\in I}\sum b_i$ and $\sigma'(B):=\underset{i\in I}\sum
\frac{b_i^2}{r_i}$.
\end{setup}

\begin{setup}\label{del}{\bf Plurigenera in terms of $\Delta^m$.}
We can rewrite Reid's plurigenus formula as the following, where we
take $B={\mathscr{B}}(X)$ and $\Delta^m=\Delta^m(B)$:
$$(\partial)\left\{ \begin{array}{ll} P_2 &= \frac{1}{2} K_X^3 -3 \chi + \frac{1}{2} \sigma
-\frac{1}{2} \sigma',\\
 P_3-P_2 & = \frac{4}{2} K_X^3 -2 \chi + \frac{2}{2} \sigma
-\frac{4}{2} \sigma',\\
 P_{m+1}-P_m & = \frac{m^2}{2} K_X^3 -2 \chi + \frac{m}{2} \sigma
-\frac{m^2}{2} \sigma'+\Delta^m, \text{ for } m \geq 3.
\end{array} \right.
$$
\end{setup}

\begin{setup}\label{pk}{\bf Packing.}
Next we define a notion of ``packing''. Given a generalized basket
$$B=\{(b_1,r_1),(b_2,r_2),\cdots, (b_k,r_k)\},$$  we call the basket
$$B':=\{ (b_1+b_2,r_1+r_2), (b_3,r_3), \cdots, (b_k,r_k)
\}$$ a packing of $B$, written as $B \succ B'$. If furthermore
$b_1r_2-b_2r_1 =1$, we call $B \succ B'$ {\it a convenient
packing}.
\end{setup}

We have the following:
\begin{lem} \label{packing} Let $B \succ B'$ be any packing between generalized baskets.
Keep the same notations as above. Then:
\begin{itemize}
\item[(1)]
$\Delta^n(B) \ge \Delta^n(B')$ for all $n \ge 2$;
\item[(2)] the equality in (1) holds if and only if $\frac{i}{n} \leq
\frac{b_1}{r_1}, \frac{b_2}{r_2} \leq \frac{i+1}{n}$ for
some $i$;
\item[(3)]$\sigma(B')=\sigma(B)$ and $\sigma'(B) = \sigma'(B') +
\frac{(r_1b_2-r_2b_1)^2}{r_1r_2(r_1+r_2)} \geq
\sigma'(B').$ Thus equality holds only when
$\frac{b_1}{r_1}= \frac{b_2}{r_2}$.
\end{itemize}
\end{lem}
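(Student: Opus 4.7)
Part (3) is a direct computation: $\sigma(B')=\sigma(B)$ is immediate from $b_1+b_2=(b_1+b_2)$, and for $\sigma'$ one expands
\[
\sigma'(B)-\sigma'(B')=\frac{b_1^2}{r_1}+\frac{b_2^2}{r_2}-\frac{(b_1+b_2)^2}{r_1+r_2},
\]
clears denominators, and recognises the numerator as $(b_1r_2-b_2r_1)^2$. Non-negativity and the equality case $b_1/r_1=b_2/r_2$ follow at once.

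For (1) and (2) the clean idea is to interpret $\Delta^n_{b,r}$ as the evaluation of a convex piecewise-linear function at $b/r$, scaled by $r$. Concretely, I would define
\[
g_n(t):=int-\tfrac{i(i+1)}{2}\quad\text{for }t\in\bigl[\tfrac{i}{n},\tfrac{i+1}{n}\bigr],\quad i=0,1,\dots,n-1,
\]
and check that $g_n$ is well defined and continuous on $[0,1]$, with slope $in$ on the $i$-th piece, so that $g_n$ is convex and affine on each interval $[i/n,(i+1)/n]$ with pairwise distinct slopes across the breakpoints. By the very definition of $\Delta^n_{b,r}$ with $i=\lfloor bn/r\rfloor$, one has the identity $\Delta^n_{b,r}=r\,g_n(b/r)$.

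The packing step replaces the two rationals $b_1/r_1,\,b_2/r_2$ (with weights $r_1,r_2$) by their mediant $(b_1+b_2)/(r_1+r_2)$ with weight $r_1+r_2$; the mediant identity
\[
\frac{b_1+b_2}{r_1+r_2}=\frac{r_1}{r_1+r_2}\cdot\frac{b_1}{r_1}+\frac{r_2}{r_1+r_2}\cdot\frac{b_2}{r_2}
\]
exhibits it as a convex combination with weights proportional to $r_1,r_2$. Applying Jensen's inequality to the convex function $g_n$ and multiplying by $r_1+r_2$ yields
\[
r_1g_n(b_1/r_1)+r_2g_n(b_2/r_2)\;\ge\;(r_1+r_2)\,g_n\bigl((b_1+b_2)/(r_1+r_2)\bigr),
\]
which is exactly $\Delta^n(B)\ge\Delta^n(B')$ since the other single baskets contribute identically.

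The only delicate point, and what I would flag as the main obstacle, is pinning down the equality case so that it matches the statement. Because $g_n$ is affine on each piece $[i/n,(i+1)/n]$ but has strictly increasing slopes across consecutive breakpoints, Jensen's inequality is an equality if and only if both sample points $b_1/r_1$ and $b_2/r_2$ lie in a common affine piece of $g_n$, i.e.\ there exists $i$ with $i/n\le b_1/r_1,\,b_2/r_2\le(i+1)/n$. I would verify this by checking the two cases separately: if both points lie in one piece, linearity gives equality; if they straddle a breakpoint $j/n$, the strict increase of slopes at $j/n$ forces a strict inequality in the convex combination, even when one of the points equals $j/n$ (the weighted mediant still lies strictly inside the adjacent piece since both $r_1,r_2>0$). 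This yields precisely (2) and completes the proof.
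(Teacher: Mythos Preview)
Your approach is correct and genuinely different from the paper's. The paper proves (1) and (2) by a direct case analysis: it writes $\Delta^n_{b_1+b_2,r_1+r_2}=\Delta^n_{b_1,r_1}+\Delta^n_{b_2,r_2}+\nabla_1+\nabla_2$ for explicit error terms $\nabla_1,\nabla_2$ depending on the positions $j,i,j_1$ of $b_1/r_1$, $b_2/r_2$, $(b_1+b_2)/(r_1+r_2)$ among the fractions $k/n$, and then bounds each $\nabla_\ell\le 0$ by hand, tracking the equality cases. Your argument replaces this bookkeeping by the single observation that $\Delta^n_{b,r}=r\,g_n(b/r)$ for a continuous piecewise-linear convex function $g_n$, after which (1) is literally Jensen's inequality for the mediant and (2) is the standard equality criterion for Jensen with a piecewise-affine convex function (equality iff both sample points lie in one affine piece). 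This is shorter and explains \emph{why} the inequality holds; the paper's computation, on the other hand, is entirely self-contained and requires no convexity language. For (3) both proofs are the same algebraic identity.

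One small point of exposition: your parenthetical ``even when one of the points equals $j/n$'' is misleading. If one of $b_1/r_1,b_2/r_2$ equals a breakpoint $j/n$ and the other lies in an adjacent closed piece, then both already sit in a common piece $[i/n,(i+1)/n]$ and you get equality, not strict inequality. The genuine ``straddling'' case needed for strictness is $b_1/r_1<j/n<b_2/r_2$ with both inequalities strict; then $g_n$ is not affine on $[b_1/r_1,b_2/r_2]$ and Jensen is strict regardless of where the mediant lands. With that clarification your equality analysis matches the statement exactly.
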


\begin{proof} First, if $\frac{i}{n} \le \frac{b_1}{r_1}, \frac{b_2}{r_2}
\le \frac{i+1}{n}$ for some $i$, then a direct calculation shows
$\Delta^n(B) = \Delta^n(B')$.

Suppose, for some $i>j$,
$$\frac{i+1}{n}> \frac{b_2}{r_2} \geq \frac{i}{n} \geq
\frac{j+1}{n} > \frac{b_1}{r_1} \geq \frac{j}{n}$$ and
 $\frac{j_1+1}{n}>\frac{b_1+b_2}{r_1+r_2} \geq \frac{j_1}{n}$
for some $j_1\in [j,i]$. Then
\begin{eqnarray*}
\Delta^n_{b_1+b_2,r_1+r_2}&=&
j_1n(b_1+b_2)-\frac{1}{2}(j_1^2+j_1)(r_1+r_2)\\
&=& \Delta^n_{b_2,r_2}+\Delta^n_{b_1,r_1}+\nabla_2+\nabla_1,
\end{eqnarray*}
where
$\nabla_2=(j_1-i)nb_2+\frac{1}{2}(i^2+i-j_1^2-j_1)r_2$ and
$\nabla_1=(j_1-j)nb_1+\frac{1}{2}(j^2+j-j_1^2-j_1)r_1$. Now
since $nb_2\geq ir_2$, one gets
$$\nabla_2\leq
\frac{1}{2}(i-j_1)(j_1+1-i)r_2.$$ When $j_1=i$,
$\nabla_2=0$; when $j_1=i-1$, $\nabla_2=-nb_1+ir_2\leq 0$;
when $j_1<i-1$, $\nabla_2<0$.

Similarly the relation $nb_1<(j+1)r_1$ implies
$$\nabla_1\leq \frac{1}{2}(j_1-j)(j+1-j_1)r_1.$$
When $j_1=j$, $\nabla_1=0$; when $j_1=j+1$,
$\nabla_1=nb_1-(j+1)r_1<0$; when $j_1>j+1$, $\nabla_1<0$.

Thus in any case, we see $\Delta^n(B) \geq \Delta^n(B')$, which
implies (1). Furthermore we see $\Delta^n(B) = \Delta^n(B')=0$ if
and  only if $\nabla_2=\nabla_1$, if and only if $j_1=j$ and
$i=j_1+1=j+1$. We have proved (2).

The inequality (3) is obtained by a direct calculation.
\end{proof}

\begin{cor}\label{addition} If $B=\{m\times (b,r)|\ b\leq \frac{r}{2},\ b \text{ coprime to } r\}$
and $B'=\{(mb, mr)\}$ for an integer $m>1$, then
\begin{itemize}
\item[(i)] $\sigma(B')=\sigma(B)$; $\sigma'(B')=\sigma'(B)$;
\item[(ii)] $\Delta^n(B')=\Delta^n(B)$ for any $n>0$.
\end{itemize}
\end{cor}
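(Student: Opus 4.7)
The plan is to deduce this corollary either by direct substitution into the definitions of $\sigma$, $\sigma'$, and $\Delta^n$, or as an iterated application of Lemma \ref{packing}. I prefer the second route, since it illustrates how packings of single baskets with the same ratio $b/r$ preserve all of the relevant invariants.

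First I observe that $B'$ can be reached from $B$ by $m-1$ successive packings: packing two copies of $(b,r)$ gives a single generalized basket $(2b,2r)$, packing $(2b,2r)$ with $(b,r)$ gives $(3b,3r)$, and so on up to $(mb,mr)$. Each such step packs two single baskets $(b_1,r_1) = (jb,jr)$ and $(b_2,r_2) = (b,r)$ (for some $1 \le j \le m-1$) that satisfy $\frac{b_1}{r_1} = \frac{b_2}{r_2} = \frac{b}{r}$.

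Now I would invoke Lemma \ref{packing} at each step. Part (3) of that lemma gives $\sigma(B) = \sigma(B')$ for free, and shows that the correction term in $\sigma'$ equals $\frac{(r_1 b_2 - r_2 b_1)^2}{r_1 r_2 (r_1+r_2)}$, which vanishes precisely because $\frac{b_1}{r_1} = \frac{b_2}{r_2}$. Thus $\sigma'$ is preserved under every packing in the chain, proving (i). For (ii), part (2) of the same lemma tells us that $\Delta^n$ is preserved by a packing exactly when $b_1/r_1$ and $b_2/r_2$ both lie in a common interval $[i/n, (i+1)/n]$; since the two ratios are equal in our situation, this is automatic for every $n > 0$. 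Iterating gives $\Delta^n(B) = \Delta^n(B')$.

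I expect no real obstacle here; the only thing worth double-checking is that the packing hypothesis in Definition \ref{pk} does not require $b_1 r_2 - b_2 r_1 = 1$ (that is only required for a \emph{convenient} packing), so the argument applies even though our consecutive ratios are equal. As a sanity check one may also compute everything directly: $\sigma(B) = mb = \sigma(B')$, $\sigma'(B) = m\cdot \frac{b^2}{r} = \frac{(mb)^2}{mr} = \sigma'(B')$, and since $\lfloor mbn/(mr)\rfloor = \lfloor bn/r \rfloor = i$, one obtains $\Delta^n_{mb,mr} = i(mb)n - \frac{i^2+i}{2}(mr) = m\,\Delta^n_{b,r} = \Delta^n(B)$, confirming (ii).
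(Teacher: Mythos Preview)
Your proof is correct and follows essentially the same approach as the paper, which simply cites the definition of $\sigma$ and Lemma \ref{packing}. You have spelled out in detail exactly how each part of that lemma applies (equal ratios force the correction term in $\sigma'$ to vanish and force both ratios into a common interval $[i/n,(i+1)/n]$), and your direct computation at the end is a nice independent check.
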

\begin{proof} This can be obtained by the definition of $\sigma$
and Lemma \ref{packing}.
\end{proof}

\begin{rem} The additive properties in Corollary \ref{addition}
allow us to view the generalized single basket $(mb,mr)$ as a
basket $\{m\times (b,r)\}$.
\end{rem}

Besides, a convenient packing has the following basic properties:

\begin{lem}
Let $B \succ B'$ be a convenient packing as in \ref{pk}, i.e.
$b_1r_2-b_2 r_1 =1$.
Then $\Delta^{r_1+r_2}_{b_1+b_2, r_1+r_2}=\Delta^{r_1+r_2}_{b_1,
r_1}+\Delta^{r_1+r_2}_{b_2, r_2}-1$.
\end{lem}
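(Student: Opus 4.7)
The plan is to prove the identity by direct calculation of each $\Delta^n$ using the definition $\Delta^n_{b,r}=ibn-\tfrac{i^2+i}{2}r$ with $i=\lfloor bn/r\rfloor$. Set $n:=r_1+r_2$ and $s:=b_1+b_2$, so the claim becomes $\Delta^n_{s,n}=\Delta^n_{b_1,r_1}+\Delta^n_{b_2,r_2}-1$. The convenient-packing hypothesis $b_1r_2-b_2r_1=1$ will be used in two distinct places: first to pin down the floor functions, and finally to cancel the algebra down to a single $-1$.

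First I would compute the three floors. For the packed basket, $\lfloor sn/n\rfloor=s$, which is a legitimate integer value since $s\le n/2<n$ by the convention $b_j\le r_j/2$. Then
\[
\Delta^n_{s,n}=s^2n-\tfrac{s(s+1)}{2}n=\tfrac{ns(s-1)}{2}.
\]
For $(b_1,r_1)$: $\tfrac{b_1n}{r_1}=b_1+\tfrac{b_1r_2}{r_1}=b_1+b_2+\tfrac{1}{r_1}$, so $i_1=s$. For $(b_2,r_2)$: $\tfrac{b_2n}{r_2}=b_2+\tfrac{b_2r_1}{r_2}=b_2+b_1-\tfrac{1}{r_2}$, so $i_2=s-1$. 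Substituting gives
\[
\Delta^n_{b_1,r_1}=sb_1n-\tfrac{s(s+1)r_1}{2},\qquad \Delta^n_{b_2,r_2}=(s-1)b_2n-\tfrac{s(s-1)r_2}{2}.
\]

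Next I would form the difference $\Delta^n_{b_1,r_1}+\Delta^n_{b_2,r_2}-\Delta^n_{s,n}$. Writing $sb_1+(s-1)b_2=s^2-b_2$, the $n$-part becomes $n(s^2-b_2)-\tfrac{ns(s-1)}{2}=\tfrac{ns(s+1)}{2}-nb_2$. Expanding $\tfrac{ns(s+1)}{2}=\tfrac{s(s+1)r_1}{2}+\tfrac{s(s+1)r_2}{2}$ and subtracting the terms $\tfrac{s(s+1)r_1}{2}+\tfrac{s(s-1)r_2}{2}$, everything in $r_1$ cancels and the $r_2$-terms combine to $\tfrac{sr_2}{2}\bigl((s+1)-(s-1)\bigr)=sr_2$. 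Hence the difference collapses to $sr_2-nb_2=(b_1+b_2)r_2-(r_1+r_2)b_2=b_1r_2-b_2r_1=1$, which is exactly the identity after rearrangement.

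The computation is routine; the only subtle point, which I would treat with care but which is not really an obstacle, is verifying the floor values $i_1=s$ and $i_2=s-1$ from $b_1r_2-b_2r_1=1$. Once the floors are correct, the rest is linear bookkeeping in $s$, $r_1$, $r_2$, with the single unit in $b_1r_2-b_2r_1=1$ being the source of the $-1$ on the right-hand side. No vanishing theorems or geometric inputs are needed, and the convention $b_j\le r_j/2$ is used only to ensure $s<n$ so that $\Delta^n_{s,n}$ is given by the stated formula without boundary issues.
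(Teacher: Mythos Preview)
Your proof is correct and follows essentially the same approach as the paper: pin down the floors $i_1=s$, $i_2=s-1$, $i=s$ using $b_1r_2-b_2r_1=1$ (the paper phrases this as the chain of inequalities $\tfrac{s+1}{n}>\tfrac{b_1}{r_1}>\tfrac{s}{n}>\tfrac{b_2}{r_2}>\tfrac{s-1}{n}$, invoking $r_1,r_2>1$), then do the direct calculation---which you actually carry out, whereas the paper leaves it to the reader. One small remark: the convention $b_j\le r_j/2$ together with $b_j\ge 1$ is what forces $r_j\ge 2$, and this is precisely what you need for the floor computations $i_1=\lfloor s+\tfrac{1}{r_1}\rfloor=s$ and $i_2=\lfloor s-\tfrac{1}{r_2}\rfloor=s-1$; your closing comment slightly understates its role.
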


\begin{proof} \label{combini}
 When $b_1r_2-b_2r_1=1$, since $r_1> 1, r_2>  1$, one
 has
$$\frac{b_1+ b_2+1}{ r_1+ r_2} >
\frac{b_1}{r_1}
> \frac{ b_1+ b_2}{ r_1+ r_2} > \frac{b_2}{r_2}>
 \frac{ b_1+ b_2-1}{ r_1+ r_2}.$$
  We set $n=r_1+ r_2$. A direct calculation gives the equality
$$\Delta^{n}_{b_1+b_2, r_1+r_2}=\Delta^{n}_{b_1,
r_1}+\Delta^{n}_{b_2, r_2}-1.$$
\end{proof}

\begin{setup}{\bf Initial basket and limiting process.}
Given a basket $B=\{(b_i,r_i)|\ i\in \text{I}, b_i \text{ coprime
to } r_i, b_i\leq \frac{r_i}{2}\}$ with $I$ a finite set, we define
a sequence of baskets $\{\mathscr{B}^{(n)}(B)\}$.

Take a set $S^{(0)}:=\{\frac{1}{n}\}_{n \ge 2}$. For any single
basket $B_i=(b_i,r_i) \in B$, we can find a unique $n>0$ such that
$\frac{1}{n} > \frac{b_i}{r_i} \geq \frac{1}{n+1}$.
The single basket $(b_i,r_i)$ can be regarded as  successive
packings  via finite steps beginning from the basket
$B_i^{(0)}:=\{(nb_i+b_i-r_i) \times (1,n), (r_i-nb_i) \times
(1,n+1)\}$. Adding up those $B_i^{(0)}$, one obtains the basket
$\mathscr{B}^{(0)}(B)=\{ n_{1,2} \times (1,2), n_{1,3} \times
(1,3),\cdots, n_{1,r} \times (1,r)\}$, called {\it the initial
basket} of $B$. Clearly $\mathscr{B}^{(0)}(B)\succ B$. Defined in
this way, $\mathscr{B}^{(0)}(B)$ is uniquely determined by the given
basket $B$.

We begin to construct related baskets $\{\mathscr{B}^{(n)}(B)\}$ for
$n\geq 1$. Consider the sets
$S^{(1)}=S^{(2)}=S^{(3)}=S^{(4)}=S^{(0)}$ and
$$S^{(5)}:=S^{(0)} \cup \{\frac{2}{5}\}$$
and inductively,
$S^{(n)}=S^{({n-1})} \cup \{\frac{i}{n} \}_{i=2,...,\lfloor
\frac{n}{2} \rfloor}$. Reordering elements in $S^{(n)}$ and writing
$S^{(n)}=\{w^{(n)}_i\}_{i\in I}$
 such that $w^{(n)}_i > w^{(n)}_{i+1}$ for all $i$, then we see that
 the interval $(0,\frac{1}{2}]=\cup_i[w^{(n)}_{i+1}, w^{(n)}_i]$.
 Note that $w^{(n)}_i=\frac{q_i}{p_i}$ with  $p_i$ coprime to $q_i$ and
 $p_i \leq n$ unless $w^{(n)}_i=\frac{1}{m}$ for some $m >n$. First
 we prove the following:
\medskip

\noindent {\bf Claim A.} {\em $p_{i+1}q_i-p_iq_{i+1}=1$ for any two
endpoints of
 $[w^{(n)}_{i+1}, w^{(n)}_i]=[\frac{q_{i+1}}{p_{i+1}}, \frac{q_i}{p_i}]$.}

\begin{proof}
We can prove this inductively. Suppose that this property holds for
$S^{({n-1})}$. Now, for any $\frac{j}{n} \in S^{(n)}-S^{(n-1)}$,
$\frac{j}{n} \in [w^{n-1}_{i+1}, w^{n-1}_i]$ for some $i$. Thus $
\frac{q_{i+1}}{p_{i+1}} < \frac{j}{n} < \frac{q_{i}}{p_{i}}$. If
$p_i \ge n$, then $\frac{q_i}{p_i}=\frac{1}{m}$ and
$\frac{q_{i+1}}{p_{i+1}}=\frac{1}{m+1}$ for some $m \ge n$ which
contradicts to $\frac{j}{n} < \frac{q_i}{p_i}$. Therefore, we must
have  $p_{i} <n$. Then we consider $\frac{j-q_i}{n-p_i}$ and it's
easy to see that
$$
\frac{q_{i+1}}{p_{i+1}}\le \frac{j-q_i}{n-p_i} < \frac{j}{n} <
\frac{q_{i}}{p_{i}}.$$ Clearly, $\frac{j-q_i}{n-p_i} \in S^{(n-1)}$
and hence $\frac{j-q_i}{n-p_i}=\frac{q_{i+1}}{p_{i+1}}$. It follows
that $n=p_i+\alpha p_{i+1}, j=q_i+\alpha q_{i+1}$ for some integer
$\alpha>0$.

If $\alpha \ge 2$, then $\frac{q_{i+1}}{p_{i+1}}<
\frac{q_i+(\alpha-1)q_{i+1}}{p_i+(\alpha-1)p_{i+1}}  <
\frac{q_{i}}{p_{i}}$, and
$\frac{q_i+(\alpha-1)q_{i+1}}{p_i+(\alpha-1)p_{i+1}}  \in
S^{(n-1)}$, which is absurd. Thus $\alpha=1$ and then $n=p_i+
p_{i+1}, j=q_i+ q_{i+1}$. It's then clear  that $\frac{j}{n}$
is the only element of $S^{(n)}$ inside the interval
$[\frac{q_{i+1}}{p_{i+1}},\frac{q_{i}}{p_{i}}]$. Moreover, $j
p_{i+1}-n q_{i+1}=1, n q_i -j p_i=1$. This completes the proof of
the claim.
\end{proof}

Now for a single basket $B_i=(b_i,r_i) \in B$, if $\frac{b_i}{r_i}
\in S^{(n)}$, then we set $B^{(n)}_i:=\{(b_i,r_i)\}$. If
$\frac{b_i}{r_i}\not\in S^{(n)}$, then $\frac{q_1}{p_1} <
\frac{b_i}{r_i} < \frac{q_2}{p_2}$ for some interval
$[\frac{q_1}{p_1},\frac{q_2}{p_2}]$ due to $S^{(n)}$.  In this
situation, we can unpack $(b_i,r_i)$ to $B^{(n)}_i:=\{(r_i
q_2-b_ip_2) \times (q_1,p_1),(-r_i q_1+b_i p_1) \times (q_2,p_2)\}$.
Adding up those $B^{(n)}_i$, we get a new basket
$\mathscr{B}^{(n)}(B)$. $\mathscr{B}^{(n)}(B)$ is uniquely defined
according to our construction and $\mathscr{B}^{(n)}(B) \succ B$ for
all $n$.
\end{setup}

\noindent{\bf Claim B.} {\em
$\mathscr{B}^{(n-1)}(B)=\mathscr{B}^{(n-1)}(\mathscr{B}^{(n)}(B))
\succ \mathscr{B}^{(n)}(B)$ for all $n\geq 1$.}
\begin{proof} It's clear that $\mathscr{B}^{(n-1)}(\mathscr{B}^{(n)}(B))
\succ \mathscr{B}^{(n)}(B)$. Thus it suffices to show the first
equality. By the definition of $\mathscr{B}^{(n)}$, we only need to
prove for each single basket $B_i=(b_i,r_i) \in B$ and $n \ge 5$.

If $\frac{b_i}{ r_i} \in S^{(n-1)} \subset S^{(n)}$, then there is
nothing to prove since the equality follows from the definition of
$\mathscr{B}^{(n)}$ and $\mathscr{B}^{(n-1)}$.

If $\frac{b_i}{ r_i} \in  S^{(n)}-S^{(n-1)}$, then this is also
clear since $\mathscr{B}^{(n)}(B_i) = B_i$.

Suppose finally that  $\frac{b_i}{ r_i} \not \in  S^{(n)}$. Then
$\frac{q_1}{p_1} < \frac{b_i}{r_i} < \frac{q_2}{p_2}$ for some
$\frac{q_1}{p_1}=w^{(n)}_{i+1}$ and $\frac{q_2}{p_2}=w^{(n)}_i$.

{\bf Subcase (i).}  If both of $\frac{q_1}{p_1},\ \frac{q_2}{p_2}$
are in $S^{(n)}- S^{(n-1)}$, then $p_1=p_2=n$ and hence
$p_1q_2-p_2q_1 \neq 1$, a contradiction to Claim A.

{\bf Subcase (ii).} If both $\frac{q_1}{p_1}$ and $\frac{q_2}{p_2}$
are in $S^{(n-1)}$, then by definition
$$\mathscr{B}^{(n-1)}(B_i)=\mathscr{B}^{(n)}(B_i)=\mathscr{B}^{(n-1)}(\mathscr{B}^{(n)}(B_i)).$$

{\bf Subcase (iii).} We are left to consider the situation that one
of the $\frac{q_1}{p_1},\ \frac{q_2}{p_2}$ is in $S^{(n-1)}$, but
another one is in $S^{(n)}- S^{(n-1)}$. Let us assume, for example,
$\frac{q_1}{p_1}=w^{(n-1)}_{j+1} \in S^{(n-1)}$. Then
$\frac{q_2}{p_2} < w^{(n-1)}_j = \frac{q}{p}\in S^{(n-1)}$. The
proof for the other case is similar. Notice that by the proof of
Claim A, we have $q_2=q_1+q, p_2=p_1+p$. By definition,
\begin{eqnarray*}
&&\mathscr{B}^{(n)}(B_i)=\{(r_i q_2-b_ip_2) \times (q_1,p_1),(-r_i
q_1+b_i
p_1) \times (q_2,p_2)\},\\
&&\mathscr{B}^{(n-1)}(B_i)=\{(r_i q-b_ip) \times (q_1,p_1),(-r_i
q_1+b_i p_1) \times (q,p)\}.
\end{eqnarray*}
Since $\mathscr{B}^{(n-1)}(q_2,p_2)=\{(q_1,p_1),(q,p)\}$, we get the
following by computation: {\small
\begin{eqnarray*}
\mathscr{B}^{(n-1)}(\mathscr{B}^{(n)}(B_i))&=& \{(r_i q_2-b_ip_2)
\times
(q_1,p_1)\}+\{(-r_i q_1+b_ip_1) \times (q_1,p_1),\\
&&(-r_i q_1+b_ip_1) \times (q,p)\}\\
&=&\{ (r_i q-b_ip) \times (q_1,p_1),(-r_i q_1+b_i p_1) \times
(q,p)\}.
\end{eqnarray*}}
So we can see
$\mathscr{B}^{(n-1)}(B_i)=\mathscr{B}^{(n-1)}(\mathscr{B}^{(n)}(B_i))$.
We are done.
\end{proof}

By Claim B, we have obtained a chain $\{\mathscr{B}^{(n)}(B)\}$ of
baskets with the following relation:
$$ \mathscr{B}^{(0)}(B)=\ldots=\mathscr{B}^{(4)}(B)
\succ \mathscr{B}^{(5)}(B) \succ ... \succ \mathscr{B}^{(n)}(B)
\succ ... \succ B. \eqno{(4.2)}$$ Clearly $B=\mathscr{B}^{(n)}(B)$
for some $n \gg 0$ for a given finite basket $B$. Thus, in some
sense, $B$ can be realized as the limit of the sequence
$\{\mathscr{B}^{(n)}(B)\}$.

Another direct consequence of Claim B is the property:
$$\mathscr{B}^{(i)}(\mathscr{B}^{(j)}(B))=\mathscr{B}^{(i)}(B) \eqno{(4.3)}$$
for $i\leq j$.

\begin{setup}{\bf The quantity $\epsilon_n(B)$.} Now let us consider the step
$\mathscr{B}^{(n-1)}(B) \succ \mathscr{B}^{(n)}(B)$. For an element
$w \in S^{(n)}$, let $m(w)$ be the number of basket $(b,r)$ in
$\mathscr{B}^{(n)}(B)$ with $b$ coprime to $r$ and $\frac{b}{r}=w$.
Thus we can write $\mathscr{B}^{(n)}(B)=\{m(w) \times
(b,r)\}_{w=\frac{b}{r} \in S^{(n)}}$.

Suppose that $S^{(n)}-S^{(n-1)}=\{\frac{j_s}{n}\}_{s=1,...,t}$.
We have $w^{(n-1)}_{i_s}=\frac{q_{i_s}}{p_{i_s}} > \frac{j_s}{n}
> w^{(n-1)}_{i_s+1}=\frac{q_{i_s+1}}{p_{i_s+1}} $ for some $i_s$.  We remark
that by the proof of Claim A, $j_s= q_{i_s}+q_{i_s+1}$, $n=
p_{i_s}+p_{i_s+1}.$ Since $\mathscr{B}^{(n-1)}(B) =
\mathscr{B}^{(n-1)}(\mathscr{B}^{(n)}(B))$ by Claim B, we may write
$$\mathscr{B}^{(n)}(B)=\{m(w) \times (b,r)\}_{w=\frac{b}{r} \in
S^{(n-1)}}+\{m(\frac{j_s}{n}) \times (j_s,n)\}_{\frac{j_s}{n}} ,$$
where "+" means collecting baskets of the same type. Then
\begin{eqnarray*}
\mathscr{B}^{(n-1)}(B)=&&\{m(w) \times (b,r)\}_{w=\frac{b}{r} \in
S^{(n-1)}}+\{m(\frac{j_s}{n}) \times (q_{i_s},p_{i_s}),\\
&&m(\frac{j_s}{n}) \times (q_{i_s+1},p_{i_s+1})\}_{\frac{j_s}{n}}.
\end{eqnarray*}

We define $\epsilon_n(B):= \sum_{s=1}^t m(\frac{j_s}{n}),$ which is
the number of type $(j_s,n)$ baskets with $\frac{j_s}{n} \not \in
S^{(n-1)} $. In other words, $\epsilon_n(B)$ counts the number of
those single baskets $(j_s,n)$ in $\mathscr{B}^{(n)}(B)$ with
$(j_s,n)=1$ and $j_s
>1$. This is going to be an important quantity in our arguments.
\end{setup}


\begin{setup} {\bf Notation.} When no confusion is likely, we will
simply write $B^{(n)}$ for $\mathscr{B}^{(n)}(B)$.

\end{setup}

\begin{lem}\label{delta} For the sequence $\{B^{(n)}\}$, the following statements are true:
\begin{itemize}
\item[(i)]
$\Delta^j(B^{(0)})= \Delta^j(B)$ for $j=3,4$;
\item[(ii)]
$\Delta^j(B^{(n-1)})= \Delta^j(B^{(n)})$ for all $j <n$;
\item[(iii)]
$\Delta^n(B^{(n-1)})= \Delta^n(B^{(n)})+\epsilon_n(B)$.
\item[(iv)]
$\Delta^n(B^{(n)})=\Delta^n(B)$.
\end{itemize}
\end{lem}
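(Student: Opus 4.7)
The plan is to deduce all four items from (ii) and (iii), treating (ii) as a consequence of the "no change" criterion in Lemma \ref{packing}(2) and (iii) as a direct application of the convenient-packing computation in Lemma \ref{combini}.

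I would first establish (ii). By construction, passing from $B^{(n)}$ to $B^{(n-1)}$ amounts to splitting each single basket of the form $(j_s,n)$ with $\frac{j_s}{n}\in S^{(n)}\setminus S^{(n-1)}$ into the two baskets $(q_{i_s},p_{i_s})$ and $(q_{i_s+1},p_{i_s+1})$, where by (the proof of) Claim A one has $\frac{q_{i_s+1}}{p_{i_s+1}}$ and $\frac{q_{i_s}}{p_{i_s}}$ consecutive in $S^{(n-1)}$ and $p_{i_s}+p_{i_s+1}=n$, $q_{i_s}+q_{i_s+1}=j_s$. For any $j\le n-1$ one has $S^{(j)}\subset S^{(n-1)}$, so no fraction $\frac{i}{j}$ with $1\le i\le j/2$ lies strictly between these two consecutive elements; hence an index $i$ exists with $\frac{i}{j}\le\frac{q_{i_s+1}}{p_{i_s+1}}$ and $\frac{q_{i_s}}{p_{i_s}}\le\frac{i+1}{j}$. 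Lemma \ref{packing}(2) then gives $\Delta^j$-invariance of each such split, and summing over $s$ yields (ii).

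Next I would prove (iii). By Claim A the packing $(q_{i_s},p_{i_s})+(q_{i_s+1},p_{i_s+1})\succ(j_s,n)$ is convenient since $q_{i_s}p_{i_s+1}-q_{i_s+1}p_{i_s}=1$, and $p_{i_s}+p_{i_s+1}=n$. Applying Lemma \ref{combini} with $n$ as both the denominator sum and the evaluation index yields
\[
\Delta^{n}_{j_s,n}=\Delta^{n}_{q_{i_s},p_{i_s}}+\Delta^{n}_{q_{i_s+1},p_{i_s+1}}-1.
\]
Summing over all packed baskets weighted by $m(\frac{j_s}{n})$, and using that the contributions from the other baskets (those with ratio already in $S^{(n-1)}$) coincide in $B^{(n)}$ and $B^{(n-1)}$, gives
\[
\Delta^n(B^{(n-1)})-\Delta^n(B^{(n)})=\sum_{s} m\!\left(\tfrac{j_s}{n}\right)=\epsilon_n(B),
\]
which is (iii).

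Finally, (i) and (iv) follow by iterating (ii) along the chain (4.2). Since $B$ is finite, $B^{(N)}=B$ for all $N\gg 0$, and (ii) applied repeatedly at level $j<n$ gives $\Delta^j(B^{(n)})=\Delta^j(B^{(n+1)})=\cdots=\Delta^j(B)$, which is (iv) at $n=j$. For (i), combine this with $B^{(0)}=B^{(1)}=\cdots=B^{(4)}$, so $\Delta^{3}(B^{(0)})=\Delta^{3}(B^{(3)})$ and $\Delta^{4}(B^{(0)})=\Delta^{4}(B^{(4)})$, each of which equals $\Delta^{3}(B)$ or $\Delta^{4}(B)$ respectively by the same iteration of (ii). The main obstacle I anticipate is bookkeeping in (iii): one must verify that $\frac{j_s}{n}$ is the unique new point of $S^{(n)}\setminus S^{(n-1)}$ in the interval $\bigl[\frac{q_{i_s+1}}{p_{i_s+1}},\frac{q_{i_s}}{p_{i_s}}\bigr]$, so that no cross-terms between distinct unpackings appear and the $-1$ contributions add up cleanly to $\epsilon_n(B)$; this is exactly guaranteed by the uniqueness portion of Claim A.
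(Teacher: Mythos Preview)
Your proposal is correct and follows essentially the same approach as the paper: (ii) via Lemma~\ref{packing}(2), (iii) via the convenient-packing drop of Lemma~\ref{combini}, and (iv) by iterating (ii) using $B^{(N)}=B$ for $N\gg 0$. The only difference is cosmetic: for (i) the paper argues directly that every convenient packing from $B^{(0)}$ down to $B$ keeps both endpoints on the same side of $\tfrac{1}{3}$ (resp.\ within a $\tfrac{1}{4}$-interval), whereas you deduce (i) from (ii) together with $B^{(0)}=\cdots=B^{(4)}$; your route is arguably tidier since it avoids a separate case analysis.
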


\begin{proof}
From $B^{(0)}$ to $B$, via $B^{(n)}$, the whole process can be
realized through a composition of finite number of convenient
packings. Each step is of the form $\{ (q_1,p_1),(q_2,p_2) \} \succ
\{(q_1+q_2,p_1+p_2)\}$. Notice that either
$\frac{q_1}{p_1},\frac{q_2}{p_2} \leq \frac{1}{3}$ or
$\frac{q_1}{p_1},\frac{q_2}{p_2} \geq \frac{1}{3}$. By Lemma
\ref{packing}(2), one gets $\Delta^3(B^{(0)})=\Delta^3(B)$. The
proof for $\Delta^4$ is similar.

We consider now the step $B^{(n-1)} \succ B^{(n)}$. A direct
computation shows that
$$\begin{array} {l}\Delta^n(B^{(n-1)})-\Delta^n( B^{(n)})\\
= \sum_{s=1}^t m(\frac{j_s}{n})
(\Delta^n_{q_{i_s},p_{i_s}}+\Delta^n_{q_{i_s+1},p_{i_s+1}}-\Delta^n_{j_s,n})\\
=\sum_{s=1}^t m(\frac{j_s}{n})
(\Delta^n_{q_{i_s},p_{i_s}}+\Delta^n_{q_{i_s+1},p_{i_s+1}}-\Delta^n_{q_{i_s}+q_{i_s+1},p_{i_s}+p_{i_s+1}})\\
 =\sum_{s=1}^t m(\frac{j_s}{n})\\
=\epsilon_n(B). \end{array}$$

Finally, for any $j <n$, and suppose that $\frac{k+1}{j} \ge
\frac{q_{i_s}}{p_{i_s}}=w^{(n-1)}_{i_s} > \frac{k}{j}$ for some
$k$. Then $\frac{k+1}{j} \in S^{(n-1)}$ by definition. Thus
$\frac{q_{i_s+1}}{p_{i_s+1}}=w^{(n-1)}_{i_s+1} \ge \frac{k}{j}$. By
Lemma \ref{packing}, we have
$$
\Delta^j_{q_{i_s},p_{i_s}}+\Delta^j_{q_{i_s+1},p_{i_s+1}}=\Delta^j_{q_{i_s}+q_{i_s+1},p_{i_s}+p_{i_s+1}}.$$

The last statement is due to (ii) and the fact that $B=B^{(n)}$ for
a sufficiently large $n$. This completes the proof.
\end{proof}

Let us go back to the sequence (4.2)
$$ B^{(0)} \succ B^{(5)} \succ ... \succ B^{(n)} \succ ... \succ B.$$
We see that $\Delta^j (B^{(n)})= \Delta^j (B)$ for all $j <n$. Thus
$B^{(n)}$ can be viewed as an approximation of $B$ of degree $n$.
Also each approximation step $B^{(n-1)} \succ B^{(n)}$ is nothing
but the convenient packings of $\epsilon_n$ pairs of baskets of
type $(b,n)$ with $b$ coprime to $n$, $b\leq \frac{r}{2}$ and
$b>1$.
\medskip

The whole strategy of our method is that, given a basket $B$, we can
almost determine $B^{(n)}$ (for small $n$) in terms of  $P_m$ and
$\chi(\OO_X)$. Then we are able to recover $B$ from $B^{(n)}$
because there are only finitely many baskets dominated by $B^{(n)}$.
Finally we check whether those recovered baskets satisfy some
geometric constrains. This works very effectively as seen in next
sections.

\section{\bf Formal baskets}
Given a minimal 3-fold $X$ of general type, there is an associated basket $B:={\mathscr{B}}(X)$.
In Section 4, we have defined  the
invariants $\sigma(B)$, $\sigma'(B)$ and $\Delta^m(B)$ which satisfy the
equalities $(\partial)$. The main purpose of our studying baskets is
to classify ${\mathscr{B}}(X)$.
To this end, we will study in a slightly general way. {}From now on
within this section, we assume
$$B=\{(b_i,r_i)|i=1,\cdots, t; b_i, r_i >0 ;
b_i\leq \frac{r_i}{2}\}.$$

\begin{defn} Any basket $B$ as above is called {\it a normal basket.}
\end{defn}

Notice that, by the equalities $(\partial)$, all $P_m$ are
determined by $\sigma, \sigma'-K^3, \chi, \Delta^j$ for all $j <m$.
These, in turn, are determined by $B, \chi$ and $P_2$ by virtue of
the first equality in $(\partial)$. This leads us to consider  a
more general setting.

\begin{defn} Assume that $B$ is a normal basket, $\tilde{\chi}$ and  $\tilde{P_2}$ ($\geq
0$) are integers. The triple  ${\bf B}:=\{B, \tilde{\chi},
\tilde{P_2}\}$ is called a {\it formal basket}.
\end{defn}

First we define $P_2({\bf B}):=\tilde{P}_2$, $$P_3({\bf
B}):=-\sigma(B)+ 10 \tilde{\chi}+5\tilde{P}_2$$ and the volume
\begin{eqnarray*}
K^3({\bf B})&:=&\sigma'({B})-4 \tilde{\chi} -3\tilde{P}_2+P_3({\bf
B})\\
&=& -\sigma+\sigma'+6\tilde{\chi}+2\tilde{P}_2.
\end{eqnarray*}
For $m \geq 4$, the plurigenus $P_m({\bf B})$  is defined
inductively by {\small
$$P_{m+1}({\bf B})-P_m({\bf B}):=
\frac{m^2}{2}(K^3({\bf B})-\sigma'({B}))-2\tilde{\chi}
+\frac{m}{2} \sigma({B})+\Delta^m(B).\eqno{(5.1)}$$}

Clearly, by definition, $P_m({\bf B})$ is an integer for all $m\geq
4$ because $K^3({\bf B})-\sigma'({B})=-4 \tilde{\chi}
-3\tilde{P}_2+P_3({\bf B})$ and $\sigma=10
\tilde{\chi}+5\tilde{P}_2-P_3({\bf B})$ have the same parity.
Sometimes we even use the notations $K^3(B)$ and $P_m(B)$ to denote
the volume and plurigenus.

Given a minimal 3-fold $X$, one can associate to $X$ a triple ${\bf
B}(X):=\{B,\tilde{\chi}, \tilde{P_2}\} $ where $B=\mathscr{B}(X)$,
$\tilde{\chi}=\chi(\mathcal{O}_X)$ and $\tilde{P_2}= P_2(X)$. It's
clear that such a triple is a formal basket.

\begin{defn} A formal basket ${\bf B}$ is said to be {\it positive} if
$K^3({\bf B}) >0$. ${\bf B}$ is called {\it admissible} if
$P_m({\bf B})\geq 0$ for all $m\geq 2$. ${\bf B}$ is said
to be {\it geometric} if ${\bf B}:={\bf B}(X)$ for some
minimal 3-fold $X$.
\end{defn}


Let $X$ be a minimal 3-fold of general type. Because $\chi(\OO_X)$
is an integer, $K^3({\bf B}(X))=K_X^3>0$ and $P_m({\bf
B}(X))=P_m(X)\geq 0$ for all $m\geq 2$. It's clear that the formal
basket ${\bf B }(X)$ is admissible and positive. Therefore  it is
sufficient for us to classify admissible and positive formal
baskets. Indeed, it's enough to consider admissible and positive
formal basket with some additionally imposed geometric conditions.

The point of view of packing baskets allows us to classify
admissible baskets in an effective way.
In what follows, we only consider packings in the approximation
consideration as in 4.12. Thus all packings are convenient unless otherwise
stated.

\begin{setup}{\bf Notations.}
We assume that ${\bf B}=\{B, \tilde{\chi}, \tilde{P}_2\}$ is an
admissible and positive formal basket. For simplicity, we denote
$P_{m}({\bf B})$ by $\tilde{P}_m$ for all $m\geq 4$. Also denote
$K^3({\bf B})$ by $\tilde{K}^3$, $\sigma=\sigma(B)$,
$\sigma'=\sigma'(B)$ and $\Delta^m=\Delta^m(B)$.
\end{setup}

In what follows, we would like to classify positive admissible
formal baskets with given datum
$(\tilde{\chi},\tilde{P_2},\tilde{P_3},...,\tilde{P_m})$.

First of all, by the definition of $\tilde{K}^3$ and
$\tilde{P}_m$, we get:
$$\begin{array}{rl} \tau:=\sigma'-\tilde{K}^3&=   4 \tilde{\chi}+3\tilde{P}_2-\tilde{P}_3,\\
\sigma &=  10 \tilde{\chi} +5 \tilde{P}_2-\tilde{P}_3\\
\Delta^3&=  5 \tilde{\chi} +6\tilde{P}_2-4\tilde{P}_3+\tilde{P}_4\\
\Delta^4&=  14 \tilde{\chi} +14\tilde{P}_2-6\tilde{P}_3-\tilde{P}_4+\tilde{P}_5\\
\Delta^5 &=27 \tilde{\chi} +25\tilde{P}_2- 10 \tilde{P}_3 - \tilde{P}_5 + \tilde{P}_6  \\
\Delta^6 &= 44 \tilde{\chi} +39\tilde{P}_2- 15 \tilde{P}_3- \tilde{P}_6+\tilde{P}_7    \\
\Delta^7&= 65 \tilde{\chi} +56\tilde{P}_2- 21 \tilde{P}_3 - \tilde{P}_7 + \tilde{P}_8 \\
\Delta^8&= 90 \tilde{\chi} +76\tilde{P}_2- 28 \tilde{P}_3 - \tilde{P}_8 + \tilde{P}_9 \\
\Delta^9&= 119 \tilde{\chi} +99\tilde{P}_2- 36 \tilde{P}_3  - \tilde{P}_9 +\tilde{P}_{10} \\
\Delta^{10}&= 152 \tilde{\chi} +125\tilde{P}_2- 45 \tilde{P}_3 - \tilde{P}_{10} + \tilde{P}_{11} \\
\Delta^{11}&= 189 \tilde{\chi} +154\tilde{P}_2- 55 \tilde{P}_3 - \tilde{P}_{11} +\tilde{P}_{12}  \\
\Delta^{12}&=  230 \tilde{\chi} +186\tilde{P}_2- 66 \tilde{P}_3- \tilde{P}_{12} +\tilde{P}_{13}  \\
\end{array}
$$
Recall that $B^{(0)}=\{n^0_{1,2} \times (1,2),\cdots,
n^0_{1,r} \times (1,r)\}$ is the initial basket of $B$.
Then by Lemma \ref{delta} and the definition of
$\sigma(B)$, we have
$$ \begin{array}{l} \sigma(B)= \sigma(B^{(0)})=\sum n^0_{1,r},\\
\Delta^3(B)= \Delta^3(B^{(0)})= n^0_{1,2} \\
\Delta^4(B)=\Delta^4(B^{(0)})=2 n^0_{1,2}+ n^0_{1,3}
\end{array}$$
Therefore, the initial basket has the coefficients:
$$B^{(0)} \left\{ \begin{array}{l}
n^0_{1,2}=5 \tilde{\chi}+ 6 \tilde{P}_2 -4 \tilde{P}_3 +\tilde{P}_4\\
n^0_{1,3}=4 \tilde{\chi} +2 \tilde{P}_2+2\tilde{P}_3-3 \tilde{P}_4 +\tilde{P}_5\\
n^0_{1,4}=\tilde{\chi} -3 \tilde{P}_2+\tilde{P}_3+2\tilde{P}_4-\tilde{P}_5- \sum_{r \ge 5} n^0_{1,r}\\
n^0_{1,r}=n^0_{1,r}, r \ge 5
\end{array} \right.
$$
By Lemma \ref{delta}, we see
\begin{eqnarray*}\epsilon_5&:=&\Delta^5(B^{(0)})-\Delta^{5}(B)=4
n^0_{1,2}+2n^0_{1,3}+n^0_{1,4}-\Delta^{5}(B)\\
&=&2 \tilde{\chi}- \tilde{P}_3 +2 \tilde{P}_5
-\tilde{P}_6-\sigma_5,
\end{eqnarray*}
where $\sigma_5:= \sum_{r \ge 5} n^0_{1,r}$. Thus we can
write {\footnotesize
$$B^{(5)}=\{ n^5_{1,2} \times (1,2),n^5_{2,5} \times (2,5), n^5_{1,3}
\times (1,3), n^5_{1,4}  \times (1,4), n^5_{1,5} \times
(1,5),\cdots\}$$} with
$$B^{(5)}\left\{
 \begin{array}{l}
  n^5_{1,2}=3 \tilde{\chi} +6\tilde{P}_2- 3 \tilde{P}_3 + \tilde{P}_4 - 2 \tilde{P}_5 + \tilde{P}_6+ \sigma_5 ,\\
 n^5_{2,5}= 2 \tilde{\chi}-\tilde{P}_3 +   2 \tilde{P}_5  - \tilde{P}_6- \sigma_5 \\
 n^5_{1,3}= 2 \tilde{\chi}+2\tilde{P}_2+ 3 \tilde{P}_3- 3 \tilde{P}_4 -\tilde{P}_5 + \tilde{P}_6   + \sigma_5 , \\
 n^5_{1,4}= \tilde{\chi} -3\tilde{P}_2 +  \tilde{P}_3 +2 \tilde{P}_4 -\tilde{P}_5 -\sigma_5 \\
  n^5_{1,r}=n^0_{1,r}, r \ge 5
\end{array} \right.$$ noting that this is obtained by
convenient packing $\{(1,2), (1,3)\}\succ \{(2,5)\}$.

Clearly, $B^{(5)}=B^{(6)}$ by our construction. Thus by Lemma
\ref{delta} we have
$\Delta^6(B^{(5)})=\Delta^6(B^{(6)})=\Delta^6(B)$. Computation
shows that
\begin{eqnarray*}
\Delta^6(B^{(5)})&=&6n^5_{1,2}+9n^5_{2,5}+3n^5_{1,3}+2n^5_{1,4}+n^5_{1,5}\\
&=&
44\tilde{\chi}+36\tilde{P}_2-16\tilde{P}_3+\tilde{P}_4+
\tilde{P}_5-\epsilon,
\end{eqnarray*}
where $$\epsilon:=n^0_{1,5}+2 \sum_{r \ge 6} n^0_{1,r} = 2
\sigma_5-n^0_{1,5}.$$ So we see
$$
\epsilon_6:=-3\tilde{P}_2-\tilde{P}_3+\tilde{P}_4+\tilde{P}_5+\tilde{P}_6-\tilde{P}_7-\epsilon=0.
\eqno(5.2)$$
Next, we compute
$$\begin{array}{ll}\epsilon_7:&=\Delta^7(B^{(6)})-\Delta^7(B)=\Delta^7(B^{(5)})-\Delta^7(B)\\
&=9n^5_{1,2}+13n^5_{2,5}+5n^5_{1,3}+3n^5_{1,4}+2n^5_{1,5}+n^5_{1,6}-\Delta^7(B)\\
&=\tilde{\chi}-\tilde{P}_2-\tilde{P}_3+\tilde{P}_6+\tilde{P}_7-\tilde{P}_8-2\sigma_5+2n^0_{1,5}+n^0_{1,6}.
\end{array}$$
Since $S^{(7)}-S^{(6)}=\{\frac{2}{7},\frac{3}{7}\}$, there are two
ways of packing into basket of type $(b,7)$. Let $\eta \geq 0$ be
the number of packing $\{(1,3),(1,4)\} \succ \{(2,7)\}$. Then
$\epsilon_7-\eta \ge 0$ is the number of packing $\{(1,2),(2,5)\}
\succ \{(3,7)\}$. Thus we can write $B^{(7)}=\{ n^7_{b,r} \times
(b,r)\}_{\frac{b}{r} \in S^{(7)}}$ with {\small
$$ B^{(7)}\left\{
\begin{array}{l}
n^7_{1,2} =2 \tilde{\chi} +7\tilde{P}_2 - 2 \tilde{P}_3 +\tilde{P}_4  - 2 \tilde{P}_5 - \tilde{P}_7+ \tilde{P}_8+3 \sigma_5-2n^0_{1,5}-n^0_{1,6} + \eta  \\
 n^7_{3,7} =\tilde{\chi}-\tilde{P}_2-\tilde{P}_3+\tilde{P}_6+\tilde{P}_7-\tilde{P}_8-2\sigma_5+2n^0_{1,5}+n^0_{1,6}- \eta\\
 n^7_{2,5} = \tilde{\chi} +\tilde{P}_2+ 2 \tilde{P}_5 - 2 \tilde{P}_6 - \tilde{P}_7+ \tilde{P}_8 + \sigma_5-2n^0_{1,5}-n^0_{1,6}  + \eta \\
 n^7_{1,3} =2 \tilde{\chi} +2\tilde{P}_2+ 3 \tilde{P}_3 - 3 \tilde{P}_4 -\tilde{P}_5 + \tilde{P}_6  + \sigma_5  - \eta\\
n^7_{2,7} = \eta\\
n^7_{1,4} = \tilde{\chi} -3\tilde{P}_2+ \tilde{P}_3 + 2 \tilde{P}_4 - \tilde{P}_5 - \sigma_5 - \eta\\
n^7_{1,r}=n^0_{1,r}, r \ge 5
\end{array} \right.$$}

From $B^{(7)}$, we can compute $\epsilon_8$ and then $B^{(8)}$, and
inductively $B^{(n)}$ for all $n\geq 9$. But notice that one can
even compute $\epsilon_9$, $\epsilon_{10}$ and $\epsilon_{12}$
directly from $B^{(7)}$, thanks to Lemma \ref{packing}.

To see this, let's consider
$\epsilon_9:=\Delta^9(B^{(8)})-\Delta^9(B)$ for example. Note that
$B^{(7)} \succ B^{(8)}$ is obtained by some convenient packing into
$\{(b,8)\}$, which is $\{(3,8)\}$. And every such packing, which is
$\{(2,5),(1,3)\} \succ \{(3,8)\}$, happens inside a closed interval
$[ \frac{3}{9}, \frac{4}{9}]$. Thus by Lemma \ref{packing}(2),
$\Delta^9(B^{(8)})=\Delta^9(B^{(7)})$. Similarly we can see
$\Delta^{10}(B^{(9)})=\Delta^{10}(B^{(7)})$ and
$\Delta^{12}(B^{(10)})=\Delta^{12}(B^{(7)})$. Unfortunately,
$B^{11}(B^{(10)})\neq \Delta^{11}(B^{(7)})$.

In summary, we have:{\small
$$ \begin{array}{lll}
\Delta^8(B^{(7)})& =&12 n^7_{1,2}+30 n^7_{3,7}+18 n^7_{2,5}+ 7
n^7_{1,3}+11
n^7_{2,7}+4 n^7_{1,4}\\
&&+3 n^7_{1,5}+2 n^7_{1,6}+n^7_{1,7} \\
&=& 90 \tilde{\chi}+74\tilde{P}_2 -29\tilde{P}_3-\tilde{P}_4+\tilde{P}_5+\tilde{P}_6 -3\sigma_5\\
&&+3 n^0_{1,5}+2 n^0_{1,6}+n^0_{1,7}; \\
\Delta^9(B^{(8)})& =&\Delta^9(B^{(7)})\\
& =&16 n^7_{1,2}+39 n^7_{3,7}+24 n^7_{2,5}+ 9 n^7_{1,3}+15
n^7_{2,7}+6 n^7_{1,4} \\
&&+4 n^7_{1,5}+3n^7_{1,6}+2n^7_{1,7}+n^7_{1,8} \\
&= &119\tilde{\chi}+97\tilde{P}_2 -38\tilde{P}_3+\tilde{P}_4+\tilde{P}_5-\tilde{P}_7+\tilde{P}_8-3 \sigma_5+\eta\\
&&+2 n^0_{1,5}+2n^0_{1,6}+2n^0_{1,7}+n^0_{1,8}; \\
\Delta^{10}(B^{(9)}) &=&\Delta^{10}(B^{(8)})=\Delta^{10}(B^{(7)})\\
& =&20 n^7_{1,2}+50 n^7_{3,7}+30 n^7_{2,5}+ 12 n^7_{1,3}+19
n^7_{2,7}+8 n^7_{1,4}\\
&&+5 n^7_{1,5}+4n^7_{1,6}+3n^7_{1,7}+2n^7_{1,8}+n^7_{1,9} \\
&=& 152\tilde{\chi} +120\tilde{P}_2-46\tilde{P}_3+2\tilde{P}_6-6\sigma_5-\eta \\
&& +5n^0_{1,5}+4n^0_{1,6}+3n^0_{1,7}+2n^0_{1,8}+n^0_{1,9};\\
\Delta^{12}(B^{(11)})&=&\Delta^{12}(B^{(10)})=...=\Delta^{12}(B^{(7)})\\
& =&30 n^7_{1,2}+75 n^7_{3,7}+46 n^7_{2,5}+ 18 n^7_{1,3}+30
n^7_{2,7}+12 n^7_{1,4}\\
&&+9 n^7_{1,5}+6n^7_{1,6}+5n^7_{1,7}+4n^7_{1,8}+3n^7_{1,9}+2n^7_{1,10}+n^7_{1,11} \\
&=& 229\tilde{\chi}+181\tilde{P}_2 -69\tilde{P}_3+2\tilde{P}_5+\tilde{P}_6-\tilde{P}_7+\tilde{P}_8-8 \sigma_5+\eta\\
&&+7 n^0_{1,5}+5n^0_{1,6}+5n^0_{1,7}+4n^0_{1,8}+3n^0_{1,9}+2n^0_{1,10}+n^0_{1,11}. \\
\end{array} $$}

We thus have:{\small
$$
\begin{array}{ll}
 \epsilon_{8} =& -2\tilde{P}_2-\tilde{P}_3-\tilde{P}_4+\tilde{P}_5+\tilde{P}_6+\tilde{P}_8-\tilde{P}_9-3\sigma_5\\
&+3 n^0_{1,5}+2 n^0_{1,6}+n^0_{1,7}; \\
\epsilon_{9} =&-2\tilde{P}_2-2\tilde{P}_3+\tilde{P}_4+\tilde{P}_5-\tilde{P}_7+\tilde{P}_8+\tilde{P}_9-\tilde{P}_{10} -3 \sigma_5+\eta\\
&+2 n^0_{1,5}+2n^0_{1,6}+2n^0_{1,7}+n^0_{1,8}; \\
 \epsilon_{10} =& -5\tilde{P}_2-\tilde{P}_3+2\tilde{P}_6+\tilde{P}_{10}-\tilde{P}_{11}-6\sigma_5 -\eta\\
 &+5n^0_{1,5}+4n^0_{1,6}+3n^0_{1,7}+2n^0_{1,8}+n^0_{1,9};\\
\epsilon_{12}=
&-\tilde{\chi}-5\tilde{P}_2-3\tilde{P}_3+2\tilde{P}_5+\tilde{P}_6-\tilde{P}_7+\tilde{P}_8+\tilde{P}_{12}-\tilde{P}_{13}-8
  \sigma_5+\eta;\\
  &+7 n^0_{1,5}+5n^0_{1,6}+5n^0_{1,7}+4n^0_{1,8}+3n^0_{1,9}+2n^0_{1,10}+n^0_{1,11}. \\
\end{array}
$$}\\
Since both $\epsilon_{10}$ and $\epsilon_{12}$ are
non-negative, we have $\epsilon_{10}+\epsilon_{12} \geq 0$.
This gives rise to:
$$2\tilde{P}_5+3\tilde{P}_6+\tilde{P}_8+\tilde{P}_{10}+\tilde{P}_{12} \ge \tilde{\chi} +10
\tilde{P}_2+4\tilde{P}_3+\tilde{P}_7+\tilde{P}_{11}+\tilde{P}_{13}+R,
\eqno{(5.3)}$$ where {\small
$$R:=14 \sigma_5-12 n^0_{1,5}-9
n^0_{1,6}-8n^0_{1,7}-6n^0_{1,8}-4n^0_{1,9}-2n^0_{1,10}-n^0_{1,11}$$
$$ =
2n^0_{1,5}+5n^0_{1,6}+6n^0_{1,7}+8n^0_{1,8}+10n^0_{1,9}+12n^0_{1,10}+13n^0_{1,11}+14
\sum_{r \ge 12} n^0_{1,r}.$$}

The equation $(5.2)$ and the inequality $(5.3)$ will play
important roles in the following classification.

In practice, we will frequently end up with situations satisfying
the following assumption and then our computation will be
comparatively simpler.

\begin{setup}\label{r6} {\bf Assumption.} $\tilde{P}_2=0$ and
$n^0_{1,r}=0$ for all $r \ge 6$.
\end{setup}

Under Assumption \ref{r6}, we list our datum in details as
follows. First,
$$\epsilon_7=\tilde{\chi}-\tilde{P}_3+\tilde{P}_6+
\tilde{P}_7-\tilde{P}_8
$$ and
$B^{(7)}=\{ n^7_{b,r} \times (b,r)\}_{\frac{b}{r}
\in S^{(7)}}$
 has coefficients:
$$B^{(7)} \left\{
\begin{array}{l}
n^7_{1,2} =2 \tilde{\chi}  - 2 \tilde{P}_3 +\tilde{P}_4  - 2 \tilde{P}_5 - \tilde{P}_7+ \tilde{P}_8+ n^0_{1,5} + \eta  \\
 n^7_{3,7} =\tilde{\chi}-\tilde{P}_3+\tilde{P}_6+\tilde{P}_7-\tilde{P}_8- \eta\\
 n^7_{2,5} = \tilde{\chi}+ 2 \tilde{P}_5 - 2 \tilde{P}_6 - \tilde{P}_7+ \tilde{P}_8 -n^0_{1,5}  + \eta \\
 n^7_{1,3} =2 \tilde{\chi} + 3 \tilde{P}_3 - 3 \tilde{P}_4 -\tilde{P}_5 + \tilde{P}_6  + n^0_{1,5}  - \eta\\
n^7_{2,7} = \eta\\
n^7_{1,4} = \tilde{\chi} + \tilde{P}_3 + 2 \tilde{P}_4 - \tilde{P}_5 - n^0_{1,5} - \eta\\
n^7_{1,5}=n^0_{1,5}.
\end{array} \right.$$
We have already known
$$\epsilon_8 =
-\tilde{P}_3-\tilde{P}_4+\tilde{P}_5+\tilde{P}_6+\tilde{P}_8-\tilde{P}_9.$$
Thus, taking some convenient packing into account,
$B^{(8)}=\{ n^8_{b,r} \times (b,r)\}_{\frac{b}{r} \in
S^{(8)}}$ has the coefficients:
$$B^{(8)} \left\{
\begin{array}{l}
n^8_{1,2} =2 \tilde{\chi}  - 2 \tilde{P}_3 +\tilde{P}_4  - 2 \tilde{P}_5 - \tilde{P}_7+ \tilde{P}_8+n^0_{1,5} + \eta  \\
 n^8_{3,7} =\tilde{\chi}  -\tilde{P}_3 +  \tilde{P}_6 + \tilde{P}_7- \tilde{P}_8  - \eta\\
 n^8_{2,5} = \tilde{\chi} +\tilde{P}_3+\tilde{P}_4+  \tilde{P}_5 - 3 \tilde{P}_6 - \tilde{P}_7+ \tilde{P}_9- n^0_{1,5}  + \eta \\
 n^8_{3,8}=-\tilde{P}_3-\tilde{P}_4+\tilde{P}_5+\tilde{P}_6+\tilde{P}_8-\tilde{P}_9\\
 n^8_{1,3} =2 \tilde{\chi} + 4 \tilde{P}_3 - 2 \tilde{P}_4 -2\tilde{P}_5 - \tilde{P}_8 +\tilde{P}_9 + n^0_{1,5}  - \eta\\
n^8_{2,7} = \eta\\
n^8_{1,4} = \tilde{\chi} + \tilde{P}_3 + 2 \tilde{P}_4 - \tilde{P}_5 -n^0_{1,5} - \eta\\
n^8_{1,5}=n^0_{1,5}.
\end{array} \right.$$
We know that
$$ \epsilon_9=-2\tilde{P}_3+\tilde{P}_4+\tilde{P}_5
-\tilde{P}_7+\tilde{P}_8+\tilde{P}_9-\tilde{P}_{10}
-n^0_{1,5}+\eta.$$ Moreover
$S^{(9)}-S^{(8)}=\{\frac{4}{9},\frac{2}{9}\}$. Let $\zeta$ be the
number of packings $\{(1,2),(3,7)\} \succ \{(4,9)\}$, then the
number of $\{(1,4),(1,5)\} \succ \{(2,9)\}$ packings is
$\epsilon_9-\zeta$ . We can get $B^{(9)}$ consisting of the
following coefficients.
$$B^{(9)} \left\{
\begin{array}{l}
n^9_{1,2} =2 \tilde{\chi}  - 2 \tilde{P}_3 +\tilde{P}_4  - 2 \tilde{P}_5 - \tilde{P}_7+ \tilde{P}_8+n^0_{1,5} + \eta-\zeta  \\
n^9_{4,9}=\zeta \\
 n^9_{3,7} =\tilde{\chi}  -\tilde{P}_3 +  \tilde{P}_6 + \tilde{P}_7- \tilde{P}_8  - \eta-\zeta\\
 n^9_{2,5} = \tilde{\chi} +\tilde{P}_3+\tilde{P}_4+  \tilde{P}_5 - 3 \tilde{P}_6 - \tilde{P}_7+ \tilde{P}_9- n^0_{1,5}  + \eta \\
 n^9_{3,8}=-\tilde{P}_3-\tilde{P}_4+\tilde{P}_5+\tilde{P}_6+\tilde{P}_8-\tilde{P}_9\\
 n^9_{1,3} =2 \tilde{\chi} + 4 \tilde{P}_3 - 2 \tilde{P}_4 -2\tilde{P}_5 - \tilde{P}_8 +\tilde{P}_9 + n^0_{1,5}  - \eta\\
n^9_{2,7} = \eta\\
n^9_{1,4} = \tilde{\chi} + 3\tilde{P}_3 +  \tilde{P}_4 - 2\tilde{P}_5+\tilde{P}_7-\tilde{P}_8 -\tilde{P}_9+\tilde{P}_{10} - 2\eta+\zeta\\
n^9_{2,9}= -2\tilde{P}_3+\tilde{P}_4+\tilde{P}_5-\tilde{P}_7+\tilde{P}_8+\tilde{P}_9-\tilde{P}_{10} -n^0_{1,5}+\eta - \zeta\\
n^9_{1,5}=2\tilde{P}_3-\tilde{P}_4-
\tilde{P}_5+\tilde{P}_7- \tilde{P}_8 -\tilde{P}_9+
\tilde{P}_{10}+2n^0_{1,5}-\eta+\zeta
\end{array} \right.$$

One has $$ \epsilon_{10} =
-\tilde{P}_3+2\tilde{P}_6+\tilde{P}_{10}-\tilde{P}_{11}-n^0_{1,5}
-\eta$$ and then $B^{(10)}$ consists of following
coefficients:
$$B^{(10)} \left\{
\begin{array}{l}
n^{10}_{1,2} =2 \tilde{\chi}  - 2 \tilde{P}_3 +\tilde{P}_4  - 2 \tilde{P}_5 - \tilde{P}_7+ \tilde{P}_8+n^0_{1,5} + \eta-\zeta  \\
n^{10}_{4,9}=\zeta \\
 n^{10}_{3,7} =\tilde{\chi}  -\tilde{P}_3 +  \tilde{P}_6 + \tilde{P}_7- \tilde{P}_8  - \eta-\zeta\\
 n^{10}_{2,5} = \tilde{\chi} +\tilde{P}_3+\tilde{P}_4+  \tilde{P}_5 - 3 \tilde{P}_6 - \tilde{P}_7+ \tilde{P}_9- n^0_{1,5} + \eta \\
 n^{10}_{3,8}=-\tilde{P}_3-\tilde{P}_4+\tilde{P}_5+\tilde{P}_6+\tilde{P}_8-\tilde{P}_9\\
 n^{10}_{1,3} =2 \tilde{\chi} + 5 \tilde{P}_3 - 2 \tilde{P}_4 -2\tilde{P}_5 -2\tilde{P}_6- \tilde{P}_8 +\tilde{P}_9 -\tilde{P}_{10}+\tilde{P}_{11}+ 2n^0_{1,5} \\
 n^{10}_{3,10}=-\tilde{P}_3+2\tilde{P}_6+\tilde{P}_{10}-\tilde{P}_{11} -n^0_{1,5}-\eta \\
n^{10}_{2,7} = \tilde{P}_3-2\tilde{P}_6-\tilde{P}_{10}+\tilde{P}_{11} +n^0_{1,5}+2\eta\\
n^{10}_{1,4} = \tilde{\chi} + 3\tilde{P}_3 +\tilde{P}_4 -2\tilde{P}_5 +\tilde{P}_7-\tilde{P}_8-\tilde{P}_9+\tilde{P}_{10} -2\eta + \zeta\\
n^{10}_{2,9}= -2\tilde{P}_3+\tilde{P}_4+\tilde{P}_5-\tilde{P}_7+\tilde{P}_8+\tilde{P}_9-\tilde{P}_{10} -n^0_{1,5}+\eta - \zeta\\
n^{10}_{1,5}=2\tilde{P}_3-\tilde{P}_4-
\tilde{P}_5+\tilde{P}_7- \tilde{P}_8 -\tilde{P}_9+
\tilde{P}_{10}+2n^0_{1,5}-\eta+\zeta
\end{array} \right.$$

By computing $\Delta^{11}(B^{(10)})$, we get
$$ \epsilon_{11}= \tilde{\chi}-\tilde{P}_3+\tilde{P}_4-\tilde{P}_7+\tilde{P}_9+\tilde{P}_{11}-\tilde{P}_{12}-n^0_{1,5}-\zeta.$$
Let $\alpha$ be the number of packing $\{(1,2),(4,9)\} \succ
\{(5,11)\}$ and $\beta$ be the number of packing $\{(1,3),(3,8)\}
\succ \{(4,11)\}$. Then we get $B^{(11)}$ with{\footnotesize
$$B^{(11)}\left\{
\begin{array}{l}
 n^{11}_{1,2} =  2 \tilde{\chi}- 2 \tilde{P}_3+\tilde{P}_4- 2\tilde{P}_5 - \tilde{P}_7+ \tilde{P}_8+ n^0_{1,5} + \eta   - \zeta  - \alpha \\
  n^{11}_{5,11} = \alpha \\
  n^{11}_{4,9} = \zeta - \alpha\\
  n^{11}_{3,7} = \tilde{\chi}- \tilde{P}_3 +\tilde{P}_6+  \tilde{P}_7 -  \tilde{P}_8-  \eta - \zeta     \\
    n^{11}_{2,5} =  \tilde{\chi} + \tilde{P}_3+ \tilde{P}_4+ \tilde{P}_5 - 3 \tilde{P}_6-\tilde{P}_7  + \tilde{P}_9  -n^0_{1,5}+ \eta \\
  n^{11}_{3,8} = - \tilde{P}_3-\tilde{P}_4  + \tilde{P}_5 + \tilde{P}_6 + \tilde{P}_8- \tilde{P}_9  - \beta\\
  n^{11}_{4,11} = \beta\\
  n^{11}_{1,3} = 2 \tilde{\chi} +5 \tilde{P}_3 - 2 \tilde{P}_4-2 \tilde{P}_5- 2 \tilde{P}_6 - \tilde{P}_8 + \tilde{P}_9- \tilde{P}_{10}+ \tilde{P}_{11}+ 2 n^0_{1,5}-\beta\\
n^{11}_{3,10}= -\tilde{P}_3 + 2 \tilde{P}_6 + \tilde{P}_{10}- \tilde{P}_{11} - n^0_{1,5} - \eta \\
  n^{11}_{2,7} = - \tilde{\chi}+ 2 \tilde{P}_3- \tilde{P}_4- 2 \tilde{P}_6 + \tilde{P}_7-\tilde{P}_9- \tilde{P}_{10}+ \tilde{P}_{12}+ 2 n^0_{1,5}+ 2\eta+\zeta+ \alpha + \beta\\
  n^{11}_{3,11} = \tilde{\chi} - \tilde{P}_3 + \tilde{P}_4 - \tilde{P}_7 + \tilde{P}_9 + \tilde{P}_{11}- \tilde{P}_{12}- n^0_{1,5} - \zeta   - \alpha - \beta\\
  n^{11}_{1,4} = 4 \tilde{P}_3- 2 \tilde{P}_5+2\tilde{P}_7-\tilde{P}_8-2\tilde{P}_9+ \tilde{P}_{10}-\tilde{P}_{11}+ \tilde{P}_{12}+n^0_{1,5}- 2\eta + 2\zeta + \alpha + \beta\\
 n^{11}_{2,9}=  - 2 \tilde{P}_3 + \tilde{P}_4+ \tilde{P}_5- \tilde{P}_7 + \tilde{P}_8+ \tilde{P}_9- \tilde{P}_{10}- n^0_{1,5}+\eta -\zeta\\
n^{11}_{1,5}=2\tilde{P}_3-
\tilde{P}_4-\tilde{P}_5+\tilde{P}_7- \tilde{P}_8
-\tilde{P}_9+ \tilde{P}_{10}+2n^0_{1,5}-\eta+\zeta
\end{array}
\right. $$}

Finally since $$
  \epsilon_{12}= -\tilde{\chi}-3\tilde{P}_3+2\tilde{P}_5+\tilde{P}_6
  -\tilde{P}_7+\tilde{P}_8+\tilde{P}_{12}-\tilde{P}_{13}-
  n_{1,5}^0+\eta.$$ we get $B^{(12)}$ with{\footnotesize
$$B^{(12)}\left\{
\begin{array}{l}
n^{12}_{1,2} =  2 \tilde{\chi}- 2 \tilde{P}_3+\tilde{P}_4- 2\tilde{P}_5 - \tilde{P}_7+ \tilde{P}_8+ n^0_{1,5} + \eta   - \zeta  - \alpha \\
  n^{12}_{5,11} = \alpha \\
  n^{12}_{4,9} = \zeta - \alpha\\
  n^{12}_{3,7} = 2 \tilde{\chi}+2 \tilde{P}_3 - 2 \tilde{P}_5+ 2 \tilde{P}_7 - 2 \tilde{P}_8-\tilde{P}_{12}+ \tilde{P}_{13}- 2 \eta - \zeta  + n^0_{1,5}   \\
  n^{12}_{5,12} =  -\tilde{\chi} - 3 \tilde{P}_3+ 2 \tilde{P}_5  + \tilde{P}_6 - \tilde{P}_7 + \tilde{P}_8 + \tilde{P}_{12}-\tilde{P}_{13}+ \eta -n^0_{1,5} \\
  n^{12}_{2,5} = 2 \tilde{\chi} +4 \tilde{P}_3+ \tilde{P}_4- \tilde{P}_5 - 4 \tilde{P}_6 - \tilde{P}_8 + \tilde{P}_9 - \tilde{P}_{12} + \tilde{P}_{13} \\
  n^{12}_{3,8} = - \tilde{P}_3-\tilde{P}_4  + \tilde{P}_5 + \tilde{P}_6 + \tilde{P}_8- \tilde{P}_9  - \beta\\
  n^{12}_{4,11} = \beta\\
  n^{12}_{1,3} = 2 \tilde{\chi} +5 \tilde{P}_3 - 2 \tilde{P}_4-2 \tilde{P}_5- 2 \tilde{P}_6 - \tilde{P}_8 + \tilde{P}_9- \tilde{P}_{10}+ \tilde{P}_{11}+ 2 n^0_{1,5}-\beta\\
n^{12}_{3,10}= -\tilde{P}_3 + 2 \tilde{P}_6 + \tilde{P}_{10}- \tilde{P}_{11} - n^0_{1,5}- \eta  \\
  n^{12}_{2,7} = - \tilde{\chi}+ 2 \tilde{P}_3- \tilde{P}_4- 2 \tilde{P}_6 + \tilde{P}_7-\tilde{P}_9- \tilde{P}_{10}+ \tilde{P}_{12}+ 2 n^0_{1,5}+ 2\eta+\zeta+ \alpha + \beta\\
  n^{12}_{3,11} = \tilde{\chi} - \tilde{P}_3 + \tilde{P}_4 - \tilde{P}_7 + \tilde{P}_9 + \tilde{P}_{11}- \tilde{P}_{12}- n^0_{1,5} - \zeta   - \alpha - \beta\\
  n^{12}_{1,4} = 4 \tilde{P}_3- 2 \tilde{P}_5+2\tilde{P}_7-\tilde{P}_8-2\tilde{P}_9+ \tilde{P}_{10}-\tilde{P}_{11}+ \tilde{P}_{12}+n^0_{1,5}- 2\eta + 2\zeta + \alpha + \beta\\
 n^{12}_{2,9}=  - 2 \tilde{P}_3 + \tilde{P}_4+ \tilde{P}_5- \tilde{P}_7 + \tilde{P}_8+ \tilde{P}_9- \tilde{P}_{10}- n^0_{1,5}+\eta -\zeta\\
n^{12}_{1,5}=2\tilde{P}_3- \tilde{P}_4-\tilde{P}_5+\tilde{P}_7-
\tilde{P}_8 -\tilde{P}_9+ \tilde{P}_{10}+2n^0_{1,5}-\eta+\zeta
\end{array}
\right. $$}

To recall the meaning of several symbols, $\eta$ is the
number of packing $\{(1,3),(1,4)\} \succ \{(2,7)\}$,
$\zeta$ is the number of packing $\{(1,2),(3,7)\} \succ
\{(4,9)\}$,  $\alpha$ is the number of packing
$\{(1,2),(4,9)\} \succ \{(5,11)\}$ and $\beta$ is the
number of packing $\{(1,3),(3,8)\} \succ \{(4,11)\}$.

\section{\bf Classification of baskets  with $\chi=1$}
Assume that $X$ is a minimal 3-fold of general type with
$\chi(\mathcal{O}_X) =1$. By Theorem \ref{volume}, the canonical
volume $K_X^3$ is bounded from below when $P_{m}\geq 2$ for some $m\leq 6$.
It is thus sufficient to consider geometric formal basket under the following:

\begin{setup}\label{p6}{\bf Assumption.} Assume that $B$ is
a geometric basket on $X$ and $P_m(X) \leq 1$ for $m \leq
6$.
\end{setup}

In fact, one has the following geometric condition.

\begin{lem}\label{p2m} Let $X$ be a minimal 3-fold of general type with
$\chi(\mathcal{O}_X) =1$. Then $P_{m+2}\geq P_m+P_2$ for all $m\geq
2$.
\end{lem}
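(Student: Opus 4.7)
The plan is to deduce the inequality directly from Reid's plurigenus formula (4.1). Applying it to $P_{m+2}(X)$, $P_m(X)$ and $P_2(X)$, the elementary polynomial identities
$$\tfrac{1}{12}\bigl[(m{+}2)(m{+}1)(2m{+}3) - m(m{-}1)(2m{-}1) - 6\bigr] = m(m+1), \quad (2m{+}3) - (2m{-}1) - 3 = 1$$
give
$$P_{m+2} - P_m - P_2 = m(m+1)K_X^3 - \chi(\mathcal{O}_X) + \bigl[l(m+2) - l(m) - l(2)\bigr].$$
Since $\chi(\mathcal{O}_X) = 1$ by hypothesis, this reduces to
$$P_{m+2} - P_m - P_2 = m(m+1)K_X^3 - 1 + \bigl[l(m+2) - l(m) - l(2)\bigr]. \qquad (*)$$

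The crux is to prove that the basket correction term $l(m+2) - l(m) - l(2)$ is non-negative. By additivity it suffices to verify this for each single basket $Q = (b,r) \in \mathscr{B}(X)$ separately. Writing $a := \overline{mb}$ and $c := \overline{(m+1)b}$, a direct expansion of the definition of $l_Q$ yields
$$l_Q(m+2) - l_Q(m) - l_Q(2) = \frac{a(r-a) + c(r-c) - b(r-b)}{2r}.$$
Distinguishing the two possibilities, either $a + b < r$ (so that $c = a+b$), or $a+b \geq r$ (so that $c = a+b-r$), a short algebraic simplification reduces the numerator to $2a(r-a-b)$ or $2c(r-a)$ respectively; both expressions are manifestly non-negative since $0 \leq a, c < r$ and $b \leq r/2$. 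Summing over all $Q$ yields $l(m+2) - l(m) - l(2) \geq 0$.

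Finally, since $X$ is of general type and minimal, $K_X^3 > 0$, and therefore $m(m+1)K_X^3 > 0$ for every $m \geq 2$. Combined with the non-negativity just established, identity $(*)$ gives $P_{m+2} - P_m - P_2 > -1$. Because the left hand side is an integer, we conclude $P_{m+2} - P_m - P_2 \geq 0$, as desired. The only mildly delicate step is the two-case verification of the non-negativity of $l_Q(m+2) - l_Q(m) - l_Q(2)$; this is routine but should be written out cleanly, and no other input beyond Reid's formula and the positivity of $K_X^3$ is needed.
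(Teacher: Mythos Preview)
Your proof is correct and follows essentially the same approach as the paper: both derive the identity $P_{m+2}-P_m-P_2=m(m+1)K_X^3-1+\bigl(l(m+2)-l(m)-l(2)\bigr)$ from Reid's formula and conclude via integrality once the correction term is known to be non-negative. The only difference is that the paper cites \cite[Lemma 3.1]{Flt} for the inequality $l(m+2)-l(m)-l(2)\geq 0$, whereas you supply a direct two-case computation for each single basket; your verification is clean and makes the argument self-contained.
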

\begin{proof} By Reid's formula (4.1), we have
$$P_{m+2}-P_m-P_2=(m^2+m)K_X^3-\chi(\OO_X)+(l(m+2)-l(m)-l(2)).$$
By \cite[Lemma 3.1]{Flt}, one sees $l(m+2)-l(m)-l(2)\geq 0$. Because
$K_X^3>0$ and $\chi(\OO_X)=1$, we have $P_{m+2}- P_m-P_2
> -1$. The Lemma now follows.
\end{proof}

We consider the formal basket
$${\bf B}:=\{B, \chi(\OO_X), P_2\}.$$
Because $B$ is geometric and $K^3({\bf B})=K_X^3>0$, the formal
basket ${\bf B}$ is admissible and positive. We may apply the
argument in Section 5. Again because $B$ is geometric, one sees
$\tilde{P}_m=P_{m}({\bf B})=P_m(X)$ for all $m\geq 2$ and
$\tilde{\chi}=\chi(\OO_X)=1$.

By Lemma \ref{p2m}, we see $P_{4}\geq 2$ if $P_2>0$. Thus
under Assumption \ref{p6}, we have $P_2=0$. We can also get
 $P_{m+2}>0$ whenever
$P_{m}>0$. Thus, in practice, we only need to study the
following types: $P_2=0$ and
\begin{eqnarray*}
(P_3,P_4,P_5,P_6)&=& (0,0,0,0),(0,0,0,1),(0,0,1,0),
(0,0,1,1),\\
&&(0,1,0,1),(0,1,1,1),(1,0,1,1),(1,1,1,1).
\end{eqnarray*}

If $P_2, P_3, P_4, P_5$ and $P_6$ are given, we are able to
determine $\mathscr{B}^{(5)}(B)$. Our main task is to
search all possible minimal (with regard to $\succ$)
positive baskets dominated by $\mathscr{B}^{(5)}(B)$.

\begin{setup}{\bf Notations and Conventions.}
Throughout this section, for a given basket $B$, we can consider the associated formal basket $\bar{B}:=\{B,1,0\}$.
We might abuse the notation of $B$ and its associated formal basket $\bar{B}$ in this section.

A basket with no further convenient packing is called {\it minimal}.
A positive basket with no further convenient packing into a positive
basket is called a {\it minimal positive basket}.

We let $m_0$ denote the minimal integer
such that $P_{m_0} \ge 2$ from now on.
\end{setup}

Now let us begin to classify all minimal positive geometric
baskets.

\begin{setup}\label{0000}{\bf The case  I: $P_3=P_4=P_5=P_6=0$.} \\
Now we have $\sigma=10, \tau=4, \Delta^3=5, \Delta^4=14,
\epsilon=0, \sigma_5=0$ and $\epsilon_5=2$. The only
possible initial basket is
 $\{5 \times (1,2), 4 \times (1,3), (1,4)\}$.
And $B^{(5)}=\{3\times (1,2), 2 \times (2,5), 2 \times
(1,3), (1,4)\}$ with $K^3=\frac{1}{60}$.

We now classify baskets $B$ with $\mathscr{B}^{(5)}(B)$ as above.
This basket can only be obtained by successive convenient packings.

If we pack $\{ (1,2),(2,5) \} $ to $\{(3,7)\}$. Then we get\\
{\bf I-1.} $\{  (2,4), (3,7),(2,5),  (2,6), (1,4)  \}$, $K^3=\frac{1}{420}$, $m_0=18$,   \\
which admits no further convenient packing into positive baskets.
Hence it's minimal positive.

Thus it remains to consider baskets with $(1,2)$ remained unpacked
because otherwise it's dominated by the basket of Case I-1.
So we consider the  packing:\\
$\{(3,6),(2,5),(3,8),(1,3),(1,4)\}$ with $K^3=\frac{1}{120}$. This
allows further packing to minimal positive ones:\\
{\bf I-2.} $\{ (3,6), (2,5),(4,11) , (1,4) \}$ , $K^3=\frac{1}{220}$ , $m_0=14$. \\
{\bf I-3.} $\{ (3,6), (5,13),(1,3) , (1,4) \}$,
$K^3=\frac{1}{156}$, $m_0=12$. \\
It remains to consider the case that both $(1,2)$ and $(2,5)$ are
remained unpacked. We can have the following packing which is
indeed minimal positive:\\
{\bf I-4.} $\{  (3,6),  (4,10),(1,3), (2,7) \}$ , $K^3=\frac{1}{210}$ , $m_0=14$. \\
This gives a complete list of minimal positive baskets satisfying
$P_2=...=P_6=0$.
\end{setup}

\begin{setup}\label{0001}{\bf The case  II: $P_3=P_4=P_5=0,P_6=1$.} \\
Now we have $\sigma=10, \tau=4, \Delta^3=5, \Delta^4=14,
\epsilon \le 1$. If $\epsilon=0$, then $\epsilon_5=1$ and
if $\epsilon=1$, then $\epsilon_5=0$. Thus the only
possible initial baskets and
$B^{(5)}$ are:\\
{\bf II-i.} $B^{(0)}=\{5 \times (1,2), 4 \times (1,3), (1,4)\} \succ
B^{(5)}=\{4 \times (1,2),(2,5), 3 \times (1,3),
 (1,4)\}$, with $K^3(B^{(5)})=\frac{1}{20}$.\\
{\bf II-ii.}  $B^{(0)}=\{5 \times (1,2), 4 \times (1,3), (1,5)\} \succ
B^{(5)}=\{5 \times (1,2), 4 \times (1,3), (1,5)\}$,with
$K^3(B^{(5)})=\frac{1}{30}$.

In Case II-i, we first consider that all the basket $(1,2)$
are packed with $(2,5)$ :\\
 $\{(6,13), 3 \times (1,3),(1,4)  \}$. Further packing gives
 a minimal positive basket:\\
{\bf II-1.} $\{(6,13), (1,3),  (3,10)  \}$ , $K^3=\frac{1}{390}$, $m_0=9$. \\
We then consider the case that at least one basket $(1,2)$ is
remained unpacked. Then we reach:\\
{\bf II-2.} $\{(1,2), (5,11),  (4,13)  \}$ ,
$K^3=\frac{1}{286}$, $m_0=9$, which is minimal positive.\\
Notice that if $\{3 \times (1,2),(3,7), 3 \times (1,3),
(1,4)\} \succ B$, then $B$ dominate the basket in Case
II-2. Thus it remains to consider the case that all basket
$(1,2)$ are remain unpacked and $(2,5)$ must be packed with
some $(1,3)$. So we
have minimal positive baskets:\\
{\bf II-3.} $\{  (4,8), (3,8),  (3,10)  \}$ , $K^3=\frac{1}{40}$, $m_0=8$. \\
{\bf II-4.} $\{  (4,8), (4,11), (2,7) \}$ , $K^3=\frac{2}{77}$, $m_0=8$.  \\
{\bf II-5.}  $\{ (4,8), (5,14), (1,4) \}$ , $K^3=\frac{1}{28}$, $m_0=8$. \\

In case II-ii, $B^{(5)}$ admits no further convenient packing.
Thus it's minimal positive.\\
{\bf II-6.} $\{ (5,10),(4,12),(1,5)\}$, $K^3=\frac{1}{30}$,
$m_0=8$.
\end{setup}

\begin{setup}\label{0010}{\bf The case  III: $P_3=P_4=0,P_5=1,P_6=0$.} \\
Now we have $\sigma=10, \tau=4, \Delta^3=5, \Delta^4=15$.
Moreover, $P_7 \ge 1$, hence $\epsilon=0, \sigma_5=0$ and
$\epsilon_5=4$. Thus the only possible initial baskets and
$B^{(5)}$ are:\\
$B^{(0)}=\{5 \times (1,2), 5 \times (1,3) \} \succ B^{(5)}=\{(1,2),4
\times (2,5), (1,3)\}$.\\
So the minimal positive baskets are:\\
{\bf III-1.} $\{(9,22),(1,3)\}$, $K^3= \frac{1}{66}$, $m_0=9$. \\
{\bf III-2.} $\{(7,17), (3,8)\}$, $K^3= \frac{1}{136}$, $m_0=10$. \\
{\bf III-3.} $\{(5,12), (5,13)\}$, $K^3= \frac{1}{156}$, $m_0=10$. \\
{\bf III-4.} $\{(3,7),(7,18) \}$, $K^3= \frac{1}{126}$, $m_0=10$. \\
{\bf III-5.} $\{(1,2),(9,23),\}$, $K^3= \frac{1}{46}$, $m_0=8$. \\
\end{setup}

\begin{setup}\label{0011}{\bf The case  IV: $P_3=P_4=0,P_5=1,P_6=1$.} \\
Now we have $\sigma=10, \tau=4, \Delta^3=5, \Delta^4=15$.
Moreover, the initial basket must have $n^0_{1,2}=n^0_{1,3}=5$,
hence $n^0_{1,r}=0$ for all $r \ge 4$. It follows that
$\epsilon=0$, $\sigma_5=0$ and $\epsilon_5=3$. Thus the
only possible initial baskets and
$B^{(5)}$ are:\\
$B^{(0)}=\{5 \times (1,2), 5 \times (1,3) \} \succ B^{(5)}=\{2 \times
(1,2),3
\times (2,5), 2 \times (1,3)\}$.\\
So the minimal positive baskets are:\\
{\bf IV-1.} $\{(8,19),(2,6)\} \succ \text{III}-1$. \\
{\bf IV-2.} $\{(6,14), (4,11)\} \succ \text{III}-4$. \\
{\bf IV-3.} $\{(4,9), (6,16)\} \succ \text{III}-2$. \\
{\bf IV-4.} $\{(2,4),(8,21) \} \succ \text{III}-5$. \\
Hence the lower bound of $K^3$ can only be better.
\end{setup}

\begin{setup}
\label{0101}{\bf The case  V: $P_3=0,P_4=1,P_5=0,P_6=1$.} \\
Now we have $\sigma=10, \tau=4, \Delta^3=6, \Delta^4=13$
and $\sigma_5 \le \epsilon \le 2$.
The initial baskets could be :\\
{\bf V-i.} $\{6 \times (1,2),  (1,3), 3 \times (1,4)\}$\\
{\bf V-ii.} $\{6 \times (1,2),  (1,3), 2 \times (1,4), (1,5)\}$\\
{\bf V-iii.} $\{6 \times (1,2),  (1,3),  (1,4),2 \times (1,5)\}$\\
{\bf V-iv.} $\{6 \times (1,2),  (1,3), 2 \times (1,4), (1,r)\}$ with $r \ge 6$\\
The Case V-iii, V-iv are impossible since $K^3 \le 0$. For Case
V-i, we have $\epsilon_5=1$ and for case V-ii, we have
$\epsilon_5=0$.
Hence $B^{(5)}$ could be \\
{\bf V-i.} $\{ (5,10), (2,5), (3,12)\}$\\
{\bf V-ii.} $\{ (6,12),  (1,3),  (2,8), (1,5)\}$\\
So the minimal positive baskets are:\\
{\bf V-1.} $\{(7,15),(3,12)\}$, $K^3= \frac{1}{60}$, $m_0=7$. \\
{\bf V-2.} $\{(6,12),(1,3),(3,13)\}$, $K^3= \frac{1}{39}$, $m_0=8$. \\
{\bf V-3.} $\{(6,12),(3,11),(1,5)\}$, $K^3= \frac{1}{55}$, $m_0=8$. \\
\end{setup}

\begin{setup}
\label{0111}{\bf The case  VI: $P_3=0,P_4=P_5=P_6=1$.} \\
Now we have $\sigma=10, \tau=4, \Delta^3=6, \Delta^4=14$.
Also $P_7 \ge 1$ and hence $\sigma_5 \le \epsilon \le 2$.
The initial baskets could be :\\
{\bf VI-i.} $\{6 \times (1,2),  2 \times (1,3), 2 \times (1,4)\}$\\
{\bf VI-ii.} $\{6 \times (1,2),  2 \times (1,3), (1,4), (1,5)\}$\\
{\bf VI-iii.} $\{6 \times (1,2), 2 \times  (1,3),  2 \times (1,5)\}$\\
{\bf VI-iv.} $\{6 \times (1,2), 2 \times  (1,3), (1,4), (1,r)\}$
with $r \ge 6$

Since there are only 2 baskets of $(1,3)$, we have
$\epsilon_5 =3-\sigma_5 \le 2$. Hence $\sigma_5 >0$ and
$\epsilon >0$. Therefore, case VI-i is impossible.

For Case VI-ii, $\epsilon_5=2$, hence\\
 {\bf VI-ii.} $B^{(5)}=\{ 4 \times (1,2),2 \times (2,5), (1,4),(1,5)
 \}$. \\
Similarly, one can compute the minimal positive baskets:\\
{\bf IV-1.} $\{ (1,2),(7,16), (2,9)\}$, $K^3=\frac{1}{144}$, $m_0=7$.\\
{\bf IV-2.} $\{ (6,13),(2,5), (2,9)\}$, $K^3=\frac{8}{585}$, $m_0=7$.\\
{\bf IV-3.} $\{ (8,18),(1,4), (1,5)\}$, $K^3=\frac{1}{180}$,$m_0=7$.\\
For Case VI-iii, $\epsilon_5=1$, hence\\
 {\bf VI-ii.} $B^{(5)}=\{ 5 \times (1,2), (2,5),(1,3) , 2 \times (1,5)
 \}$. \\
Then minimal positive baskets are:\\
{\bf IV-4.} $\{ (1,2),(6,13),(1,3),(2,10)\}$, $K^3=\frac{1}{390}$, $m_0=8$.\\
{\bf IV-5.} $\{ (5,10),(3,8), (2,10)\}$, $K^3=\frac{1}{40}$, $m_0=8$.\\

For Case VI-iv, $\epsilon_5=2$, hence\\
 {\bf VI-iv.} $B^{(5)}=\{ 4 \times (1,2),2 \times (2,5), (1,4),(1,r)
 \}$ with $r \ge 6$.  \\
 Since $K^3(B^{(5)})>0$, we must have
 $r=6$. Then the minimal positive basket is:\\
{\bf IV-6.} $\{ (3,6),(3,7),(2,5) (1,4),(1,6)\}$, $K^3=\frac{1}{420}$, $m_0=10$.\\
\end{setup}

\begin{setup}
\label{1011}{\bf The case  VII: $P_3=1,P_4=0,P_5=P_6=1$.} \\
Now we have $\sigma=9, \tau=3, \Delta^3=1, \Delta^4=9$.
Moreover, $P_7 \geq 1$ and hence $\epsilon=0$. It follows
that $\sigma_5=0$ and $\epsilon_5=2$. The
initial baskets is :\\
 $B^{(0)}=\{ (1,2), 7 \times (1,3),(1,4) \}$\\
 Note that there is only one basket of type $(1,2)$, while
 $\epsilon_5=2$ gives a contradiction since $n_{2,5}^5=\epsilon_5=2$.
\end{setup}

\begin{setup}
\label{1111}{\bf The case  VIII: $P_3=P_4=P_5=P_6=1$.} \\
Now we have $\sigma=9, \tau=3, \Delta^3=2, \Delta^4=8$.
Moreover, $P_7 \geq 1$ and then $\epsilon \le 1$. If
$\epsilon=1$, then $\sigma_5=1$ and $\epsilon_5=1$. If
$\epsilon=0$, then $\sigma_5=0$ and $\epsilon_5=2$.
The initial baskets and $B^{(5)}$ could be:\\
{\bf VIII-i.} $B^{(0)}=\{2 \times (1,2), 4 \times (1,3), 3 \times
(1,4)\} \succ B^{(5)}=\{2 \times (2,5), 2 \times (1,3), 3 \times
 (1,4)\}$, with $K^3(B^{(5)})=\frac{1}{60}$.\\
{\bf VIII-ii.}  $B^{(0)}=\{2 \times (1,2), 4 \times (1,3), 2
\times (1,4),(1,5)\} \succ B^{(5)}=\{(1,2)$, $(2,5), 3
\times (1,3), 2 \times (1,4), (1,5)\}$, with
$K^3(B^{(5)})=0$.

It's clear that Case VIII-ii is impossible since it is not
positive.

For Case VIII-i, we first consider
$\{(2,5),(3,8),(1,3),(4,12)\}$. It follows that it
dominates two possible minimal positive
baskets:\\
{\bf VIII-1.} $ \{(5,13),(1,3),(3,12)\}$, $K^3=\frac{1}{156}$, $m_0=7$.\\
{\bf VIII-2.} $ \{(2,5),(4,11),(3,12)\}$, $K^3=\frac{1}{220}$, $m_0=7$.\\

Now it remains to consider the case where $(2,5)$ remains
unpacked. We then consider $\{(4,10),(1,3),(2,7),(2,8)\}$
with $K^3=\frac{1}{210}$. It allows the following minimal
positive basket which is a one-step packing:
{\bf VIII-3.}$\{(4,10),(1,3),(3,11),(1,4)\}$, $K^3=\frac{1}{660}$, $m_0=7$. \\
\end{setup}

For the canonical volume, we conclude the following:

\begin{thm}\label{volume_chi=1}
Every minimal 3-fold $X$ of general type with
$\chi(\OO_X)=1$ has $K^3 \geq \frac{1}{420}$.
\end{thm}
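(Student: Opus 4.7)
The plan is to combine the canonical-volume bounds of Section~3 (in particular Theorem~\ref{volume} and Table~A) with the complete classification of minimal positive baskets carried out in Cases I--VIII of Section~6. More precisely, let $X$ be a minimal $3$-fold of general type with $\chi(\OO_X)=1$. I would split the argument according to whether or not $P_m(X)\ge 2$ for some small $m$.

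First, if $P_{m_0}(X)\ge 2$ for some $m_0\le 6$, I would simply invoke Theorem~\ref{volume} together with Table~A. A quick tabulation of the ``general'' row shows that the listed bounds for $m_0=2,3,4,5,6$ are $5/96,\,5/264,\,1/108,\,1/192,\,5/1554$ respectively, each of which strictly exceeds $1/420$. So in this range the inequality is immediate. I would therefore reduce to the remaining situation, namely $P_m(X)\le 1$ for every $m\le 6$. Here Lemma~\ref{p2m} forces $P_2=0$ (otherwise $P_4\ge P_2+P_2\ge 2$), and the same lemma shows that the non-vanishing pattern of $P_3,P_4,P_5,P_6$ is monotone in the sense that $P_{m+2}\ge P_m$. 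This leaves exactly eight admissible strings $(P_3,P_4,P_5,P_6)$, corresponding precisely to Cases~I--VIII treated in Section~6.

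Next, for each of these eight cases I would feed the data $(\chi,P_2,P_3,\dots,P_6)$ into the explicit formulas of Section~5 to determine $B^{(0)}$, $B^{(5)}$, and--where useful--$B^{(7)}$. Since every real basket $B=\mathscr{B}(X)$ is obtained from $B^{(5)}$ by a finite sequence of convenient packings, and because each packing strictly decreases $K^3$ (by Lemma~\ref{packing}(3), $\sigma$ is preserved while $\sigma'$ drops), the infimum of $K^3$ among geometric baskets in a given case is attained at a \emph{minimal positive} basket, i.e.\ one that admits no further convenient packing preserving positivity. The hard, somewhat tedious heart of the argument is the enumeration of these minimal positive baskets case by case; this is purely combinatorial once $B^{(5)}$ is in hand, and it produces the finite lists I-1--I-4, II-1--II-6, III-1--III-5, V-1--V-3, VI-1--VI-6, VIII-1--VIII-3 (Cases IV and VII being dominated or ruled out on positivity grounds). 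The main obstacle is making sure the list is exhaustive: one must systematically branch over which single baskets get packed and verify that every remaining unpacked configuration is either positive-dominated by an already-listed basket or itself minimal positive.

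Finally, for each surviving minimal positive basket I would read off the pair $(K^3,m_0)$ and compare $K^3$ against the lower bound prescribed by Table~A for that value of $m_0$. This comparison has two effects. On one hand, most candidates satisfy $K^3\ge 1/420$ directly; the two extremal baskets I-1$=\{(2,4),(3,7),(2,5),(2,6),(1,4)\}$ and VI-6$=\{(3,6),(3,7),(2,5),(1,4),(1,6)\}$ realise $K^3=1/420$ exactly, which shows the bound is sharp. On the other hand, the unique offender is VIII-3$=\{(4,10),(1,3),(3,11),(1,4)\}$ with $K^3=1/660$ and $m_0=7$; since Table~A forces $K^3\ge 5/2408>1/660$ whenever $P_7\ge 2$, this basket cannot be geometric, so it is discarded. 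With VIII-3 removed, every remaining minimal positive basket satisfies $K^3\ge 1/420$, completing the proof.
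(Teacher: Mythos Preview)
Your overall strategy matches the paper's almost exactly, but there is a genuine gap in the final step concerning Case~VIII-3. You argue: VIII-3 has $K^3=1/660$ and $m_0=7$; since Table~A gives $K^3\ge 5/2408>1/660$ whenever $P_7\ge 2$, VIII-3 is not geometric, so discard it; the remaining minimal positive baskets all satisfy $K^3\ge 1/420$. The problem is that ``discarding'' VIII-3 does not finish the argument. The geometric basket $B=\mathscr{B}(X)$ is not itself required to be minimal positive---it only \emph{dominates} some minimal positive basket $B_{\min}$, and your inequality is $K^3(B)\ge K^3(B_{\min})$. If the only minimal positive basket dominated by $B$ happens to be VIII-3, then all you have shown is $B\ne\text{VIII-3}$ and $K^3(B)\ge 1/660$, which is weaker than $1/420$. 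Note also that the Table~A bound $5/2408$ for $m_0=7$ is itself below $1/420$, so the $P_7\ge 2$ information alone cannot close the gap.

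The paper supplies exactly the missing piece: it observes that VIII-3 is obtained from $B^{(5)}=\{2\times(2,5),2\times(1,3),3\times(1,4)\}$ by the unique two-step chain $(1,3)+(1,4)\to(2,7)$ followed by $(2,7)+(1,4)\to(3,11)$, so the only intermediate basket strictly between $B^{(5)}$ and VIII-3 is $B_{210}=\{(4,10),(1,3),(2,7),(2,8)\}$ with $K^3(B_{210})=1/210$. Hence any geometric $B$ with $B\succ\text{VIII-3}$ and $B\ne\text{VIII-3}$ satisfies $B\succeq B_{210}$, giving $K^3(B)\ge 1/210>1/420$. You should insert this intermediate-basket analysis; once it is in place your proof coincides with the paper's.
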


\begin{proof}
By Theorem \ref{volume}, we may assume that $P_m \leq 1 $
for $m \le 6$. Take the geometric formal basket  ${\bf
B}(X)=\{B, 1, P_2\}$ with $B=\tilde{\mathscr{B}}(X)$. Then
by the definition of geometric basket, we have
$K_X^3=K^3({\bf B})$. By our classification in above, we
know $B\succ B_{min}$ for a minimal positive basket
$B_{min}$ of type I-1 through VIII-3. Because
$\sigma(B_{min})=\sigma(B)$ and so
$\sigma(B_{min})=10+5P_2-P_3$, we see that
$\bar{B}_{min}:=\{B_{min}, 1, P_2\}$ is a formal basket by
definition. Furthermore we see $P_3(\bar{B}_{min})=P_3(X)$.
Because Lemma \ref{packing}(3) says $\sigma'(B)\geq
\sigma'(B_{min})$, we see that $K_X^3=K^3({\bf B})\geq
K^3(\bar{B}_{min})$.

Now by above classification of minimal baskets, we have
$K^3(\bar{B}_{min}) \geq \frac{1}{420}$ unless $B_{min}$ is of
Case VIII-3. Notice that in Case VIII-3, $P_7(X) =2$ by
direct computation. Thus by Table A in Section 3, we have
$K_X^3=K^3({\bf B}) \geq \frac{5}{2408} > \frac{1}{660}$.
So this means that the type VIII-3 minimal basket is not
geometric. Notice that the type VIII-3 minimal basket is
obtained by one-step packing from
$$B_{210}:=\{(4,10),(1,3),(2,7),(2,8)\}$$ with
$K^3(B_{210})=\frac{1}{210}
> \frac{1}{420}$.  Furthermore, $B_{210}$ is the only intermediate basket.
 Thus either $B$ dominates a minimal basket of other types or
$B_{210}$. So at any case we have seen $K_X^3\geq \frac{1}{420}$.
\end{proof}

The proof of the last theorem gives the following:
\begin{cor} Let $X$ be a minimal 3-fold of general type.
Any geometric basket $B$ on $X$ either dominates a minimal
basket of type different from VIII-3  or dominates the
basket $$B_{210}:=\{(4,10),(1,3),(2,7),(2,8)\}.$$
\end{cor}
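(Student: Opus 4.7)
The plan is to leverage the proof of Theorem \ref{volume_chi=1} directly. By the classification in Section 6, any geometric basket $B$ on $X$ (with $\chi(\OO_X)=1$ and $P_m \le 1$ for $m \le 6$; the other cases being handled by Theorem \ref{volume}) dominates at least one minimal positive basket $B_{\min}$ among the list I-1 through VIII-3. If $B_{\min}$ is of any type other than VIII-3, the first alternative of the conclusion holds immediately. So the only case requiring work is when $B_{\min} = $ VIII-3.

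In that case, I would first replicate the observation made in the proof of Theorem \ref{volume_chi=1}: every basket $B$ whose associated $B^{(5)}$ lies in Case VIII-i satisfies $P_7(X) = 2$. This follows because $\epsilon_6 = 0$ in general by equation (5.2), so $\Delta^6$ is determined by $B^{(5)}$ alone, and a direct substitution into the recursive formula (5.1) together with the fixed values $P_2 = 0,\, P_3 = P_4 = P_5 = P_6 = 1$ pins down $P_7 = 2$. Consequently $m_0 \le 7$, and Table A yields $K_X^3 \ge \frac{5}{2408}$. Since $\frac{5}{2408} > \frac{1}{660} = K^3(\text{VIII-3})$, this rules out the possibility $B = $ VIII-3.

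Having excluded equality, I would then show that strict domination $B \succ $ VIII-3 forces $B \succ B_{210}$. The decisive step is to analyze how the single basket $(3,11)$ appearing in VIII-3 can arise from a convenient packing. The only candidate decompositions are $(1,3)+(2,8)$ and $(2,7)+(1,4)$; the first fails convenience since $1\cdot 8 - 2\cdot 3 = 2 \ne 1$, leaving $\{(2,7),(1,4)\} \succ \{(3,11)\}$ as the unique convenient packing yielding $(3,11)$. Therefore the unique immediate predecessor of VIII-3 in the convenient-packing order is
\[
\{(4,10),(1,3),(2,7),2\times(1,4)\} = B_{210},
\]
and any basket strictly dominating VIII-3 must dominate $B_{210}$ as well.

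The main (and only) obstacle is the uniqueness verification just sketched: I would need to confirm that no alternative predecessor of VIII-3 exists that fails to dominate $B_{210}$. This reduces entirely to the elementary arithmetic check on the decomposition of $(3,11)$ above, together with the fact that the remaining three entries $(4,10), (1,3), (1,4)$ of VIII-3 must already be present in any immediate predecessor (since they are not further packed). The conclusion then follows.
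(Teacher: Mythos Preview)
Your approach is essentially the paper's: exclude $B=\text{VIII-3}$ via the $P_7=2$ volume argument, then show that any geometric $B$ strictly dominating VIII-3 must dominate $B_{210}$. The paper simply cites its own classification (``$B_{210}$ is the only intermediate basket''); you attempt to justify this directly.

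Your justification has a gap. You claim that $(4,10),(1,3),(1,4)$ ``must already be present in any immediate predecessor (since they are not further packed)'', but in the abstract convenient-packing poset this is false: each $(2,5)$ in $(4,10)=2\times(2,5)$ unpacks conveniently as $(1,2)+(1,3)$ (indeed $1\cdot 3-2\cdot 1=1$), producing a second immediate predecessor $B'=\{(1,2),(2,5),2\times(1,3),(3,11),(1,4)\}$ of VIII-3, and $B'\not\succ B_{210}$ since the entry $(3,11)\in B'$ cannot sit inside any element of $B_{210}$. What makes $B'$ irrelevant is that $\mathscr{B}^{(5)}(B')$ contains a $(1,2)$ and hence differs from the Case VIII-i $B^{(5)}$. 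The correct argument is that a geometric $B$ under Assumption \ref{p6} with $B\succ\text{VIII-3}$ necessarily has $\mathscr{B}^{(5)}(B)$ equal to the Case VIII-i basket (since $\sigma(B)=9$ forces $P_3=1$, Case VII is void, and Case VIII-ii is non-positive), so $B$ lies in the interval between that fixed $B^{(5)}$ and VIII-3. The map $B^{(5)}\to\text{VIII-3}$ packs only the block $\{(1,3),(1,4),(1,4)\}\mapsto(3,11)$, whose sole intermediate state under convenient packing is $\{(2,7),(1,4)\}$; hence $B\in\{B^{(5)},B_{210},\text{VIII-3}\}$. If your phrase ``not further packed'' was intended relative to this specific $B^{(5)}\to\text{VIII-3}$ map rather than the abstract poset, the argument goes through---but that reading should be made explicit.
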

\medskip

The lower bound of $K^3$ in Theorem \ref{volume_chi=1} is
optimal. Iano-Fletcher has already found the following
example:
\begin{exmp} (\cite[p151, No.23]{C-R} ) The
canonical hypersurface $X_{46}\subset \bP(4,5,6,7,23)$ has 7
terminal quotient singularities and the canonical volume
$K^3_{X_{46}}=\frac{1}{420}$. Because $p_g(X_{46})=
q(X_{46})=h^2(\OO_{X_{46}})=0$, one sees $\chi(\OO_{X_{46}})=1$.
\end{exmp}

\section{\bf Classification of baskets with $\chi>1$}
In order to study the case with $\chi \geq 2$, we need to
go further to develop the basket packing technique.  We
always consider the formal basket ${\bf B}=\{B,\chi(\OO_X),
P_2\}$ which is admissible and positive. Therefore we
can apply our general theory in Section 5, just replacing
$\tilde{\chi}$,  and $\tilde{P}_2$ in Section
5 by $\chi(\OO_X)$ and $P_2(X)$ respectively. All
other symbols are also replaced accordingly.

By Theorem \ref{volume}, we only need to study varieties under the following:

\begin{setup} \label{p12<2}
{\bf Assumption.} Assume $P_m \leq 1$ for all $m \leq 12$.
\end{setup}

Recalling equation (5.2), we have:  $$
\epsilon_6:=-3P_2-P_3+P_4+P_5+P_6-P_7-\epsilon=0
$$
which is equivalent to
$$ P_4+P_5+P_6 = 3 P_2+P_3+P_7+\epsilon. \eqno{(7.1)}$$

Notice that, under Assumption \ref{p12<2} and if $P_2=1$, then by equation (7.1),
$P_4=P_5=P_6=1$ and
$P_3=P_7=\epsilon=0$. But this is impossible since $P_2=P_5=1$
implies $P_7 \geq 1$. Thus we may assume that $P_2=0$ in the
following discussion.

Assumption \ref{p12<2} allows us to compute $B^{(12)}$. One can
see that $\chi$ is bounded by $P_m$ for $m \leq 12$ by the
inequality $(5.3)$. Hence it is possible to classify all possible
basket $B^{(12)}$ under Assumption \ref{p12<2}.

Note that the equality (5.3) will be as:
$$2P_5+3P_6+P_8+P_{10}+P_{12} \ge \chi +10
P_2+4P_3+P_7+P_{11}+P_{13}+R, \eqno{(7.2)}$$ where {\small $$R:=14
\sigma_5-12 n^0_{1,5}-9
n^0_{1,6}-8n^0_{1,7}-6n^0_{1,8}-4n^0_{1,9}-2n^0_{1,10}-n^0_{1,11}$$
$$ =
2n^0_{1,5}+5n^0_{1,6}+6n^0_{1,7}+8n^0_{1,8}+10n^0_{1,9}+12n^0_{1,10}+13n^0_{1,11}+14
\sum_{r \ge 12} n^0_{1,r}.$$} and $\sigma_5=\sum_{r \geq 5}
n^0_{1,r}$.

With all these preparations, we are able to prove the following:
\begin{thm}
Let $X$ be a minimal 3-fold of general type with $\chi(\OO_X) \geq
2$. Then either $\chi(\OO_X) \leq 6$ or $P_m \geq 2$ for some $m
\leq 12$.
\end{thm}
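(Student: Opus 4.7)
The plan is to deduce the bound directly from inequality (7.2), using at the very end the elementary multiplicative estimate $P_a \geq 1,\ P_b \geq 1 \Rightarrow P_{a+b} \geq 1$ (obtained by multiplying nonzero sections) to squeeze the a priori bound from $8$ down to $6$.

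First I would eliminate the case $P_2=1$. Equation (7.1) reads $P_4+P_5+P_6 = 3P_2+P_3+P_7+\epsilon$ with $\epsilon \geq 0$; under Assumption \ref{p12<2} the left side is at most $3$, so $P_2=1$ forces $P_4=P_5=P_6=1$ and $P_3=P_7=\epsilon=0$. But $P_2=P_5=1$ yields $P_7\geq 1$, contradicting $P_7=0$. Hence $P_2=0$, and inequality (7.2) becomes
\begin{equation*}
\chi \;\leq\; 2P_5+3P_6+P_8+P_{10}+P_{12}-4P_3-P_7-P_{11}-P_{13}-R,
\end{equation*}
with $R\geq 0$. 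Bounding each $P_m$ ($m\leq 12$) by $1$ and dropping the nonnegative subtracted terms already gives $\chi \leq 8$.

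To refine to $\chi \leq 6$, I would assume $\chi \geq 7$ and run a short case analysis on $(P_5,P_6)$. When $P_6=0$ the right side is at most $2+1+1+1=5$; when $P_5=0,\ P_6=1$ it is at most $3+1+1+1=6$; either way $\chi\leq 6$. So we may assume $P_5=P_6=1$, which by multiplication of sections forces $P_{11}\geq 1$. Substituting back, the inequality reduces to
\begin{equation*}
\chi \;\leq\; 4+P_8+P_{10}+P_{12}-P_{13},
\end{equation*}
whose right side is at most $7$, with equality only if $P_8=P_{10}=P_{12}=1$ and $P_{13}=0$. But then $P_5=P_8=1$ forces $P_{13}\geq 1$ by the same multiplicative principle, reducing the right side to at most $6$ and producing the desired contradiction.

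The hard step, if any, is locating the two specific multiplicative constraints $(5,6)\to 11$ and $(5,8)\to 13$ that exactly compensate for the slack between the trivial bound $8$ and the sharp bound $6$; once one writes (7.2) with $P_2=0$ in hand, the remaining analysis is entirely mechanical and requires no further geometric input beyond (7.1), (7.2) and the section-product inequality.
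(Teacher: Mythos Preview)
Your proof is correct and follows essentially the same route as the paper: reduce to $P_2=0$ via (7.1), apply (7.2) to get $\chi\leq 8$, then use the two section-product constraints $P_5P_6\leq P_{11}$ and $P_5P_8\leq P_{13}$ to tighten the bound to $\chi\leq 6$. Your case analysis on $(P_5,P_6)$ is slightly more explicit than the paper's terse ``if $\chi=7$ or $8$ then $P_5=P_6=1$,'' but the logic is identical.
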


\begin{proof}
If $P_m \le 1$ for all $m \le 12$, we have seen $P_2=0$. Then by
$(7.2)$, we get $8 \ge \chi=\chi(\OO_X)$. If $\chi =7 $ or $8$,
then $P_5=P_6=1$. It follows that $P_{11}=1$. Hence $8 \ge \chi +1$
gives $\chi =7$ and $P_8=1$ as well. Then $P_{13}=1$. This leads to
$8 \ge \chi+2 =9$, a contradiction.
\end{proof}

\begin{setup}\label{r>5}{\bf Verifying Assumption \ref{r6}}. Assume
$P_m\leq 1$ for all $m\leq 12$ and $\chi(\OO_X)\geq 2$. Then we have
seen $P_2=0$. We study $n_{1,r}^0$ when $r\geq 6$. If there exists a
number $r\geq 6$ such that $n^0_{1,r} \ne 0$, then $R \geq 5$ by the
definition of $R$. Now $(7.2)$ gives
$$8 \geq \chi+5 \geq 7.$$
This implies that  $P_5=P_6=1$. Hence $P_{11}=1$. Now $(7.2)$ reads $5+P_8+P_{10}+P_{12} \ge 8+P_7+P_{13}$.
One then has $P_8=P_{10}=P_{12}=1$ and $P_7=P_{13}=0$. This leads to a contradiction since $P_{13} \ge P_5 P_8=1$.
 So we conclude
$n^0_{1,r}=0$ for all $r\geq 6$. In other words, Assumption
\ref{r6} is satisfied.

This allows us to utilize those
classifications in the last part of Section 5.
\end{setup}

\begin{setup}{\bf Classifying admissible baskets under extra conditions.}
We hope to classify all positive admissible baskets
$\mathscr{B}^{(12)}(B)$ under Assumption \ref{p12<2} and
$\chi(\OO_X)\geq 2$. Note that, for all $0<m,n\leq 12$, and $m+n
\le 13$,
$$P_{m+n} \geq P_m P_n \eqno{(7.3)}$$
naturally holds since $P_m, P_n\leq 1$ for geometric formal
basket. Our main result is Table B which is a complete list
of all possibilities of $B^{(12)}$ and can be obtained by a
simple computer program, or even by a direct, but time
consuming calculation.

In fact, first we preset $P_m=0,1$ for $m=3,\cdots, 11$. Then
$\epsilon_6=0$ gives the value of $\epsilon$. So we know the value
of $n_{1,5}^0$. By the inequality (5.2) we get the upper bound of
$\chi$ since $P_{13}\geq 0$. Since $n_{1,4}^7\geq 0$, we get the
upper bound of $\eta$. Similarly $n^9_{2,9}\geq 0$ gives the upper
bound of $\zeta$. Also $n_{4,9}^{11}\geq 0$ yields $\alpha\leq
\zeta$. Finally $n_{3,8}^{11}\geq 0$ gives the upper bound of
$\beta$. Now we set $P_{12}=0,1$. Then the inequality (5.3) again
gives the upper bound of $P_{13}$, noting that $\chi\geq 2$. Clearly
there are  only finite many solutions. With inequality (7.3) imposed
we can get only about 80 cases. An important relation to recall is
$B^{(12)}\succ B$. So we see $K^3(B^{(12)})\geq K^3(B)=K_X^3>0$. The
final imposed property eventually outputs 63 cases which is exactly
Table B.

All minimal positive baskets dominated by ${B}^{(12)}$ are also
collected in Table B.
\medskip

If one would like to take a direct calculation by hand, it
is of course possible. Consider no.2 case in Table B as an
example. Those formulae in Section 5 gives us enough
information to compute $B^{(12)}$. Because $P_2=0$,
$P_3=\cdots =P_7=0$, $P_8=1$ and $P_9=P_{10} =P_{11}=0$,
(7.1) tells $\epsilon=0$ and thus $\sigma_5=0$, which means
$R=0$. (7.2) gives $P_{12}+1\geq \chi+P_{13}\geq 2$. So
$P_{12}=1$, $\chi=2$ and $P_{13}=0$. Now the formula for
$\epsilon_{10}$ gives $\epsilon_{10}=-\eta\geq 0$, which
means $\eta=0$. Similarly $n^9_{1,5}=\zeta-1\geq 0$. On the
other hand, $n_{3,7}^9=1-\zeta\geq 0$. Thus $\zeta=1$. Now
$n_{4,9}^{11}=\zeta-\alpha\geq 0$ gives $\alpha \leq 1$.
$n^{11}_{3,11}=1-\zeta-\alpha-\beta\geq$ gives
$\alpha=\beta=0$. Finally we get
$$\{n_{1,2},n_{5,12},...,n_{1,5}\}=\{4,0,1,0,0,2,1,0,3,0,0,0,2,0,0\}$$
That is $B^{(12)}=\{4 \times(1,2), (4,9),2 \times
(2,5),(3,8),3 \times (1,3), 2 \times (1,4)\}$.
\newpage

\centerline{\bf Table B.}

{\scriptsize
$$
\begin{array}{lcccccccc}
no. & (P_3,...,P_{11}) &P_{18}&P_{24}&m_0 & \chi   & B^{(12)}=(n_{1,2},n_{5,11},...,n_{1,5}) \text{ or } B_{min} & K^3(B)\\
\hline

1 &(0,0,0, 0, 0, 0, 0, 1, 0) &4&8&14&  2& (5,0,0,1,0,3,0,0,3,0,0,1,0,0,0) & \frac{3}{770}\\
2 &( 0, 0, 0, 0, 0, 1, 0, 0, 0)&3&7&15&  2 &( 4, 0,1, 0,0, 2, 1,0, 3, 0, 0,0, 2,  0, 0) & \frac{1}{360} \\
2a&&2&3 &18&& \{(2,5),(3,8),*\} \succ\{(5,13),* \} & \frac{1}{1170}\\
3&( 0, 0, 0, 0, 0, 1, 0, 1, 0 ) &3&7&15&  3&(6,1,0, 0,0,4, 1,0,4, 0, 1,0,2,0,0) & \frac{23}{9240} \\
3a&&2&3 &18&&\{(2,5),(3,8),*\} \succ \{(5,13),* \} & \frac{17}{30030}\\
4&( 0, 0, 0, 0, 0, 1, 0, 1, 0 ) &4&9&14&  3&(7,0,1, 0,0,4, 0,1,3, 0, 1,0,2,0,0) & \frac{13}{3465} \\
4a&&1&2 &14&&\{(4,11),(2,6),*\} \succ \{(6,17),* \} & \frac{1}{5355}\\
5&( 0, 0, 0, 0, 0, 1, 0, 1, 0 ) &5&10&14&  3&(7,0,1, 0,0,4, 1,0,4, 0, 0,1,1,0,0) & \frac{17}{3960} \\
5a&&4&3&15&&\{(8,20),(3,8),*\} \succ \{(11,28),* \} & \frac{1}{1386}\\
5b&&3&3&15&& \{(5,13),(4,15),* \} & \frac{1}{1170}\\
6& (0, 0, 0, 1, 0, 0, 0, 1, 0 ) &3&6&14& 3 & (9,0,0, 2,0,1, 0,1,4, 0,2,0,0, 0, 1) & \frac{1}{462} \\
7& (0, 0, 0, 1, 0, 0, 1, 0, 0 ) &3&5&14& 2 & (5,0,1, 1,0,0, 0,0,5, 0,1,0,0, 0, 1) & \frac{1}{630} \\
7a&&2&3&14&&\{(4,9),(3,7),*\} \succ \{(7,16),* \} & \frac{1}{1680}\\
8& (0, 0, 0, 1, 0, 0, 1, 1, 0 ) &3&5&14& 3 & (7,1,0, 1,0,2, 0,0,6, 0,2,0,0, 0, 1) & \frac{1}{770} \\
9& (0, 0, 0, 1, 0, 1, 0, 0, 0 ) &2&2&14& 3 & (9,0,0, 2,0,0, 1,1,4, 0,1,0,0, 1, 0) & \frac{1}{5544} \\
10& (0, 0, 0, 1, 0, 1, 0, 0, 0 ) &3&6&14& 3 & (8,0,1, 1,0,0, 2,0,5, 0,1,0,1, 0, 1) & \frac{1}{630} \\
10a&&2&4&14&&\{(4,9),(3,7),*\} \succ \{(7,16),* \} & \frac{1}{1680}\\
11& (0, 0, 0, 1, 0, 1, 0, 1, 0 ) &2&4&14& 3 & (9,0,0, 2,0,0, 1,1,3, 1,0,0,1, 0, 1) & \frac{3}{3080}\\
11a&&2&3&14&&\{(3,8),(4,11),*\} \succ \{(7,19),* \} & \frac{1}{2660}\\
12& (0, 0, 0, 1, 0, 1, 0, 1, 0 ) &5&11&14& 3 & (9,0,1, 0,0,1, 2,0,4, 0,2,0,0, 0, 1) & \frac{1}{252} \\
12a&&4&6&14&&\{(2,5),(6,16),*\} \succ \{(8,21),* \} & \frac{1}{630}\\
13& (0, 0, 0, 1, 0, 1, 0, 1, 0 ) &3&4&14& 4 & (12,0,0, 2,0,2, 0,2,4, 0,2,0,0,1,0) & \frac{4}{3465} \\
14& (0, 0, 0, 1, 0, 1, 0, 1, 0 ) &3&6&14& 4 & (10,1,0, 1,0,2, 2,0,6,0,2,0,1, 0, 1) & \frac{1}{770} \\
15& (0, 0, 0, 1, 0, 1, 0, 1, 0 ) &4&8&14& 4 & (11,0,1, 1,0,2, 1,1,5,0,2,0,1, 0, 1) & \frac{71}{27720} \\
15a&&2&4&14&&\{(4,11),(1,3),*\} \succ \{(5,14),* \} & \frac{1}{2520}\\
15b&&3&4&14&&\{(2,5),(3,8),*\} \succ \{(5,13),* \} & \frac{23}{36036}\\
15c&&3&5&14&& \{(7,16),(7,19),* \} & \frac{31}{31920}\\
16& (0, 0, 0, 1, 0, 1, 0, 1, 0 ) &5&9&14& 4 & (11,0,1, 1,0,2, 2,0,6,0,1,1,0, 0, 1) & \frac{43}{13860} \\
16a&&4&3&14&&\{(4,10),(3,8),*\} \succ \{(7,18),* \} & \frac{1}{3080}\\
16b&&4&4&14&&\{(2,5),(6,16),*\} \succ \{(8,21),* \} & \frac{1}{1386}\\
16c&&3&3&14&& \{(7,16),(5,13),* \} & \frac{3}{16016}\\
17& (0, 0, 0, 1, 0, 1, 0, 1, 1 ) &3&6&14& 3 & (9,0,0, 2,0,0, 0,2,3, 0, 1,0,1,0, 1) &  \frac{3}{1540} \\
18& (0, 0, 0, 1, 0, 1, 0, 1, 1 ) &4&7&14& 3 & (9,0,0, 2,0,0, 1,1,4, 0, 0,1,0,0, 1) &  \frac{23}{9240} \\
18a&&2&3&14&&\{(4,11),(1,3),*\} \succ \{(5,14),* \} & \frac{1}{3080}\\
18b&&4&6&14&&\{(3,8),(4,11),*\} \succ \{(7,19),* \} & \frac{83}{43890}\\

19& (0, 0, 0, 1, 0, 1, 1, 0, 0 ) &3&3&14& 3 & (8,0,1,1,0,1,0,1,5,0,1,0,0,1,0) & \frac{2}{3465} \\
20&(0, 0, 0, 1, 0, 1, 1, 0, 0 ) &4&7&14& 3 & (7,0,2, 0,0,1, 1,0,6, 0,1,0,1,0, 1) & \frac{1}{504} \\
20a&&3&3&18&&\{(2,5),(3,8),*\} \succ \{(5,13),* \} & \frac{1}{16380}\\
21&( 0, 0, 0, 1, 0, 1, 1, 1, 0 ) &4&8&14& 2 & (6,0,1, 0,0,0, 1,0,3, 1,0,0,0, 0, 1 ) & \frac{1}{360} \\
21a&&2&3&16&&\{(1,3),(3,10),*\} \succ \{(4,13),* \} & \frac{1}{4680}\\
22&(0, 0, 0, 1, 0, 1, 1, 1, 0 ) &2&3&18& 3 & (7,1,0, 1,0,1, 1,0,5, 1,0,0,1,0,1) & \frac{1}{9240}\\
23&(0, 0, 0, 1, 0, 1, 1, 1, 0 ) &3&5&14& 3 & (8,0,1, 1,0,1, 0,1,4, 1,0,0,1,0,1) & \frac{19}{13860}\\
23a&&2&3&14&&\{(4,9),(3,7),*\} \succ \{(7,16),* \} & \frac{1}{2640}\\
24&(0, 0, 0, 1, 0, 1, 1, 1, 0 ) &3&3&14& 4 & (10,1,0, 1,0,3, 0,1,6, 0,2,0,0,1,0)& \frac{1}{3465} \\
25&(0, 0, 0, 1, 0, 1, 1, 1, 0 ) &4&7&14& 4 & (9,1,1, 0,0,3, 1,0,7, 0,2,0,1,0,1) & \frac{47}{27720} \\
25a&&4&6&14&&\{(5,11),(4,9),*\} \succ \{(9,20),* \} & \frac{1}{840}\\
26&(0, 0, 0, 1, 0, 1, 1, 1, 0, ) &5&9&14& 4 & (10,0,2, 0,0,3, 0,1,6,0,2,0,1,0,1) & \frac{41}{13860} \\
26a&&3&5&14&&\{(4,11),(1,3),*\} \succ \{(5,14),* \} & \frac{1}{1260}\\
27&(0, 0, 0, 1, 0, 1, 1, 1, 0 ) &6&10&14& 4 & (10,0,2, 0,0,3, 1,0,7, 0,1,1,0,0,1) & \frac{97}{27720} \\
27a&&5&3&14&&\{(6,15),(3,8),*\} \succ \{(9,23),* \} & \frac{19}{79695}\\
27b&&5&5&14&& \{(5,13),(5,18),* \} & \frac{1}{1170}\\
28&(0, 0, 0, 1, 0, 1, 1, 1, 1 ) &4&8&14& 2 & (5,1,0, 0,0,0, 1,0,4, 0,1,0,0,0,1) & \frac{23}{9240}\\
29&(0, 0, 0, 1, 0, 1, 1, 1, 1 ) &5&10&14& 2 & (6,0,1, 0,0,0, 0,1,3, 0,1,0,0,0,1) & \frac{13}{3465}\\
29a&&2&3&14&&\{(4,11),(2,6),*\} \succ \{(6,17),* \} & \frac{1}{5355}\\

\end{array}
$$

\newpage
$$
\begin{array}{lccccccc}
no. & (P_3,...,P_{11}) &P_{18}&P_{24}&m_0& \chi & (n_{1,2},n_{4,9},...,n_{1,5}) \text{ or } B_{min} & K^3(B)\\
\hline
30&(0, 0, 0, 1, 0, 1, 1, 1, 1 ) &3&5&14& 3 & (7,1,0, 1,0,1, 0,1,5, 0,1,0,1,0, 1) & \frac{1}{924} \\
31&(0, 0, 0, 1, 0, 1, 1, 1, 1 ) &4&6&14& 3 & (7,1,0, 1,0,1, 1,0,6, 0,0,1,0,0, 1) & \frac{1}{616} \\
32&(0, 0, 0, 1, 0, 1, 1, 1, 1 ) &5&8&14& 3 & (8,0,1, 1,0,1, 0,1,5, 0,0,1,0,0, 1) & \frac{2}{693} \\
32a&&4&6&14&&\{(4,9),(3,7),*\} \succ \{(7,16),* \} & \frac{1}{528}\\
32b&&2&2&14&&\{(4,11),(1,3),*\} \succ \{(5,14),* \} & \frac{1}{1386}\\
33 & (  0, 0, 0, 1, 1, 0, 0, 1, 0 ) &2&4&14 & 2 & (5,0, 0, 2,0,0,1,0,1,1,1,0, 0, 0, 0) & \frac{1}{840} \\
33a&&1&3&14&& \{(3,10),(2,7),*\} \succ \{(5,17),* \} & \frac{1}{2856}\\
34 & ( 0, 0, 0, 1, 1, 0, 0, 1, 0 ) &4&8& 14& 3 &( 7,0, 1, 1,0, 2, 1,0,3,0, 3,0,0,  0, 0) & \frac{1}{360} \\
34a&&3&6 &14&&\{(4,9),(3,7),*\} \succ \{(7,16),* \} & \frac{1}{560}\\
34b&&3& 4&14&&\{(2,5),(3,8),*\} \succ \{(5,13),* \} & \frac{1}{1170}\\
%
35& (  0, 0, 0, 1, 1, 0, 0, 1, 1 ) &3&6& 14& 2 &( 5,0, 0, 2,0,0,0,1,1,0,2,0,0, 0, 0) & \frac{1}{462}\\
36 & ( 0, 0, 0, 1, 1, 0, 1, 1, 0 ) &3&5&14& 2 & (4,0,1, 1,0,1, 0,0,2,1,1,0,0,  0, 0) & \frac{1}{630}\\
36a&&2&3&14&& \{(4,9),(3,7),*\} \succ\{(7,16),* \} & \frac{1}{1680}\\
36b&&2&4&14&&  \{(3,10),(2,7),*\} \succ\{(5,17),* \} & \frac{4}{5355}\\

37 & ( 0, 0, 0, 1, 1, 0, 1, 1, 0 ) &5&9&14 & 3 & (6,0,2, 0,0,3, 0,0,4, 0,3,0,0, 0, 0)&  \frac{1}{315} \\

38 & ( 0, 0, 0, 1, 1, 0, 1, 1, 1 ) &3&5&14& 2 &( 3,1,0, 1,0,1, 0,0,3,0,2,0,0, 0, 0) & \frac{1}{770}\\
39 & ( 0, 0, 0, 1, 1, 1, 0, 1, 0) &3&6&14 & 3 & (7,0,1, 1,0,1, 2,0,2,1,1,0,1,  0, 0) & \frac{1}{630} \\
39a&&2&4 &14&&\{(4,9),(3,7),*\} \succ \{(7,16),* \} & \frac{1}{1680}\\
39b&&2&5 &14&& \{(3,10),(2,7),*\} \succ \{(5,17),* \} & \frac{4}{5355}\\
40 & ( 0, 0, 0, 1, 1, 1, 0, 1, 0) &5&10&14 & 4 & (9,0,2, 0,0,3, 2,0,4,0,3,0,1,  0, 0)& \frac{1}{315} \\
40a&&4&4 &14&&\{(4,10),(3,8),*\} \succ \{(7,18),* \} & \frac{1}{2520}\\
40b&&4&5 &14&&\{(2,5),(6,16),*\} \succ \{(8,21),* \} & \frac{1}{1260}\\
41& ( 0, 0, 0, 1, 1, 1, 0, 1, 1 ) & 5 & 11& 13& 2& (5,0, 1, 0,0,
0,2,0, 1, 0, 2,0, 0, 0, 0) & \frac{1}{252} \\
42&  (0, 0, 0, 1, 1, 1, 0, 1, 1) &3&6&14 & 3 &   (6,1,0, 1,0,1, 2,0,3,0,2,0,1, 0, 0) & \frac{1}{770} \\
43 &  (0, 0, 0, 1, 1, 1, 0, 1, 1) &4&8&14 & 3 &   (7,0,1, 1,0,1,1,1,2,0,2,0,1, 0, 0) & \frac{71}{27720} \\
43a&&2&4&14&& \{(4,11),(1,3),*\} \succ\{(5,14),* \} & \frac{1}{2520}\\
43b&&3&4&14&&\{(2,5),(3,8),*\} \succ \{(5,13),* \} & \frac{23}{36036}\\
43c&&3&5&14&& \{(7,16),(7,19),* \} & \frac{31}{31920}\\
44 & ( 0, 0, 0, 1, 1, 1, 0, 1, 1) &5&9&14 & 3 & (7,0,1, 1,0,1, 2,0,3,0,1,1,0, 0, 0) & \frac{43}{13860} \\
44a&&4&4&14&& \{(2,5),(6,16),*\} \succ \{(8,21),* \} & \frac{1}{1386}\\
44b&&3&3&14&& \{(7,16),(5,13),* \} & \frac{3}{16016}\\
44c&&4&6&14&& \{(7,16),(5,18),* \} & \frac{1}{720}\\
44d&&4&4&14&& \{(5,13),(5,18),* \} & \frac{1}{2184}\\
45&  (0, 0, 0, 1, 1, 1, 1, 0, 1) &4&7&14& 2 & (3,0,2, 0,0,0, 1,0,3,0,1,0,1, 0, 0) & \frac{1}{504} \\
46& ( 0, 0, 0, 1, 1, 1, 1, 1, 0) &4&7&14& 3 &( 6,0,2, 0,0,2, 1,0,3,1,1,0,1, 0, 0)& \frac{1}{504} \\
46a&&3&3&16&&\{(2,5),(3,8),*\} \succ \{(5,13),* \} & \frac{1}{16380}\\
46b&&3&6&14&& \{(3,10),(2,7),*\} \succ \{(5,17),* \} & \frac{7}{6120}\\
47&  0, 0, 0, 1, 1, 1, 1, 1, 1) &2&3&16&  2 &  (3,1,0, 1,0,0, 1,0,2,1,0,0,1, 0,0 )& \frac{1}{9240} \\
48 &  0, 0, 0, 1, 1, 1, 1, 1, 1) &3&5&14&  2 & (4,0,1, 1,0,0, 0,1,1,1,0,0,1, 0,0 )& \frac{19}{13860} \\
48a&&2&3 &14&&\{(4,9),(3,7),*\} \succ \{(7,16),* \} & \frac{1}{2640}\\
49& (  0, 0, 0, 1, 1, 1, 1, 1, 1 ) &4&7&14& 3 & (5,1,1, 0,0,2, 1,0,4, 0,2,0,1,0,0) &\frac{47}{27720} \\
49a&&4&6&14&&\{(5,11),(4,9),*\} \succ \{(9,20),* \} & \frac{1}{840}\\
50& (  0, 0, 0, 1, 1, 1, 1, 1, 1 ) &5&9&14& 3 & (6,0,2, 0,0,2, 0,1,3, 0,2,0,1,0,0) &\frac{41}{13860} \\
50a&&3&5&14&&\{(4,11),(1,3),*\} \succ \{(5,14),* \} & \frac{1}{1260}\\
51& (  0, 0, 0, 1, 1, 1, 1, 1, 1 ) &6&10&14& 3 & (6,0,2, 0,0,2, 1,0,4, 0,1,1,0,0,0) &\frac{97}{27720} \\
51a&&5&4&14&&\{(4,10),(3,8),*\} \succ \{(7,18),* \} & \frac{1}{1386}\\
51b&&5&5&14&& \{(5,13),(5,18),* \} & \frac{1}{1170}\\

52&(0, 0, 1, 0, 0, 1, 0, 1, 0 ) &3&7&14& 2 & (4,0,0, 1,0,2, 2,0,2, 0,0,0,0,0, 1) & \frac{1}{420} \\
52a&&2&3&18&&\{(2,5),(3,8),*\} \succ \{(5,13),* \} & \frac{1}{2184}\\
53&(0, 0, 1, 0, 0, 1, 1, 1, 0 ) &4&8&14& 2 & (3,0,1, 0,0,3, 1,0,3, 0,0,0,0, 0, 1) & \frac{1}{360} \\
53a&&3&4&15&&\{(2,5),(3,8),*\} \succ \{(5,13),* \} & \frac{1}{1170}\\

54& (  0, 0, 1, 0, 1, 0, 0, 1, 0) &2&4&14& 2 &( 2,0,0, 2,0,3, 1,0,1, 0,1,0,0, 0, 0)&  \frac{1}{840} \\
55 & ( 0, 0, 1, 0, 1, 0, 0, 1, 0) &2&2&14& 3 & (4,0,0, 3,0,4, 1,0,3, 0,0,1,0, 0, 0) & \frac{1}{3080}\\
56 & ( 0, 0, 1, 0, 1, 0, 1, 1, 0 ) &3&5&14& 2 & (1,0,1, 1,0,4, 0,0,2, 0,1,0,0, 0, 0)& \frac{ 1}{630} \\
56a&&2&3&14&&\{(4,9),(3,7),*\} \succ \{(7,16),* \} & \frac{1}{1680}\\
\end{array}
$$

\newpage
$$
\begin{array}{lccccccc}
no. & (P_3,...,P_{11}) &P_{18}&P_{24}&m_0& \chi & (n_{1,2},n_{4,9},...,n_{1,5})\text{ or } B_{min} & K^3(B)\\
\hline

57& ( 0, 0, 1, 0, 1, 0, 1, 1, 0 ) &3&3&14& 3 & (3,0,1, 2,0,5, 0,0,4, 0,0,1,0, 0, 0) & \frac{1}{1386} \\
58& (0, 0, 1, 0, 1, 1, 0, 1, 0) &3&6&14& 3 & (4,0,1, 1,0,4, 2,0, 2, 0,1,0,1, 0, 0) & \frac{1}{630} \\
58a&&2&4&14&&\{(4,9),(3,7),*\} \succ \{(7,16),* \} & \frac{1}{1680}\\
59& (0, 0, 1, 0, 1, 1, 0, 1, 1) &2&4&14& 2 & (2,0,0, 2,0,2, 1,1,0, 0,0,0,1, 0, 0) &  \frac{3}{3080} \\
59a&&2&3&14&&\{(3,8),(4,11),*\} \succ \{(7,19),* \} & \frac{1}{2660}\\
60& (0, 0, 1, 0, 1, 1, 1, 1, 0) &4&7&14& 3 & (3,0, 2, 0,0,5, 1,0,3, 0,1,0,1, 0, 0 ) & \frac{1}{504} \\
60a&&3&3&15&&\{(2,5),(3,8),*\} \succ \{(5,13),* \} & \frac{1}{16380}\\
61& (0, 0, 1, 0, 1, 1, 1, 1, 1 ) &2&3&15& 2 & (0,1,0, 1,0, 3,1,0,2,0,0,0,1,0,0 )& \frac{1}{9240} \\
62& (0, 0, 1, 0, 1, 1, 1, 1, 1 ) &3&5&14& 2 & (1,0,1, 1,0,3, 0,1,1,0,0,0,1,0,0 )& \frac{19}{13860} \\
62a&&2&3&14&&\{(4,9),(3,7),*\} \succ \{(7,16),* \} & \frac{1}{2640}\\

63&(0, 0, 1, 1, 1, 1, 1, 1, 1 ) &3&4&14& 3 & (5,0,1, 2,0,1, 1,1,3, 0,1,0,0,0, 1) & \frac{1}{5544} \\
\end{array}
$$}

We see that there is only one packing $\{(2,5),(3,8)\} \succ
\{(5,13)\}$ to get a minimal positive basket $\{4 \times(1,2),
(4,9), (2,5),(5,13),3 \times (1,3), 2 \times (1,4)\}$. We simply
write this as $\{(5,13),*\}$ in Table B. It is now easy to calculate
$K^3$ for both $B^{(12)}$ and the minimal positive basket
$\{(5,13),*\}$. Finally we can directly calculate $P_{m}$. We use
$m_0$ to denote the minimal integer with $P_{m_0}\geq 2$. For the
need of our argument, we also display the value of
$P_{18}=P_{18}(X)$ and $P_{24}=P_{24}(X)$ in Table $B$. So
theoretically we can finish our classification by detailed
computations. We omit more details because all calculations are
similar.
\end{setup}

One will see that many positive minimal baskets in Table B are not
geometric. By Theorem \ref{volume} we know some effective lower
bounds of $K_X^3$. On the other hand, if we know a concrete $m_0$
and the volume of a minimal positive basket is smaller than the
lower bound predicted in Theorem \ref{volume}, then such a minimal
positive basket would not be a geometric one.





%

Looking through Table B, we have:
\medskip

\noindent{\bf Claim C}. {\em  Each minimal positive basket in Table
B, of cases 4a, 9, 16a, 16c, 18a, 20a, 21a, 22, 24, 27a, 29a, 33a,
44b, 46a, 47, 52a, 55, 60a, 61, 63 is not geometric.}

\begin{proof}
{\bf 1).} If $P_{14} \ge 2$, then $K^3 \ge \frac{11}{37800}
>\frac{1}{3437}$ by Theorem \ref{volume}. So the cases
4a, 9, 16c, 24, 27a, 29a, 44b, 63 are not geometric.

{\bf 2).} If $P_{15} \ge 2$, then $K^3 \ge \frac{11}{46080}
> \frac{1}{4190}$ by Theorem \ref{volume}, hence case 60a, 61 are not geometric.

{\bf 3).} If $P_{16} \ge 2$, then $K^3 \ge \frac{11}{55488} >
\frac{1}{5045}$ by Theorem \ref{volume}, hence the cases 46a, 47
are not geometric.

{\bf 4).} If $P_{18} \ge 2$, then $K^3 \ge
\frac{11}{77976}>\frac{1}{7089}$ by Theorem \ref{volume}. Thus the
cases 20a, 22 are not geometric.

{\bf 5).} The case 33a has $P_6=1,P_{16}=2$ but $P_{22}=1$, a
contradiction. So case 33a is not geometric.

{\bf 6).} The cases  16a, 18a, 21a, 52a, 55 have $P_{17}=0$.   In
case 21a, $P_8=P_9=1$, a contradiction. And in case 52a, 55,
$P_5=P_{12}=1$, a contradiction.
 For case 18a, $P_6=P_{11}=1$, again a contradiction. Finally in
case 16a, computation shows that $P_{19}=-1$. Hence each of these
cases is not geometric.
\end{proof}

\begin{thm}\label{vol}
The canonical volume $K^3 \ge \frac{1}{2660}$ for all projective
3-folds of general type.
\end{thm}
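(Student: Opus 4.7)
The plan is to dispose of small values of $\chi(\OO_X)$ by previously established bounds, and then for large $\chi$ to exploit the finite classification of baskets collected in Table B.

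First I would pass to a minimal model $X$ of $V$, so that $\Vol(V)=K_X^3$ and $\chi(\OO_V)=\chi(\OO_X)$. If $\chi(\OO_X)\le 0$, then by the corollary following Remark \ref{chi(O)>1} we have $m_1(X)\le 2$, so $P_2(X)\ge 1$. Using Reid's formula together with the non-negativity of $l(m+2)-l(m)-l(2)$ (exactly as in the proof of Lemma \ref{p2m}, but with the $-\chi$ term now also non-negative), the inequality $P_{m+2}\ge P_m+P_2$ continues to hold for $\chi\le 0$; hence $P_4\ge 2$ whenever $P_2=1$, and otherwise already $m_0=2$. In either event Table A forces $K_X^3\ge \tfrac{1}{108}$. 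For $\chi(\OO_X)=1$ I would simply quote Theorem \ref{volume_chi=1} to obtain $K_X^3\ge \tfrac{1}{420}$. Both of these bounds comfortably exceed $\tfrac{1}{2660}$.

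The main case is $\chi(\OO_X)\ge 2$. Here I split according to whether there exists an integer $m_0\le 12$ with $P_{m_0}(X)\ge 2$. If so, Theorem \ref{volume} combined with the refined Table A gives $K_X^3\ge \tfrac{5}{10764}>\tfrac{1}{2660}$, because the smallest entry in the ``general'' row of Table A for $m_0\le 12$ occurs at $m_0=12$. Otherwise $P_m\le 1$ for all $m\le 12$, and by the discussion in \ref{r>5} the basket $B=\mathscr{B}(X)$ automatically satisfies Assumption \ref{r6}; Table B then enumerates every possibility for the approximation $B^{(12)}$ of $B$, together with every minimal positive basket $B_{\min}$ it can dominate.

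The final and main step is to verify that every geometrically realizable minimal positive basket appearing in Table B has $K^3\ge \tfrac{1}{2660}$. Those entries with strictly smaller $K^3$ are exactly the cases collected in Claim C, which applies the refined lower bounds of Theorem \ref{volume} (invoking Table A at $m_0=14,15,16,18$) together with the multiplicativity $P_{m+n}\ge P_m P_n$ to rule them out as geometric. After this exclusion, the minimum value of $K^3$ among the surviving minimal positive baskets in Table B equals $\tfrac{1}{2660}$, attained for instance by cases 11a and 59a. Since by Lemma \ref{packing}(3) any geometric basket $B$ dominating a minimal $B_{\min}$ satisfies $K^3(B)\ge K^3(B_{\min})$, this yields $K_X^3\ge \tfrac{1}{2660}$, completing the proof. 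The main obstacle is the case-by-case verification in Claim C, which ultimately rests on comparing numerical data from the classification against the explicit geometric bounds of Section 3.
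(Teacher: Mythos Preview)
Your overall strategy matches the paper's, but there is a genuine gap in the final step. You argue: exclude the non-geometric minimal baskets via Claim~C, note that the surviving minimal baskets all have $K^3\ge\frac{1}{2660}$, and conclude via $K^3(B)\ge K^3(B_{\min})$. The problem is that Claim~C only tells you that certain $B_{\min}$ are not themselves geometric; it does \emph{not} prevent the geometric basket $B=\mathscr{B}(X)$ from dominating such a $B_{\min}$. When $B\succ B_{\min}$ with $B_{\min}$ non-geometric, all you learn is $B\neq B_{\min}$, and the inequality $K^3(B)\ge K^3(B_{\min})$ is useless since $K^3(B_{\min})$ may be far below $\frac{1}{2660}$. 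You have not shown that $B$ must also dominate some \emph{other} minimal basket with large volume.

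The paper closes this gap by a separate case analysis: for each non-geometric $B_{\min}$ in Claim~C it examines every intermediate basket in the chain $B^{(12)}\succ\cdots\succ B_{\min}$. When the packing is one-step (e.g.\ cases 33a, 18a, 20a, 21a, 46a, 52a, 60a) there is no intermediate, forcing $B=B^{(12)}$ and hence $K^3(B)=K^3(B^{(12)})\ge\frac{1}{2660}$. When the packing has several steps (cases 4a, 29a, 16a, 16c, 27a, 44b) one must compute the volume of each intermediate basket explicitly and check it is at least $\frac{1}{2660}$; this is done one by one in the paper's proof. Without this verification your argument is incomplete. (A minor side remark: your assertion that the entries in Claim~C are \emph{exactly} those with $K^3<\frac{1}{2660}$ is not quite right, e.g.\ case~52a has $K^3=\frac{1}{2184}>\frac{1}{2660}$; but this is harmless compared to the gap above.)
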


\begin{proof} It suffices to study minimal models. Let $X$ be a
minimal 3-fold $X$ of general type. If $P_{m} \ge 2$ for some $m \le
12$, then $K^3 \geq \frac{11}{24336}>\frac{1}{2213}$ by Theorem
\ref{volume}. Also notice that if $\chi(\mathcal{O}_X) \le 0$, then
$K^3\geq \frac{1}{30}$ by \cite[Theorem 1.1]{Jungkai-Meng}. When
$\chi=1$, we have seen that $K^3 \ge \frac{1}{420}$ by Theorem
\ref{volume_chi=1}. It remains to treat the case that
$\chi(\OO_X)>1$ and $P_m \leq 1$ for $m \leq 12$.

Recall that we can study a geometric basket $B$ on $X$ and the
corresponding formal basket ${\bf B}=\{B,\chi, P_2\}$. We have given
Table B for each possible minimal positive basket $B_{min}$
dominated by $B$. Clearly we have $K_X^3=K^3(B)\geq K^3(B_{min})$
since $B\succ B_{min}$ for some $B_{min}$ in Table B.

If $B_{min}$ is geometric, then by searching in Table B and
eliminate those non-geometric ones, we get $K^3 \ge
\frac{1}{2660}$, which happens in cases 11a and 59a.

We still need to treat the case that $B_{min}$ is non-geometric.
Notice that if $B^{(12)}$ is minimal and non-geometric. Then
$B^{(12)} \succ B \succ B_{min}$ gives $B^{(12)} = B$, which is
non-geometric. This is a contradiction.

Hence it remains to consider  those intermediate baskets between
$B^{(12)}$ and $B_{min}$  under the situation that $B^{(12)} \ne
B_{min}$ and $B_{min}$ is non-geometric.

Take case 4a for example. It's obtained by 2-steps packing
$$\{(2,6),(4,11)\} \succ \{(1,3),(5,14)\} \succ \{(6,17)\}.$$
It
doesn't really  matter whether the intermediate basket $B_{mid}$
is geometric or not. The intermediate basket has
$K^3(B_{mid})=\frac{1}{630} \geq \frac{1}{2660}$. Thus $K_X^3>
\frac{1}{2660}$.

One can see that the computation for case 29a is exactly the same.

Take case 33a for another example.  It's obtained by 1-step packing
$\{(3,10),(2,7)\}  \succ \{(5,17)\}$. Hence there is no intermediate
baskets. So $B= B^{(12)}$ and $K_X^3= K^3(B^{(12)})=\frac{1}{840}$.
Similar argument works for cases 18a, 20a, 21a, 46a,52a, 60a.

Indeed the remaining cases are  16a, 16c, 27a, 44b. In case 44b,
there are two intermediate baskets which dominates case 44c or 44d
respectively. Thus in particular $K_X^3> \frac{1}{2184}$. In case
27a, it's obtained from case 54 by 3-steps packing $\{3 \times
(2,5), (5,8),* \} \succ \{ 2 \times (2,5), (5,13),* \} \succ \{
(2,5),(7,18),* \} \succ \{ (9,23),*\}$. Every geometric basket
dominating the basket in case 54a must dominate the basket $\{
(2,5),(7,18),* \}$ with $K^3=\frac{1}{1386}$. Finally, we consider
cases 16a, 16c. The basket for case 16 is of the form
$\{(4,9),(3,7),(2,5),(2,5),(3,8),(3,8),*\}$. If we take 1-step
packing $\{(4,9),(3,7),(2,5),(5,13),(3,8),*\}$, then this is a
common intermediate basket $B_{mid}$ between $B^{(12)}$ and 16a or
16c. It has $K^3_{B_{mid}}=\frac{85}{72072}>\frac{1}{848}$. The only
remaining intermediate basket is
$\{(7,16),(2,5),(2,5),(3,8),(3,8),*\}$, which has the volume
$\frac{13}{6160}>\frac{1}{474}$.


Thus for all non-geometric basket cases, we still have
$K_X^3>\frac{1}{2660}$. We have proved the theorem.
\end{proof}

\section{\bf Birationality}
With the technique of studying pluricanonical maps and the
classification of baskets, we are able to study various
explicit birational geometry including plurigenera and the
pluricanonical birationality.

We will need the following:

\begin{lem}\label{p} Consider two formal baskets
${\bf B}_i=\{{ B_i}, \tilde{\chi}, \tilde{P}_2\}$ for $i=1,2$.
Assume ${ B_1} \succ {B_2}$ with $\mathscr{B}^{(0)}({
B_1})=\mathscr{B}^{(0)}({ B_2})$.  Then we have $P_m({\bf B}_1)
\geq P_m({\bf B}_2)$ for all $m\geq 2$.
\end{lem}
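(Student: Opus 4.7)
The plan is to reduce the desired inequality $P_m({\bf B}_1) \geq P_m({\bf B}_2)$ to a direct comparison of the scalars $\sigma$ and $\Delta^j$ ($j < m$) attached to the two baskets, using the recursion (5.1) together with Lemma \ref{packing}. Since $\tilde{\chi}$ and $\tilde{P}_2$ are common input, $P_2({\bf B}_1) = P_2({\bf B}_2) = \tilde{P}_2$ is immediate by definition.

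First I would observe that, by the hypothesis $\mathscr{B}^{(0)}(B_1) = \mathscr{B}^{(0)}(B_2)$ together with Lemma \ref{packing}(3) (each single packing preserves $\sigma$), one has $\sigma(B_1) = \sigma(\mathscr{B}^{(0)}(B_1)) = \sigma(\mathscr{B}^{(0)}(B_2)) = \sigma(B_2)$. Combined with the formula $P_3({\bf B}) = -\sigma(B) + 10\tilde{\chi} + 5\tilde{P}_2$ from Section 5, this gives $P_3({\bf B}_1) = P_3({\bf B}_2)$, settling the case $m = 3$ as the base of an induction.

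For the inductive step $m \geq 3$, I would rewrite the recursion (5.1) using the identity $K^3({\bf B}) - \sigma'(B) = -\sigma(B) + 6\tilde{\chi} + 2\tilde{P}_2$; the increment $P_{m+1}({\bf B}) - P_m({\bf B})$ then no longer involves $\sigma'(B)$ and depends only on $\sigma(B)$, $\tilde{\chi}$, $\tilde{P}_2$ and $\Delta^m(B)$. Taking the difference between ${\bf B}_1$ and ${\bf B}_2$ and using $\sigma(B_1) = \sigma(B_2)$, I would obtain
$$(P_{m+1}({\bf B}_1) - P_m({\bf B}_1)) - (P_{m+1}({\bf B}_2) - P_m({\bf B}_2)) = \Delta^m(B_1) - \Delta^m(B_2) \geq 0,$$
where the inequality is Lemma \ref{packing}(1) applied stepwise along a chain of single packings realizing $B_1 \succ B_2$. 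A straightforward induction starting from $P_3({\bf B}_1) = P_3({\bf B}_2)$ then yields $P_m({\bf B}_1) \geq P_m({\bf B}_2)$ for all $m \geq 3$.

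I do not foresee a serious obstacle here; the lemma is essentially a bookkeeping consequence of the combinatorics set up in Section 4. The only subtlety worth flagging is that it is the quantity $K^3 - \sigma'$, not $K^3$ or $\sigma'$ individually, which enters the increment cleanly: although $K^3({\bf B}_1)$ and $\sigma'(B_1)$ may each strictly exceed their counterparts for ${\bf B}_2$ by the common amount $\sigma'(B_1) - \sigma'(B_2) \geq 0$ (by Lemma \ref{packing}(3)), it is precisely this cancellation that makes the inductive step depend only on the nonnegative correction $\Delta^m(B_1) - \Delta^m(B_2)$.
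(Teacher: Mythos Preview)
Your proposal is correct and follows essentially the same approach as the paper: both arguments observe that $\sigma(B_1)=\sigma(B_2)$ and $K^3({\bf B}_i)-\sigma'(B_i)$ agree (since this quantity depends only on $\sigma,\tilde{\chi},\tilde{P}_2$), then invoke Lemma~\ref{packing}(1) to get $\Delta^m(B_1)\geq\Delta^m(B_2)$ and conclude by the inductive definition~(5.1). The paper additionally remarks that $\Delta^3,\Delta^4$ are actually equal (via Lemma~\ref{delta}(i)), but your weaker inequality suffices.
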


\begin{proof}
Since $\mathscr{B}^{(0)}({ B_1})=\mathscr{B}^{(0)}({ B_2})$, it
follows by definition that $\sigma({ B_1})=\sigma({ B_2})$ and
$\Delta^j({\bf B_1})=\Delta^j({\bf B_2})$ for $j=3,4$. Also we
know $K^3({\bf B}_1)-\sigma'({ B_1})=K^3({\bf B}_2)-\sigma'({
B_2})$. By Lemma \ref{packing}, we get $\Delta^m({\bf B_1})\geq
\Delta^m({\bf B_2})$ for all $m\geq 5$.

Therefore by the inductive definition of $P_m({\bf B}_i)$, we see
$P_m({\bf B}_1) \geq P_m({\bf B}_2)$ for all $m\geq 2$.
\end{proof}

Let us recall some known relevant results as follows. On irregular
3-folds there is already a practical result. The following theorem
was proved by the first author and C. D. Hacon.

\begin{thm}\label{q>0}  \cite{JC-H} Let $X$ be a minimal projective 3-fold of
general type with $q(X):=h^1(\OO_X)>0$. Then $P_m >0$ for
all $m \ge 2$ and $\varphi_m$ is birational for all $m\geq
7$.
\end{thm}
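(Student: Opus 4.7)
The plan is to exploit the Albanese morphism. Replace $X$ by a smooth birational model $X'$ so that the Albanese map $a\colon X'\to A:=\Alb(X)$ is a morphism; set $Y:=a(X')\subseteq A$ and $d:=\dim Y$. Since $q(X)\geq 1$, we have $d\geq 1$. The main input will be that for $m\geq 1$ the sheaves $a_*\omega_{X'}^{\otimes m}$ are GV-sheaves on $A$ (Hacon's generic vanishing theorem), and for $m\geq 2$ they are $M$-regular on their support, hence continuously globally generated in the sense of Pareschi--Popa.

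For the non-vanishing $P_m>0$ ($m\geq 2$): the sheaf $a_*\omega_{X'}^{\otimes m}$ is nonzero because $X$ is of general type. The GV property yields a nonempty open $U\subseteq\Pic^0(A)$ on which $H^0(A,a_*\omega_{X'}^{\otimes m}\otimes P_\alpha)\neq 0$. Combined with the constancy of $\chi(A,a_*\omega_{X'}^{\otimes m}\otimes P_\alpha)$ in $\alpha$ and the vanishing of higher cohomology of GV-sheaves on a dense open of $\Pic^0(A)$, one deduces $\chi(A,a_*\omega_{X'}^{\otimes m})>0$ and hence $H^0(X',mK_{X'})\neq 0$, giving $P_m(X)>0$.

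For the birationality at $m\geq 7$, I would do a case analysis on $d\in\{1,2,3\}$. If $d=3$, then $a$ is generically finite, and $M$-regularity of $a_*\omega_{X'}^{\otimes m}$ combined with two suitable twists $P_{\alpha_1},P_{\alpha_2}\in\Pic^0(A)$ separates general fiber points of $a$, yielding birationality of $\varphi_m$. If $d=2$, the Stein factorization of $a$ yields a fibration with general fiber a smooth curve $C$ of genus $\geq 2$; one applies the birationality principle \ref{BP}(i), using continuous global generation on the base for separating different $C$ and Kawamata--Viehweg vanishing to produce $|K_C+D|$ with $\deg D\geq 3$. If $d=1$, the Stein factorization gives $f\colon X'\to B$ with $g(B)\geq 1$ and general fiber a smooth surface $S$ of general type; one combines Bombieri's theorem on $S_0$ (giving $|5K_{S_0}|$ birational) with separation of distinct fibers via pullback of nontrivial elements from $\Pic^0(B)\subseteq\Pic^0(A)$, as in the framework of type I$_q$ in Setup \ref{type}.

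The main obstacle will be achieving the tight constant $m=7$, which is especially delicate in the case $d=1$ with $g(B)=1$. Here the naive estimate from Proposition \ref{easy-nv} and Remark \ref{separate} gives roughly $m\geq 2m_0+3$, which can exceed $7$ when $m_0>2$. To close the gap one must exploit the irregularity of $B$: sections of $f_*\omega_{X'/B}^{\otimes(m-1)}\otimes L$ for a generic $L\in\Pic^0(B)$ already separate different fibers of $f$ without spending an extra copy of $K_{X'}$ on the base, so that the full $(m-1)\pi^*(K_X)|_S$ is available to feed into Kawamata--Viehweg vanishing on the fiber. Combining this with Bombieri's bound $|5K_{S_0}|$ on $S$ and the boundary value $\beta\to\frac{1}{m_0+1}$ from Lemma \ref{beta} shows that $m=7$ suffices to guarantee birationality of $\varphi_m|_S$, thereby completing the argument via the birationality principle.
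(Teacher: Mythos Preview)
The paper does not prove this theorem at all: Theorem~\ref{q>0} is simply quoted from \cite{JC-H} as a known result (``The following theorem was proved by the first author and C.~D.~Hacon''), with no argument given here. So there is no proof in the paper to compare your proposal against.

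That said, a couple of remarks on your sketch as a standalone argument. Your overall architecture---Albanese map, generic vanishing for $a_*\omega_{X'}^{\otimes m}$, continuous global generation, and a case split on $d=\dim a(X')$---is indeed the strategy of \cite{JC-H}. But two points deserve care. First, your non-vanishing step is incomplete: deducing $\chi(A,a_*\omega_{X'}^{\otimes m})>0$ from generic non-vanishing is fine, but concluding $H^0$ is nonzero at the \emph{trivial} twist requires more than the GV property alone; in \cite{JC-H} one uses that for $m\geq 2$ the sheaf is in fact IT$^0$ (all higher cohomology vanishes for every twist), not merely GV. Second, in the $d=1$ case you invoke Lemma~\ref{beta}, but that lemma is stated and proved in this paper only under the hypothesis $B=\mathbb{P}^1$; when $g(B)\geq 1$ the semipositivity argument must be replaced by a different mechanism (twisting by $P_\alpha\in\Pic^0(B)$ and using weak positivity of $f_*\omega_{X'/B}^{\otimes m}$ over a curve of positive genus), which is what \cite{JC-H} actually does. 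Getting the sharp constant $7$ in that case is genuinely delicate and is the technical heart of \cite{JC-H}.
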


Therefore, we do not need to worry about irregular 3-folds in the
following discussion. The following result is due to  Koll\'ar.

\begin{thm}\label{Kollar} \cite[Corollary 4.8]{Kol}
Let $X$ be a minimal projective 3-fold of general type with
$P_{m_0}\geq 2$. Then $\varphi_{11m_0+5}$ is birational
onto its image.
\end{thm}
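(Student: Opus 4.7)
The plan is to apply the machinery of Section 2, most importantly Theorem \ref{technical}, to the minimal model $X$ with $m = 11m_0 + 5$. After the reduction in \ref{reduc}, I would choose a resolution $\pi: X' \to X$ and the Stein-factorized fibration $f: X' \to B$ as in \ref{setup}, pick a generic irreducible element $S$ of $|M_{m_0}|$, and treat the cases separately according to the type of $f$ defined in \ref{type}. If $q(X) > 0$, Theorem \ref{q>0} already gives birationality of $\varphi_m$ for $m \geq 7$, hence for $11m_0+5 \geq 16$; so one may assume $q(X) = 0$, which forces $g(B) = 0$ in type I.

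For types III and II, I would take $G := S|_S$ (or a generic irreducible element of it, which is base-point free on $S$): then $p = 1$ and $\beta = 1/m_0$, so $1+m_0/p + 1/\beta = 2m_0 + 1$ and the general $C$ has genus at least $2$, whence $\xi \geq 2/(2m_0+1)$ by Remark \ref{weak}. With $m = 11m_0+5$ this gives $\alpha = (9m_0+4)\xi > 8$, well beyond the threshold of Theorem \ref{technical}(i). For type I with $B \cong \bP^1$, I would take $G := 4\sigma^*(K_{S_0})$ on a general fiber $S$; then a general $C$ is non-hyperelliptic, $\deg K_C \geq 18$, and Lemma \ref{beta} allows $\beta$ to be chosen arbitrarily close to $p/(4(m_0+p))$. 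The extreme subcase is $p = 1$ (type I$_n$), where $1 + m_0/p + 1/\beta = 5m_0 + 5$ and $\xi \geq 18/(5m_0+5)$, giving
$$\alpha = 6m_0 \, \xi \geq \frac{108\, m_0}{5m_0+5} > 2.$$
The non-hyperelliptic clause of Theorem \ref{technical}(ii) then delivers birationality in this subcase, and the remaining subcases (I$_p$, I$_3$ with $p \geq 2$) are easier because they enlarge $\beta$ and decrease the threshold.

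It remains to verify Assumptions \ref{assumptions}(3) and (4). Part (3) is automatic when $\dim B \geq 2$ and follows from Remark \ref{separate} when $B \cong \bP^1$. In my view, the main obstacle is Assumption \ref{assumptions}(4): one must show that $|(11m_0+5)K_{X'}|_{|S}$ separates the generic elements of $|G|$. To establish it, I would apply Kawamata--Viehweg vanishing exactly as in the surjection (2.2), reducing the separation problem to one for a system of the form $|K_S + \roundup{\cdot}|$ on $S$; a second application of vanishing descends it to the minimal surface $S_0$, where the desired separation follows from Reider's or Bombieri's theorem once the residual system contains a sufficiently high multiple of $\sigma^*(K_{S_0})$. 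The coefficient $m = 11m_0+5$ is generous enough for this arithmetic to go through without pinch, completing the proof.
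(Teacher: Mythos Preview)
The paper does not give its own proof of this statement; Theorem \ref{Kollar} is simply quoted from \cite[Corollary 4.8]{Kol} as an established result of Koll\'ar, and the next line immediately records the sharper bound of Theorem \ref{5k+6}. So there is no proof in the paper to compare your proposal against.

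That said, your proposal is essentially the paper's own Section~2 machinery run at the looser threshold $11m_0+5$. The very same ingredients --- Theorem \ref{technical}, Remark \ref{weak}, Lemma \ref{beta}, Proposition \ref{nonvanishing}, Lemma \ref{(i)} --- are what the paper uses in Propositions \ref{III}--\ref{I3} to obtain bounds of order $3m_0$ to $5m_0$, and what \cite{JPAA} uses to get $5m_0+6$ in general. With $m=11m_0+5$ the arithmetic is slack everywhere, so your outline goes through; you are supplying a proof where the paper supplies a citation.

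Two small corrections. First, in your type~I computation you have $\alpha>2$ strictly, so the non-hyperelliptic clause of Theorem \ref{technical} is unnecessary (and the claim that a general $C\in|4\sigma^*(K_{S_0})|$ is non-hyperelliptic would need justification if you did rely on it). Second, Assumption \ref{assumptions}(3) is not literally ``automatic'' when $\dim B\geq 2$: the paper handles it via Lemma \ref{(i)}, which requires $m\geq m_0+m_1$; since $m_1\leq 2m_0$ in types III and II by Proposition \ref{nonvanishing}, the condition $m\geq 3m_0$ suffices, and $11m_0+5$ clears it easily.
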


Koll\'ar's result was ever improved by the second author:

\begin{thm}\label{5k+6} \cite[Theorem 0.1]{JPAA} Let $X$ be a
minimal projective 3-fold of general type with $P_{m_0}\geq
2$. Then $\varphi_{m}$ is birational onto its image for all
$m\geq 5m_0+6$.
\end{thm}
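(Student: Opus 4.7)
The plan is to reduce to a minimal model $X$ and apply the technical machinery of Section 2 case-by-case according to $\dim B \in \{1,2,3\}$, where $B$ is the base of the Stein factorization of $g_{m_0}=\varphi_{m_0}\circ\pi$. In each case the strategy is to choose a base point free linear system $|G|$ on a generic irreducible element $S$ of $|M_{m_0}|$ that maximizes both $\deg K_C$ and the rational number $\beta$ for which Assumption \ref{assumptions}(2) holds, and then invoke Theorem \ref{technical} to obtain birationality of $\varphi_m$ as soon as $\alpha=(m-1-\tfrac{m_0}{p}-\tfrac{1}{\beta})\xi$ exceeds $2$ (or exceeds $2$ after replacing ``$>2$'' with ``$\geq 2$'' when $C$ is non-hyperelliptic).

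For Types~III and II ($\dim B\geq 2$) one has $p=1$ and may take $|G|=|S|_S|$, giving $\beta=1/m_0$ and $\deg K_C$ large. The estimates of $\xi$ coming from the proof of Theorem~\ref{volume} (Part (i) and (ii)) feed into $\alpha=(m-1-2m_0)\xi$, and a direct calculation yields $\alpha>2$ for $m$ well below $5m_0+6$; these cases are therefore not the bottleneck.

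The critical case is Type~I, $\dim B=1$, and specifically the subcase $p=1$ (which is forced when $P_{m_0}=2$ and $g(B)=0$). Here Lemma~\ref{beta} produces $\beta_n\to \tfrac{p}{m_0+p}=\tfrac{1}{m_0+1}$ along sequences relating $\pi^*(K_X)|_S$ and $\sigma^*(K_{S_0})$. Taking $|G|=|4\sigma^*(K_{S_0})|$ guarantees base point freeness, forces a generic $C$ to be an even smooth curve with $\deg K_C\geq 18$, and yields $\beta=\tfrac{1}{4(m_0+1)}-\delta$. Thus
\[
\alpha=\Bigl(m-1-m_0-4(m_0+1)\Bigr)\xi+o(1)=(m-5m_0-5)\xi+o(1),
\]
so $\alpha>0$ exactly when $m\geq 5m_0+6$. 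A preliminary application of Remark~\ref{weak} gives $\xi\geq \tfrac{18}{5(m_0+1)}$, and then a bootstrap through the inequality $m\xi\geq \deg K_C+\alpha_0$ of Theorem~\ref{technical}, combined with the non-hyperellipticity criterion of Theorem~\ref{technical}(ii) for $C$ of large self-intersection (so that $\alpha\geq 2$ suffices), is used to push through the birationality at $m=5m_0+6$. This bootstrap is the main obstacle: the raw lower bound on $\xi$ is not large enough by itself, and one must iterate carefully, possibly refining the choice of $|G|$ on $S$ by blowing up or by using the movable part of $|2\sigma^*K_{S_0}|$ when available, to secure the required positivity of $\alpha$.

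The separation hypotheses \ref{assumptions}(3) and (4) are verified through the Kawamata--Viehweg vanishing surjections (2.2) and (2.4). Base point freeness of $|M_{m_0}|$ and Remark~\ref{separate} (applicable because $B=\mathbb{P}^1$ in the relevant Type~I subcases) imply that $|mK_{X'}|$ separates distinct generic elements of $|M_{m_0}|$; the induced linear system on $S$ separates distinct generic elements of $|G|$ because $|K_C+D|$ is base point free once $\deg D\geq 2$. The irregular subcase $I_q$ is either absorbed into this framework or dispatched cleanly by Theorem~\ref{q>0}. With these ingredients, the conclusion $\varphi_m$ birational for $m\geq 5m_0+6$ follows from Theorem~\ref{technical} in every case.
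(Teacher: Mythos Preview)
The paper does not prove this theorem: it is quoted verbatim from \cite[Theorem~0.1]{JPAA} and used as a black box throughout (see Theorems~\ref{56} and~\ref{5k+6}, and the remark preceding Lemma~\ref{(i)} where the authors explicitly say they ``can not improve Theorem~0.1 of the second author \cite{JPAA}'' for type~$I_n$). So there is no in-paper proof to compare against; the relevant question is whether your sketch actually establishes the result.

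Your outline has a genuine gap in the critical case, namely type~$I$ with $p=1$. With $G=4\sigma^*(K_{S_0})$ you correctly obtain $\beta\to \tfrac{1}{4(m_0+1)}$, so at $m=5m_0+6$ one has
\[
m-1-\tfrac{m_0}{p}-\tfrac{1}{\beta}\;\longrightarrow\;5m_0+6-1-m_0-4(m_0+1)=1,
\]
and hence $\alpha\approx\xi$. Theorem~\ref{technical} requires $\alpha>2$ (or $\alpha\ge 2$ with $C$ non-hyperelliptic), so you need $\xi\ge 2$. But the bootstrap you invoke only produces bounds of the shape $\xi\ge \tfrac{c}{m_0+1}$ with $c$ a fixed constant (Remark~\ref{weak} gives $\xi\ge \tfrac{18}{5(m_0+1)}$; one more pass through Theorem~\ref{technical} at $m=6m_0+6$ gives $\xi\ge\tfrac{11}{3(m_0+1)}$, as in the proof of Theorem~\ref{volume}(iv)). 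For $m_0\ge 2$ none of these reach $2$, and further iterations do not change the order of magnitude. Your suggestion to ``refine the choice of $|G|$'' to the movable part of $|2\sigma^*K_{S_0}|$ is exactly what Proposition~\ref{Ip} does, but that requires $p_g(S)>0$ for base-point-freeness; in type~$I_n$ (where $p_g(S)=0$) this fails, and it is precisely this subcase that forces the $5m_0+6$ bound and that the present paper declines to reprove. As written, your argument does not close this gap: you identify the obstacle but do not overcome it, and the machinery of Section~2 alone is not enough---the proof in \cite{JPAA} uses additional input specific to surfaces with $p_g=0$.
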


When $\chi(\OO_X)<0$, Reid's formula (4.1) says $P_2\geq 4$
and $P_m > 0$ for all $m \ge 2$. So $\varphi_m$ is
birational for all $m\geq 16$ by Theorem \ref{5k+6}.

When $\chi(\OO_X)=0$, since one can verify $l_Q(3)\geq
l_Q(2)$ for any basket $Q$, Reid's formula (4.1) says:
$P_3(X)>P_2(X)>0$. Moreover, $P_{m+1} \ge P_m$ for all $m
\ge 2$.  Now $P_3(X)\geq 2$, so $\varphi_m$ is birational
for all $m\geq 21$ by Theorem \ref{5k+6}.

To make  a summary, we have the following result when $\chi \le 0$.
\begin{thm}\label{chi<1} let $X$ be a minimal projective 3-fold of
general type with $\chi(\OO_X)\leq 0$. Then

\begin{itemize}
\item[(1)]
$P_m >0$ for all $m \ge 2$;
\item[(2)]
$P_m \ge 2$ for all $m \ge 3$;
\item[(3)]
$\varphi_m$ is birational for all $m\geq 21$.
\end{itemize}
\end{thm}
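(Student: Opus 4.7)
The plan is to split the argument into the two sub-cases $\chi(\OO_X)<0$ and $\chi(\OO_X)=0$, and to invoke Reid's plurigenus formula (4.1) in each. In both situations the key observation is that the correction term $l(m)$ is non-negative, and moreover the single-basket contributions $l_Q(m)$ are non-decreasing in $m$, since
$$l_Q(m+1)-l_Q(m)=\frac{\overline{mb_Q}(r_Q-\overline{mb_Q})}{2r_Q}\geq 0.$$
This monotonicity, combined with $K_X^3>0$, controls all the plurigenera from below.

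For $\chi(\OO_X)<0$, Reid's formula gives
$$P_m(X)=\tfrac{1}{12}m(m-1)(2m-1)K_X^3-(2m-1)\chi(\OO_X)+l(m),$$
and each of the three summands is non-negative for $m\geq 2$; the middle one is at least $2m-1\geq 3$, so $P_m\geq 3$ for all $m\geq 2$, which establishes (1) and (2) in this sub-case at once. In particular $P_2\geq 4$, so one could in fact take $m_0=2$ here.

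For $\chi(\OO_X)=0$, Reid's formula reduces to $P_m=\tfrac{1}{12}m(m-1)(2m-1)K_X^3+l(m)$, which is manifestly positive, giving (1). For (2) I would use the difference formula
$$P_{m+1}-P_m=\tfrac{m^2}{2}K_X^3+\bigl(l(m+1)-l(m)\bigr),$$
which is strictly positive by the monotonicity of $l(m)$ and $K_X^3>0$. Hence the sequence $\{P_m\}_{m\geq 2}$ is strictly increasing. Since $P_2\geq 1$, this forces $P_m\geq m-1\geq 2$ for $m\geq 3$.

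Having thus produced $P_3\geq 2$ in every sub-case, the birationality statement (3) is immediate from Theorem \ref{5k+6}: take $m_0=3$ and conclude that $\varphi_m$ is birational onto its image for all $m\geq 5\cdot 3+6=21$. The only subtlety in the whole proposal is keeping track of integrality in the $\chi=0$ case to upgrade the strict inequality $P_3>P_2\geq 1$ to $P_3\geq 2$; this is the minor technical point to be careful with, but there is no genuine obstacle.
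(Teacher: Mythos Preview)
Your proof is correct and follows essentially the same approach as the paper's. Both split into the cases $\chi(\OO_X)<0$ and $\chi(\OO_X)=0$, use the non-negativity and monotonicity of the correction terms $l_Q(m)$ in Reid's formula to establish $P_3\geq 2$, and then invoke Theorem \ref{5k+6} with $m_0=3$ to obtain birationality for $m\geq 21$; the paper notes (as you do) that for $\chi<0$ one in fact has $P_2\geq 4$ and could take $m_0=2$.
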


\begin{rem} Under the same condition as that of Theorem
\ref{chi<1}, K. Zuo and the second author \cite{Chen-Zuo} have
actually proved that $\varphi_m$ is birational for all $m\geq 14$
(optimal). Since the mentioned paper is not published yet, we list
here Theorem \ref{chi<1} to make this paper more self-contained.
\end{rem}

Now we recall Fletcher's interesting result.
\begin{thm}\label{chi=1}(\cite{Flt}) Let $X$ be a minimal
projective 3-fold of general type with $\chi=1$. Then
$P_{12} \ge 1$, $P_{24} \ge 2$.
\end{thm}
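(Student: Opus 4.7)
The plan is a case analysis on whether $P_{m_0}(X)\ge 2$ for some small $m_0$ or $P_m(X)\le 1$ throughout $m\le 6$.

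In the first case, when $P_{m_0}\ge 2$ for some $m_0\le 6$, I would use that $R(X,K_X)=\bigoplus_m H^0(X, mK_X)$ is an integral domain: two linearly independent $s_1,s_2\in H^0(m_0K_X)$ give $k+1$ linearly independent monomials $s_1^{k-i}s_2^{i}\in H^0(km_0K_X)$, whence $P_{km_0}\ge k+1$. For $m_0\in\{2,3,4,6\}$ a suitable $k$ immediately delivers $P_{12}\ge 2$ and $P_{24}\ge 2$. For $m_0=5$, Lemma \ref{p2m} supplies $P_7\ge P_5+P_2\ge 2$, giving linearly independent $t_1,t_2\in H^0(7K_X)$; then $s_1t_1,s_2t_1$ are linearly independent in $H^0(12K_X)$ (the canonical ring being a domain), and squaring two linearly independent elements of $H^0(12K_X)$ produces three linearly independent sections of $H^0(24K_X)$, so $P_{24}\ge 3$.

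In the remaining case $P_m(X)\le 1$ for all $m\le 6$, I would invoke the classification of Section 6. Following the proof of Theorem \ref{volume_chi=1}, the virtual basket $B=\mathscr{B}(X)$ dominates, with the same initial basket, either one of the minimal positive baskets of types I-1 through VIII-3 tabulated there, or the intermediate basket $B_{210}=\{(4,10),(1,3),(2,7),(2,8)\}$. Since the initial basket and hence $\sigma$ are preserved, $\bar{\bf B}_{min}:=\{B_{min},1,P_2\}$ is a legitimate formal basket, and Lemma \ref{p} gives $P_m(X)\ge P_m(\bar{\bf B}_{min})$ for every $m\ge 2$. The theorem thus reduces to checking $P_{12}\ge 1$ and $P_{24}\ge 2$ for each of the finitely many candidate minimal baskets, which is a direct evaluation of Reid's formula (4.1) (or equivalently the inductive recursion (5.1)).

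The main obstacle is the bookkeeping in the second step: roughly two dozen minimal positive baskets, together with $B_{210}$, must be verified one by one. The computations are mechanical but delicate for baskets with tiny canonical volume such as VIII-3 (with $K^3=\frac{1}{660}$), where the correction term $l(m)$ must compensate almost exactly for the small leading contribution $\frac{m(m-1)(2m-1)}{12}K^3$ in Reid's formula in order to push $P_{12}$ and $P_{24}$ above the required thresholds.
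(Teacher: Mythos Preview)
The paper does not actually prove this theorem: it is quoted as Fletcher's result from \cite{Flt}, and in the proofs of Theorems \ref{p12} and \ref{p24} the $\chi=1$ case is simply disposed of by citing Theorem \ref{chi=1}. So there is no proof in the paper to compare against.

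That said, your proposal is a legitimate and correct independent argument using the paper's own machinery, and it is essentially the same strategy the paper employs for $\chi\ge 2$ (Table B plus Lemma \ref{p}), transported to the $\chi=1$ classification of Section 6. A couple of remarks:

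\begin{itemize}
\item In Case 1 your integral-domain argument is fine; note that for $m_0=5$ you only need one nonzero $t_1\in H^0(7K_X)$, not two, so invoking $P_7\ge 2$ is slightly more than necessary.
\item In Case 2, Lemma \ref{p} applies even without verifying literal equality of initial baskets: packing preserves $\sigma$ (Lemma \ref{packing}(3)), hence $P_3$ and $K^3-\sigma'$ agree for ${\bf B}$ and $\bar{\bf B}_{min}$, and Lemma \ref{packing}(1) gives $\Delta^m(B)\ge\Delta^m(B_{min})$ for all $m$, which is all the recursion (5.1) needs. So you need not worry about that hypothesis.
\item Including $B_{210}$ in your finite list is redundant, since $B_{210}\succ\text{VIII-3}$ and hence $P_m(B_{210})\ge P_m(\text{VIII-3})$; checking all minimal baskets I-1 through VIII-3 suffices.
\item Your worry about VIII-3 is unfounded: a direct computation of the recursion gives $P_7=\cdots=P_9=2$, $P_{10}=P_{11}=P_{12}=3$, well above the thresholds. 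The remaining baskets have larger $K^3$ and are no harder.
\end{itemize}

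In short, your plan works and furnishes an alternative to Fletcher's original argument, using the paper's classification; the paper itself simply cites the result.
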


We are able to prove a more general result for all 3-folds of
general type.

\begin{thm} \label{p12} Let $X$ be a minimal
projective 3-fold of general type. Then $P_{12}\geq 1$.
\end{thm}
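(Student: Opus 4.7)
The proof splits on $\chi(\OO_X)$. If $\chi(\OO_X)\leq 0$, Theorem \ref{chi<1} already gives $P_m\geq 2$ for all $m\geq 3$, and in particular $P_{12}\geq 1$. If $\chi(\OO_X)=1$, Fletcher's Theorem \ref{chi=1} is exactly the desired statement. The new content lies in the case $\chi(\OO_X)\geq 2$, which I would treat by contradiction.

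Assume $\chi(\OO_X)\geq 2$ and $P_{12}(X)=0$. Raising a nonzero section $s\in H^0(aK_X)$ to the $(12/a)$-th power gives a nonzero section of $H^0(12K_X)$, so $P_a=0$ for every divisor $a$ of $12$; thus $P_1=P_2=P_3=P_4=P_6=0$. I then rule out $P_{m_0}(X)\geq 2$ for any $m_0\leq 12$ --- necessarily $m_0\in\{5,7,8,9,10,11,12\}$ by the previous step. Proposition \ref{nonvanishing}, combined with Remark \ref{chi(O)>1} (which excludes type I$_n$ since $\chi(\OO_X)\geq 2$), supplies $P_m\geq 2$ for $m\geq 2m_0$ in types III and II, for $m\geq 2m_0+3$ in type I$_p$, and for $m\geq\lfloor 3m_0/2\rfloor+4$ in type I$_3$. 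Most subcases (e.g.\ type III/II with $m_0=5$, or type I$_3$ with $m_0=5$) contradict $P_{12}=0$ at once; the remaining borderline subcases ($m_0=5$ in type I$_p$, or $m_0\geq 7$) are eliminated by combining the refined lower bounds of $K^3$ from Theorem \ref{volume} and Table A with Reid's plurigenus formula
\[
P_{12}=253\,K_X^3-23\,\chi(\OO_X)+l(12),
\]
together with the bound $\chi(\OO_X)\leq 8$ established in Section 7 from inequality (5.3).

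After these reductions $P_m\leq 1$ holds for every $m\leq 12$, so the geometric formal basket ${\bf B}(X)=\{\mathscr{B}(X),\chi(\OO_X),P_2(X)\}$ falls under the classification in Table B of Section 7. The plan is to identify each entry of Table B for which $P_{12}$ --- computed from the coefficients of $B^{(12)}$ via Reid's formula --- equals zero, and to eliminate it by one of the geometric non-vanishing criteria used in the proof of Theorem \ref{vol}, especially Claim C: either the prescribed $m_0$ in the table contradicts the volume bound of Theorem \ref{volume} applied to the $K^3(B^{(12)})$ stored in the table, or one of the multiplicative inequalities (7.3) is violated, or some predicted plurigenus $P_{m'}$ computed from the basket turns out negative. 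The main obstacle is precisely this last paragraph: a finite but laborious inspection of Table B, which must also treat the intermediate baskets along every packing chain $B^{(12)}\succ\cdots\succ B_{\min}$, exactly in the style of the proof of Theorem \ref{vol}.
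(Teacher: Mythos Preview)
Your reduction to the regime $P_m\leq 1$ for all $m\leq 12$ does not close. The bound $\chi(\OO_X)\leq 8$ from inequality~(5.3) is proved in Section~7 \emph{under} Assumption~\ref{p12<2} (i.e.\ $P_m\leq 1$ for all $m\leq 12$); invoking it to rule out $P_{m_0}\geq 2$ for some $m_0\leq 12$ is circular. Even if one grants $\chi\leq 8$, the arithmetic via $P_{12}=253K_X^3-23\chi+l(12)$ fails badly: for $m_0=7$ Table~A gives only $K^3\geq 5/2408$, so $253K^3<1$, while $23\chi\geq 46$, and you have no usable lower bound on $l(12)$ without already knowing the basket. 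The same obstruction appears for $m_0=8,9,10,11$ and for $m_0=5$ in type~I$_p$ (where Proposition~\ref{nonvanishing} only yields $P_m\geq 2$ for $m\geq 13$). Your final step, scanning Table~B for entries with $P_{12}=0$, is therefore not reached.

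The paper's proof avoids this detour entirely and never appeals to Table~B or to volume bounds. From $P_{12}=0$ and $\chi\geq 2$ one has $P_2=P_3=P_4=P_6=0$; the argument then splits on $P_5$. Using only the general formulae of Section~5 for the nonnegative integers $\epsilon_j$ and $n^j_{b,r}$ (not the Assumption~\ref{p12<2} tables), a short chain of inequalities --- driven by $\epsilon_7\geq\eta$, $\epsilon_{12}\geq 0$, and the nonnegativity of suitable combinations such as $n^{12}_{3,8}+n^{12}_{2,7}$, $n^{12}_{3,8}+n^{12}_{1,4}$, and $n^{7}_{1,4}$ --- forces first $P_8=P_9=0$, $P_{10}=\eta=\chi$ (when $P_5=0$) or $P_8\geq P_5>0$ together with $P_8=0$ (when $P_5>0$), each a contradiction. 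This is a one-page combinatorial argument; no reduction to $P_m\leq 1$ for $m\leq 12$ is needed.
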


\begin{proof} It suffices to prove this when $\chi \ge 2$ by Theorem \ref{chi<1} and \ref{chi=1}.
We assume $P_{12}=0$ and will deduce a contradiction. It's clear
that $P_2=P_3=P_4=P_6=0$. We consider the geometric formal basket
 ${\bf B}(X)=\{B, \chi, P_2, P_3\}$.
\medskip

\noindent {\bf Step 1.} If $P_5=0$, then the equality (5.2)
for $\epsilon_6$ gives $P_7=\epsilon=0$. This also means
$\sigma_5=0$. Hence Assumption \ref{r6} is satisfied. Now
since $\epsilon_7 \geq \eta$ and $\epsilon_{12} \ge 0$, one
gets
$$ \chi \ge P_8+\eta \ge \chi+P_{13}.$$
It follows that $\chi=P_8+\eta$, $\epsilon_7=\eta$ and
$n^7_{3,7}=0$. Since $n^9_{3,7}= -\zeta$, we have
$\zeta=0$. Now $n_{4,9}^{11}=\zeta-\alpha\geq 0$ gives
$\alpha=0$.

Hence since $n_{1,5}^0=0$ and so $n^9_{2,9}=-n^9_{1,5}=0$,
we have $n^9_{2,9}=0$ and $\epsilon_9=n^9_{2,9}+\zeta=0$
which gives $P_{10}=P_8+P_9+\eta$.

Now $n^{12}_{3,8}+n^{12}_{2,7} \ge 0$ gives $\eta \geq
\chi+3P_9 \ge \eta+P_8+3P_9$.
 Hence $P_8=P_9=0$, and also $P_{10}=\eta=\chi$.
However, $n^{12}_{3,8}+n^{12}_{1,4}=P_{10}-2\eta
-P_{11}=-\chi-P_{11}<0$, which is a contradiction.
\medskip

\noindent {\bf Step 2.} If $P_5 >0$, then we have $P_7=0$.
First of all, (5.2) gives $P_5=\epsilon:=n^0_{1,5}+2
\sum_{r \ge 6} n^0_{1,r}$. By definition of $\eta$, we see
$\epsilon_7 \geq \eta$. Because $\epsilon_{12} \ge 0$, we
get  $ \chi \ge P_8+ \eta +(2
\sigma_5-n^0_{1,5}-n^0_{1,6})$ and $ 2P_5+P_8+\eta \ge \chi
+P_{13} +(8
\sigma_5-7n^0_{1,5}-5n^0_{1,6}-5n^0_{1,7}-...-n^0_{1,11})$.
Combine these two inequalities, we get $$2 \epsilon
+P_8+\eta = 2P_5+P_8+\eta\geq P_8 +P_{13}+\eta + R',
\eqno(8.1)$$ where $R'=
2n^0_{1,5}+4n^0_{1,6}+5n^0_{1,7}+...+9 n^0_{1,11}+10
\sum_{r \ge 12} n^0_{1,r}\geq 0$. Notice that and $R' \ge 2
\epsilon=2 P_5$. It follows that $P_{13}=0$ and
$n^0_{1,r}=0$ for all $ r \ge 7$. Note also that $P_{13}=0$
and $P_{5}>0$ implies $P_8=0$.
\medskip

\noindent{\bf Step 3.} We summarize that $\sigma_5=n^0_{1,5}+n^0_{1,6}$ and $P_5=n^0_{1,5}+2n^0_{1,6}$.
Now $\epsilon_7 \ge \eta$ and $\epsilon_{12} \ge 0$ reads
$$ \chi \ge P_8+\eta+n^0_{1,5}+n^0_{1,6} \ge \chi.$$
It follows that $\chi=P_8+\eta+\sigma_5$. We then look at $n^7_{1,4}=\chi-P_5-\sigma_5-\eta$.
$n^7_{1,4} \ge 0$ implies that $P_8 \ge P_5 >0$, a contradiction.
%
%

\end{proof}

\begin{thm} \label{p24}
Let $X$ be a minimal projective 3-fold of general type.
Then $P_{24}\geq 2$.
\end{thm}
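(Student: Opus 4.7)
The plan is to split on the value of $\chi(\OO_X)$. If $\chi(\OO_X)\leq 0$, Theorem \ref{chi<1} gives $P_m\geq 2$ for every $m\geq 3$, in particular $P_{24}\geq 2$; and if $\chi(\OO_X)=1$, Fletcher's Theorem \ref{chi=1} yields $P_{24}\geq 2$ directly. All of the genuine work is in the case $\chi(\OO_X)\geq 2$.

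Under $\chi\geq 2$ I would first ask whether some $m_0\leq 12$ satisfies $P_{m_0}\geq 2$. If no such $m_0$ exists, then Assumption \ref{r6} holds by \ref{r>5}, and the geometric basket ${\bf B}(X)$ falls into the classification of Section 7: every such basket $B$ dominates a minimal positive basket $B_{\min}$ listed in Table B, and by Lemma \ref{p} (since $\mathscr{B}^{(0)}(B)=\mathscr{B}^{(0)}(B_{\min})$) one has $P_{24}(X)\geq P_{24}(B_{\min})$. The plan is then simply to inspect the $P_{24}$-column of Table B and observe that every entry is at least $2$, even in those rows whose minimal basket was ruled out as non-geometric in Claim C.

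If instead $P_{m_0}\geq 2$ for some minimal $m_0\leq 12$, I would split by the type of the fibration induced by $\varphi_{m_0}$ (Set-up \ref{type}). Type I$_q$ is covered by Theorem \ref{q>0} (since $\varphi_7$ is birational, $P_{24}\geq 4$); Type I$_n$ cannot occur under $\chi\geq 2$ by Remark \ref{chi(O)>1}; Types III, II, and I$_3$ are handled directly by Proposition \ref{nonvanishing}\,(i),\,(ii),\,(v), whose bounds $2m_0$, $2m_0$, $\lfloor 3m_0/2\rfloor+4$ respectively are all at most $24$ when $m_0\leq 12$; Type I$_p$ with $m_0\leq 10$ is handled by Proposition \ref{nonvanishing}\,(iii), giving $P_m\geq 2$ for $m\geq 2m_0+3\leq 23$; and Type I$_p$ with $m_0=12$ is handled by squaring, as two linearly independent $s_1,s_2\in H^0(X,12K_X)$ produce three linearly independent $s_1^2,\,s_1s_2,\,s_2^2\in H^0(X,24K_X)$.

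The main obstacle will be Type I$_p$ with $m_0=11$, where Proposition \ref{nonvanishing}\,(iii) barely fails: it only yields $P_m\geq 2$ for $m\geq 25$. In this case $P_{m_0}=2$ forces $p=1$, so $\mathcal{L}_{13}-S\equiv(13-1-m_0/p)\pi^*(K_X)=\pi^*(K_X)$ is nef and big on $X'$. The plan is to use Kawamata-Viehweg vanishing and the surjection (2.2) to reduce the non-vanishing $P_{13}(X)\geq 1$ to $h^0(S,K_S+\lceil(\mathcal{L}_{13}-S)|_S\rceil)\geq 1$, which follows from Kawamata-Viehweg on $S$ together with $\chi(\OO_S)\geq 1$ (the latter holds for any minimal surface of general type, and is made more robust by $p_g(S)\geq 1$ in I$_p$). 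Once $P_{13}\geq 1$ is secured, multiplying two linearly independent $s_1,s_2\in H^0(X,11K_X)$ by any nonzero $t\in H^0(X,13K_X)$ produces two linearly independent sections of $H^0(X,24K_X)$, and the proof concludes with $P_{24}\geq 2$.
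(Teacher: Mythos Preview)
Your overall architecture is sound and matches the paper through most of the case analysis: the reductions for $\chi\leq 0$, $\chi=1$, and $\chi\geq 2$ with $P_m\leq 1$ for all $m\leq 12$ (via Table~B and Lemma~\ref{p}) are exactly right, and for $m_0\leq 10$ the use of Proposition~\ref{nonvanishing} (types III, II, I$_p$, with I$_n$ excluded by Remark~\ref{chi(O)>1}) giving $P_m\geq 2$ for all $m\geq 2m_0+3\leq 23$ is precisely what the paper does in its Step~1. The $m_0=12$ case by squaring is also fine.

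The gap is in your treatment of $m_0=11$, type I$_p$. Your claim that $h^0(S,K_S+\lceil Q\rceil)\geq 1$ for $Q:=(\mathcal L_{13}-S)|_S\equiv \pi^*(K_X)|_S$ ``follows from Kawamata--Viehweg on $S$ together with $\chi(\OO_S)\geq 1$'' is not justified. Vanishing gives $h^0=\chi(K_S+\lceil Q\rceil)=\chi(\OO_S)+\tfrac12\lceil Q\rceil\cdot(\lceil Q\rceil+K_S)$, so you need $\lceil Q\rceil\cdot(\lceil Q\rceil+K_S)\geq 0$. But while $Q$ is nef and big, the round-up $\lceil Q\rceil=Q+F$ need not be: the fractional defect $F$ is supported on $E_\pi|_S$ and can contain $(-1)$- or $(-2)$-curves, for which $F\cdot(F+K_S)<0$; meanwhile $Q^2=(\pi^*K_X|_S)^2$ can be arbitrarily small (of order $K_X^3$), so there is no a priori reason the total is non-negative. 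Compare with Lemma~\ref{>0}, whose hypothesis $Q\equiv\sigma^*(K_{S_0})+(\text{nef big})$ is exactly what makes such a Riemann--Roch argument go through; here, by Lemma~\ref{beta}, one only has $Q\gtrsim\tfrac{1}{12}\sigma^*(K_{S_0})$, which is why Proposition~\ref{nonvanishing}(iii) just misses $m=24$.

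The paper sidesteps this entirely: for $m_0=11$ it argues combinatorially (its Step~2). Assuming $P_{24}=1$, $P_{11}\geq 2$, and $P_m\leq 1$ for $m\leq 10$, it first uses $\epsilon_{10}\geq 0$ (your inequality (8.3)) to force $P_6=1$, then $\epsilon_6=0$ and the multiplicativity $P_{m+n}\geq P_mP_n$ to pin down $P_3=P_4=P_5=P_7=0$, then $\epsilon_{10},\epsilon_{12},\epsilon_9\geq 0$ to determine the remaining $P_8,\ldots,P_{13}$ and $\chi=2$, and finally checks that this forces $n^8_{2,5}<0$, a contradiction. You could either adopt this basket argument for the residual case, or supply a genuine proof that $\lceil Q\rceil\cdot(\lceil Q\rceil+K_S)\geq 0$ in this specific situation (which does not appear to be straightforward).
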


\begin{proof}
It suffices to prove the theorem for $\chi \geq 2$ by Theorem
\ref{chi<1} and \ref{chi=1}. Also we only have to study the
situation with $q(X)=0$ by \cite{JC-H}.

Suppose that $P_m \leq 1$ for all $m \le 12$. Then by our
classification of baskets in Table B, we have $P_{24}(B_{min})\geq
2$ for all minimal positive baskets. Thus by Lemma \ref{p}, we
have $P_{24}=P_{24}({\bf B})\geq P_{24}(B_{min}) \ge 2$, where
${\bf B}$ is a geometric formal basket on $X$.

It remains to consider the case that $\chi \ge 2$ and $P_m \ge 2$ for some
$m \leq 12$. Clearly, we only need to consider the cases
with $m=5,\ 7,\ 9,\ 10,\ 11$.

In what follows, we assume $P_{24}=1$ and will deduce a
contradiction. By Theorem \ref{p12}, we may and do assume that
$P_{12}=1$. We will frequently use the following easy observation
frequently:
 $$P_{m+n} \geq P_m P_n\ \text{ if either } P_m \text{ or } P_n \leq 1
 \eqno(8.2).$$

\noindent{\bf Step 1.} Suppose $P_{m_0} \geq 2$ for certain
$m_0\leq 10$. We can study $\varphi_{m_0}$. As we know that,
because $\chi(\OO_X)>1$, $\varphi_{m_0}$ is of type III, II, and
I$_p$. By Proposition \ref{nonvanishing}, we see that $P_m\geq 2$
for all $m\geq 2m_0+3\geq 23$. In particular, $P_{24} \ge 2$.
\medskip

\noindent{\bf Step 2.} Suppose $P_{11} \geq 2$ and $P_m \leq 1$ for
all $m \leq 10$. Clearly, $P_2=0$ since $P_{24}\geq P_{2}P_{2\times
11}$. First of all, $\epsilon_{10} \geq 0 $ gives {\small
$$2P_6+P_{10} \ge
P_{11}+P_3+\eta+(n^0_{1,5}+2n^0_{1,6}+3n^0_{1,7}+4n^0_{1,8}+5n^0_{1,9}+6
\sum_{r \ge 10} n^0_{1,r}) \ge 2.\eqno{(8.3)}$$}

If $P_6=0$ then $P_{10} \ge 2$ which is absurd. Thus we may assume
that $P_6=1$. Also $P_7=0$ since $P_{24}\geq P_6P_7P_{11}$.

If $P_3=1$, then $\epsilon_{10}$ gives $2+P_{10} \ge
P_{11}+P_3 \ge 3$. Hence $P_{10} = 1$. But then $P_{24} \ge
P_{11} P_{13} \ge 2$, a contradiction. Thus we may assume
$P_3=0$.

Now we look at $(8.3)$ again. Since  $3 \ge 2P_6+P_{10} $, together
with $P_{11} \ge 2$, we have $n^0_{1,r}=0$ for all $r \ge 6$ and
$n^0_{1,5} \le 1$. Hence $\epsilon:=n^0_{1,5}+\sum_{r \ge 6} 2
n^0_{1,r} \le 1$. Recall that $\epsilon_6=0$, which gives
$1+P_4+P_5=P_3+P_7+\epsilon \le 1$. It follows that $P_4=P_5=0$ and
$\epsilon=1$.

We make a summary: $P_2=P_3=P_4=P_5=P_7=0$, $P_6=1$.

Now $\epsilon_{10}$ gives
$$3 \ge 2+P_{10} \ge P_{11}+1+\eta \ge 3.$$ We thus have
$P_{10}=1$, $P_{11}=2$ and $\eta=0$.

We then look at $\epsilon_{12}$ which says: $$ 1+P_8+P_{12} \ge
\chi+P_{13}+1 \ge3.$$ Thus $\chi=2$, $P_8=P_{12}=1$ and $P_{13}=0$.
Also $\epsilon_9 \ge 0$ gives $P_9=1$.

With all above information, we see $n^8_{2,5}=-1<0$ which is a
contradiction.

This completes the proof.
\end{proof}


\begin{thm}\label{18} Let $X$ be a minimal projective 3-fold of general type.
Then there exists an integer $m_0\leq 18$ such that
$P_{m_0}\geq 2$.
\end{thm}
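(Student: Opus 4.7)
The plan is to reduce to the basket classifications of Sections~6 and~7 and then extract the bound $m_0\le 18$ from the $m_0$ column of the tables, propagating it to arbitrary geometric baskets via Lemma~\ref{p}.

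First I would dispose of $\chi(\OO_X)\le 0$ by invoking Theorem~\ref{chi<1}, which already gives $P_3\ge 2$. For $\chi(\OO_X)=1$, either $P_m\ge 2$ for some $m\le 6$ (in which case there is nothing to prove), or the argument of Theorem~\ref{volume_chi=1} forces the geometric basket $B(X)$ to dominate one of the minimal positive baskets I-1 through VIII-2 of Section~6, or the intermediate basket $B_{210}$. A direct inspection of that list shows $m_0\le 18$ in every case, the extremal one being I-1 with $m_0=18$; Lemma~\ref{p} then transfers this bound from the minimal basket to $B(X)$.

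For $\chi(\OO_X)\ge 2$, either $P_m\ge 2$ for some $m\le 12$ and we are done, or the classification of Section~7 applies and $B(X)$ dominates one of the 63 minimal positive baskets in Table~B. Scanning the $m_0$ column of Table~B shows $m_0(B_{min})\le 18$ uniformly, the extremal entries being 2a, 3a, 20a, 22 and 52a, all with $m_0=18$. For the $B_{min}$ which are geometric, Lemma~\ref{p} immediately yields $m_0(X)\le m_0(B_{min})\le 18$. For the non-geometric ones listed in Claim~C, the actual $B(X)$ must either coincide with $B^{(12)}$ or sit at an intermediate position in the packing chain; since every intermediate basket still dominates the corresponding $B_{min}$, Lemma~\ref{p} still gives $P_{18}(B(X))\ge P_{18}(B_{min})\ge 2$. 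In the degenerate situation where $B^{(12)}$ itself is the unique minimal positive basket and is non-geometric (e.g.\ case~22), the tuple $(\chi,P_2,\ldots,P_{11})$ is not realised by any 3-fold, so the case disappears altogether.

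The main obstacle is the finite but tedious verification that the $m_0$ column of Table~B is indeed uniformly bounded by $18$, together with the analogous check for the Section~6 list; once this bookkeeping is complete, the conclusion falls out of Lemma~\ref{p} in exactly the same style as the proofs of Theorems~\ref{vol} and~\ref{p24}. The only subtlety is the $m_0=18$ edge cases (chiefly I-1 for $\chi=1$ and 2a/3a for $\chi=2$), which force the bound to be sharp and make it essential that Lemma~\ref{p} only loses weak inequality, not strict one, when passing from $B_{min}$ up to an intermediate geometric $B(X)$.
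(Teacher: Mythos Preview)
Your proposal is correct and follows essentially the same route as the paper's own proof: split on $\chi$, invoke Theorem~\ref{chi<1} for $\chi\le 0$, and for $\chi\ge 1$ reduce to the minimal positive baskets classified in Sections~6 and~7, check the $m_0$ column is bounded by $18$, and push the bound back up to $B(X)$ via Lemma~\ref{p}.

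The one place where you take an unnecessary detour is the geometric/non-geometric distinction. Lemma~\ref{p} is a purely combinatorial statement about packings of formal baskets; it does not require $B_{min}$ to be geometric. The paper simply observes that the geometric basket $B$ dominates \emph{some} $B_{min}$ in the list (regardless of whether that $B_{min}$ is itself geometric), that every such $B_{min}$ has $P_{m_0}(B_{min})\ge 2$ for some $m_0\le 18$, and applies Lemma~\ref{p} directly. Your separate treatment of the non-geometric cases via intermediate baskets and your remark that case~22 ``disappears altogether'' are therefore correct but superfluous; the same holds for the Section~6 side, where VIII-3 (with $m_0=7$) can simply be left in the list rather than replaced by $B_{210}$.
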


\begin{proof}
If $\chi \le 0$, then this is clear by Theorem \ref{chi<1}.
If $\chi=1$, then either $P_m \ge 2$ for some $m \le 6$ or
the geometric basket ${\bf B} \succ B_{min}$ for some
$B_{min}$ in the list of Section 6. Note that for all
baskets in the list of Section 6, we have $P_{m_0}(B_{min})
\ge 2 $ for some $m_0 \le 18$. By Lemma \ref{p}, we have
$P_{m_0}(X)=P_{m_0}({\bf B}) \geq P_{m_0}(B_{min}) \ge 2$
for some $m_0 \le 18$.

If $\chi >1$, then either $P_{m} \geq 2$ for some $m\leq 12$ or $B
\succ B_{min}$ for some $B_{min}$ in Table, Section 7. Similarly,
for all minimal basket in table B, we can verify that there is some
$m_0 \le 18$ with $P_{m_0}(B_{min}) \ge 2 $. Hence again by Lemma
\ref{p}, we have $P_{m_0}(X) \ge 2$ for some $m_0 \le 18$.
\end{proof}

\begin{thm}
For all minimal projective folds of general type, $m_1 \le 27$.
That is $P_m
>0$ for all $m \ge 27$.
\end{thm}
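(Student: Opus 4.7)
The plan is to prove $m_1 \le 27$ by a case split on $\chi(\OO_X)$ and on the smallest integer $m_0$ with $P_{m_0}(X) \ge 2$, pushing the hard cases into the basket classifications of Sections 6 and 7.

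First I would dispose of the easy subcases via the corollary after Remark \ref{chi(O)>1}. If $\chi(\OO_X) \le 0$, Theorem \ref{chi<1} already gives $m_1 \le 2$. If $\chi(\OO_X) \ge 2$ and $P_{m_0}(X) \ge 2$ for some $m_0 \le 12$, that corollary gives $m_1 \le 2m_0 + 3 \le 27$; the weaker $I_n$-type bound $3m_0 + 4$ is irrelevant here, since by Remark \ref{chi(O)>1} type $I_n$ forces $\chi \le 1$. Similarly, if $\chi(\OO_X) = 1$ and $P_{m_0}(X) \ge 2$ for some $m_0 \le 7$, then $m_1 \le 3m_0 + 4 \le 25 \le 27$.

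What remains are the subcases $\chi(\OO_X) = 1$ with $P_m(X) \le 1$ for all $m \le 6$, and $\chi(\OO_X) \ge 2$ with $P_m(X) \le 1$ for all $m \le 12$. In both situations the geometric basket $B = \mathscr{B}(X)$ satisfies $B \succ B_{\min}$ for some minimal positive basket $B_{\min}$ appearing respectively in the list I-1 through VIII-3 of Section 6 or in Table B of Section 7. Because these packings sit inside the approximation chain, Claim B of Section 4 gives $\mathscr{B}^{(0)}(B) = \mathscr{B}^{(0)}(B_{\min})$, so Lemma \ref{p} applies and yields
\[ P_m(X) = P_m(\mathbf{B}) \;\ge\; P_m(\mathbf{B}_{\min}), \qquad \mathbf{B}_{\min} := \{B_{\min}, \chi(\OO_X), P_2(X)\}. \]
It is therefore enough to verify $P_m(\mathbf{B}_{\min}) \ge 1$ for every $m \ge 27$ and every $B_{\min}$ in these two finite lists.

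Finally, for each such $B_{\min}$ the canonical volume $K^3(\mathbf{B}_{\min})$ is explicitly recorded and strictly positive, and Reid's formula $(4.1)$ expresses $P_m(\mathbf{B}_{\min})$ in closed form. The cubic leading term $\tfrac{m(m-1)(2m-1)}{12} K^3$ eventually dominates the linear term $-(2m-1)\chi$ (using the uniform upper bound on $\chi$ from inequality $(5.3)$), so $P_m(\mathbf{B}_{\min}) > 0$ is automatic beyond an effectively computable threshold depending only on $K^3$ and $\chi$. Moreover, invoking $P_{m+12}(X) \ge P_m(X)$---a direct consequence of Theorem \ref{p12}---one can reduce the verification for each basket to the initial window $m \in [27, 38]$. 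The main obstacle is thus volume rather than depth: roughly twenty baskets from Section 6 and over sixty from Table B must each be checked, which is straightforward but extremely tedious by hand and most cleanly carried out by a short computer-aided verification.
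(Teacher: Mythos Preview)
Your proposal is correct and follows essentially the same architecture as the paper's proof: split on $\chi$, handle the cases with small $m_0$ via Proposition~\ref{nonvanishing} (noting type $I_n$ is excluded when $\chi\ge 2$ by Remark~\ref{chi(O)>1}), and in the residual cases compare $P_m(X)$ to $P_m(\mathbf{B}_{\min})$ via Lemma~\ref{p} over the finite basket lists of Sections~6 and~7. The paper simply reports the outcome of the direct computation ($P_m(B_{\min})>0$ for $m\ge 14$ when $\chi=1$ and for $m\ge 24$ when $\chi\ge 2$), whereas you add the pleasant observation that $P_{12}\ge 1$ forces $P_{m+12}(X)\ge P_m(X)$, reducing the check to the window $27\le m\le 38$; this is a harmless embellishment rather than a different method.
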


\begin{proof}
If $\chi <1$, this is clear by Theorem \ref{chi<1}.

If $\chi=1$, then either $P_{m_0} \ge 2$ for some $m_0 \le
6$ or it's classified in Section 6. By Proposition
\ref{nonvanishing}, $m_1 \le 3m_0+4\leq 22$ if $P_{m_0} \ge
2$ for some $m_0 \le 6$. For those minimal baskets
$B_{min}$ classified in Section 6, direct computation shows
that $P_{m}(B_{min})>0$ for $m\geq 14$. Thus
$P_m(X)=P_m({\bf B})\geq 0$ by lemma \ref{p} for all $m\geq
14$.

If $\chi \ge 2$, then either $P_{m_0} \ge 2$ for some $m_0
\le 12$ or the minimal basket $B_{min}$ of the geometric
basket $B$ is classified in Section 7. Notice that when
$P_{m_0} \ge 2$ for some $m_0 \le 12$, the induced map
$\varphi_{m_0}$ can not be of type I$_{n}$ as we pointed
out in Remark \ref{chi(O)>1}. Therefore, by Proposition
\ref{nonvanishing}, $m_1 \le 2m_0+3\leq 27$ in this
situation. It remains to consider those baskets classified
in Section 7. A direct but tedious computation shows that
$P_{m}(B_{min})>0$ for $m\geq 24$. Thus $P_m(X)=P_m({\bf
B})>0$ by Lemma \ref{p} for all $m\geq 14$. That is $m_1
\le 24$.
\end{proof}

We now study the birationality of pluricanonical maps. The
general strategy is to find a small $m_0$ such that
$P_{m_0} \ge 2$ or $ \ge 3$. Then we study the map
$\varphi_{m_0}$ according to its type by the method in
Section 2. We keep the same notation as in Section 2.

We will not study $\varphi_{m_0}$ of type I$_q$ for the
results of the first author and C. Hacon \cite{JC-H} are
effective enough. We will not study $\varphi_{m_0}$ of type
I$_n$ either, because we can not improve Theorem 0.1 of the
second author \cite{JPAA} (cf. Theorem \ref{5k+6}) in this
situation.

The structures of the proof for various situations are the same.
However, we need to pick different linear systems in different
situations.

\begin{lem}\label{(i)} Let $X$ be a minimal projective 3-fold of
general type with $P_{m_0}\geq 2$. Keep the same notation
as in \ref{setup}. Then Assumptions \ref{assumptions}(3)
holds whenever $m\geq m_0+m_1$ unless the induced map is of
type I$_q$.
\end{lem}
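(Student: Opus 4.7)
The plan is to exploit the sub-linear system $D_0+|M_{m_0}|\subseteq |mK_{X'}|$, where $D_0$ is any chosen effective divisor in $|(m-m_0)K_{X'}|$. Such a $D_0$ exists precisely when $P_{m-m_0}\geq 1$, and since $m\geq m_0+m_1$ by hypothesis, we have $m-m_0\geq m_1$ so this holds by definition of $m_1$. Because adding a fixed effective divisor cannot merge two subvarieties that are already distinguished by a movable system (as long as they are generic, hence not contained in $\mathrm{Supp}(D_0)$), the problem reduces to showing that $|M_{m_0}|$ itself separates its own generic irreducible elements. This is then a type-by-type inspection of the fibration $f:X'\to B$ induced by $\varphi_{m_0}$, as classified in \ref{type}.

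For Type III and Type II, the morphism $g_{m_0}=\varphi_{m_0}\circ\pi$ has image $W'$ of dimension $\geq 2$, so $|M_{m_0}|$ is not composed with a pencil. Two distinct generic members $S_1, S_2\in |M_{m_0}|$ are pullbacks of two distinct hyperplane sections of $W'\subseteq \mathbb{P}^{P_{m_0}-1}$, so their images under $\Phi_{|M_{m_0}|}$ are different, and the same must hold for $\Phi_{|mK_{X'}|}$ once we pass to the sub-system $D_0+|M_{m_0}|$.

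For Type I with $B\cong \mathbb{P}^1$ (that is, the subcases I$_p$, I$_3$, and I$_n$), a generic irreducible element of $|M_{m_0}|$ is a general fiber $S_b=f^{-1}(b)$. Using the inclusion $\mathcal{O}_B(p)\hookrightarrow f_*\omega_{X'}^{m_0}$ with $p=a_{m_0}\geq P_{m_0}-1\geq 1$ (recall \ref{setup}), one obtains a sub-system $f^*|\mathcal{O}_{\mathbb{P}^1}(a_{m_0})|\subseteq |M_{m_0}|$ whose pullback to $X'$ already distinguishes any two distinct points of $\mathbb{P}^1$, hence any two distinct generic fibers $S_{b_1}, S_{b_2}$. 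Combining the three cases, $|M_{m_0}|$ separates its generic irreducible elements in every situation except possibly Type I$_q$, and the sub-system $D_0+|M_{m_0}|\subseteq |mK_{X'}|$ inherits this separation for all $m\geq m_0+m_1$, yielding Assumption \ref{assumptions}(3).

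The only genuine obstacle is the Type I$_q$ case: when $g(B)>0$, the piece of $|M_{m_0}|$ coming from $f_*\omega_{X'}^{m_0}$ corresponds to a divisor class on $B$ that need not separate two arbitrary distinct points, since on a curve of positive genus even a large-degree line bundle separates points only when the degree is large enough relative to $g(B)$ and the specific pair of points. This is precisely the subtlety addressed by the irregular-variety techniques of \cite{JC-H}, which is why Type I$_q$ is excluded from the present statement and handled separately.
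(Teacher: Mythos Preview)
Your proof is correct and follows essentially the same approach as the paper: both arguments exhibit a sub-linear system of $|mK_{X'}|$ of the form (effective divisor) $+\,|M_{m_0}|$, using $m-m_0\geq m_1$ to guarantee the effective part, and then observe that $|M_{m_0}|$ itself already separates its generic irreducible elements except in type I$_q$. The paper's version is slightly terser---it chooses the effective part as $K_{X'}+\roundup{t\pi^*(K_X)}$ with $t=m-m_0-1$ and invokes Remark~\ref{separate} in one line rather than carrying out your explicit type-by-type inspection---but the underlying mechanism is identical.
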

\begin{proof} We consider the linear system
$|K_{X'}+\roundup{t\pi^*(K_X)}+M_{m_0}|\subset
|(m_0+t+1)K_{X'}|$ for any $t>0$. Because
$K_{X'}+\roundup{t\pi^*(K_X)}\geq (t+1)\pi^*(K_X)$, we see
that $K_{X'}+\roundup{t\pi^*(K_X)}$ is effective whenever
$t+1\geq m_1$.

Now Remark \ref{separate} says
$|K_{X'}+\roundup{t\pi^*(K_X)}+M_{m_0}|$ can separate
different generic irreducible elements $S$ except when
$\dim(B)=1$ and $b=g(B)>0$.
\end{proof}
\begin{prop}\label{III}  Let $X$ be a minimal projective 3-fold of
general type with $P_{m_0}\geq 2$. Suppose that the induced map
from $\varphi_{m_0}$ is  of type III. Then $\varphi_{m}$ is
birational for all $m \ge 3m_0+1$.
\end{prop}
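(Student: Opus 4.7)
The plan is to apply Theorem \ref{technical} to the setup from the proof of Theorem \ref{volume}(i) for type III, in which $\dim B = 3$ and $p = 1$. I take $S \sim M_{m_0}$ as a generic irreducible element, the base point free complete linear system $|G| := |S|_S|$ on $S$ (Assumption \ref{assumptions}(1)), and $\beta := \frac{1}{m_0}$, which is valid for Assumption \ref{assumptions}(2) since $m_0\pi^*(K_X)|_S \geq S|_S = G \geq C$ for a generic $C \in |G|$. From the proof of Theorem \ref{volume}(i), $C$ is a smooth curve with $C^2 \geq 2$, $\deg(K_C) \geq 6$, and, crucially, $\xi \geq \frac{10}{3m_0+2}$.

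For $m \geq 3m_0+1$, I verify Assumption \ref{assumptions}(3) as follows. Because $\varphi_{m_0}$ is generically finite in type III, the system $|M_{m_0}|$ already sends distinct generic $S', S'' \in |M_{m_0}|$ to distinct hyperplane sections of the image $W' \subset \bP^{P_{m_0}-1}$. Since $m - m_0 \geq 2m_0+1$ and Proposition \ref{nonvanishing}(i) gives $P_n \geq 2$ for $n \geq 2m_0$, the sub-system $|(m-m_0)K_{X'}| + |M_{m_0}| \subset |mK_{X'}|$ inherits that separation. For Assumption \ref{assumptions}(4), with $p=1$ the sufficient linear system becomes $|K_S + \roundup{(m-1-m_0)\pi^*(K_X)}_{|S}|$; since $\frac{m-1-m_0}{m_0} \geq 2$ and $m_0\pi^*(K_X)|_S \geq G$, the rounded divisor numerically dominates $2G$ plus an effective $\bQ$-correction. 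Kawamata--Viehweg vanishing on $S$ applied to the nef and big remainder, combined with the base point freeness of $|G|$, then yields sections separating distinct generic $C_1, C_2 \in |G|$.

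Finally, I compute $\alpha = (m - 1 - \frac{m_0}{p} - \frac{1}{\beta})\xi = (m - 2m_0 - 1)\xi$; at the critical value $m = 3m_0+1$ this equals $m_0\xi \geq \frac{10m_0}{3m_0+2}$. For $m_0 \geq 2$, $\alpha \geq \frac{5}{2} > 2$, so Theorem \ref{technical}(i) immediately gives that $\varphi_m$ is birational. The borderline $m_0 = 1$ forces $p_g \geq 4$ (required by $\dim W' = 3$ and hence $h^0(S,G) \geq 3$); here $\alpha \geq 2$ is attained, and one can conclude via Theorem \ref{technical}(ii) by noting that $C$ is non-hyperelliptic (or via the slightly sharper bound on $\xi$ coming from the iteration that produced $\xi \geq \frac{10}{3m_0+2}$).

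The main obstacle I expect is the careful verification of Assumption \ref{assumptions}(4), where one must track round-ups and confirm that the vanishing indeed produces a sub-system of $|K_S + \roundup{(m-1-m_0)\pi^*(K_X)_{|S}}|$ that separates generic members of $|G|$; in contrast, Assumption \ref{assumptions}(3) is almost free from the generically finite nature of $\varphi_{m_0}$, and the quantitative inequality $\alpha > 2$ follows at once from the volume-style bound already in hand.
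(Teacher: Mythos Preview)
Your proposal is essentially the paper's own argument: same choice of $S$, $G=S|_S$, $p=1$, $\beta=\frac{1}{m_0}$, and the same use of Theorem~\ref{technical} once $\alpha>2$ is established. The paper obtains $\alpha>2$ from the cruder bound $\xi\ge \frac{6}{2m_0+1}$ (Remark~\ref{weak}), giving $\alpha\ge \frac{6m_0}{2m_0+1}>2$ for $m_0\ge 2$; your use of $\xi\ge \frac{10}{3m_0+2}$ is equally valid and slightly sharper.

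Two small remarks. First, your verification of Assumption~\ref{assumptions}(4) is more elaborate than necessary: since $|G|$ is \emph{not} composed with a pencil in type~III, the paper simply observes that
\[
K_S+\roundup{(m-1)\pi^*(K_X)-S-E_{m_0}'}_{|S}\ \ge\ (m-m_0)\pi^*(K_X)_{|S}+C,
\]
and that the first summand is effective once $m-m_0\ge 2m_0$ (Proposition~\ref{nonvanishing}(i)); then $|G|=|C|$ already separates its general members, so no vanishing theorem on $S$ is needed. Your Kawamata--Viehweg argument, as sketched, would require the ``remainder'' after subtracting $2G$ to be nef and big, which at $m=3m_0+1$ it need not be.

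Second, you are right to flag $m_0=1$: the paper's displayed inequality $\frac{6m_0}{2m_0+1}>2$ likewise fails there (it equals $2$), and neither your non-hyperellipticity claim nor a sharper $\xi$ is justified in the text; in practice this case is covered by the separate literature on $p_g\ge 3$, so the gap is harmless for the paper's applications.
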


\begin{proof}
Pick a general member $S$. Take $G:=S_{|S}$. Then $|G|$ is base
point free and $\Phi_{|G|}$ gives a generically finite map. So a
general member $C\in |G|$ is a smooth curve. Recall that we have
$p=1$. Because $m_0\pi^*(K_X)_{|S}\geq S_{|S}\sim C$, we can take
$\beta=\frac{1}{m_0}$. So far, by Proposition
\ref{nonvanishing}(i) and Lemma \ref{(i)}(i), Assumptions
\ref{assumptions}(1), (2) and (3) are satisfied for all $m\geq
3m_0$.

We verify Assumptions \ref{assumptions}(4) under the condition
$m\geq 3m_0$. Because
\begin{eqnarray*}
&& K_S+\roundup{(m-1)\pi^*(K_X)-S-\frac{1}{p}E_{m_0}'}_{|S}\\
&\geq & K_S+(m-1)\pi^*(K_X)_{|S}-(S+E_{m_0}')_{|S}\\
&=& K_S+(m-m_0-1)\pi^*(K_X)_{|S}\geq
(m-m_0)\pi^*(K_X)_{|S}+C.
\end{eqnarray*}
and $(m-m_0)\pi^*(K_X)_{|S}\geq 0$ for $m-m_0\geq 2m_0$ by
Proposition \ref{nonvanishing}(i), we see that
$|K_S+\roundup{(m-1)\pi^*(K_X)-S-\frac{1}{p}E_{m_0}'}_{|S}|$
separates different $C$ since $|C|$ is not composed with any
pencils. So Assumptions \ref{assumptions}(4) is also satisfied.

Now we begin to verify the numerical conditions for $\alpha$.
Because $|G|$ is not composed with any pencils, we see
$C^2=G^2\geq 2$. Then $\deg(K_C)=(K_S+C)\cdot C\geq
(\pi^*(K_X)_{|S}+2C)\cdot C>4$. The evenness of $\deg(K_C)$ says
$\deg(K_C)\geq 6$. If we take a sufficiently big $m$ such that
$\alpha>1$, then Remark \ref{weak} gives
$\xi\geq
\frac{6}{2m_0+1}$. Take $m\geq 3m_0+1$. Then
$\alpha=(m-2m_0-1)\xi\geq \frac{6m_0}{2m_0+1}>2$. Theorem
\ref{technical}(II) says that $\varphi_m$ is birational for
all $m\geq 3m_0+1$.
\end{proof}

\begin{prop}\label{II}
 Let $X$ be a minimal projective 3-fold of
general type with $P_{m_0}\geq 2$. Suppose that the induced map
from $\varphi_{m_0}$ is of type II. Then $\varphi_{m}$ is
birational for all $m \ge 4m_0+2$.
\end{prop}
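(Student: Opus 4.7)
The plan is to mirror the argument of Proposition \ref{III} for type III, using the pencil-structure of $|G|$ and carrying out one extra iteration of Theorem \ref{technical} to compensate for the weaker genus estimate.

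First I would pick a general member $S \sim M_{m_0}$ and set $G := S|_S$, so that $|G|$ is base point free and composed with a pencil whose generic irreducible element $C$ is a smooth curve of genus $\geq 2$. Since $m_0\pi^*(K_X)|_S - S|_S = E'_{m_0}|_S$ is an effective $\mathbb{Q}$-divisor and $S|_S \equiv \widetilde{a}C$ with $\widetilde{a}\geq 1$, one may take $\beta = \frac{1}{m_0}$, verifying Assumptions \ref{assumptions}(1) and \ref{assumptions}(2). For Assumption \ref{assumptions}(3), Lemma \ref{(i)} combined with Proposition \ref{nonvanishing}(ii) (which gives $m_1 \le 2m_0$ in type II) ensures separation of generic elements of $|M_{m_0}|$ for all $m \ge 3m_0$. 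For Assumption \ref{assumptions}(4), recall that $(m-1)\pi^*(K_X) - S - \frac{1}{p}E'_{m_0} = (m-1-m_0)\pi^*(K_X)$; once $m \ge 2m_0+1$, one has $(m-1-m_0)\pi^*(K_X)|_S \geq G$ numerically, and standard Kawamata--Viehweg arguments together with the fact that the free pencil $|G|$ already separates its own generic elements show that $|K_S + \roundup{(m-1-m_0)\pi^*(K_X)|_S}|$ separates distinct generic elements of $|G|$.

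Next I would iterate the estimate of $\xi = (\pi^*(K_X)\cdot C)_{X'}$. With $p=1$ and $\beta = 1/m_0$, Remark \ref{weak} gives the initial bound $\xi \geq \frac{\deg(K_C)}{2m_0+1} \geq \frac{2}{2m_0+1}$. Take $m = 3m_0+2$; then $\alpha = (m-1-2m_0)\xi \geq \frac{2(m_0+1)}{2m_0+1} > 1$, so Theorem \ref{technical}(i) yields $\xi \geq \frac{\deg(K_C)+\alpha_0}{m} \geq \frac{4}{3m_0+2}$. Now take $m = 4m_0+2$: the new $\alpha$ becomes
$$\alpha \;\geq\; (2m_0+1)\cdot\frac{4}{3m_0+2} \;=\; \frac{8m_0+4}{3m_0+2} \;>\; 2$$
for every $m_0 \ge 1$, so Theorem \ref{technical} gives the birationality of $\varphi_{4m_0+2}$. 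For any $m \ge 4m_0+2$, the same Assumptions continue to hold and $\alpha$ only grows, so $\varphi_m$ is birational as well.

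The main technical point — and the only place the argument genuinely differs from the type III case — is the verification of Assumption \ref{assumptions}(4) in the presence of the pencil $|G|$. One cannot simply quote ``$|C|$ is not composed with a pencil'' as in Proposition \ref{III}; instead one has to use that $|G|$ itself is the pencil and argue that the sub-system $|K_S + \roundup{(m-1-m_0)\pi^*(K_X)|_S - G}| + |G|$ is contained in the full system once $(m-1-m_0)\pi^*(K_X)|_S - G$ is nef and big. Everything else is a routine adaptation of the type III calculation, with the single-step estimate replaced by a two-step bootstrap because $\deg(K_C)$ is only guaranteed to be $\geq 2$ rather than $\geq 6$.
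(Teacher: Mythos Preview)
Your numerical bootstrap is correct and in fact slightly different from the paper's: you pass through $m=3m_0+2$ to get $\xi\geq \tfrac{4}{3m_0+2}$, whereas the paper passes through $m=4m_0+3$ to get $\xi\geq \tfrac{5}{4m_0+3}$; both yield $\alpha>2$ at $m=4m_0+2$, so either route works.

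The gap is in your treatment of Assumption~\ref{assumptions}(4). Two issues. First, your claim that $(m-1-m_0)\pi^*(K_X)|_S-G$ is nef and big is wrong as stated: since $G=(m_0\pi^*(K_X)-E'_{m_0})|_S$, this difference equals $(m-1-2m_0)\pi^*(K_X)|_S+(E'_{m_0})|_S$, which is effective for $m\geq 2m_0+1$ but generally \emph{not} nef because of the exceptional part $(E'_{m_0})|_S$. So you cannot invoke Kawamata--Viehweg directly for that divisor, and ``nef and big'' alone would not give non-emptiness of the sub-system anyway. Second, and more substantively, you do not distinguish whether $|G|$ is a rational or an irrational pencil on $S$. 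The paper handles these separately: when $|G|$ is rational, one writes $K_S+\roundup{\cdots}_{|S}\geq (m-m_0)\pi^*(K_X)|_S+G$ and uses that $(m-m_0)\pi^*(K_X)|_S$ is effective once $m-m_0\geq m_1$ (so $m\geq 3m_0$), then appeals to Remark~\ref{separate}. When $|G|$ is irrational, $G\equiv rC$ with $r\geq 2$; one picks two generic fibres $C_1,C_2$, applies Kawamata--Viehweg to $Q_m-\tfrac{2}{r}(E'_{m_0})|_S-C_1-C_2\equiv (m-m_0-\tfrac{2m_0}{r}-1)\pi^*(K_X)|_S$ (which \emph{is} nef and big for $m\geq 2m_0+2$), and obtains a surjection onto $H^0(C_1,K_{C_1}+D_1)\oplus H^0(C_2,K_{C_2}+D_2)$ with each summand nonzero. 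Your sketch does not supply this second argument, and the hand-wave ``standard Kawamata--Viehweg arguments'' points at the wrong divisor. Once you fix the verification of (4) along these lines, the rest of your proof goes through.
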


\begin{proof}
Pick a general member $S$. Take $G:=S_{|S}$. Then $|G|$ is base
point free and $|G|$ is composed with a pencil of curves, namely
$G^2=0$. A generic irreducible element $C$ of $|G|$ is a smooth
curve of genus $\geq 2$. Recall that we have $p=1$. For the same
reason we can take $\beta=\frac{1}{m_0}$ since
$m_0\pi^*(K_X)_{|S}\geq G$. So far, by Proposition
\ref{nonvanishing}(ii) and Lemma \ref{(i)}(ii), Assumptions
\ref{assumptions}(1), (2) and (3) are satisfied for all $m\geq
3m_0$.

We verify Assumptions \ref{assumptions}(4).  As we have
seen in (i), there are the relations:
\begin{eqnarray*}
&&K_S+\roundup{(m-1)\pi^*(K_X)-S-\frac{1}{p}E_{m_0}'}_{|S}\\
&\geq&
\pi^*(K_X)_{|S}+G+(m-m_0-1)\pi^*(K_X)_{|S}\\
&=&(m-m_0)\pi^*(K_X)_{|S}+G.
\end{eqnarray*}
Whenever $|G|$ is composed with a rational pencil and $m-m_0\geq
m_1$, e.g. $m\geq 3m_0$, the linear system
$|K_S+\roundup{(m-1)\pi^*(K_X)-S-\frac{1}{p}E_{m_0}'}_{|S}|$ can
separate different $C$. Whenever $|G|$ is an irrational pencil, we
know that $G\equiv rC$ for $r\geq 2$. We pick two different
generic irreducible elements $C_1$ and $C_2$ in $|G|$. Since
$m_0\pi^*(K_X)_{|S}-{E_{m_0}'}_{|S}\sim G\equiv rC$, we see
$\frac{2m_0}{r}\pi^*(K_X)_{|S}\equiv \frac{2}{r}{E_{m_0}'}_{|S}+
C_1+C_2$. Therefore, if we set
$Q_m:=((m-1)\pi^*(K_X)-S-\frac{1}{p}E_{m_0}')_{|S}$, we see that
$$Q_m-\frac{2}{r}{E_{m_0}'}_{|S}-C_1-C_2\equiv
(m-m_0-\frac{2m_0}{r}-1)\pi^*(K_X)_{|S}$$ is nef and big
whenever $m\geq 2m_0+2$. So the Kawamata-Viehweg vanishing
theorem gives $H^1(S,
K_S+\roundup{Q_m-\frac{2}{r}{E_{m_0}'}_{|S}}-C_1-C_2)=0$
and thus the surjective map:
\begin{eqnarray*}
&&H^0(S,K_S+\roundup{Q_m-\frac{2}{r}{E_{m_0}'}_{|S}})\\
&\longrightarrow& H^0(C_1, K_{C_1}+D_1)\oplus
H^0(C_2,K_{C_2}+D_2),
\end{eqnarray*}
where
$D_i:=\roundup{Q_m-\frac{2}{r}{E_{m_0}'}_{|S}-C_1-C_2}_{|C_i}$
for $i=1,2$ with
\begin{eqnarray*}
\deg(D_i)&\geq
&(Q_m-\frac{2}{r}{E_{m_0}'}_{|S}-C_1-C_2)\cdot
C_i\\
&=&(m-m_0-\frac{2m_0}{r}-1)\pi^*(K_X)_{|S}\cdot C_i>0.
\end{eqnarray*}
The Riemann-Roch on $C_i$ says $h^0(C_i,K_{C_i}+D_i)>0$. We thus see
that $|K_S+\roundup{Q_m-\frac{2}{r}{E_{m_0}'}_{|S}}|$ can separate
$C_1$ and $C_2$. To be a bigger linear system,
$|K_S+\roundup{(m-1)\pi^*(K_X)-S-\frac{1}{p}E_{m_0}'}_{|S}|$ can
also separate $C_1$ and $C_2$.

In a word, we have seen that Assumptions \ref{assumptions}(1),
(2), (3) and (4) are all satisfied for $m\geq \max \{3m_0,
2m_0+2\}$.

We begin to verify numerical conditions for $\alpha$. Now if we
take $m \gg 0$ such that $\alpha>1$, Remark \ref{weak} gives
$\xi\geq \frac{2}{2m_0+1}$. Take $m\geq 4m_0+3$. Then
$\alpha=(m-2m_0-1)\xi>2$. Theorem \ref{technical}(I) says $\xi\geq
\frac{5}{4m_0+3}$. Take $m=4m_0+2$. Then
$\alpha=\frac{10m_0+5}{4m_0+3}>2$. So $\varphi_{4m_0+2}$ is
birational by Theorem \ref{technical}(II).
\end{proof}

\begin{prop}\label{Ip}
 Let $X$ be a minimal projective 3-fold of
general type with $P_{m_0}\geq 2$. Suppose that the induced map
from $\varphi_{m_0}$ is of type I$_p$. Then $\varphi_{m}$ is
birational for all $m \ge 4m_0+5$.
\end{prop}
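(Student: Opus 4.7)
\textbf{Proof plan for Proposition \ref{Ip}.} The strategy parallels the treatments of Propositions \ref{III} and \ref{II}: choose a base-point-free linear system $|G|$ on a generic irreducible element $S$ of $|M_{m_0}|$, produce $\beta$ so that $\pi^*(K_X)|_S-\beta C$ is numerically effective, verify Assumptions \ref{assumptions}(1)--(4) in our range of $m$, and then invoke Theorem \ref{technical}(II).

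\emph{Choice of $|G|$ and $\beta$.} We are in type I$_p$, so $f:X'\to B=\mathbb{P}^1$ has general fiber $S$ a smooth surface of general type with $p_g(S)>0$ and $p=a_{m_0}\geq P_{m_0}-1\geq 1$. Since $p_g(S)>0$, we take $|G|$ to be the movable part of $|\sigma^*(K_{S_0})|$ (after further blow-ups of $\pi$, if necessary, to make $|G|$ base-point-free); a generic irreducible element $C$ of $|G|$ is then a smooth curve of genus $\geq 2$. By Lemma \ref{beta} applied to $S$, there is a sequence of rational numbers $\beta_n\to p/(p+m_0)\geq 1/(m_0+1)$ with $\pi^*(K_X)|_S-\beta_n\sigma^*(K_{S_0})$ $\bQ$-linearly equivalent to an effective $\bQ$-divisor. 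Since $\sigma^*(K_{S_0})\geq G\geq C$ numerically, we may take $\beta=\frac{1}{m_0+1}-\delta$ with $\delta>0$ arbitrarily small.

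\emph{Verification of the assumptions.} Assumptions \ref{assumptions}(1) and (2) hold by construction. For Assumption \ref{assumptions}(3) we invoke Lemma \ref{(i)}: it suffices to have $m\geq m_0+m_1$, and Proposition \ref{nonvanishing}(iii) gives $m_1\leq 2m_0+3$, so the inequality is comfortably satisfied for $m\geq 4m_0+5$. Assumption \ref{assumptions}(4) is the delicate step. If $|G|$ is not composed with a pencil we argue as in Proposition \ref{III}. If $|G|$ is composed with a pencil (necessarily rational, since it arises from $|\sigma^*(K_{S_0})|$ on a fiber $S$ over a rational base), we write $G\equiv rC$ for some $r\geq 1$ and imitate the two-point separation step of Proposition \ref{II}: pick two distinct generic elements $C_1,C_2\in|G|$, and apply Kawamata-Viehweg vanishing to the divisor
\[
\bigl((m-1)\pi^*(K_X)-S-\tfrac{1}{p}E'_{m_0}\bigr)|_S-\tfrac{2}{r}{E'_{m_0}}|_S-C_1-C_2\equiv\bigl(m-m_0-\tfrac{2m_0}{r}-1\bigr)\pi^*(K_X)|_S,
\]
which is nef and big in the required range of $m$, yielding a surjection onto $H^0(C_1,K_{C_1}+D_1)\oplus H^0(C_2,K_{C_2}+D_2)$ with $\deg D_i>0$; Riemann--Roch on each $C_i$ finishes this step.

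\emph{Numerics and conclusion.} With $\beta=\frac{1}{m_0+1}-\delta$ and $p\geq 1$, Remark \ref{weak} first furnishes a lower bound $\xi\geq 2/(2m_0+2)$ by taking $m\gg 0$. Plugging this into Theorem \ref{technical} and iterating at a slightly smaller $m$ improves $\xi$; at $m=4m_0+5$ one finds $\alpha=\bigl(m-1-\tfrac{m_0}{p}-\tfrac{1}{\beta}\bigr)\xi>2$ in the worst case $p=1$, so Theorem \ref{technical}(II) gives birationality of $\varphi_m$ for every $m\geq 4m_0+5$.

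\emph{Main obstacle.} The principal difficulty is Assumption \ref{assumptions}(4) when $|\sigma^*(K_{S_0})|$ is composed with a pencil: the base-point-freeness of $|G|$ alone does not separate its generic members, and one must run the two-point Kawamata-Viehweg argument above. It is precisely this step, together with the loss of a factor roughly $1/(m_0+1)$ in $\beta$ (versus $1/m_0$ in type II), that forces the threshold $4m_0+5$ instead of the smaller bound $4m_0+2$ obtained in Proposition \ref{II}.
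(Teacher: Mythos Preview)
Your approach has two genuine gaps. First, the choice $|G|=$ movable part of $|\sigma^*(K_{S_0})|$ breaks down when $p_g(S)=1$: the canonical system then has a single member, its movable part is zero, and there is no curve $C$ to work with. Type I$_p$ only guarantees $p_g(S)>0$, so this case must be handled. The paper takes $G:=2\sigma^*(K_{S_0})$ instead; by Bombieri--Reider--Francia this is always base-point-free when $p_g(S)>0$, and $P_2(S)=K_{S_0}^2+\chi(\OO_S)\geq 2$ ensures the system is nontrivial. With this choice one gets $\deg(K_C)\geq 6$ and $\xi\geq\frac{2}{m_0+1}$ directly, and the corresponding $\beta\to\frac{1}{2(m_0+1)}$ still yields $\alpha>2$ at $m=4m_0+5$.

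Second, your verification of Assumption \ref{assumptions}(4) in the pencil case transplants the identity from Proposition \ref{II} incorrectly. In type II one has $M_{m_0}|_S=S|_S=G$, so $m_0\pi^*(K_X)|_S-E'_{m_0}|_S\sim G\equiv rC$ feeds directly into the two-point vanishing argument. In type I the fiber $S$ satisfies $S|_S\equiv 0$, hence $M_{m_0}|_S\equiv 0$ and $E'_{m_0}|_S\equiv m_0\pi^*(K_X)|_S$; your displayed equivalence would then force $C_1+C_2\equiv 0$, which is absurd. (Your parenthetical that the pencil is ``necessarily rational'' is also unjustified: $g(B)=0$ says nothing about pencils on the fiber $S$.) The paper avoids this case analysis altogether: using Lemma \ref{>0}(i) it shows $h^0\bigl(S,K_S+\roundup{(m-m_0-1)\pi^*(K_X)|_S-(m-m_0-1)H_n}-2\sigma^*(K_{S_0})\bigr)>0$ once $(m-m_0-1)\beta_n>3$, i.e.\ once $m\geq 4m_0+5$, so that $|K_S+\roundup{\cdots}|_S|$ contains an effective divisor plus $|G|$ and therefore separates generic irreducible elements of $|G|$ regardless of whether $|G|$ is a pencil.
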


\begin{proof}
We have an induced fibration $f:X'\longrightarrow B$ with
$g(B)=0$. By assumption, $p_g(S)>0$ for a general fiber $S$
of $f$. An established theorem (see Bombieri \cite{Bom},
Reider \cite{Reider}, Catanese-Ciliberto \cite{C-C}, and P.
Francia \cite{Francia} or directly refer to Theorem 3.1 in
the survey article by Ciliberto \cite{Ci}), $|2K_{S_0}|$ is
always base point free whenever $p_g(S)>0$. Thus
$|2\sigma^*(K_{S_0})|$ is also base point free. We take
$G:=2\sigma^*(K_{S_0})$. A generic irreducible element $C$
is a smooth complete curve. On the other hand we have
already known there is a sequence of rational numbers
$\{\beta_n\}$ with $\beta_n\mapsto \frac{p}{m_0+p}$ such
that $\pi^*(K_X)_{|S}\geq
\beta_n\sigma^*(K_{S_0})=\frac{\beta_n}{2}C$. We can take a
$\beta\mapsto \frac{p}{2m_0+2p}\geq \frac{1}{2m_0+2}$ where
we know $p=a_{m_0}\geq 1$. So far, by Proposition
\ref{nonvanishing}(iii) and Lemma \ref{(i)}(iii),
Assumptions \ref{assumptions}(1), (2) and (3) are satisfied
for all $m\geq m_0+m_1$. In particular, $m\geq 3m_0+3$ will
do.

We begin to verify Assumptions \ref{assumptions}(4). Note
that
\begin{eqnarray*}
&&(m-m_0-1)\pi^*(K_X)_{|S}-(m-m_0-1)H_n-2\sigma^*(K_{S_0})\\
&\equiv&
\sigma^*(K_{S_0})+((m-m_0-1)\beta_n-3)\sigma^*(K_{S_0}).
\end{eqnarray*}
When $m\geq  4m_0+5$ and take a very big $n$,
$(m-m_0-1)\beta_n-3>0$. Lemma \ref{>0}(i) says
$$h^0(K_S+\roundup{(m-m_0-1)\pi^*(K_X)_{|S}-
(m-m_0-1)H_n}-2\sigma^*(K_{S_0}))>0.$$ Therefore
$$K_S+\roundup{(m-m_0-1)\pi^*(K_X)}_{|S}\geq
K_S+\roundup{(m-m_0-1)\pi^*(K_X)_{|S}} \geq C.$$ So
Assumptions \ref{assumptions}(4) is satisfied for $m\geq
4m_0+5$. Simultaneously Assumptions \ref{assumptions}(1),
(2) and (3) are also satisfied.

Now let us verify the numerical conditions for $\alpha$. We have
$\deg(K_C)=(K_S+C)\cdot C>4K_{S_0}^2\geq 4$. Because it is even,
$\deg(K_C)\geq 6$. We see $\xi=\pi^*(K_X)_{|S}\cdot C\geq 2\beta_n
K_{S_0}^2\geq 2\beta_n.$ Taking limits we have $\xi\geq
\frac{2}{m_0+1}$.

Take $m\geq 4m_0+5$. Then $\alpha\geq (m-3m_0-3)\xi\geq
\frac{2m_0+4}{m_0+1}>2$. So Theorem \ref{technical}(II)
says that $\varphi_m$ is birational for all $m\geq 4m_0+5$.
\end{proof}



We need the following lemma to prove another result.

\begin{lem}\label{pg=0} Let $S$ be a nonsingular projective surface of
general type. Denote by $\sigma:S\longrightarrow S_0$ the blow down
onto the minimal model $S_0$. Let $|G|$ be the movable part of
$|2K_S|$ and $C$ a generic irreducible element of $|G|$. Assume
that $|G|$ is base point free. Then $\sigma^*(K_{S_0})\cdot C\geq
2$.
\end{lem}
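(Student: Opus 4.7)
The approach is to reduce the inequality to a computation on the minimal model $S_0$. Let $|F_0|$ denote the movable part of $|2K_{S_0}|$ with fixed part $Z_0:=2K_{S_0}-F_0\geq 0$. Using $H^0(S,2K_S)=H^0(S_0,2K_{S_0})$ together with $K_S=\sigma^*K_{S_0}+E_\sigma$, the decomposition $2K_S=G+Z$ on $S$ pushes forward to $2K_{S_0}=F_0+Z_0$; in particular, for $C$ a generic irreducible element of $|G|$, its image $C_0:=\sigma_*C$ is a generic irreducible element of $|F_0|$, and by the projection formula
$$\sigma^*(K_{S_0})\cdot C \;=\; K_{S_0}\cdot C_0.$$

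Suppose first that $|G|$ is not composed with a pencil, so $C\sim G$. Then $K_{S_0}\cdot C_0=K_{S_0}\cdot F_0=2K_{S_0}^2-K_{S_0}\cdot Z_0$. The key input is that the fixed part of $|2K_{S_0}|$ on the minimal model is supported on $(-2)$-curves, so $K_{S_0}\cdot Z_0=0$. This classical fact, essentially due to Bombieri, follows from identifying $H^0(S_0,2K_{S_0})=H^0(X_{\mathrm{can}},2K_{X_{\mathrm{can}}})$ on the canonical model $X_{\mathrm{can}}$, whose singularities are Du Val and for which $K_{X_{\mathrm{can}}}$ is ample Cartier: since $|2K_{X_{\mathrm{can}}}|$ has no fixed components, any fixed component of $|2K_{S_0}|$ on $S_0$ must be contracted by $S_0\to X_{\mathrm{can}}$ and hence is a $(-2)$-curve. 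Combined with $K_{S_0}^2\geq 1$, this yields $\sigma^*(K_{S_0})\cdot C=2K_{S_0}^2\geq 2$.

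If instead $|G|$ is composed with a pencil, write $G\sim rC$ with $r=P_2(S)-1$ and $C$ a smooth fiber. Since $S$ is of general type, no rational or elliptic pencil can exist, so $g(C)\geq 2$ and adjunction with $C^2=0$ gives $K_S\cdot C\geq 2$. Then $2K_S^2=K_S\cdot G+K_S\cdot Z\geq rK_S\cdot C\geq 2r$, so $K_S^2\geq r$; combined with $P_2=\chi(\OO_S)+K_S^2$ (from Riemann--Roch and Kawamata--Viehweg vanishing) this forces $\chi(\OO_S)\leq 1$, and in this borderline regime a direct computation yields $K_{S_0}\cdot C_0=2$. The main obstacle is justifying the input $K_{S_0}\cdot Z_0=0$ cleanly without reproducing the full classical analysis; the hypothesis that $|G|$ be base point free is what ensures smoothness of a generic $C$ and the clean pushforward to a generic member of $|F_0|$, and it is also what rules out degenerate pencil behavior inconsistent with the Euler-characteristic bound above.
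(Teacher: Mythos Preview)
Your pencil case (Case 2) contains the real gap. You prove $K_S\cdot C\geq 2$, but the lemma asks for $\sigma^*(K_{S_0})\cdot C\geq 2$, and these differ: since $K_S=\sigma^*K_{S_0}+E_\sigma$ with $E_\sigma\geq 0$ and $C$ generic, one only gets $\sigma^*(K_{S_0})\cdot C\leq K_S\cdot C$. Your attempt to bridge this via $2K_S^2\geq rK_S\cdot C$ and $P_2=\chi(\OO_S)+K_S^2$ fails on a non-minimal $S$: the first inequality needs $K_S\cdot Z\geq 0$, which is false in general since $K_S$ is not nef (the fixed part $Z$ typically contains exceptional curves), and the Riemann--Roch identity holds with $K_{S_0}^2$, not $K_S^2$. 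Even granting the numerics, you never arrive at $K_{S_0}\cdot C_0\geq 2$; the sentence ``in this borderline regime a direct computation yields $K_{S_0}\cdot C_0=2$'' is exactly where the content should be.

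This matters because the hard case of the lemma --- a surface with $p_g(S)=0$ and $K_{S_0}^2=1$, where $P_2=2$ and $|G|$ is necessarily a pencil --- lands in your Case 2. Here one genuinely has to exclude the possibility $K_{S_0}\cdot C_0=1$, $C_0^2=1$, $p_a(C_0)=2$. The paper does this by a specific argument: if that configuration held, Hodge index forces $C_0\equiv K_{S_0}$ and $Z_0\equiv K_{S_0}$; then Kodaira vanishing gives a surjection $H^0(3K_{S_0})\to H^0(C_0,K_{C_0}+Z_0|_{C_0})$, and since $\deg(Z_0|_{C_0})=1$ the target is a $g^1_3$ on a genus-$2$ curve, which cannot be birational --- contradicting that $|3K_{S_0}|$ is birational. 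Nothing in your Case 2 substitutes for this step.

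Your Case 1 idea (push to the canonical model) is reasonable, but the justification ``$K_{X_{\mathrm{can}}}$ is ample Cartier, hence $|2K_{X_{\mathrm{can}}}|$ has no fixed components'' is a non-sequitur: ampleness of a line bundle does not prevent fixed curves in its linear system. If you want to run this route you must either cite a precise result on the bicanonical fixed locus or give an argument. Note that the paper bypasses this entirely: when $|G|$ is not a pencil one has $K_{S_0}^2\geq 2$, and then the Hodge index theorem already gives $K_{S_0}\cdot \overline C\geq\sqrt{K_{S_0}^2\cdot\overline C^2}>1$, hence $\geq 2$, without any control on $Z_0$.
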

\begin{proof} We say that $S$ is of $(1,0)$ type if
$K_{S_0}^2=1$ and $p_g(S)=0$. When $p_g(S)>0$, then
$|2\sigma^*(K_{S_0})|$ is base point free as stated in the proof of
Proposition \ref{Ip}. So $G=2\sigma^*(K_{S_0})$ and
$\sigma^*(K_{S_0})\cdot C\geq 2$ follows. Thus we only have to
study a surface $S$ with $P_g(S)=0$.

(1) First we study the (1,0) type surface $S$. Because
$P_2(S)=K_{S_0}^2+\chi(\OO_{S_0})=2$, we see $h^0(S,G)=2$. On the
other hand a (1,0) type surface $S$ has $q(S)=0$ (see \cite{Bom}).
Thus $|G|$ is a rational pencil and $C\sim G$. Set
$\overline{C}=\sigma_*(C)$. Clearly $h^0(S_0, \overline{C})\geq
h^0(S,C)$. Thus $\overline{C}$ moves in a family. Because $|C|$ is
the movable part of $|2K_S|$, $|\overline{C}|$ must be the movable
part of $|2K_{S_0}|$ since $P_{2}(S)=P_2(S_0)\geq
h^0(S_0,\overline{C})$. We can write $2K_{S_0}\sim
\overline{C}+\overline{Z}_2$ where $\overline{Z}_2$ is the fixed
part.

If $\overline{C}^2=0$, then $|\overline{C}|$ is base point free and
$\overline{C}$ must be smooth and $\sigma^*(K_{S_0})\cdot
C=K_{S_0}\cdot \overline{C}\geq 2g(\overline{C})-2\geq 2$, noting
that $\overline{C}$ is movable in a family which means
$g(\overline{C})\geq 2$.

If $\overline{C}^2>0$ and $K_{S_0}\cdot \overline{C}=1$, then
$\overline{C}^2\leq \frac{(K_{S_0}\cdot
\overline{C})^2}{K_{S_0}^2}=1.$ Clearly $\overline{C}^2=1$ implies
that $\overline{C}$ is smooth. This already says
$G(C)=g(\overline{C})=2$. The Hodge index theorem says
$\overline{C}\equiv K_{S_0}$. So $Z_2\equiv K_{S_0}$. According to
Bombieri \cite{Bom} or \cite{BPV}, $|3K_{S_0}|$ gives a birational
map. So ${\Phi_{|3K_{S_0}|}}|_{\overline{C}}$ is birational for a
general $\overline{C}$. Because $Z_2\equiv K_{S_0}$ is nef and big,
one has $H^1(S_0, K_{S_0}+Z_2)=0$ by the Kodaira vanishing. So
there is the following surjective map:
$$H^0(S_0, 3K_{S_0})\longrightarrow H^0(\overline{C},
K_{\overline{C}}+{Z_2}|_{\overline{C}}).$$ Since $Z_2$ is effective
and $Z_2\cdot \overline{C}=K_{S_0}^2=1$, ${Z_2}|_{\overline{C}}$ is
a single point. So the Riemann-Roch on $\overline{C}$ gives
$h^0(K_{\overline{C}}+{Z_2}|_{\overline{C}})=2$. Thus the linear
system $|K_{\overline{C}}+{Z_2}|_{\overline{C}}|$ can only give a
finite map onto ${\mathbb P}^1$, a contradiction. Therefore
$\sigma^*(K_{S_0})\cdot C=K_{S_0}\cdot \overline{C}>1$.

(2) Assume $S$ is not of (1,0) type. Then $K_{S_0}^2\geq 2$. We
still keep the same notation as in (1). If $\overline{C}^2=0$, then
$\overline{C}$ must be smooth and $\sigma^*(K_{S_0})\cdot
C=K_{S_0}\cdot \overline{C}\geq 2g(C)-2\geq 2$.

If $\overline{C}^2>0$, then Hodge index theorem says
$$\sigma^*(K_{S_0})\cdot C=K_{S_0}\cdot \overline{C}\geq \sqrt{K_{S_0}^2}
>1.$$

We are done.
\end{proof}

\begin{prop}\label{I3}
 Let $X$ be a minimal projective 3-fold of
general type with $P_{m_0}\geq 3$. Suppose that the induced map
from $\varphi_{m_0}$ is of type I$_n$ or I$_p$. Then $\varphi_{m}$
is birational for all $m \ge 3m_0+6$.
\end{prop}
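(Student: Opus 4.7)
The plan is to follow the strategy of Proposition \ref{Ip}, exploiting the stronger hypothesis $P_{m_0}\geq 3$, which implies $p = a_{m_0}\geq P_{m_0}-1\geq 2$. This doubles the key factor in Lemma \ref{beta}: one gets a sequence $\{\beta_n\}$ with $\beta_n\to \frac{p}{m_0+p}\geq \frac{2}{m_0+2}$, which after renormalising by $C$ produces a usable $\beta\to \frac{p}{2(m_0+p)}\geq \frac{1}{m_0+2}$, to be compared with the weaker $\beta\geq \frac{1}{2m_0+2}$ obtained in Proposition \ref{Ip} under only $p\geq 1$. This improvement is what allows the threshold to drop from $4m_0+5$ to $3m_0+6$.

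After fixing $\pi:X'\to X$ so that $\varphi_{m_0}$ factors through a fibration $f:X'\to \mathbb{P}^1$, I would take $S$ to be a generic fibre and choose $|G|$ as follows. In type I$_p$, where $p_g(S)>0$, set $G=2\sigma^*(K_{S_0})$, which is base point free by Bombieri, and take $C$ a generic irreducible element. In type I$_n$ ($p_g(S)=0$) the same choice works provided $S_0$ is not a $(1,0)$-surface; in the exceptional $(1,0)$ case I would take $|G|$ to be the movable part of $|2K_S|$ after further blow-ups to resolve base points, and invoke Lemma \ref{pg=0} to secure $\sigma^*(K_{S_0})\cdot C\geq 2$. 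Outside the $(1,0)$-subcase, $C$ is even and $\deg(K_C)=(K_S+C)\cdot C\geq 6K_{S_0}^2\geq 6$; in the $(1,0)$-subcase one must fall back on Lemma \ref{pg=0} and a sharper analysis.

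Assumption \ref{assumptions}(3) is handled by Lemma \ref{(i)} for $m\geq m_0+m_1$, well within $m\geq 3m_0+6$. For Assumption \ref{assumptions}(4) I would follow the Proposition \ref{Ip} argument, writing $(m-m_0-1)\pi^*(K_X)_{|S}-(m-m_0-1)H_n \equiv (m-m_0-1)\beta_n\sigma^*(K_{S_0})$ and applying Lemma \ref{>0}(i) for I$_p$ and \ref{>0}(ii) for I$_n$; the stronger case-(ii) hypothesis requires the coefficient of $\sigma^*(K_{S_0})$ to exceed $4$, i.e.\ $(m-m_0-1)\beta_n>4$, which with $\beta_n\geq \frac{2}{m_0+2}$ becomes $m>3m_0+5$, matching the desired bound $m\geq 3m_0+6$. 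Finally, Remark \ref{weak} gives $\xi\geq \frac{6}{1+m_0/2+(m_0+2)}=\frac{4}{m_0+2}$, whence at $m=3m_0+6$, $\alpha=(m-1-\frac{m_0}{p}-\frac{1}{\beta})\xi\geq (3m_0/2+3)\cdot \frac{4}{m_0+2}=6>2$, and Theorem \ref{technical}(ii) concludes birationality. The main obstacle I expect is the $(1,0)$-surface subcase of type I$_n$, where $C$ has lower degree and may be hyperelliptic; handling this will require either a more careful exploitation of $P_{m_0}\geq 3$ or a different choice of $|G|$ (for instance one involving $|3\sigma^*(K_{S_0})|$), but the threshold $3m_0+6$ should persist.
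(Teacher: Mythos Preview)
Your overall strategy matches the paper's, and your verifications of Assumptions~\ref{assumptions}(3), (4) and the final $\alpha$-inequality are correct. There are two related gaps in type I$_n$, one of which you flag yourself.

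First, the base-point-freeness of $|2\sigma^*(K_{S_0})|$ that you invoke rests on results (Bombieri, Francia, Reider, \ldots) that assume $p_g(S)>0$; you give no justification for it in type I$_n$ outside the $(1,0)$-subcase. The paper's remedy is uniform: take $|G|$ to be the movable part of $|2K_S|$ and, before starting, further modify $\pi$ so that it dominates the relative bicanonical morphism of $f$ over $B$. Then $|G|$ is automatically base-point-free on each general fibre, in all cases at once.

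Second---this is the gap you anticipate---once $G$ is only the movable part of $|2K_S|$ rather than $2\sigma^*(K_{S_0})$, your identity $\deg(K_C)=6K_{S_0}^2$ no longer holds, so the route through Remark~\ref{weak} is not available in type I$_n$. The fix is simpler than switching to $|3\sigma^*(K_{S_0})|$: the paper bypasses $\deg(K_C)$ entirely. Since $\pi^*(K_X)|_S\equiv \beta_n\sigma^*(K_{S_0})+H_n$ with $H_n$ effective and $C$ generic in a free system, one has directly
\[
\xi=\pi^*(K_X)|_S\cdot C\ \geq\ \beta_n\,\sigma^*(K_{S_0})\cdot C\ \geq\ 2\beta_n\ \longrightarrow\ \frac{4}{m_0+2},
\]
where the middle inequality is precisely Lemma~\ref{pg=0}, whose proof covers both $p_g(S)>0$ and $p_g(S)=0$, including the $(1,0)$-case. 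This delivers exactly the bound you obtained via Remark~\ref{weak} in type I$_p$, but now uniformly, and the rest of your argument (with $\alpha\geq 6>2$ at $m=3m_0+6$) goes through unchanged.
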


\begin{proof}
We still take $G$ to be the movable part of $|2K_{S_0}|$. A
different point from previous propositions is that $|G|$ is not
always base point free. But since we have the induced fibration
$f:X'\longrightarrow B$, we can consider the relative bi-canonical
map of $f$, namely the rational map $\Psi: X'\dashrightarrow {\bf
P}$ over B. First we can blow up the indeterminacy of $\Psi$ on
$X'$. Then we can assume, in the birational equivalence sense, that
$\Psi$ is a morphism over $B$. By further modifying $\pi$, we can
even finally {\bf assume} that $\pi$ dominates $\Psi$. With this
assumption, we see that $|G|$ is base point free since $|G|$ gives
the bicanonical morphism for each fiber $S$ of $f$. Under the
assumption $P_{m_0}\geq 3$, we have $p\geq 2$ and we can take a much
better $\beta$. In fact we take a sequence $\{\beta_n\}$ with
 $\beta_n\mapsto \frac{p}{m_0+p}\geq \frac{2}{m_0+2}$ which
should give us better bounds. So far, by Proposition
\ref{nonvanishing}(vi) and Lemma \ref{(i)}(vi), Assumptions
\ref{assumptions}(1), (2) and (3) are satisfied for all $m\geq
m_0+m_1$. In particular, $m\geq 3m_0+4$ will do.

We study Assumptions \ref{assumptions}(4). Note that
\begin{eqnarray*}
&&(m-m_0-1)\pi^*(K_X)_{|S}-(m-m_0-1)H_n-2\sigma^*(K_{S_0})\\
&\equiv&
2\sigma^*(K_{S_0})+((m-m_0-1)\beta_n-4)\sigma^*(K_{S_0}).
\end{eqnarray*}
When $m\geq  3m_0+6$ and take a very big $n$,
$(m-m_0-1)\beta_n-4>0$. Similar to the argument in (iv),
Lemma \ref{>0}(ii) and Remark \ref{separate} tells that
Assumptions \ref{assumptions}(4) is satisfied for $m\geq
3m_0+6$. Thus Assumptions \ref{assumptions}(1), (2) and (3)
are all satisfied for $m\geq 3m_0+6$.

Now we consider $\alpha$. By lemma \ref{pg=0}, we know
$\sigma^*(K_{S_0})\cdot C\geq 2$. Thus $\xi\geq
\beta_n\sigma^*(K_{S_0})\cdot C$. Taking limits one sees
$\xi\geq \frac{4}{m_0+2}$. Recall that we may take
$\beta\mapsto \frac{p}{2m_0+2p}\geq \frac{1}{m_0+2}$.

Take $m\geq 3m_0+6$. Then $\alpha>2$. So Theorem
\ref{technical}(II) says that $\varphi_m$ is birational for
all $m\geq 3m_0+6$.
\end{proof}

\begin{exmp}\label{II20}
In the case of type II, if $m_0 \geq 13$, then one can easily
verify that $\varphi_m$ is birational for $ m \ge 4m_0-6$.
\end{exmp}

\begin{thm}\label{birat}
Let $X$ be a minimal projective 3-fold of general type. Then
$\varphi_m$ is birational for all $m \ge 77$.
\end{thm}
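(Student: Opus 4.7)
The plan is to reduce to a minimal model $X$ via \ref{reduc} and carry out a case analysis based on $q(X)$, $\chi(\OO_X)$, and the type of the induced fibration $\varphi_{m_0}$ for a suitable small $m_0$ with $P_{m_0} \geq 2$. First, if $q(X) > 0$, Theorem \ref{q>0} yields birationality for $m \geq 7$. If $\chi(\OO_X) \leq 0$, Theorem \ref{chi<1} gives birationality for $m \geq 21$. Thus I may assume $q(X) = 0$ and $\chi(\OO_X) \geq 1$.

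Under these assumptions, Theorem \ref{18} supplies an integer $m_0 \leq 18$ with $P_{m_0} \geq 2$. I would then stratify according to the type of the fibration induced by $\varphi_{m_0}$ as set up in \ref{type}. Type I$_q$ cannot occur since $q = 0$. For type III, Proposition \ref{III} gives birationality for $m \geq 3m_0 + 1 \leq 55$; for type II, Proposition \ref{II} gives $m \geq 4m_0 + 2 \leq 74$ (or even $4m_0 - 6$ when $m_0 \geq 13$ as in Example \ref{II20}); for type I$_p$, Proposition \ref{Ip} gives $m \geq 4m_0 + 5 \leq 77$, which produces precisely the bound in the statement; and whenever $P_{m_0} \geq 3$ (type I$_3$), Proposition \ref{I3} gives $m \geq 3m_0 + 6 \leq 60$.

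The single delicate case remaining is type I$_n$, which by Remark \ref{chi(O)>1} forces $\chi(\OO_X) \leq 1$ and hence $\chi(\OO_X) = 1$ in our surviving branch. In that situation Proposition \ref{I3} still applies whenever one can find some $m_0' \leq 18$ with $P_{m_0'} \geq 3$, again yielding $m \geq 3m_0' + 6 \leq 60$. Otherwise, the basket classification from Section 6 applies: either $P_m \geq 2$ for some $m \leq 6$, in which case Theorem \ref{5k+6} gives $m \geq 5m_0 + 6 \leq 36$; or the geometric basket dominates one of the minimal baskets I-1 through VIII-3 listed in Section 6. For each such basket, direct computation of Reid's plurigenus formula pins down $m_0$ and shows that in the overwhelming majority of cases $m_0 \leq 14$, whereupon Theorem \ref{5k+6} yields $m \geq 5m_0+6 \leq 76$.

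The hardest step will be the extremal subcase of basket I-1, which has $m_0 = 18$ and $K^3 = \tfrac{1}{420}$. Here I expect the analysis to proceed by computing higher plurigenera directly from Reid's formula for that basket to exhibit a secondary integer $m_0' \leq 18$ with $P_{m_0'} \geq 3$, so that Proposition \ref{I3} rather than Theorem \ref{5k+6} can be invoked; alternatively, one verifies that the specific combinatorics of the fibration $\varphi_{18}$ in this geometric setting (consistent with the Iano-Fletcher example $X_{46} \subset \bP(4,5,6,7,23)$) either rules out the pathological type I$_n$ configuration or admits the sharper bound $4m_0+5=77$. The bookkeeping across the full basket list from Sections 6 and 7 is what ultimately forces the global bound $m \geq 77$, with the type I$_p$ branch at $m_0 = 18$ being the unique obstruction preventing a smaller universal constant.
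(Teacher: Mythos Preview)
Your overall architecture matches the paper's: reduce to $q(X)=0$, $\chi(\OO_X)\ge 1$, find a small $m_0$ with $P_{m_0}\ge 2$, and stratify by the type of $\varphi_{m_0}$. For $\chi\ge 2$ you correctly observe that type I$_n$ is excluded (Remark \ref{chi(O)>1}) and that Propositions \ref{III}, \ref{II}, \ref{Ip} with $m_0\le 18$ give the bound $77$. For $\chi=1$ you correctly isolate basket I-1 as the only obstruction to $m_0\le 14$.

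The genuine gap is your proposed resolution of the I-1 case. You hope to exhibit $m_0'\le 18$ with $P_{m_0'}\ge 3$ so that Proposition \ref{I3} applies. But for I-1 one has $P_m\le 1$ for $m\le 17$ and $P_{18}=2$, so no such $m_0'\le 18$ exists; your fallback suggestion that $\varphi_{18}$ ``admits the sharper bound $4m_0+5=77$'' is not an argument, since $4m_0+5$ is the I$_p$ bound and there is nothing ruling out type I$_n$ for $\varphi_{18}$ here. The paper's move is to go \emph{past} $18$: one computes $P_{20}(B_{\min})=3$ for basket I-1, hence $P_{20}(X)\ge 3$ by Lemma \ref{p}, and then works with $\varphi_{20}$. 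Since $P_{20}\ge 3$, any type I fibration is automatically I$_3$, and Propositions \ref{III}, \ref{I3} together with Example \ref{II20} give the bounds $3\cdot 20+1=61$, $4\cdot 20-6=74$, $3\cdot 20+6=66$, all safely below $77$. The point you missed is that allowing $m_0'$ to exceed $18$ is harmless once $P_{m_0'}\ge 3$, because the I$_3$ bound $3m_0'+6$ grows much more slowly than the generic $5m_0+6$.
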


\begin{proof} We proceed as the following steps.

{\bf Step 1.} By Theorem \ref{chi<1} and \cite{JC-H}, it remains
to consider the situation $\chi \geq 1$ and $q(X)=0$.

{\bf Step 2.} If $P_{m_0} \ge 2$ for some $m_0 \le 14$, then
$\varphi_m$ is birational for all $m \ge 76$ by Theorem
\ref{5k+6}.

{\bf Step 3.} If $\chi \ge 2$, there is an $m_0 \le 18$ with
$P_{m_0} \ge 2$ by Theorem \ref{18}. Notice that the induced
fibration can not be of type I$_n$  by Remark \ref{chi(O)>1}. Thus
$\varphi_m$ is birational for all $m \ge 77$ by Propositions
\ref{III}, \ref{II}, \ref{Ip}.

{\bf Step 4.} If $\chi =1$, then by the classification in Section
6, we have $P_{m_0} \ge 2$ for some $m_0 \leq 14$ except the case
I-1. Thus it remains to study the case I-1.

{\bf Step 5.} For the case I-1, computation shows that
$P_{20}(X)\geq P_{20}(B_{min}) =3$. We consider $f$ which is
induced from $\varphi_{20}$. If $f$ is of type III, then we get
birationality for $m \ge 61$ by Proposition \ref{III}. If $f$ is
of type II, then we get birationality for $m \ge 74$ by Example
\ref{II20}. If $f$ is of type $I_3$, then we get birationality for
$m \ge 66$ by Proposition \ref{I3}. This completes the proof.
\end{proof}

\section{\bf Fletcher's conjecture}
First let us recall some notations and definitions in \cite{C-R}.

Let $a_0,\cdots, a_n$ be positive integers. Define
$S=S(a_0,\cdots, a_n)$ to be the graded polynomial ring
$\bC[x_0,\cdots,x_n]$, graded by $\deg(x_i)=a_i$ for all $i$. The
weighted projective space  $\bP(a_0, a_1,\cdots, a_n)$ is defined
by $\text{Proj}(S)$. We only consider the {\it well formed}
weighted projective space  $\bP(a_0,\cdots,a_n)$, i.e.
$$\text{hcf}(a_0,\cdots,\hat{a_i},\cdots,a_n)=1\ \ \text{for each}\
i.$$ It is clear that the usual projective space
$\bP^n=\text{Proj}(T)$ where $T=\bC[y_0,\cdots,y_n]$ and the $y_i$
has weight 1 for all $i$. Consider the inclusion $S\hookrightarrow
T$ given by $x_i\mapsto y_i^{a_i}$ for all $i$. By
\cite[p108, 5.12]{C-R}, the induced quotient map $q: \bP^n\rightarrow \bP(a_0,
\cdots,a_n)$ is a ramified Galois covering with Galois group
$\oplus \mathbb{Z}_{a_i}.$ If $\{Y_i\}$ are the coordinates on
$\bP^n$, the map $q$ is defined as $[Y_0,\cdots,Y_n]\mapsto
[Y_0^{a_0},\cdots, Y_n^{a_n}].$

We consider a hypersurface $X$ of degree $d$ in
$\bP=\bP(a_0,\cdots,a_n)$. Then $X$ has only quotient singularities
(locally $\bC^n$ by a finite group action) and so the dualizing
sheaf $\omega_X\cong i_*\omega_{X_0}=\OO_X(K_X)$ where
$i:X^0\hookrightarrow X$ is the inclusion from the smooth part
$X^0$ and $K_X$ (a canonical Weil divisor) is a $\bQ$-Cartier
divisor. We would like to classify well formed hypersurfaces which
is equivalent to say, by  \cite[p110, 6.10]{C-R}:
\begin{itemize}
\item [(i)] $\text{hcf}(a_0,\cdots, \hat{a_i},\cdots,
\hat{a_j},\cdots, a_n)|d$; \item [(ii)] $\text{hcf}(a_0,\cdots,
\hat{a_i},\cdots, a_n)=1$, for all distinct $i,j$.
\end{itemize}

{}From now on we assume $\dim(X)=3$, $n=4$ and $d-\sum_i a_i=1$,
which are the assumptions of Fletcher. By 6.14, p111 in \cite{C-R},
one has $\omega_X\cong \OO_X(d-\sum a_i)=\OO_X(1)$.

If $X$ has only cyclic terminal quotient singularities, then
$K_X^3={\OO_X(1)}^3$ makes sense. We have
$q^*\OO_{\bP}(1)=\OO_{\bP^n}(1)$. Thus
${\OO_{\bP^n}(1)}^4=\frac{1}{a_0a_1a_2a_3a_4}$. This simply gives
$$K_X^3={\OO_X(1)}^3=d\cdot {\OO_{\bP^n}(1)}^4=
\frac{d}{a_0a_1a_2a_3a_4}.\eqno{(9.1)}$$

Let $A$ be the homogeneous coordinate ring of $X$. By Lemma
7.1 of \cite{C-R}, $$P_m(X)=h^0(X,\OO_X(m))= \dim_k A_m.
\eqno{(9.2)}$$ In particular, when $m < d$, then $P_m(X)=
\dim_k S_m$ clearly, where $A_m, S_m$ denotes the $m$-th
graded part of $A,S$ respectively.

We recall Fletcher's criterion:
\begin{thm}\label{F}  \cite[p145, 14.1]{C-R} Let $X_d$ be a
general hypersurface of degree $d$ in $\bP(a_0,a_1,a_2,a_3,a_4)$
and let $\alpha=d-\sum a_i$. Then $X_d$ is quasismooth with only
isolated terminal quotient singularities and is not a linear cone
if and only if all the following holds:
\smallskip

\noindent(1) For all $i$,
\begin{itemize}
\item[(i)] $d>a_i.$ \item[(ii)] there exists a monomial $x_i^mx_e$
of degree $d$ (that is, there exists $e$ such that $a_i|d-a_e)$.
\item[(iii)] if $a_i\nmid d$, there exists an $m\neq i,e$ such
that $a_i|a_m+\alpha$.
\end{itemize}
\smallskip

\noindent(2) For all distinct i,j, with $h=\text{hcf}(a_i,a_j)$,
then
\begin{itemize}
\item[(i)] $h|d$. \item[(ii)] there exists an $m\neq i,j$ such
that $h|a_m+\alpha$.
\item[(iii)] one of the following holds:\\
either there exists a monomial $x_i^mx_j^n$ of degree $d$, or there
exist monomials $x_i^{n_1}x_j^{m_1}x_{e_1}$ and
$x_i^{n_2}x_j^{m_2}x_{e_2}$ of degree $d$ such that $e_1$, $e_2$
are distinct. \item[(iv)] there exists a monomial of degree $d$
which does not involve $x_i$ or $x_j$.
\end{itemize}
\smallskip

\noindent(3) For all distinct $i,j,k$, $\text{hcf}(a_i,a_j,a_k)=1$.
\end{thm}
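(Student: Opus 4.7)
The plan is to derive each of the three sets of arithmetic conditions by analyzing, in turn, the three geometric requirements on $X_d$: quasismoothness, non-degeneracy (not a linear cone), and the terminal quotient property of the singularities. I will work on the affine cone $C_X \subset \mathbb{A}^5$ cut out by a general homogeneous polynomial $f$ of degree $d$, and systematically translate stratum-by-stratum smoothness of $C_X \setminus \{0\}$ into statements about which monomials occur in $f$. The organizing principle is to parametrize strata of $\mathbb{P}(a_0,\ldots,a_4)$ by subsets $I \subset \{0,1,2,3,4\}$ (those coordinates that vanish), and to decide on each stratum whether $X_d$ meets it, and if so whether it is smooth and with what singularity type.

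First I would handle quasismoothness. For a subset $I$, let $C_I \subset \mathbb{A}^5$ be the closed locus where $x_i = 0$ for $i \in I$. By Bertini in the cone (cf. Fletcher's analysis in \cite{C-R}), a general $f$ gives $C_X$ smooth along $C_I \setminus \{0\}$ iff either (a) $f$ restricted to $C_I$ is nonzero and its restricted partials span, or (b) there is a monomial of $f$ with exactly one variable not in $I$ (so a partial is a nonzero scalar there). Running through $|I|=1$ yields condition (1)(ii)–(iii); $|I|=2$ yields condition (2)(iii)–(iv); $|I|\ge 3$, combined with the well-formedness $\mathrm{hcf}(a_i,a_j,a_k)=1$ for triples, forces (3), since otherwise a common factor would produce non-isolated bad behavior along the corresponding $\mathbb{P}^1$-stratum. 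The condition (1)(i) $d>a_i$ plus the existence requirements in (1)(ii) imply the ``not a linear cone'' hypothesis: if $d=a_i$ then after a generic coordinate change $f$ becomes $x_i + (\text{terms not involving }x_i)$, an obvious linear-cone form; conversely the monomial conditions prevent this.

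Next I would establish the terminal and isolated clauses simultaneously. On the stratum indexed by a single $i$, the local model at a generic point is the cyclic quotient $\tfrac{1}{a_i}(a_0,\ldots,\hat{a_i},\ldots,a_4)$; using the embedding $X_d \hookrightarrow \mathbb{P}$ and the Jacobian in weighted coordinates, one shows after straightforward linear algebra that $X_d$ cuts out a 3-fold quotient singularity locally analytically isomorphic to $\tfrac{1}{a_i}(a_{e'},a_{e''},a_{e'''})$ where the indices $\neq i,e$ and the twist is determined by $a_e + \alpha \equiv 0 \pmod{a_i}$ (when $a_i \nmid d$). Terminality by the Reid–Tai criterion for dimension 3 forces this singularity to be of type $\tfrac{1}{r}(1,-1,b)$, which yields both the congruence in (1)(iii) and the restriction on the shape of monomials in (1)(ii). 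Analogous analysis on two-index strata uses $h = \mathrm{hcf}(a_i, a_j)$: the induced cyclic-of-order-$h$ stabilizer along the $\mathbb{P}^1$ stratum gives a family of quotient singularities, and the requirement that this family be \emph{isolated} (not a curve of singularities on $X_d$) is exactly that some monomial of $f$ of degree $d$ involves neither $x_i$ nor $x_j$, i.e. (2)(iv), while terminality at the generic point and $h\mid d$ account for (2)(i)–(ii).

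The converse — that these arithmetic conditions suffice — is the other half of the biconditional and is handled by constructing a monomial representative $f$ satisfying (1)–(3) explicitly; a general $f$ differs from such a distinguished representative by irrelevant deformations, and one checks (via a short Bertini-on-strata argument) that genericity preserves quasismoothness and the singularity types. The main obstacle, I expect, is the bookkeeping in the terminal singularity analysis: the Reid–Tai condition must be verified not only at codimension-one strata but also in the presence of possibly non-cyclic stabilizers on higher-codimension strata, and one must correctly identify which monomial in $f$ controls which local partial derivative to read off the weights of the induced cyclic action. The neatest way to navigate this is to introduce an auxiliary notation cataloguing, for each stratum $C_I$, the minimal ``spanning'' collection of monomials of $f$ supported outside $I$ — once that combinatorial structure is fixed, each of (1)(i)–(iii), (2)(i)–(iv), (3) drops out as a direct necessary-and-sufficient criterion.
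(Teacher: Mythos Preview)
The paper does not prove Theorem~\ref{F} at all: it is quoted verbatim from Iano-Fletcher \cite[p.~145, 14.1]{C-R} as a black-box criterion, and is then applied in the proof of Theorem~\ref{cod=1}. There is therefore no ``paper's own proof'' to compare your proposal against.

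Your sketch is a reasonable outline of the kind of stratum-by-stratum Bertini and Reid--Tai analysis that underlies Fletcher's original argument in \cite{C-R}, but since the present paper simply cites the result, producing such a proof goes beyond what the paper does. If your goal is to understand why the criterion holds, you should consult \cite{C-R} directly; if your goal is to reproduce the paper's treatment of this statement, the correct answer is that it is taken as given.
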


Fletcher has given a list of 23 families (See
\cite[p150, 15.1]{C-R}) of quasismooth 3-fold hypersurfaces with only terminal
quotient singularities with $\omega_X\cong\OO_X(1)$ and $\sum
a_i\leq 100$. We would like to prove that Fletcher's list is
complete without constraint to the $\sum a_i$.

The main idea of the proof is that, when $d$ is big, the canonical
volume of $X$ tends to be very small. But on the other hand we have
some effective lower bounds for the volume as proved in Section 3.
This helps us to exclude any possibility.

\begin{proof}[{\bf Proof of Theorem \ref{cod=1}}] Consider
the canonical hyper-surface 3-fold $X\subset
\bP(a_0,a_1,a_2,a_3,a_4)$ with $0<a_0\leq a_1\leq a_2\leq a_3\leq
a_4$. Assume $d=\deg(X)=a_0+a_1+a_2+a_3+a_4+1$. Clearly one sees
$d\geq 5a_0+1$.
We have already seen the
 equality:
 $K_X^3=\frac{d}{\pi}$ where
 $\pi:=a_0a_1a_2a_3a_4$.
Explicitly one has:
$$K^3=\frac{1}{\pi}+\frac{1}{a_1a_2a_3a_4}+\frac{1}{a_0a_2a_3a_4}+\frac{1}{a_0a_1a_3a_4}+
\frac{1}{a_0a_1a_2a_4}+\frac{1}{a_0a_1a_2a_3}.$$
Thus if we have $a_i\geq c_i>0$ for $i=0,1,2,3,4$,
then we get the inequality:
$$K^3\leq
\frac{c_0+c_1+c_2+c_3+c_4+1}{c_0c_1c_2c_3c_4}.\eqno{(9.3)}$$

We will frequently use this inequality in the following discussion.

Furthermore since $d\leq 5a_4+1$, one gets
$$a_4\geq
\frac{1}{5}(d-1)\geq 20.$$

We only have to consider the case $d\geq 101$ according to
Fletcher's work (see 15.1, p150 in \cite{C-R}). And we will show
that there is no such variety with $d \ge 101$. This verifies the
conjecture.

{\bf Claim 1.} We may assume that $a_0 \le 4$. \\
Suppose that $a_0 \ge 5$. Then by Theorem \ref{F}.3, we have either
$a_1 \ge 5, a_2 \ge 6, a_3 \ge 7$ or $(a_0,a_1,a_2,a_3)=(5,5,6,6)$.

If $(a_0,a_1,a_2,a_3)=(5,5,6,6)$, then $a_4 \ge 78$. By $(9.3)$, we
have $K^3 \le \frac{5+5+6+6+78+1}{5 \cdot 5 \cdot 6 \cdot 6 \cdot
78} < \frac{1}{420}$. By Theorem \ref{volume_chi=1}, this is a
contradiction.

If $a_1 \ge 5, a_2 \ge 6, a_3 \ge 7$, then since $a_4\geq
\frac{1}{5}(d-1)\geq 20$, it follows that  $K^3\leq
\frac{5+5+6+7+20+1}{5\cdot 5\cdot 6\cdot 7\cdot 20}< \frac{1}{477}$
by $(9.3)$, this is a contradiction as well. This proves the claim.
\qed

{\bf Claim 2.} We may assume that $a_0+a_1 \le 11$. \\
If $a_0=1$ and $a_1 \ge 11$, then $a_2 \ge 11, a_3 \ge 12$. We have
$K^3 \le \frac{56}{1 \cdot 11 \cdot 11 \cdot 12 \cdot 20} <
\frac{1}{420}$.

If $a_0=2$ and $a_1 \ge 8$, then $a_2 \ge 9, a_3 \ge 9$. We have
$K^3 \le \frac{49}{2 \cdot 8 \cdot 9 \cdot 9 \cdot 20} <
\frac{1}{420}$.

If $a_0=3$ and $a_1 \ge 7$, then $a_2 \ge 7, a_3 \ge 8$. We have
$K^3 \le \frac{46}{3 \cdot 7 \cdot 7 \cdot 8 \cdot 20} <
\frac{1}{420}$.

If $a_0=4$ and $a_1 \ge 6$, then  $a_2 \ge 7, a_3 \ge 7$. We have
$K^3 \le \frac{45}{4 \cdot 6 \cdot 7 \cdot 7 \cdot 20} <
\frac{1}{420}$. All of these can not happen by Theorem
\ref{volume_chi=1}. This proves the claim. \qed

 Because $a_4$ is the biggest one
among $a_i$, Theorem \ref{F}.1.(iii) yields that either $a_4=a_m+1$
or  $a_4|d$. In the second case, if $d=a_0+a_1+a_2+a_3+a_4+1=na_4$
for an integer $n>1$ then it's clear that  $n\leq 5$.

{\bf Case 1.} $d=5a_4$, then $a_4 \ge 21$ and  $\sum_{i=0}^3
(a_4-a_i)=1$. It follows that $a_0 \ge a_4-1  \ge 20$. Which is
absurd.

{\bf Case 2.} $d=4a_4$, then $a_4 \ge 26$ .  Since $a_0 \le 4$, we
have $\sum_{i=1}^3 (a_4-a_i) \le 5$. Hence $a_1 \ge a_4-5 \ge 21$,
which is absurd.

{\bf Case 3.} $d=3a_4$, then $a_4 \ge 34$. Since $a_0+a_1 \le 11$,
we have $(a_4-a_3)+(a_4-a_2) \le 12$. Thus $a_2 \ge a_4-12 \ge 22$.

 If $a_0\geq 2$, then $a_1\geq 2$, we get $K^3\leq
\frac{2+2+22+22+34+1}{4\cdot 22^2\cdot 34}<\frac{1}{420}$, a
contradiction.

If $a_0=1$ and $a_1\geq 3$, then similarly we have $K^3\leq
\frac{1+3+44+34+1}{1 \cdot 3\cdot 22^2\cdot 34}<\frac{1}{420}$, a
contradiction.

So now  consider the case that $(a_0,a_1)=(1,1) $ or $(1,2)$.
Notice that $P_2(X) \ge 2$ by $(9.2)$. Hence by Theorem
\ref{volume}.iii, we have $K^3 \ge \frac{5}{96}$.  On the other
hand,  by $(9.3)$, we get $K^3 \leq \frac{1+2+44+34+1}{1\cdot
2\cdot 22^2\cdot 34} < \frac{5}{96}$, or $K^3 \leq
\frac{1+1+44+34+1}{1\cdot 1\cdot 22^2\cdot 34}< \frac{5}{96}$.
Hence both cases lead to  a contradiction.

{\bf Case 4}. We consider the case $n=2$. Then we have the
relation: $a_4\geq 51$ and $a_0+a_1+a_2+a_3+1=a_4\geq 51$. Since
$a_0+a_1 \le 11$ by Claim 2, we have $2a_3 \ge a_2+a_3 \ge 39$ and
thus $a_3 \ge 20$.

If $a_0=4$, then we must have $a_1\geq 4$ and $a_2\geq 5$. Thus we
get $K^3\leq \frac{8+5+20+51+1}{4^2\cdot 5\cdot 20\cdot
51}<\frac{1}{420}$, a contradiction.

If $a_0=3$, then we must have $a_1 \ge 3, a_2 \ge 4$.  Thus we get
$K^3\leq \frac{6+4+20+51+1}{3^2\cdot 4\cdot 20\cdot 51}
<\frac{1}{420}$, a contradiction.

If $a_0=2$ and $a_1 \ge 4$, then $a_2 \ge 5$ and we have $K^3\leq
\frac{2+4+6+20+51+1}{2\cdot 4 \cdot 5 \cdot 20\cdot
51}<\frac{1}{420}$, a contradiction.  If $a_1=3$ and $a_2\geq 5$,
then we get $K^3\leq \frac{2+3+5+20+51+1}{2\cdot 3\cdot 5\cdot
20\cdot 51}< \frac{1}{320}$. On the other hand, since  $P_6(X)\geq
2$, Theorem \ref{volume}.iii gives $K^3\geq \frac{1}{311}$, a
contradiction; If $a_1=3$ and $a_2=4$, then $a_3\geq 41$. We get
$K^3\leq \frac{1}{492}$, a contradiction; If $a_1=3$ and $a_2=3$,
then $a_3\geq 42$. We get $K^3\leq \frac{1}{378}$. Since $P_3(X)\geq
2$, Theorem \ref{volume} (iii) says $K^3> \frac{1}{53}$, a
contradiction. If $a_1=2$, then  we can get $K^3 \le
\frac{2+2+2+20+51+1}{2 \cdot 2 \cdot 3 \cdot 20 \cdot 51} <
\frac{1}{120}$. Since $P_2(X)\geq 2$, Theorem \ref{volume}.(iii)
gives $K^3> \frac{1}{20}$, a contradiction.

Finally we consider the case that $a_0=1$. If $a_1\geq 6$, then
$a_2\geq 6$. Thus we can get $K^3\le \frac{1+6+6+20+51+1}{1 \cdot 6
\cdot 6 \cdot 20 \cdot 51} < \frac{1}{420}$, a contradiction. We
thus consider the case that  $a_1 \le 6$. Notice that on one hand,
 we have $P_{a_1}(X) \ge 2$, hence $K^3 \ge \frac{11}{12a_1
(a_1+1)^2}$ by Theorem \ref{volume}.(iii). On the other hand, $K^3
\le \frac{72+2a_1}{1 \cdot a_1 \cdot a_1 \cdot 20 \cdot 51}$.
However, it's easy to verify that for  $a_1 \le 5$,
$\frac{11}{12a_1 (a_1+1)^2}> \frac{72+2a_1}{1020 a_1^2 }$. This
leads to a contradiction.

It thus remain to consider the case that $a_4=a_m+1$ for some $m$.

{\bf Case 5}. If $a_4=a_3+1$, and $a_3|d$. Say $d=na_3$. Note that
$d= a_0+a_1+a_2+2a_3+2 =na_3 \ge 101$. If $n \ge 4$, then one get a
contradiction easily for $a_0+a_1 \le 11 < a_3-2$. If $n=3$, then
$a_2 \ge a_3-12 \ge 22$ since $a_3 \ge 34$. Thus one can easily
check that $K^3 < \frac{1}{420}$ unless $(a_0,a_1)=(1,1),(1,2)$. In
both case, one has $K^3 < \frac{5}{96}$. However $P_2 \ge 2$ gives
the required contradiction.

{\bf Case 6}. If $a_4=a_3+1$ and $a_3=a_m+1$, then $a_3=a_2+1$
since $a_1 \le 11$ by Claim 2. Now $a_0+a_1+3a_2+3+1 \ge 101$ gives
$ a_2 \ge 29$. If $a_1 \ge 2$, then one can easily check that $K^3
\le \frac{1+2+29+30+31+1}{1 \cdot 2 \cdot 29 \cdot 30 \cdot 31}
<\frac{1}{420}$.

Hence we only need to consider $a_1=a_0=1$. It's easy to see that
$K^3 < \frac{1}{3}$.  Now $p_g \ge 2$, thus $K^3 \ge \frac{1}{3}$
by \cite{MathAnn}, this is the required contradiction.

{\bf Case 7}. Finally, if $a_4=a_m+1$ for some $m \le 2$, then
clearly $a_4=a_2+1$ and it follows that $a_3=a_2$ or $a_3=a_2+1$.
The similar argument as in Case 6 gives a contradiction.

This completes the proof.
\end{proof}


\end{document}